              \def\version{25 May 2021}	        	%
\pgfplotsset{compat=1.11}
\numberwithin{equation}{section}
\newcommand{\ProofEnde}{\hfill {$\square$}
\renewcommand{\thesubfigure}{\alph{subfigure}}

}
\def\d{{\rm d}} 
\def\e{\varepsilon} 
\font\tenBbb=msbm10 
\font\sevenBbb=msbm7 
\font\fiveBbb=msbm5 
\def\2{\mathbf 2}
\newcommand{\R}     {\mathbb{R}} 
\newcommand{\N}     {\mathbb{N}} 
\renewcommand{\P}   {\mathbb{P}} 
\newcommand{\E}     {\mathbb{E}}
\newcommand{\smfrac}[2]{\textstyle{\frac {#1}{#2}}}
\def\1{{\mathchoice {1\mskip-4mu\mathrm l}      
{1\mskip-4mu\mathrm l} 
{1\mskip-4.5mu\mathrm l} {1\mskip-5mu\mathrm l}}} 
\def\comment#1{} 
\newtheoremstyle{thm}{2ex}{2ex}{\itshape\rmfamily}{} 
{\bfseries\rmfamily}{}{1.7ex}{} 
\newtheoremstyle{rem}{1.3ex}{1.3ex}{\rmfamily}{} 
{\itshape\rmfamily}{}{1.5ex}{}
\renewcommand{\theequation}{\thesection.\arabic{equation}} 
\newtheorem{theorem}{Theorem}[section] 
\newtheorem{lemma}[theorem]{Lemma} 
\newtheorem{prop}[theorem] {Proposition} 
\newtheorem{cor}[theorem]  {Corollary}
\theoremstyle{definition}
\newtheorem{remark}[theorem]{Remark}
\renewcommand{\d}{{\rm d}} 
\newcommand{\eps}{\varepsilon}
\newcommand{\Tr}{{\operatorname {Tr}\,}}
\newcommand{\Bcal}  {{\mathcal B}}
\newcommand{\Dcal}   {{\mathcal D }}
\newcommand{\Lcal}   {{\mathcal L }} 
\newcommand{\Ncal}   {{\mathcal N }}
\newcommand\numberthis{\addtocounter{equation}{1}\tag{\theequation}}
\renewcommand{\e}   {{\operatorname e }}
\definecolor{Red}{rgb}{1,0,0}
\begin{document} 
 
\title[A stochastic population model with dormancy and transfer]
{The interplay of dormancy and  transfer in bacterial populations:
Invasion, fixation and coexistence regimes}
\author[Jochen Blath and András Tóbiás]{}
\maketitle
\thispagestyle{empty}
\vspace{-0.5cm}

\centerline{\sc Jochen Blath and András Tóbiás{\footnote{TU Berlin, Straße des 17. Juni 136, 10623 Berlin, {\tt blath@math.tu-berlin.de, tobias@math.tu-berlin.de}}}}
\renewcommand{\thefootnote}{}
\vspace{0.5cm}
\centerline{\textit{TU Berlin}}

\bigskip

\centerline{\small(\version)} 
\vspace{.5cm} 
 
\begin{quote} 
{\small {\bf Abstract:}} 
In this paper we investigate the interplay between two fundamental mechanisms of microbial population dynamics and evolution, namely {\em dormancy} and {\em horizontal gene transfer}. The corresponding traits come in many guises and are ubiquitous in microbial communities, affecting their dynamics in important ways. Recently, they have each moved (separately) into the focus of stochastic individual-based modelling
 (Billiard et\ al\ 2016, 2018; Champagnat, Méléard and Tran, 2021;  Blath and Tóbiás 2020). 
Here, we investigate their combined effects in a unified model. Indeed, we consider the (idealized) scenario of two sub-populations, respectively carrying `trait 1' and `trait 2', where trait 1 individuals are able to switch (under competitive pressure) into a dormant state, and trait 2 individuals are able to execute horizontal gene transfer, which in our case means that they can turn trait 1 individuals into trait 2 ones, at a rate depending on the density of individuals.

In the large-population limit, we examine the fate of (i) a single trait 2 individual (called `mutant') arriving in a trait 1 resident population living in equilibrium, and (ii) a trait 1 individual (`mutant') arriving in a trait 2 resident population. We analyse the invasion dynamics in all cases where the resident population is individually fit and the behaviour of the mutant population is initially non-critical. This leads to the identification of parameter regimes for the invasion and fixation of the new trait, stable coexistence of the two traits, and `founder control' (where the initial resident always dominates, irrespective of its trait). 

One of our key findings is that horizontal transfer can lead to stable coexistence even if trait 2 is unfit on its own. In the case of founder control, the limiting dynamical system also exhibits a coexistence equilibrium, which, however, is unstable, and with overwhelming probability none of the mutant sub-populations is able to invade. In all cases, we observe the classical (up to three) phases of invasion dynamics à la Champagnat (2006).
\end{quote}

\bigskip\noindent 
{\it MSC 2010.} 60J85, 92D25.

\medskip\noindent
{\it Keywords and phrases.} Dormancy, horizontal gene transfer, stochastic population model, large population limit, coexistence, founder control.

\setcounter{tocdepth}{3}


\setcounter{section}{0}
\begin{comment}{
This is not visible.}
\end{comment}

\section{Introduction }\label{sec-introductionHGT}

{\bf Motivation.} An essential feature in bacterial population biology is the ability of individuals to engage in {\em horizontal} (or {\em lateral}) {\em gene transfer} (HGT), which we understand here in an abstract sense as the exchange of heritable genetic material resp.\ traits between contemporary microorganisms (eg.\ as the result of bacterial conjugation, \cite{LT46}). The ability to execute horizontal transfer is widespread among microbial populations, appears in several different forms, and has a crucial impact on the ecological, evolutionary, and also pathogenic (eg.\ regarding antibiotic resistance) properties of microbial communities, cf.\  eg.\ \cite{OLG00}, \cite{KW12}, \cite{GB14}. Horizontal gene transfer has been introduced into a number of classical population genetics models, see eg.~\cite{BP14} and the references therein. Recently, stochastic individual-based modelling of the effects of HGT (in idealized models) has gained substantial interest in the mathematical biology community. Important contributions in this direction are for example the population dynamics models and results of Billiard et al.\ \cite{BCFMT16,BCFMT18} 
and the analysis in an extended adaptive dynamics framework, including a rather novel mutation regime, by Champagnat, Méléard, and Tran \cite{CMT19}. In these papers, the authors study 
the dynamics of two or more populations competing for resources and horizontally exchanging traits, in case of the last paper additionally considering mutations and the resulting changes in relative fitnesses. It is shown that HGT can drastically change the dynamics and evolution of the underlying populations: Depending on the frequency of mutations, it can lead to coexistence of traits that would not coexist without HGT, evolutionary cyclic behaviour, temporary extinction of sub-populations, and evolutionary suicide.


Another biologically essential feature of microbial populations is {\em dormancy}. 
This trait, which is again ubiquitous in microbial communities (see eg.\ \cite{RC87}, \cite{W04}, \cite{SD73}), allows individuals to switch reversibly into a metabolically (almost) inactive state (eg.\ by forming an endospore or cyst, or by phenotypically switching into a `persister cell') to withstand unfavourable conditions \cite{B04}, \cite{KL+05}, thus forming a seed bank comprised of dormant individuals. Traditionally, dormancy and seed banking have been modelled in the context of seeds and plants, where they can be understood as a bet-hedging strategy in fluctuating environments \cite{Co66}, \cite{B84}, \cite{E85}.  However, as is the case with horizontal transfer, dormancy also influences  the evolutionary, ecological, and pathogenic character of bacterial populations in complex ways (see eg.\ \cite{LJ11, SL18, L10, LdHWB20} for overviews). While in the last two decades there has been significant progress in the mathematical understanding of the role of dormancy and seed banks in  population genetics (cf.\ eg.\ \cite{KKL01, T11, BEGKW15, BGKW16, BGKW20}), there seem to be relatively few rigorous results in a stochastic 
 population dynamic framework involving direct competition. 
In our previous paper \cite{BT19}, we introduced a stochastic individual-based model 
for the invasion analysis of a dormancy trait in a resident population lacking this trait. We showed that under suitable assumptions on the model parameters, with asymptotically positive probability, a newly arriving (`mutant') individual having the dormancy trait is able to invade the resident population, even if it has a significantly lower reproductive rate than the residents, and in case of a successful invasion, it will reach fixation (i.e., frequency one in the population) with high probability. This shows that dormancy can come with a selective advantage even in the presence of a rather severe reproductive trade-off, providing some conceptual explanation for the ubiquity of dormancy despite the high energy costs for the maintenance of the underlying trait. 


In the present paper we aim to gain some conceptual understanding of the {combined effects} of both horizontal transfer and dormancy (at least in a basic scenario based on the previous works mentioned above). To this end we analyse competitive individual-based models with two traits, one of them being one-sided HGT involving a donor and a recipient (similar to \cite[Section 7]{BCFMT18} or \cite{CMT19}), and the other one 
being dormancy (as in \cite{BT19}), and their large population limits. For concreteness, let us say that trait 1 allows for dormancy, where switching into a dormant state is triggered by competitive pressure, while switching back (resuscitation) happens spontaneously. Trait 2 individuals have no dormant state, but they may be able to impose their trait  on trait 1 individuals via HGT (thus effectively turning trait 1 individuals into trait 2 individuals). See Section \ref{sec-modeldef} below for details of the model.
We investigate both the situation where at time zero there is a trait 1 resident population close to its equilibrium population size and a single newly arriving individual (mutant) of trait 2, and vice versa, with the roles of 1 and 2 exchanged.

\medskip

 The {\bf main questions} that we wish to answer in this paper are a) under which conditions an invasion of the mutant trait is possible (in either scenario) in a way that the probability of invasion stays  positive in the large-population limit, and b)  whether a successful mutant trait will reach fixation (ie.\ frequency one in the population) and make the resident trait go extinct, or whether both will coexist (again, for either scenario). Finally, we are also interested in c) how long it takes until an invading mutant reaches fixation, goes extinct, or reaches a coexistence equilibrium.

\medskip

{\bf Outline of the paper.} 
In Section~\ref{sec-main} we introduce our stochastic individual-based model and derive its large-population limit, given by a dynamical system. This scaling limit approximation works well as soon/as long as all population sizes (of the residents and the mutants) are of the same order, that is, after successful invasion, and before potential extinction/fixation of either population. 
%
We then describe the main properties of the dynamical system (existence of a coexistence equilibrium, stability etc.).
To understand the invasion and fixation properites for the original stochastic model, we  introduce couplings to approximating branching processes for the early invasion / late extinction phase, when either the mutant or the resident populations are small.  At the end of the section we present our main results, explain the heuristics behind them, and explicitly answer our main questions.

In Section~\ref{sec-discussion12} we conclude the main body of the text with a discussion of the rigorous methods required to obtain our results, the different parameter regimes, and the boundary cases where either HGT or dormancy is absent from the model. The analysis of the boundary cases clarifies which properties are due to HGT (and present also in absence of dormancy), which ones are due to dormancy (and observable also without HGT), and which emerge only if both evolutionary forces are simultaneously included in the model. 

The full proofs of our results can be found in the Appendix: In Section~\ref{sec-preliminaries}, we verify our deterministic preliminary results about the limiting dynamical system. Section~\ref{sec-generator}  contains the description of the infinitesimal generator of our population process. In Section~\ref{sec-HGTproofs}, we analyse the invasion of trait 2 against trait 1, whereas in Section~\ref{sec-dormantproofs}  we investigate the one of trait 1 against trait 2. The main methods and ideas of the proof are summarized in Section~\ref{sec-methods}. 

\section{Model definition, heuristics, and main results}\label{sec-main}

\subsection{The stochastic population model and its limiting dynamical system}
\label{sec-modeldef}

\tikzstyle{1a}=[circle,draw=blue!50,fill=blue!20,thick,minimum size=5mm]
\tikzstyle{1d}=[circle,draw=black!50,fill=black!20,thick,minimum size=5mm]
\tikzstyle{2}=[circle,draw=green!50,fill=green!20,thick,minimum size=5mm]
\tikzstyle{D}=[rectangle,draw=black!50,fill=white!20,thick,minimum size=5mm]


In our stochastic model, the population evolves as a continuous time Markov chain 
$$\mathbf N_t := (N_{1a,t},N_{1d,t},N_{2,t}), \quad t \geq 0,$$ 
%
on the state space $(\N_0)^3$.  The three quantities on the right-hand sides describe the number of individuals at time $t$ of trait $1$ that are active (index $1a$), those of trait 1 that are dormant (index $1d$), and finally those of trait 2. 
We further introduce a parameter $K>0$ called the \emph{carrying capacity} of the population. 
To obtain a large population limit, we will later also consider the rescaled population process 
$$
\mathbf N_t^K := \frac 1K \mathbf N_t, \quad t \geq 0.
$$
Besides $K$, our process involves the following parameters: 
\begin{itemize}
\item $\lambda_1>0$, the birth rate of active trait 1 individuals,
\item $\lambda_2>0$, the birth rate of trait 2 individuals,
\item $\mu >0$, the death rate of both active trait 1 and trait 2 individuals,
\item $C>0$, the competition strength,
\item $p \in (0,1)$, the dormancy initiation probability under competitive pressure,
\item $\kappa \geq 0$, with $\kappa \mu$ the death rate of dormant individuals,
\item $\sigma >0,$ the resuscitation rate of dormant individuals,
\item $\tau >0$, the horizontal transfer rate.
\end{itemize}
The dynamics of our process $(\mathbf N_t)$ is then given by the following transitions with corresponding rates, see Figure \ref{fig:hgt} for a visualization (for the interested reader, we also display the infinitesimal Markov generator of the process $(\mathbf N_t )_{t\geq 0}$ in Appendix~\ref{sec-generator}).
\begin{itemize}
    \item Active trait 1 individuals give birth to another such individual at rate $\lambda_1$ and die at rate $\mu$. 
    \item Trait 2 individuals give birth to another such individual at rate $\lambda_2$ and die at rate $\mu$. 
    
 
    \item For any ordered pair of active individuals, competitive events happen at rate $C/K$, affecting the first individual from the pair. If the affected individual is of trait 2, it dies immediately. If it is of trait 1, then it dies with probability $1-p$, but with probability $p$ it switches into dormancy.

    \item Dormant trait 1 individuals become active at rate $\sigma$ and die at rate $\kappa\mu$.
    
    
    \item Trait 2 individuals transfer their trait to active trait 1 individuals, i.e., turn them into trait 2, at rate $\tau/K$.


 \end{itemize}

Note that the death rate $\mu$ is the same for all active individuals. This assumption simplifies the notation, and it can be made without loss of generality since the crucial parameters are the net reproduction rates of the traits $1a$ and $2$ (here, $\lambda_1-\mu$ and $\lambda_2-\mu$), as we will see below. We typically assume that $\kappa \in [0,1]$, ie.\ the death rate is lower for dormant individuals than for active ones.

Further, note that the carrying capacity $K$ enters both the competition and the horizontal transfer events. This is a classical scaling that leads to non-trivial limits for $(\mathbf N_t^K)$ as $K \to \infty$. In particular, in the limit, the competition events will lead to a {\em logistic growth term} with pre-factor $C$, and the transfer events will lead to {\em density-dependent transfer} of rate $\tau$ (an alternative modelling option would be {\em frequency-dependent transfer}, see eg.~\cite[Remark 3.1]{BCFMT18} for a discussion of both concepts), as can be seen below. 
Regarding the concrete mechanism of competition, we chose a micro-model in which the individual death rates are increased due to crowding. An alternative way to incorporate competition could have been via decreased birth rates, which, if suitably chosen, would lead to the same scaling limit. However, increasing death rates blends in nicely with competition-induced switches into dormancy and is consistent with earlier work in \cite{BT19}. Our resuscitation mechanism for dormancy leads to an immigration term from the dormant pool of individuals into the active trait 1 population, and our HGT term is realized by an increased death rate of trait 1 individuals in the presence of trait 2, combined with a simultaneously increased birth rate for trait 2. 


To formally identify the large population limit, assume that we have the convergence of the initial population sizes of the rescaled process $(\mathbf N_t^K)$  at time 0, i.e.
$$
\frac 1K \mathbf N_0 = \mathbf N_0^K \overset{K \to \infty}{\longrightarrow}  {\bf n}(0)=(n_{1a}(0),n_{1d}(0),n_{2}(0)) \, \, \in \, [0, \infty)^3.
$$
Then, by classic convergence theory for Markov processes (see \cite[Theorem 11.2.1, p.~456]{EK}), as $K \to \infty$, for $T>0$, we obtain the weak convergence (uniformly on $[0,T])$ of our population process
$$
(\mathbf N_t^K)_{t \in [0,T]} \Rightarrow (\mathbf n(t))_{t \geq 0}=((n_{1a}(t),n_{1d}(t),n_{2}(t)))_{t \in [0,T]}
$$
to the solution $(\mathbf n(t))_{t \in [0,T]}$ of the deterministic dynamical system given by 
\begin{equation}\label{3dimHGT}
\begin{aligned}
\frac{\d n_{1a}(t)}{\d t} & = n_{1a}(t)\big( \lambda_1-\mu-C (n_{1a}(t)+n_{2}(t))-\tau n_{2}(t) \big)  + \sigma n_{1d}(t), \\
\frac{\d n_{1d}(t)}{\d t} & = p C n_{1a}(t)(n_{1a}(t)+n_{2}(t)) - (\kappa\mu+\sigma) n_{1d}(t), \\
\frac{\d n_{2}(t)}{\d t} & = n_{2}(t) \big( \lambda_2-\mu-C (n_{1a}(t)+n_{2}(t)) + \tau n_{1a}(t) \big),
\end{aligned}
\end{equation}
with the same initial condition ${\bf n}(0)$ and parameters as above. 

The result in particular shows that if all populations are comparably large at the same initial time, their future dynamics can be well-approximated by their law-of-large number scaling limits given by the above system. The Gaussian fluctuations around this scaling limit are described by \cite[Theorem 11.2.3]{EK}.

It is easy to see that the positive orthant $[0, \infty)^3$ is positively invariant under this system. 
In Section~\ref{sec-coex2d} we will provide a short discussion of the relation of \eqref{3dimHGT} to competitive Lotka--Volterra equations in the dormancy-free case $p=0$ and some related remarks corresponding to the case $p>0$. 


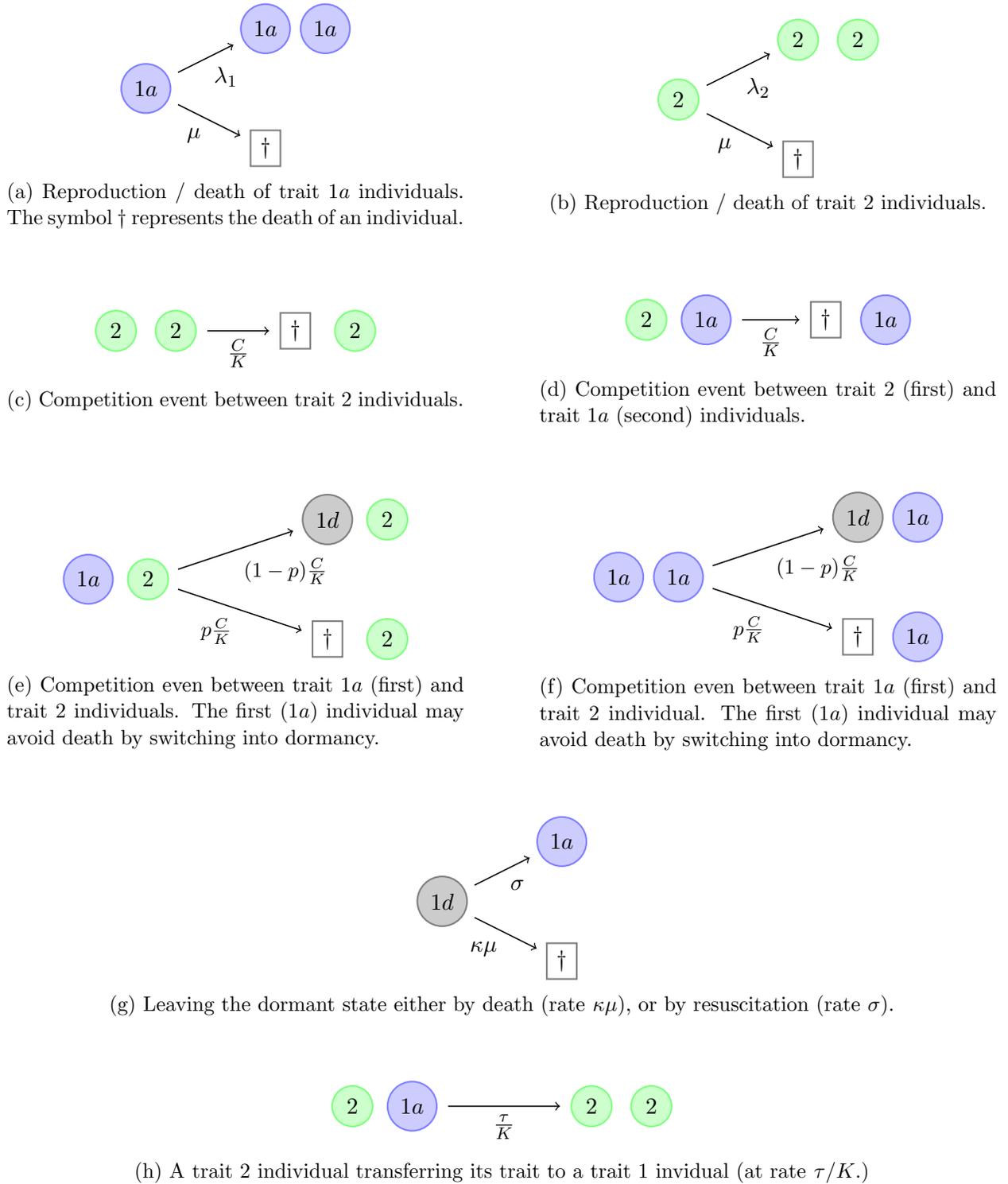
\begin{figure}
\vspace{1cm}
\begin{subfigure}{.45\textwidth}
  \centering
   \begin{tikzpicture}[node distance=2cm, semithick, shorten >=5pt, shorten <=5pt]
  \node  [1a] 	(1a_rep)	 {$1a$};
  \node  [1a] 	(1a_child_1)	[right of=1a_rep, yshift=1cm]	 {$1a$};    
  \node  [1a] 	(1a_child_2) 	[right of=1a_child_1, xshift=-1cm] 	{$1a$};
  \node  [D] 		(death) 	[below of=1a_child_1] 	{$\dagger$}; 
  	 \draw[->] (1a_rep) to node[auto, swap] {$\lambda_1$} (1a_child_1);
 	 \draw[->] (1a_rep) to node[auto, swap] {$\mu$} (death);  
 \end{tikzpicture}
  \caption{Reproduction / death of trait $1a$ individuals. The symbol $\dagger$ represents the death of an individual.}
  \vspace{1cm}
\end{subfigure}
\hspace{1cm}
\begin{subfigure}{.45\textwidth}
  \centering
  \begin{tikzpicture}[node distance=2cm, semithick, shorten >=5pt, shorten <=5pt]
  \node  [2] 	(2_rep)	 {$2$};
  \node  [2] 	(2_child_1)	[right of=2_rep, yshift=1cm]	 {$2$};    
  \node  [2] 	(2_child_2) 	[right of=2_child_1, xshift=-1cm] 	{$2$};
  \node  [D] 		(death) 	[below of=2_child_1] 	{$\dagger$}; 
  	 \draw[->] (2_rep) to node[auto, swap] {$\lambda_2$} (2_child_1);
 	 \draw[->] (2_rep) to node[auto, swap] {$\mu$} (death);  
 \end{tikzpicture}
  \caption{Reproduction / death of trait  2 individuals.}
  \vspace{1cm}
\end{subfigure}
 \begin{subfigure}{.45\textwidth}
   \centering
     \begin{tikzpicture}[node distance=2cm, semithick, shorten >=5pt, shorten <=5pt]
  \node  [2] 	(2_comp_1)	 {$2$};  
  \node  [2] 	(2_comp_2)	 [right of=2_comp_1, xshift=-1cm]{$2$};
  \node  [D] 	(2_child_1)	[right of=2_comp_2,]	 {$\dagger$};    
  \node  [2] 		(death) 	[right of=2_child_1, xshift=-1cm] 	{$2$}; 
  	 \draw[->] (2_comp_2) to node[auto, swap] {$\frac CK$} (2_child_1);
 \end{tikzpicture}
  \caption{Competition event between trait 2 individuals.}
   \vspace{1cm}
      \end{subfigure} 
 \hspace{1cm}     
 \begin{subfigure}{.45\textwidth}
  \centering
  \begin{tikzpicture}[node distance=2cm, semithick, shorten >=5pt, shorten <=5pt]
  \node  [2] 	(2_comp_1)	 {$2$};  
  \node  [1a] 	(2_comp_2)	 [right of=2_comp_1, xshift=-1cm]{$1a$};
  \node  [D] 	(2_child_1)	[right of=2_comp_2,]	 {$\dagger$};    
  \node  [1a] 		(death) 	[right of=2_child_1, xshift=-1cm] 	{$1a$}; 
  	 \draw[->] (2_comp_2) to node[auto, swap] {$\frac CK$} (2_child_1);
 \end{tikzpicture}
     \caption{Competition event between trait 2 (first) and trait $1a$ (second) individuals.}
      \vspace{1cm}
      \end{subfigure}
\vspace{1cm}
\begin{subfigure}{.45\textwidth}
 \centering
     \begin{tikzpicture}[node distance=2cm, ,semithick, shorten >=5pt, shorten <=5pt]
  \node  [1a] 	(2_comp)	 {$1a$};  
  \node  [2] (1a_comp)	 [right of=2_comp_1, xshift=-1cm]{$2$};
  \node  [1d] 	(2_child)	[right of=2_comp_2,yshift=1cm, xshift=1cm]	 {$1d$};    
  \node  [2] 		(dormant) 	[right of=2_child, xshift=-1cm] 	{$2$}; 
  	 \draw[->] (1a_comp) to node[auto, swap] {\small $(1-p)\frac{C}{K}$} (2_child);
  \node  [D] 	(2_child_2)	[below of=2_child]	 {$\dagger$};    
  \node  [2] 		(dormant) 	[right of=2_child_2, xshift=-1cm] 	{$2$}; 
  	 \draw[->] (1a_comp) to node[auto, swap] {\small $p\frac{C}{K}$} (2_child_2);	 
 \end{tikzpicture}
  \caption{Competition even between trait $1a$ (first) and trait 2 individuals. The first ($1a$) individual may avoid death by switching into dormancy.}
   \end{subfigure} 
\hspace{1cm}
 \begin{subfigure}{.45\textwidth}
 \centering
  \begin{tikzpicture}[node distance=2cm, semithick, shorten >=5pt, shorten <=5pt]
  \node  [1a] (2_comp)	 {$1a$};  
  \node  [1a] (1a_comp)	 [right of=2_comp_1, xshift=-1cm]{$1a$};
  \node  [1d] 	(2_child)	[right of=2_comp_2,yshift=1cm, xshift=1cm]	 {$1d$};    
  \node  [1a] 		(dormant) 	[right of=2_child, xshift=-1cm] 	{$1a$}; 
  	 \draw[->] (1a_comp) to node[auto, swap] {\small $(1-p)\frac{C}{K}$} (2_child);
  \node  [D] 	(2_child_2)	[below of=2_child]	 {$\dagger$};    
  \node  [1a] 		(dormant) 	[right of=2_child_2, xshift=-1cm] 	{$1a$}; 
  	 \draw[->] (1a_comp) to node[auto, swap] {\small $p\frac{C}{K}$} (2_child_2);	 
 \end{tikzpicture} 
 \caption{Competition even between trait $1a$ (first) and trait 2 individual. The first ($1a$) individual may avoid death by switching into dormancy.}
      \end{subfigure}
 \begin{subfigure}{.9\textwidth}
  \centering
  \begin{tikzpicture}[node distance=2cm, semithick, shorten >=5pt, shorten <=5pt]
  \node  [1d] (dormant)	 {$1d$};  
  \node  [1a] (active)	 [right of=dormant, yshift=1cm]{$1a$};
  \node  [D] 	(death)	[below of=active]	 {$\dagger$};    
  	 \draw[->] (dormant) to node[auto, swap] {$\sigma$} (active);
  	 \draw[->] (dormant) to node[auto, swap] {$\kappa \mu$} (death);	 
 \end{tikzpicture} 
 \caption{Leaving the dormant state either by death (rate $\kappa \mu$), or by resuscitation (rate $\sigma$).}   
  \vspace{1cm}
\end{subfigure}
    \begin{subfigure}{.9\textwidth}
  \centering
   \begin{tikzpicture}[node distance=2cm, semithick, shorten >=5pt, shorten <=5pt]
  \node  [2] 	(2_comp)	 {$2$};  
  \node  [1a] (1a_comp)	 [right of=2_comp_1, xshift=-1cm]{$1a$};
  \node  [2] 	(2_child)	[right of=1a_comp, xshift=1cm]	 {$2$};    
  \node  [2] 		(transfer) 	[right of=2_child, xshift=-1cm] 	{$2$}; 
  	 \draw[->] (1a_comp) to node[auto, swap] {$\frac{\tau}{K}$} (2_child);
 \end{tikzpicture}
 \caption{A trait 2 individual transferring its trait  to a trait 1 invidual (at rate $\tau/K$.)}
\end{subfigure}
\caption{Graphical representation of the transitions of the stochastic model from Section \ref{sec-modeldef}. 
}\label{fig:hgt}
\end{figure}



Before we analyse the behaviour of the full system, we briefly collect some preliminary results in special cases. Indeed, in absence of trait 1, the equilibrium population size of trait 2 is given as
\[ \bar n_{2} = \frac{(\lambda_2-\mu) \vee 0}{C}, \numberthis\label{n2bardef} \]
, since it is easy to see that started from an initial condition $\mathbf n(0)=(0,0,a)$, $a>0$, $\mathbf n(t)$ converges to $(0,0,\bar n_{2})$ as $t \to \infty$. (Throughout the paper, $\vee$ stands for maximum and $\wedge$ for minimum.)
Further, in the absence of trait 2, the active equilibrium population size of trait 1 is
\[ \bar n_{1a} = \frac{(\lambda_1-\mu) \vee 0}{C} \frac{\kappa\mu+\sigma}{\kappa\mu+(1-p)\sigma} \numberthis\label{n1abardef} \]
and the dormant equilibrium population size of (in other words, the size of the seed-bank for) trait 1 equals
\[ \bar n_{1d}=\frac{((\lambda_1-\mu)^2 \vee 0) p (\kappa\mu+\sigma)}{C (\kappa\mu+(1-p)\sigma)^2}, \numberthis\label{n1dbardef} \]
where $(\bar n_{1a},\bar n_{1d},0)$ is given as the limit of $\mathbf n(t)$ as $t\to\infty$ if $\mathbf n(0)=(a,b,0)$ for some $a,b>0$; see \cite{BT19} for further details. 

Throughout the paper we will usually assume that trait 1 is \emph{individually fit}, i.e., $\lambda_1>\mu$, which ensures that $\bar n_{1a}$ and $\bar n_{1d}$ are both positive. 
However, the condition that trait 2 is individually fit, i.e., $\lambda_2>\mu$, will not always be required; in some cases, an individually unfit trait 2 will still be able to coexist with an individually fit trait 1, as we will see later. Yet, the same is not true with the two traits interchanged, and the reason for this asymmetry is that trait 2 benefits from HGT, creating an inflow and thus decreasing the trait 1 population. 




\subsection{Equilibria of the dynamical system and their stability} 
\subsubsection{Existence of a coexistence equilibrium}\label{sec-coexistence}
Regarding the existence and number of coexistence equilibria of our system \eqref{3dimHGT} where both dormancy and HGT are present, we have the following lemma, the proof of which is simply checking the zeros of the system. 
\begin{lemma}\label{lemma-coexistence}
The system \eqref{3dimHGT} has a coordinate-wise positive coexistence equilibrium if 
\[ \lambda_2-\mu >\frac{Cp\sigma}{\tau(\kappa\mu+\sigma)} (\lambda_2-\mu)+\frac{C}{\tau}(\lambda_1-\lambda_2) > \lambda_1-\mu \numberthis\label{mutantfitter2ineq} \]
or
\[  \lambda_2-\mu <\frac{Cp\sigma}{\tau(\kappa\mu+\sigma)} (\lambda_2-\mu)+\frac{C}{\tau}(\lambda_1-\lambda_2) < \lambda_1-\mu. \numberthis\label{mutantlessfit2ineq} \]
Given that a coexistence equilibrium exists, it is also unique and given as $(n_{1a},n_{1d},n_2)$ where
\begin{equation}\numberthis\label{coexeq}
    \begin{aligned}
    n_{1a} & = \frac{C(\kappa\mu+\sigma)(\lambda_2-\lambda_1)+Cp\sigma(\mu-\lambda_2)+(\kappa\mu+\sigma)\tau(\lambda_2-\mu)}{\tau(Cp\sigma-(\kappa\mu+\sigma)\tau)}, \\
    n_{1d} & = \frac{pC(\lambda_2-\lambda_1) \big(C(\kappa\mu+\sigma)(\lambda_2-\lambda_1)+Cp\sigma(\mu-\lambda_2)+(\kappa\mu+\sigma)\tau(\lambda_2-\mu)\big)}{\tau(Cp\sigma-(\kappa\mu+\sigma)\tau)^2}, \\
    n_{2} & = \frac{C(\kappa\mu+\sigma)(\lambda_2-\lambda_1)+Cp\sigma(\mu-\lambda_2)+(\kappa\mu+\sigma)\tau(\lambda_1-\mu)}{-\tau(Cp\sigma-(\kappa\mu+\sigma)\tau)}.
    \end{aligned}
\end{equation}
\end{lemma}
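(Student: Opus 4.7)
The plan is to solve the algebraic system obtained from setting the right-hand sides of \eqref{3dimHGT} to zero, assuming that all three coordinates are strictly positive. This will determine $(n_{1a},n_{1d},n_2)$ uniquely, and then I will translate the positivity constraints into \eqref{mutantfitter2ineq}--\eqref{mutantlessfit2ineq}.

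First I would use the third equation: since $n_2>0$, it forces
\[
C(n_{1a}+n_2)=\lambda_2-\mu+\tau n_{1a},
\]
which expresses the total active density in terms of $n_{1a}$. Next, from the second equation one gets $n_{1d}=\frac{pC n_{1a}(n_{1a}+n_2)}{\kappa\mu+\sigma}$, and substituting the identity above yields the reduced expression
\[
n_{1d}=\frac{p\,n_{1a}\,(\lambda_2-\mu+\tau n_{1a})}{\kappa\mu+\sigma}.
\]
Then I plug both relations into the first equation. The term $\lambda_1-\mu-C(n_{1a}+n_2)$ collapses to $\lambda_1-\lambda_2-\tau n_{1a}$, and using the expression for $n_{1d}$, after dividing by $n_{1a}>0$, the first equation becomes
\[
(\lambda_2-\mu+\tau n_{1a})\Bigl(\tfrac{p\sigma}{\kappa\mu+\sigma}-\tfrac{\tau}{C}\Bigr)=\lambda_2-\lambda_1,
\]
which is linear in $n_{1a}$. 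Setting $A:=\frac{Cp\sigma-(\kappa\mu+\sigma)\tau}{C(\kappa\mu+\sigma)}$, I can solve it (note that $A\neq 0$ is automatic whenever $\lambda_1\neq\lambda_2$, which is implied by either of the strict chains \eqref{mutantfitter2ineq}--\eqref{mutantlessfit2ineq}). Back-substitution into the earlier relations gives $n_{1d}$ and $n_2$ explicitly, and after multiplying numerator and denominator by $C(\kappa\mu+\sigma)$ the resulting formulas match those stated in \eqref{coexeq}. Uniqueness is automatic, because every step above was a forced identity.

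To verify the positivity regime, I would reorganise the formulas in the compact form
\[
n_{1a}=\frac{(\lambda_2-\mu)-X}{CA},\qquad n_2=\frac{X-(\lambda_1-\mu)}{CA},\qquad n_{1a}+n_2=\frac{\lambda_2-\lambda_1}{CA},
\]
where $X:=\frac{Cp\sigma}{\tau(\kappa\mu+\sigma)}(\lambda_2-\mu)+\frac{C}{\tau}(\lambda_1-\lambda_2)$ is precisely the middle expression of \eqref{mutantfitter2ineq}--\eqref{mutantlessfit2ineq}. Positivity of $n_{1a}$ and $n_2$ then holds exactly when the numerators both have the sign of $A$: if $A>0$ one recovers $\lambda_2-\mu>X>\lambda_1-\mu$, i.e.\ \eqref{mutantfitter2ineq}, and if $A<0$ one recovers \eqref{mutantlessfit2ineq}. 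Finally, $n_{1d}=\frac{p n_{1a}(\lambda_2-\lambda_1)}{(\kappa\mu+\sigma)A}$ is positive whenever $n_{1a}>0$, because $\lambda_2-\lambda_1$ and $A$ automatically carry the same sign in either regime (directly from the outer pair of inequalities).

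The computations are essentially routine, so no serious obstacle is expected; the only point that requires a little care is the sign-tracking in the final positivity check, in particular recognising the compact parametrisation via $X$ and $A$ so that the two separate cases \eqref{mutantfitter2ineq} and \eqref{mutantlessfit2ineq} emerge transparently from a single linear identity rather than from a brute-force case analysis of the three explicit fractions in \eqref{coexeq}.
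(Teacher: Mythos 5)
Your algebra is correct and in fact follows essentially the same route as the paper: set the right-hand sides of \eqref{3dimHGT} to zero, eliminate $C(n_{1a}+n_2)$ through the third equation, express $n_{1d}$ from the second, and reduce the first to the linear identity $(\lambda_2-\mu+\tau n_{1a})\,A=\lambda_2-\lambda_1$ with $A=\frac{Cp\sigma-(\kappa\mu+\sigma)\tau}{C(\kappa\mu+\sigma)}$; back-substitution does reproduce \eqref{coexeq}, and the forced-identity argument gives uniqueness (the degenerate case $A=0$, $\lambda_1=\lambda_2$ being excluded by either chain of inequalities). Your compact parametrisation through $X$ and $A$ is a pleasant repackaging of the paper's case analysis, not a different method.

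However, there is a genuine gap in the sufficiency direction. What your positivity check proves is the equivalence: a coordinatewise positive equilibrium exists if and only if either ($A>0$ and \eqref{mutantfitter2ineq}) or ($A<0$ and \eqref{mutantlessfit2ineq}). To obtain the lemma as stated you must additionally show that \eqref{mutantfitter2ineq} by itself forces $A>0$ (i.e.\ $\tau<\frac{Cp\sigma}{\kappa\mu+\sigma}$) and that \eqref{mutantlessfit2ineq} forces $A<0$; otherwise the hypothesis could hold while the unique candidate \eqref{coexeq} has a negative coordinate. This is not routine sign-tracking: it is precisely the content of the paper's Lemma~\ref{lemma-lessconditions}, which is proved by a separate, nontrivial chain of estimates and uses the standing fitness assumption $\lambda_1>\mu$. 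Without that assumption the implication genuinely fails: take $C=\sigma=\mu=1$, $\kappa=0$, $p=1/2$, $\tau=10$, $\lambda_1=1/2$, $\lambda_2=2$; then $X=0.05-0.15=-0.1$, so \eqref{mutantfitter2ineq} holds ($1>-0.1>-1/2$), yet $Cp\sigma-(\kappa\mu+\sigma)\tau=-9.5<0$, so $A<0$ and your formula gives $n_{1a}=\frac{(\lambda_2-\mu)-X}{CA}<0$, i.e.\ no positive equilibrium. So the missing implication cannot follow from the outer inequalities by pure bookkeeping; it is where the real work of the paper's proof lies, and your proposal dismisses it as automatic. (A small side remark: the parenthetical ``$A\neq 0$ is automatic whenever $\lambda_1\neq\lambda_2$'' is stated backwards, since $A$ does not depend on $\lambda_1,\lambda_2$; what is true is that if $A=0$ and $\lambda_1\neq\lambda_2$, the linear identity admits no solution.)
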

The proof of Lemma~\ref{lemma-coexistence} (as well as proofs of all other assertions in this section) can be found in the Appendix, see Section~\ref{sec-coexistenceproof}. There, we will point out that condition~\eqref{mutantfitter2ineq} implies that $\tau<\frac{Cp\sigma}{\kappa\mu+\sigma}$, ie.\ the transfer intensity $\tau$ is relatively small, whereas condition~\eqref{mutantlessfit2ineq} implies that $\tau>\frac{Cp\sigma}{\kappa\mu+\sigma}$, ie.\ HGT is relatively strong, see Lemma~\ref{lemma-lessconditions}.

\begin{cor}\label{cor-1fit}
Condition \eqref{mutantfitter2ineq} implies $\lambda_1>\mu$, and the same holds for condition \eqref{mutantlessfit2ineq}. 
\end{cor}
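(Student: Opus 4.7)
My plan is to derive $\lambda_1>\mu$ from either condition via a single algebraic identity and a short sign count, bypassing the explicit formulas~\eqref{coexeq}. Introduce the shorthand $\alpha := \frac{Cp\sigma}{\tau(\kappa\mu+\sigma)}$ and $\beta := \frac{C}{\tau}$, so that the middle expression in both \eqref{mutantfitter2ineq} and \eqref{mutantlessfit2ineq} is
\[
M := \alpha(\lambda_2-\mu)+\beta(\lambda_1-\lambda_2).
\]
A direct expand-and-collect (using $\lambda_2-\mu = (\lambda_1-\mu)+(\lambda_2-\lambda_1)$) produces the identity
\[
(\alpha-1)(\lambda_1-\mu)\;=\;\bigl[M-(\lambda_1-\mu)\bigr]+(\beta-\alpha)(\lambda_2-\lambda_1),
\]
and this single line carries the whole argument.

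First I would observe that $\beta-\alpha=\beta\cdot\frac{\kappa\mu+(1-p)\sigma}{\kappa\mu+\sigma}>0$ holds unconditionally (since $p<1$), which removes the ambiguity of that factor. Then I would invoke Lemma~\ref{lemma-lessconditions} (referenced in the paragraph just preceding the corollary) to pin down the sign of $\alpha-1$: case \eqref{mutantfitter2ineq} gives $\tau<\tfrac{Cp\sigma}{\kappa\mu+\sigma}$, i.e. $\alpha>1$, whereas case \eqref{mutantlessfit2ineq} gives $\tau>\tfrac{Cp\sigma}{\kappa\mu+\sigma}$, i.e. $\alpha<1$. Finally, the outer inequality of the hypothesis, $\lambda_2-\mu\gtrless\lambda_1-\mu$, is nothing but $\lambda_2\gtrless\lambda_1$, fixing the sign of $\lambda_2-\lambda_1$.

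With all signs in hand, both cases fall out of the displayed identity. In case \eqref{mutantfitter2ineq} both summands on the right-hand side are strictly positive ($M>\lambda_1-\mu$ from the inner inequality, and $\lambda_2>\lambda_1$ from the outer one), so $(\alpha-1)(\lambda_1-\mu)>0$; since $\alpha>1$, this forces $\lambda_1>\mu$. In case \eqref{mutantlessfit2ineq} both summands are strictly negative for the symmetric reason, so $(\alpha-1)(\lambda_1-\mu)<0$; since $\alpha<1$, this again yields $\lambda_1>\mu$. I do not expect a genuine obstacle here: the only nontrivial input is Lemma~\ref{lemma-lessconditions}, which is already in place; everything else reduces to the one-line identity above and a sign count.
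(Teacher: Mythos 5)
Your identity is correct (expanding the right-hand side indeed collapses to $\alpha(\lambda_1-\mu)-(\lambda_1-\mu)$), and your sign count is a tidier repackaging of what the paper does: the paper instead assumes $\lambda_1\le\mu$ for contradiction and derives a sign contradiction in the numerator of the expression for $n_2$ in \eqref{coexeqoncemore}, using exactly the same three inputs — the second inequality of the chain, the outer inequality (which gives $\lambda_2>\lambda_1$ under \eqref{mutantfitter2ineq}, resp.\ $\lambda_1>\lambda_2$ under \eqref{mutantlessfit2ineq}), and the comparison of $\tau$ with $Cp\sigma/(\kappa\mu+\sigma)$ taken from Lemma~\ref{lemma-lessconditions}. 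So structurally your proposal is the paper's proof, run forwards rather than by contradiction and without the explicit formulas \eqref{coexeq}; that part is a genuine simplification.

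The problem is the step you describe as "the only nontrivial input": Lemma~\ref{lemma-lessconditions} is stated under the hypothesis $\lambda_1>\mu$, which is precisely the conclusion of the corollary, so quoting it to fix the sign of $\alpha-1$ is circular. Nor can the needed sign be extracted from \eqref{mutantfitter2ineq} alone: take $C=\mu=\sigma=1$, $\kappa=0$, $p=1/2$, $\tau=5$, $\lambda_1=1/2$, $\lambda_2=3$; then $\alpha=0.1$, $\beta=0.2$, $M=-0.3$, and the chain $2>-0.3>-1/2$ of \eqref{mutantfitter2ineq} holds while $\alpha<1$ and $\lambda_1<\mu$ — in your identity both sides are negative and nothing forces $\lambda_1>\mu$ (an analogous parameter choice, e.g.\ $C=2$, $\tau=\mu=\sigma=1$, $\kappa=0$, $p=0.99$, $\lambda_1=0.9$, $\lambda_2=0.5$, does the same for \eqref{mutantlessfit2ineq}). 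In other words, the identity only shows that $\alpha-1$ and $\lambda_1-\mu$ have equal (resp.\ opposite) signs; to conclude, the comparison of $\tau$ with $Cp\sigma/(\kappa\mu+\sigma)$ has to enter as an additional hypothesis — for instance by working in the regime where the equilibrium of Lemma~\ref{lemma-coexistence} is genuinely coordinatewise positive, or under the paper's standing assumption $\lambda_1>\mu$ — rather than being derived from the displayed condition. To be fair, the paper's own proof invokes Lemma~\ref{lemma-lessconditions} at exactly the same point and carries the same blemish; but since your whole argument leans on that lemma as its sole external input, you should at least flag that its stated hypothesis is the very thing being proved, and that some substitute for it is needed to make the argument self-contained.
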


The corollary shows that coexistence with an unfit  HGT-receiving trait is not possible. However, note that coexistence for $\lambda_1>\mu \geq \lambda_2$ is not excluded: condition~\eqref{mutantlessfit2ineq} can still be satisfied with $\lambda_2 \leq \mu$ if the other parameters are suitably chosen; a necessary condition for this is $\tau \geq C$.

\subsubsection{Stability of equilibria}\label{sec-stabilityHGT}
After understanding under what conditions the coexistence equilibrium exists, let us analyse the stability of the equilibria of the dynamical system \eqref{3dimHGT} in case $\lambda_1>\mu$.  Our main result regarding this is the following proposition, the proof of which can be found in Section~\ref{sec-stabilityproof}. 
\begin{prop}\label{prop-stability3d}
Assume that $\max \{ \lambda_1,\lambda_2\}>\mu$ and that none of the inequalities of \eqref{mutantfitter2ineq} (or equivalently, \eqref{mutantlessfit2ineq}) is satisfied with an equality. Then, the stability of the equilibria of the dynamical system \eqref{3dimHGT} is given according to Table~\ref{table-stability}.
\end{prop}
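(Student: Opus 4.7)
The plan is to apply a standard linearised (Hartman--Grobman) stability analysis at each equilibrium of the dynamical system \eqref{3dimHGT}. The equilibria to consider are: (i) the trivial one $(0,0,0)$; (ii) the trait-1-only equilibrium $(\bar n_{1a},\bar n_{1d},0)$ given by \eqref{n1abardef}--\eqref{n1dbardef} and positive when $\lambda_1>\mu$; (iii) the trait-2-only equilibrium $(0,0,\bar n_2)$ with $\bar n_2$ from \eqref{n2bardef} and positive when $\lambda_2>\mu$; and (iv) the coexistence equilibrium \eqref{coexeq} of Lemma~\ref{lemma-coexistence}, present precisely when one of the chains \eqref{mutantfitter2ineq} or \eqref{mutantlessfit2ineq} holds. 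The non-degeneracy hypothesis of the proposition guarantees that no eigenvalue ever has zero real part, legitimising the appeal to Hartman--Grobman.

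For the three boundary equilibria the Jacobian $J$ of the right-hand side of \eqref{3dimHGT} is block-triangular: at any equilibrium with $n_2=0$ the third row of $J$ reduces to $(0,0,\lambda_2-\mu+(\tau-C)n_{1a})$, and at any equilibrium with $n_{1a}=0$ the couplings between the $(n_{1a},n_{1d})$-plane and the $n_2$-direction vanish. At the origin this immediately yields the eigenvalues $\lambda_1-\mu$, $-(\kappa\mu+\sigma)$, $\lambda_2-\mu$, so that the hypothesis $\max\{\lambda_1,\lambda_2\}>\mu$ forces instability. At $(\bar n_{1a},\bar n_{1d},0)$ the $(n_{1a},n_{1d})$-block coincides with the Jacobian of the single-trait dormancy system at its resident equilibrium, whose local asymptotic stability is already established in \cite{BT19}; the remaining, decoupled eigenvalue is $\lambda_2-\mu+(\tau-C)\bar n_{1a}$, and after substituting \eqref{n1abardef} and rearranging, its sign agrees with the sign of the quantity compared to $\lambda_1-\mu$ in the right halves of the chains \eqref{mutantfitter2ineq}--\eqref{mutantlessfit2ineq}. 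Analogously, at $(0,0,\bar n_2)$ the decoupled eigenvalue is $-(\lambda_2-\mu)<0$, and computing the trace and determinant of the remaining $2\times 2$ block, then inserting $\bar n_2=(\lambda_2-\mu)/C$, one checks that the determinant is positive precisely when the left half of the same chains holds; the trace is then automatically negative.

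The main obstacle is the coexistence equilibrium, where $J$ has no block structure. For this I would write the characteristic polynomial as $\chi(\lambda)=\lambda^3+a_1\lambda^2+a_2\lambda+a_3$ and apply the Routh--Hurwitz criterion (all eigenvalues have negative real part iff $a_1>0$, $a_3>0$, and $a_1a_2-a_3>0$). The key simplification is to use the defining relations of the coexistence equilibrium implicit in \eqref{coexeq} in order to eliminate $\lambda_1-\mu$ and $\lambda_2-\mu$ in favour of the coordinates $n_{1a},n_{1d},n_2$; this reduces the $a_i$ to manageable polynomials in $C,\tau,\sigma,\kappa\mu,p$ and the equilibrium coordinates, in which the combination $Cp\sigma-(\kappa\mu+\sigma)\tau$ -- whose sign distinguishes \eqref{mutantfitter2ineq} from \eqref{mutantlessfit2ineq} by Lemma~\ref{lemma-lessconditions} -- arises as a natural common factor of the Hurwitz expressions. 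I then expect to verify that in the "weak HGT" case \eqref{mutantfitter2ineq} all three Routh--Hurwitz conditions hold strictly, giving a locally asymptotically stable coexistence equilibrium, whereas in the "strong HGT" case \eqref{mutantlessfit2ineq} the opposite sign of $Cp\sigma-(\kappa\mu+\sigma)\tau$ causes at least one Hurwitz inequality to fail, producing an eigenvalue with positive real part and hence the unstable, founder-control coexistence equilibrium announced in the abstract. Assembling the sign information collected from the four equilibria then yields the entries of Table~\ref{table-stability}.
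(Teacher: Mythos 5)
Your treatment of the three boundary equilibria is sound and essentially the paper's own argument (block-triangular Jacobian, the $2\times2$ dormancy block from \cite{BT19}, and the decoupled eigenvalue whose sign is governed by the second resp.\ first inequality in \eqref{mutantfitter2ineq}--\eqref{mutantlessfit2ineq}). The genuine problem is at the coexistence equilibrium, where you have the two regimes swapped. The proposition (Table~\ref{table-stability}) asserts that under the \emph{weak-HGT} condition \eqref{mutantfitter2ineq} the coexistence equilibrium is \emph{unstable} --- this is precisely the founder-control case of the abstract --- while under the strong-HGT condition \eqref{mutantlessfit2ineq} the table makes \emph{no} claim (entry ``---''); see Remark~\ref{remark-coexstability}. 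You claim the opposite: stability under \eqref{mutantfitter2ineq} and instability under \eqref{mutantlessfit2ineq}. Your own Routh--Hurwitz computation would contradict this expectation: using the equilibrium relations \eqref{coexeq}, the determinant of the Jacobian at $(n_{1a},n_{1d},n_2)$ reduces to $\tau n_{1a}n_2\big(Cp\sigma-(\kappa\mu+\sigma)\tau\big)$, and by Lemma~\ref{lemma-lessconditions} this is \emph{positive} under \eqref{mutantfitter2ineq}, so $a_3=-\det<0$, one Hurwitz condition fails, and there is a positive real eigenvalue --- instability, not stability. Under \eqref{mutantlessfit2ineq} one gets $a_1>0$ and $a_3>0$, but the remaining condition $a_1a_2-a_3>0$ is exactly what the paper could not verify; the coefficients do not factor through $Cp\sigma-(\kappa\mu+\sigma)\tau$ in the convenient way you anticipate, and no proof of stability is available.

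A related unjustified step is your assertion that the non-degeneracy hypothesis ``guarantees that no eigenvalue ever has zero real part.'' Excluding equalities in \eqref{mutantfitter2ineq}--\eqref{mutantlessfit2ineq} rules out zero eigenvalues at the boundary equilibria and makes $\det\neq0$ at the coexistence equilibrium, but it does not exclude a purely imaginary conjugate pair there; the paper explicitly cannot rule this out under \eqref{mutantlessfit2ineq}, which is why the table is silent in that cell. (Hartman--Grobman is in any case not needed: Lyapunov's linearisation theorem suffices for the asymptotic-stability claims, and a single eigenvalue with positive real part suffices for instability.) To repair your argument, keep the boundary analysis, but at the coexistence equilibrium replace the full Routh--Hurwitz programme by the determinant--trace argument just described (instability under \eqref{mutantfitter2ineq}), and make no stability assertion under \eqref{mutantlessfit2ineq}, in line with what the proposition actually states. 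You should also note, as the paper does via Corollary~\ref{cor-1fit} and Lemma~\ref{lemma-somebodyfit}, the degenerate subcases $\lambda_1\le\mu$ or $\lambda_2\le\mu$ in which a one-trait equilibrium collapses to the origin, so that the corresponding table entries are interpreted correctly.
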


%
%
%

\begin{table}[]
    \centering
    \begin{tabularx}{1\textwidth}{|X|X|X|X|c|} \hline
    \diagbox[width=9.4em]
    {Cond.\\satisfied}{Equilibrium}
      & $(0,0,0)$ & $(0,0,\smfrac{\lambda_2-\mu}{C})$ & $(\bar n_{1a},\bar n_{1d},0)$ & $(n_{1a},n_{1d},n_2)$ \\ \hline
     \eqref{mutantfitter2ineq}& unstable & asp.~stable & asp.~stable & {unstable} \\ \hline
   Second inequality of \eqref{mutantfitter2ineq} and first inequality of \eqref{mutantlessfit2ineq}  & unstable & $\lambda_2>\mu \Rightarrow$ unstable, $\lambda_2\leq\mu \Rightarrow$ equals $(0,0,0)$ & asp.~stable  & $\nexists$ \\ \hline
     \eqref{mutantlessfit2ineq} & unstable &  $\lambda_2>\mu \Rightarrow$ unstable, $\lambda_2\leq\mu \Rightarrow$ equals $(0,0,0)$ & unstable & --- \\ \hline
      First inequality of \eqref{mutantfitter2ineq} and second inequality of \eqref{mutantlessfit2ineq}  & unstable & asp.~stable & $\lambda_1>\mu \Rightarrow$ unstable, $\lambda_1 \leq \mu \Rightarrow$ equals $(0,0,0)$  & $\nexists$ \\ \hline
     \end{tabularx} \smallskip
    \caption{Stability of coordinatewise nonnegative equilibria of the dynamical system \eqref{3dimHGT}.
    Here, `---' means `no assertion is provided' and `asp.' stands for `asymptotically'.}
    \label{table-stability}
\end{table}

\begin{remark}\label{remark-coexstability}
Proposition~\ref{prop-stability3d} provides no assertion about the stability of the coexistence equilibrium $(n_{1a},n_{1d},n_2)$ under the assumption~\eqref{mutantlessfit2ineq}, although we expect that this equilibrium is asymptotically stable. Our main convergence results about the dynamical system~\eqref{3dimHGT},  Proposition~\ref{prop-convergenceHGTguys} and Lemma~\ref{lemma-1invades2LV} in the Appendix, imply a certain global stability property of this equilibrium, namely that the domain of attraction of the equilibrium is not contained in any proper affine linear subspace of $\R^3$. Now, the methods used for our analysis of the same equilibrium under the assumption~\eqref{mutantfitter2ineq} imply that under \eqref{mutantlessfit2ineq}, the Jacobi matrix corresponding to~\eqref{3dimHGT} at this equilibrium has negative trace and negative determinant, see Remark~\ref{remark-whatisleftfromstability} for further details. Hence, the number of its eigenvalues with negative real parts must be equal to 1 or 3. If we knew that the equilibrium $(n_{1a},n_{1d},n_2)$ was hyperbolic (ie.\ that all eigenvalues have nonzero real parts), then the aforementioned global stability properties of the equilibrium would be sufficient to exclude that there is only one eigenvalue with negative real parts, and hence we could conclude that the equilibrium is asymptotically stable. However, we are not able to exclude the critical (non-hyperbolic) case that the Jacobi matrix has a conjugate pair of purely imaginary eigenvalues, in which case the stability of the equilibrium with respect to the system~\eqref{3dimHGT} need not be the same as for the linearized variant of the system around $(n_{1a},n_{1d},n_2)$. Fortunately, the missing assertion about the local stability of $(n_{1a},n_{1d},n_2)$ under condition~\eqref{mutantlessfit2ineq} is not needed for our analysis of invasion dynamics.
\end{remark}

\begin{remark}\label{remark-stability}
The four cases included in Table~\ref{table-stability} suggest the following behaviour for the stochastic individual-based model in the limit $K \to \infty$, which we will later make rigorous.
We tacitly disregard the cases where one of the inequalities in \eqref{mutantfitter2ineq} (or equivalently, \eqref{mutantlessfit2ineq}) is satisfied with an equality. 

\begin{enumerate}[(I)]
    \item If \eqref{mutantfitter2ineq} holds, we have unstable coexistence of traits 1 and 2. This corresponds to \emph{founder control}, that is, for large population sizes, the probability that a mutant of one trait can invade a population of the other trait living in equilibrium tends to zero. 
    Note that we borrowed the term `founder control' from spatial ecology, where it describes a situation such that whichever population first establishes at a given location, wins locally, i.e.\ can prevent the invasion of other populations at this location, see e.g.~\cite{V15}. 
    \item If the second inequality of \eqref{mutantfitter2ineq} and first inequality of \eqref{mutantlessfit2ineq}  hold, trait 2 is relatively unfit (or even absolutely unfit, in the sense that $\lambda_2 \leq \mu$) and the HGT rate is too small to compensate this disadvantage. Thus, trait 2 cannot invade trait 1. But the opposite invasion is possible, where trait 1 will even reach fixation (the population size will approach the equilibrium  $(\bar n_{1a},\bar n_{1d},0)$ after rescaling by $K$) and trait 2 will become extinct. 
    Note that one can only speak about trait 1 invading trait 2 if $\lambda_2 >\mu$ because otherwise trait 2 cannot survive on its own.
    \item If \eqref{mutantlessfit2ineq} holds, we have stable coexistence; even though we are not able to show that $(n_{1a},n_{1d},n_{2})$ is asymptotically stable, in Sections~\ref{sec-phase2HGTguys} and \ref{sec-phase2dormants} we will see that it exhibits strong non-local attracting properties. Both traits are able to invade the other one, but none of them is able to reach fixation and make the other one go extinct; instead, starting with a resident population of equilibrium size from one trait and a single mutant of the other one, with asymptotically positive probability the rescaled population size process will converge to the coexistence equilibrium $(n_{1a},n_{1d},n_2)$. For $C<\tau$, this regime contains cases when $\lambda_2 \leq \mu$, ie.\ where trait 2 can only survive thanks to HGT from trait 1, as we will discuss in Section~\ref{sec-regimes}.
    \item In the last case, when the first inequality of \eqref{mutantfitter2ineq} and the second inequality of \eqref{mutantlessfit2ineq} hold, HGT is strong or trait 2 is individually fitter than in the previous case. The result of this is that a mutant of trait 1 cannot invade a population of trait 2 of equilibrium size. In contrast, a mutant of trait 2 can invade a trait 1 population and even reach fixation (the population size will reach the equilibrium $(0,0,\bar n_2)$ after rescaling by $K$) with asymptotically positive probability, driving trait 1 into extinction. 
\end{enumerate}
\end{remark}

One may wonder whether in the second and in the third case, the one-type equilibria that are claimed to be asymptotically stable actually have a positive coordinate. This is indeed the case, which is shown by the following lemma.
\begin{lemma}\label{lemma-somebodyfit}
The following assertions hold.
\begin{enumerate}[(i)]
\item\label{if1fit2fit} If the first inequality of \eqref{mutantfitter2ineq} and the second inequality of \eqref{mutantlessfit2ineq} hold with $\lambda_1>\mu$, then $\lambda_2>\mu$.
\item If the first inequality of \eqref{mutantlessfit2ineq} and the second one of \eqref{mutantfitter2ineq} hold with $\lambda_2>\mu$, then $\lambda_1>\mu$.
\end{enumerate}
\end{lemma}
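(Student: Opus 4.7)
The plan is to isolate the common middle expression in \eqref{mutantfitter2ineq} and \eqref{mutantlessfit2ineq}, namely
\[ A := \frac{Cp\sigma}{\tau(\kappa\mu+\sigma)}(\lambda_2-\mu) + \frac{C}{\tau}(\lambda_1-\lambda_2), \]
and to rewrite it in a form whose sign is transparent. Setting $x := \lambda_2 - \mu$ and $y := \lambda_1 - \mu$, a short rearrangement using $\lambda_1 - \lambda_2 = y - x$ gives
\[ A \;=\; \frac{C}{\tau}\left[\,y \;-\; x \cdot \frac{\kappa\mu + (1-p)\sigma}{\kappa\mu+\sigma}\,\right], \]
and the crucial point is that the coefficient of $x$ is strictly positive (since $p\in(0,1)$, $\sigma>0$, $\kappa\geq 0$, $\mu>0$).

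For part (i), suppose toward a contradiction that $\lambda_1 > \mu$ (so $y > 0$) but $\lambda_2 \leq \mu$ (so $x \leq 0$). Then the bracketed expression is strictly positive, forcing $A > 0 \geq x$. This contradicts the first inequality of \eqref{mutantfitter2ineq}, which requires $x > A$. Hence $\lambda_2 > \mu$. Note that only the first inequality of \eqref{mutantfitter2ineq} and the assumption $\lambda_1>\mu$ are actually used; the second inequality of \eqref{mutantlessfit2ineq} is not needed for this direction.

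Part (ii) is symmetric. Assuming $\lambda_2 > \mu$ (so $x > 0$) and, for contradiction, $\lambda_1 \leq \mu$ (so $y \leq 0$), the bracket becomes strictly negative, whence $A < 0 < x$. This violates the first inequality of \eqref{mutantlessfit2ineq}, which demands $A > x$. Therefore $\lambda_1 > \mu$.

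There is no real obstacle here: once the right change of variables $(x,y)$ and the grouping that exposes the factor $\tfrac{\kappa\mu+(1-p)\sigma}{\kappa\mu+\sigma} > 0$ are spotted, both implications reduce to a one-line sign check. The only mild subtlety is recognising that part (i) uses only the lower half of \eqref{mutantfitter2ineq} and part (ii) only the lower half of \eqref{mutantlessfit2ineq}, so the joint hypotheses in the statement are in fact stronger than strictly required.
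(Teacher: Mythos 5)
Your proof is correct and takes essentially the same route as the paper: both reduce the claim to a sign check of the common middle expression, exploiting that $\frac{p\sigma}{\kappa\mu+\sigma}\in(0,1)$, i.e.\ that your coefficient $\frac{\kappa\mu+(1-p)\sigma}{\kappa\mu+\sigma}$ is positive. Your side remark that only the first inequality of \eqref{mutantfitter2ineq} (resp.\ of \eqref{mutantlessfit2ineq}) is actually needed is also consistent with the paper's argument, where the contradiction is likewise obtained from that inequality alone.
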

Lemma~\ref{lemma-somebodyfit} together with Corollary~\ref{cor-1fit} and Lemma~\ref{lemma-lessconditions} will guarantee that an individually unfit trait cannot drive another individually fit trait into extinction, hence there is no \emph{evolutionary suicide} (cf.~\cite[Section 7]{BCFMT18} or \cite[Section 1]{CMT19}) in our model. 
The proof of Lemma~\ref{lemma-somebodyfit} can also be found in Section~\ref{sec-stabilityproof}. 

To provide a better intuition about the four different parameter regimes described in Proposition~\ref{prop-stability3d}, 
in Section~\ref{sec-regimes} (and in particular in Figure~\ref{figure-regimes}) we depict the areas of fixation of one trait, stable coexistence of both traits, and founder control (with a corresponding unstable coexistence equilibrium in the dynamical system~\eqref{3dimHGT}) for different choices of the parameters. Note that the precise meaning of this figure (eg.\ the meaning of `stable coexistence') relies on our main results presented in Section~\ref{sec-results} below. While condition~\eqref{mutantfitter2ineq} implies $\lambda_1>\lambda_2$ and the \eqref{mutantlessfit2ineq} implies $\lambda_2>\lambda_1$, in the other two cases both $\lambda_1 >\lambda_2$ and $\lambda_2\geq\lambda_1$ are possible. 

We will discuss the special cases $\tau>p=0$ (HGT without dormancy) and $p>\tau=0$ (dormancy without HGT) in Sections~\ref{sec-coex2d} and \ref{sec-onlydormancybackwards}, since they are also covered by the proof techniques of the present paper. In a nutshell, the case $\tau>p=0$ turns out to be rather similar to the case $\tau,p>0$, with the main difference being that in the case (I) above (founder control), the underlying (two-dimensional) dynamical system does not exhibit an unstable coexistence equilibrium. In contrast, in the case $p>\tau=0$ we have `competitive exclusion' in the sense that `invasion implies fixation'. That is,  regimes (II) and (IV) cover all possibilities with $\max \{ \lambda_1,\lambda_2\}>\mu$, apart from boundary cases. Our previous paper~\cite{BT19} focused on the question of under what conditions trait 1 can invade trait 2 (with $\lambda_2,\lambda_1>\mu$) and reach fixation. The parameter regime where we found this to be possible is the analogue of  regime (II), and we pointed out that it also contains cases where $\lambda_1<\lambda_2$, ie.\ the trait benefiting from competition-induced dormancy may even win against a faster reproducing trait  lacking the capability of dormancy. 

\subsection{Heuristics of the mechanisms of invasion and fixation for the stochastic system}\label{sec-phase13} 

We now return to the analysis of the stochastic model that we introduced in Section~\ref{sec-modeldef}.  Our results are based on probabilistic individual-based invasion theory à  la  Champagnat (cf.~\cite{C06}), exhibiting the following now classical phases:
\begin{enumerate}[(1)]
    \item growth or extinction of the (initially small) mutant population to a `macroscopic size' of order $K$, while the (large)  resident population stays close to equilibrium,
   \item in case the probability of extinction during phase 1) does not tend to one as $K \to \infty$: a mean-field (a.k.a.~Lotka--Volterra) phase where both traits have macroscopic population sizes and the system can be approximated by a deterministic system of ODEs (in our case by~\eqref{3dimHGT}), converging either to a coexistence equilibrium (exhibiting a certain global stability property) or to the monomorphic equilibrium of the mutants,
   \item if there is no stable coexistence: previously macroscopic resident population becomes small and faces extinction, while the mutant population stays close to equilibrium.
\end{enumerate}

To be more concrete, assume that $K$ is large and that for some $i,j \in \{1,2\}$, $i \neq j$, there exists a monomorphic resident population of trait $i$ close to its equilibrium population size after rescaling by $K$, where we assume that trait $i$ is individually fit (i.e., $\lambda_i>\mu$), and a single mutant of trait $j$. The question is now whether an invasion of mutants is possible, i.e., whether the mutants are able to reach a population size of order $K$ with a probability that stays bounded away from zero as $K \to \infty$. We expect the following to hold. As long as the mutant population is small but not (yet) extinct, it essentially only feels competition from the active resident population. Thus, since during the time until its extinction or growth to size of order $K$ the resident population stays close to its equilibrium with high probability, the mutant population can be approximated by a linear branching process (which is two-type if $j=1$), similarly eg.~to the setting of \cite{C+16, C+19, BT19}. This phase ends either with the extinction of the mutant population (after $O(1)$ time units) or with the mutant population reaching a size of order of $K$, which takes of order $\log K$ time units.
Given that our three-dimensional population size process reaches a state where all sub-populations are of order $K$, rescaled by $K$ it can be approximated by the limiting dynamical system \eqref{3dimHGT}. Now, our aim is to show that the population process hits a state that converges to an initial condition of the dynamical system such that starting from this point, the solution of the system converges to the corresponding single stable equilibrium. This second, `mean-field' or `Lotka--Volterra' phase takes $O(1)$ time. Finally, in case this stable equilibrium is not the coexistence one but the monomorphic one of trait $j$, fixation occurs: after an additional time of order $\log K$ the trait $i$ population becomes extinct with high probability. Here, the mutant population stays close to equilibrium, and the resident one can be approximated by a branching process, which is now subcritical. In fact, this branching process is the same as the one corresponding to the mutants in the first phase of invasion in case the roles of $i$ and $j$ are interchanged. This third phase does not take place if we have stable coexistence.  

Depending on the stability of equilibria, we expect the following invasion dynamics:
\begin{enumerate}
    \item In case $(n_{1a},n_{1d},n_2)$ exists but it is unstable, invasion of trait $j$ is impossible. The mutant population becomes extinct in $O(1)$ time with high probability and the resident population stays close to equilibrium. This corresponds to founder control.
    \item In case $(n_{1a},n_{1d},n_2)$ exists and is asymptotically stable, invasion of trait $j$ is possible, and the population size process reaches a state with high probability from which the solution of the dynamical system converges to this stable equilibrium. In particular, with high probability, fixation does not occur.
    \item In case there is no coexistence equilibrium and the one-trait equilibrium of the mutant trait $j$ is asymptotically stable, invasion and fixation of trait $j$ is possible.
    \item In case there is no coexistence equilibrium and the one-trait equilibrium of the resident trait $i$ is asymptotically stable, invasion of trait $j$ is impossible. The mutant population goes extinct in $O(1)$ time with high probability, while the resident population stays close to equilibrium. 
\end{enumerate}

In particular, we expect
that the three (respectively two) phases of invasion are consistent in the sense that the possibility of invasion and fixation is in one-to-one correspondence with the local stability landscape of the dynamical system \eqref{3dimHGT}.
In other words, we will see that, apart from the critical case, a trait is able to invade if its invasion fitness is positive and it is able to reach fixation if and only if the invasion fitness of the other trait is negative (see Remark~\ref{remark-invfitness} for the notion and properties of invasion fitness). 


We can show that the behaviour of the approximating branching processes is indeed in correspondence with the one of the local stability portrait of \eqref{3dimHGT}. If trait $1$ is resident with $\lambda_1>\mu$ and trait $2$ is mutant, this branching process is defined as a continuous time Markov chain $(\widehat N_{2}(t))_{t \geq 0}$ with state space $\N_0$, assuming that $0$ is an absorbing state, and with the following rates for $n \in \N$:
\begin{itemize}
    \item $n \to n+1$ at rate $(\lambda_2+\tau \bar n_{1a})n$ (reproduction of mutants from birth or HGT to the residents),
    \item $n \to n-1$ at rate $(\mu+C \bar n_{1a})n$ (death of mutants by age or competition with the residents).
\end{itemize}
This branching process is supercritical, i.e., its birth $(n \to n+1)$ rate is higher than its death $(n \to n-1)$ rate,  if and only if
\[ \widehat \lambda:=\lambda_2-\mu-(C-\tau)\frac{\lambda_1-\mu}{C} \frac{\kappa\mu+\sigma}{\kappa\mu+(1-p)\sigma}>0, \numberthis\label{lambdahatdef} \]
where we used the definition of $\bar n_{1a}$ from \eqref{n1abardef}.
Using elementary operations, we conclude that this condition is equivalent to
\[ \lambda_1-\mu > \frac{Cp\sigma(\lambda_2-\mu)}{\tau(\kappa\mu+\sigma)} + \frac{C}{\tau} (\lambda_1-\lambda_2), \numberthis\label{secondinmutantlessfit2ineq} \]
which is the second inequality in \eqref{mutantlessfit2ineq}. Further, in case trait $2$ is initially resident and $1$ is mutant, and the only stable equilibrium of the system \eqref{3dimHGT} is $(\bar n_{1a}, \bar n_{1d}, 0)$, then the same branching process approximates the trait $2$ population in the last, third phase of invasion. Hence, this branching process is subcritical in case the strict reverse inequality of \eqref{secondinmutantlessfit2ineq} holds, i.e.,
\[ \lambda_1-\mu < \frac{Cp\sigma(\lambda_2-\mu)}{\tau(\kappa\mu+\sigma)} + \frac{C}{\tau} (\lambda_1-\lambda_2). \numberthis\label{reversesecondin} \]
The extinction probability of the branching process $(\widehat N_2(t))_{t \geq 0}$ is defined as
\[ q_2 = \P\big( \exists t < \infty \colon \widehat N_{2}(t)=0 \big|\widehat N_{2}(0)=1 \big). \numberthis\label{q2def} \]
Then we have $q_2=1$ if the process is not supercritical. Else, $q_2$ equals the unique fixed point of the probability generating function of the branching process in $(0,1)$, that is, $q_2$ is the unique solution of the quadratic equation
\[  (\lambda_2+\tau \bar n_{1a}) s^2 + (-\lambda_2-(\tau+C) \bar n_{1a}) s + C \bar n_{1a}=0  \]
being different from 1. That is,
\[ q_2 = \frac{(\lambda_2+\tau \bar n_{1a}) - \sqrt{(\lambda_2+(\tau+C)\bar n_{1a})^2-4 C \bar n_{1a}(\lambda_2+\tau \bar n_{1a})}}{2(\lambda_2+\tau \bar n_{1a})}. \numberthis\label{q2} \]

On the other hand, if trait $2$ is resident and trait $1$ is mutant, the branching process approximating the mutants in the first phase is two-type; it is given as a continuous time Markov chain $((\widehat N_{1a}(t),\widehat N_{1d}(t)))_{t \geq 0}$ with state space $\N_0 \times \N_0$, assuming that $(0,0)$ is an absorbing state, and with the following rates for $(n,m) \in \N_0 \times \N_0\setminus \{ (0,0) \}$:
\begin{itemize}
    \item $(n,m) \to (n+1,m)$ at rate $\lambda_1 n$ (reproduction of active mutants from birth),
    \item $(n,m) \to (n-1,m)$ at rate $(\mu +C(1-p) \bar n_2 + \tau \bar n_2)n$ (death of active mutants by age, competition with the residents or HGT from the residents),
    \item $(n,m) \to (n-1,m+1)$ at rate $(Cp\bar n_2)n$ (competition-induced switching of active mutants to dormancy),
    \item $(n,m) \to (n,m-1)$ at rate $\kappa\mu m$ (natural death of dormant mutants),
    \item $(n,m) \to (n+1,m-1)$ at rate $\sigma m$ (resuscitation of dormant mutants).
\end{itemize}
Using standard results of the theory of multitype branching processes \cite[Section 7.2]{AN72}, this process is supercritical, i.e., its probability of going extinct is less than one, if and only if the following \emph{mean matrix} has a positive eigenvalue:
\[ J = \begin{pmatrix} \lambda_1-\mu-(\lambda_2-\mu)-\frac{\tau}{C}(\lambda_2-\mu) & p(\lambda_2-\mu) \\ \sigma & -\kappa\mu-\sigma
\end{pmatrix}. \numberthis\label{JdefHGT} \]
The determinant of this matrix is
\[ \det J = -\big(\lambda_1-\lambda_2-\frac{\tau}{C} (\lambda_2-\mu) \big)(\kappa\mu+\sigma)-p\sigma(\lambda_2-\mu). \]
This is negative if and only if
\[ \frac{C}{\tau}(\lambda_1-\lambda_2) + \frac{Cp\sigma(\lambda_2-\mu)}{\tau(\kappa\mu+\sigma)} > \lambda_2-\mu, \numberthis\label{firstinmutantlessfit2ineq} \]
which is the first inequality in \eqref{mutantlessfit2ineq}. Under this condition, the matrix clearly has a positive eigenvalue. On the other hand, if the determinant is not negative, then there is no positive eigenvalue. Indeed, if the determinant was nonnegative but there was a positive eigenvalue, then the trace of the matrix would also be nonnegative, in other words, we would have
\[ \lambda_1-\lambda_2-\frac{\tau}{c}(\lambda_2-\mu) \geq \kappa\mu+\sigma. \]
But this would imply that $\det J \leq -(\kappa\mu+\sigma)^2 - p\sigma(\lambda_2-\mu)<0$, which would contradict the assumption that $\det J \geq 0$. We conclude that the branching process is supercritical if and only if \eqref{firstinmutantlessfit2ineq} is satisfied. Moreover, in case trait $1$ is initially resident and $2$ is mutant, and the only stable equilibrium of the system \eqref{3dimHGT} is $(0,0,\bar n_2)$, then the same two-type branching process approximates the trait $1$ population in the last, third phase of invasion. Hence, this branching process is subcritical in case the strict reverse inequality of \eqref{firstinmutantlessfit2ineq} holds, that is,
\[ \frac{C}{\tau}(\lambda_1-\lambda_2) + \frac{Cp\sigma(\lambda_2-\mu)}{\tau(\kappa\mu+\sigma)} < \lambda_2-\mu. \numberthis\label{reversefirstin} \]
Note that the largest eigenvalue of $J$ is given as
\[ 
\begin{aligned}
\widetilde \lambda& = \frac{1}{2C} \Big( -\widetilde \alpha  + \sqrt{\widetilde \alpha^2-4(-C^2(\kappa\mu+\sigma)(\lambda_1-\lambda_2)-C^2p\sigma(\lambda_2-\mu)+C(\kappa\mu+\sigma)\tau(\lambda_2-\mu)) } \Big)
\end{aligned} \numberthis\label{lambdaHGT} \]
where
\[ \widetilde \alpha= C(\kappa\mu+\sigma)-C(\lambda_1-\lambda_2)+(\lambda_2-\mu)\tau. \]
Further, we identify the extinction probabilities of the branching process $((\widehat N_{1a}(t),\widehat N_{1d}(t)))_{t \geq 0}$ as follows. We define
\[ q_1 = \P\big( \exists t < \infty \colon \widehat N_{1a}(t)+\widehat N_{1d}(t)=0 \big|  (\widehat N_{1a}(0),\widehat N_{1d}(0))=(1,0) \big). \numberthis\label{q1def} \]
Thanks to \cite[Section 7]{AN72}, if the process is not supercritical, then $q_1=1$. Else, $q_1$ is the first coordinate of the unique solution of the system of equations
\[ \numberthis\label{extinctionequation}
\begin{aligned}
 \lambda_1 (s_a^2-s_a) + p(\lambda_2-\mu) (s_d-s_a) + (\mu+C\big((1-p)+\frac{\tau}{C}\big)(\lambda_2-\mu))(1-s_a) &=0, \\
\sigma (s_a-s_d) + \kappa \mu(1-s_d) &=0,  \\
\end{aligned}
\]
in $[0,1]^2 \setminus \{ (1,1) \}$,
while the second coordinate of the same solution is the extinction probability given that the branching process is started from $(0,1)$.

\begin{remark}\label{remark-simplifyconditions}
It follows from the observations of this section that the eigenvalues $\widehat \lambda$ and $\widetilde \lambda$ have opposite signs, in particular, $\widehat \lambda = 0$ if and only if $\widetilde \lambda = 0$, i.e., if one of the branching processes $(\widehat N_{2}(t))_{t \geq 0}$ and $((\widehat N_{1a}(t),\widehat N_{1d}(t)))_{t \geq 0}$ is critical, then so is the other one. Hence, $\widetilde\lambda \neq 0$ implies that either \eqref{firstinmutantlessfit2ineq} or \eqref{reversefirstin} holds, further, either \eqref{secondinmutantlessfit2ineq} or \eqref{reversesecondin} is satisfied. We will use this throughout the paper in order to simplify the notation.
\end{remark}

\begin{remark}\label{remark-invfitness}
The \emph{invasion fitness} of a mutant individual in a one-trait resident population living in equilibrium is commonly defined as the initial growth rate of the mutant population. 
In case of a single-coordinate mutant population, the growth rate is to be understood as the difference between the initial birth rate and the initial death rate. From this we easily conclude that the invasion fitness of a mutant of trait 2 against a two-coordinate resident population of trait 1 close to the equilibrium population size $(\bar n_{1a},\bar n_{1d})$ is
\[ S(2;1)=\lambda_2-\mu-C \bar n_{1a} + \tau \bar n_{1a} = \lambda_2-\mu-(C-\tau)(\lambda_1-\mu)\frac{\kappa\mu+\sigma}{\kappa\mu+(1-p)\sigma}=\widehat\lambda,
\numberthis\label{invfitnessdormancy} \]
which is the birth rate plus the initial HGT rate minus the death rate minus the rate of death by competition for the approximating branching process $(\widehat N_{2}(t))_{t \geq 0}$.
In case the mutant population is given by a multi-type linear branching process, it follows from \cite[Section 7]{AN72} that the mean growth rate of the population is the Lyapunov exponent of the corresponding mean matrix. This equals
\[ S(1;2)=\widetilde \lambda \]
for the branching process $((\widehat N_{1a}(t),\widehat N_{1d}(t)))_{t \geq 0}$ approximating the trait 1 mutant population shortly after time 0 in case of a trait 2 resident population close to its equilibrium. Hence, we can rightfully call the expression $S(1;2)$ the corresponding invasion fitness. Note that in this case, the total mutant population size process $(\widehat N_{1a}(t)+\widehat N_{1d}(t))_{t \geq 0}$ is typically not Markovian.

We see that apart from the critical case $\widehat \lambda=\widetilde \lambda=0$, for $i,j \in \{1,2\}$, $i \neq j$, a mutant of trait $i$ can invade a resident population of trait $j$ living in equilibrium if and only if $S(i;j)>0$, and in this case, fixation of trait $i$ is possible if and only if $S(j;i)<0$, else, there is stable coexistence between the two traits. This aligns with the observations of~\cite[Section 3]{B19} about invasion fitnesses of one-coordinate populations.
\end{remark}

Summarizing, we see a consistency between the stability of equilibria of \eqref{3dimHGT} and the properties of the approximating branching processes as explained above. How precisely these branching processes approximate our population process will be explained during the proofs of our main results, which will be carried out in Appendix~\ref{sec-HGTproofs} and Appendix~\ref{sec-dormantproofs}.

\subsection{Statement of our main results}\label{sec-results}
In this section we provide mathematically rigorous answers to our main questions.
Recall that trait 1 has a dormant state, whereas 2 has none, but it can transfer trait 1 individuals into trait 2 ones via HGT. 
Making the heuristics of Section~\ref{sec-phase13} precise, we present the following main results, which describe the fate of a mutant of trait 2 in a population of trait 1 living in equilibrium, respectively (in case trait 2 is fit) the fate of an active mutant of trait 1 in a population of trait 2 in equilibrium. (Given this, the case when initially there is a dormant mutant instead of an active one can be handled analogously to \cite[Section 3.3]{BT19}.)  We assume throughout that $\lambda_1>\mu$.

For $\beta>0$ define the `fixation sets'
\[ S_\beta^2 = \{ 0 \} \times \{ 0 \} \times [\bar n_2-\beta,\bar n_2+\beta ], \numberthis\label{Sbeta2def} \]
\[ S_\beta^1 =  [ \bar n_{1a}-\beta, \bar n_{1_a} + \beta ] \times [\bar n_{1d}-\beta, \bar n_{1d} + \beta] \times \{ 0 \}. \numberthis\label{Sbeta1def} \]
Further, we define the `coexistence set'
\[ S_\beta^{\mathrm{co}} = [n_{1a}-\beta, n_{1a}+\beta] \times [n_{1d}-\beta,n_{1d}+\beta]\times [n_2-\beta,n_2+\beta] \numberthis\label{Sbetacodef} \]
in case $(n_{1a},n_{1d},n_2)$ exists as a coordinatewise positive equilibrium.
These are the closures of open $\ell^\infty$-neighbourhoods of the equilibria $(0,0,\bar n_2)$, $(\bar n_{1a},\bar n_{1d},0)$ respectively $(n_{1a},n_{1d},n_2)$. Next, we define a stopping time at which $\mathbf N_t^K$ reaches these sets, respectively:
\[ T_{S_\beta^i}  = \inf \{ t > 0 \colon \mathbf N_t^K \in S_\beta^i \}, \qquad i \in \{ 1,2,\mathrm{co} \}, \numberthis\label{TSbetaidef} \]
where we put $T_{S_{\beta}^{\rm co}}=\infty$ if $(n_{1a},n_{1d},n_2)$ does not exist as a coordinatewise positive equilibrium. Moreover, we define
the first time when the rescaled mutant population size reaches a threshold $x \geq 0$ (from below or above):
\[ T_{x}^2 := \inf \{ t > 0 \colon KN_{2,t}^K = \lfloor x K \rfloor \} \numberthis\label{Tlevel1} \]
(in case trait 2 is the mutant) respectively
\[ T_{x}^1 := \inf \{ t > 0 \colon N_{1a,t}^K+N_{1d,t}^K = \lfloor x K \rfloor \}. \numberthis\label{Tlevel2} \]

Let us now describe the invasion and fixation dynamics of a single mutant of trait 2 in a population of trait 1 living close to equilibrium. We disregard the critical cases when in one of the inequalities in \eqref{mutantlessfit2ineq} the corresponding equality holds. First, we derive the limiting probability of a successful invasion. In case this is positive, we also provide the asymptotic time until fixation respectively reaching the coexistence equilibrium.

\begin{theorem}\label{thm-invasionof2}
Assume that $\lambda_1>\mu$,
$(N^K_{1a}(0),N^K_{1d}(0)) \underset{K \to \infty}{\to} (\bar n_{1a},\bar n_{1d})$
and $N^K_{2}(0)=\smfrac{1}{K}$.
\begin{enumerate}
    \item Assume that \eqref{reversesecondin} holds. Then, for all $x>0$ we have 
    \[ \lim_{K \to \infty} \mathbb P \Big( T_x^2 < T_0^2 \Big) = 0. \]
    \item Assume that \eqref{secondinmutantlessfit2ineq} holds. Then, \smallskip
    \begin{itemize}
        \item if \eqref{firstinmutantlessfit2ineq} holds, then for all sufficiently small $\beta>0$ we have
\[ \lim_{K \to \infty} \mathbb P \Big( T_{S_\beta^{\mathrm{co}}} < T_0^2 \wedge T_{S_\beta^2} \Big) = 1- q_2, \numberthis\label{coexprob2} \]
and on the event $\{ T_{S_\beta^{\mathrm{co}}} < T_0^2 \wedge T_{S_\beta^{2}} 
\}$,
\[ \lim_{K \to \infty} \frac{T_{S_\beta^{\mathrm{co}}}}{\log K} = \frac{1}{\widehat \lambda}\numberthis\label{coexistenceof2} \]
in probability,
\item whereas if \eqref{reversefirstin} holds, then for all sufficiently small $\beta>0$, we have
\[ \lim_{K \to \infty} \mathbb P \Big( T_{S_\beta^{2}} < T_0^2 \Big) = 1- q_2, \numberthis\label{invasionprob2}\]
and on the event $\{ T_{S_\beta^{2}} < T_0^2 \}$,
\[ \lim_{K \to \infty} \frac{T_{S_\beta^{2}}}{\log K} = \frac{1}{\widehat \lambda}-\frac{1}{\widetilde\lambda} \numberthis\label{invasionof2} \]
in probability.
    \end{itemize}  
\end{enumerate}
\end{theorem}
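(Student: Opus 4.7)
The plan is to implement the three-phase Champagnat scheme outlined in Section~\ref{sec-phase13}. By Remark~\ref{remark-simplifyconditions}, $\widehat\lambda$ and $\widetilde\lambda$ have opposite signs, so the assumption sets of parts~(1) and~(2) correspond bijectively to the four sign combinations of $\widehat\lambda,\widetilde\lambda\ne 0$. I would first fix a small $\epsilon>0$ and define the good event $\mathcal G_K^\epsilon$ that $(N_{1a,\cdot}^K,N_{1d,\cdot}^K)$ remains in an $\epsilon$-neighbourhood of $(\bar n_{1a},\bar n_{1d})$ throughout the relevant horizon $T=O(\log K)$; by the local asymptotic stability of $(\bar n_{1a},\bar n_{1d})$ in the monomorphic trait-1 system (from \cite{BT19}) together with a standard martingale/concentration estimate for the jump-chain perturbation induced by a mutant sub-population of size at most $\epsilon K$, one obtains $\P(\mathcal G_K^\epsilon)\to 1$. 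For Part~(1), where $\widehat\lambda<0$, on $\mathcal G_K^\epsilon$ the transition rates of $N_{2,t}^K$ differ from those of the one-type branching process $(\widehat N_2(t))$ of Section~\ref{sec-phase13} by at most $O(\epsilon)$; choosing $\epsilon$ so small that $\widehat\lambda+O(\epsilon)$ remains negative dominates $N_{2,t}^K$ above by a subcritical branching process, which dies out in $O(1)$ time almost surely, giving $\P(T_{\lfloor xK\rfloor}^2<T_0^2)\to 0$.

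\emph{Part~(2), Phase~1: growth of trait~2 to macroscopic size.} When $\widehat\lambda>0$, the same coupling yields a two-sided sandwich of $N_{2,t}^K$ between two branching processes with Malthusian parameters $\widehat\lambda\pm O(\epsilon)$ up to $T_\epsilon^2\wedge T_0^2$. Classical single-type branching estimates \cite{AN72} will then produce $\P(T_\epsilon^2<T_0^2)\to 1-q_2$ in the iterated limit $K\to\infty$, $\epsilon\to 0$, together with $T_\epsilon^2=\widehat\lambda^{-1}\log K\,(1+o_\epsilon(1))$ in probability on the survival event.

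\emph{Part~(2), Phases~2 and~3: mean-field convergence and extinction of trait~1.} Once all coordinates of $\mathbf N_t^K$ are of order $\epsilon$, one applies~\cite[Thm.~11.2.1]{EK} to get uniform approximation by the solution of~\eqref{3dimHGT} on any $O(1)$ horizon. Under \eqref{firstinmutantlessfit2ineq}, the non-local attraction property of $(n_{1a},n_{1d},n_2)$ recorded in Proposition~\ref{prop-convergenceHGTguys} and Lemma~\ref{lemma-1invades2LV} will force the deterministic flow into $S_\beta^{\mathrm{co}}$ in $O(1)$ time, which combined with Phase~1 delivers \eqref{coexprob2}--\eqref{coexistenceof2}. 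Under \eqref{reversefirstin} the same mechanism sends the flow into a neighbourhood of $(0,0,\bar n_2)$ in $O(1)$ time; then, with $(N_{1a}^K,N_{1d}^K)$ of order $\epsilon$ and $N_{2}^K$ remaining near $\bar n_2$ (by a concentration argument around the now-relevant equilibrium), the analogue of the Phase-1 coupling bounds $(N_{1a,t}^K,N_{1d,t}^K)$ from above by the two-type subcritical branching process $((\widehat N_{1a}(t),\widehat N_{1d}(t)))$ with mean matrix $J$ from~\eqref{JdefHGT}. Standard subcritical multi-type estimates will give extinction in time $-\widetilde\lambda^{-1}\log K\,(1+o(1))$ in probability; adding the Phase-1 contribution yields~\eqref{invasionof2}.

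\emph{Main obstacle.} The technically hardest step will be the $O(\log K)$-time control of the resident sub-population in Phases~1 and~3: over this horizon the jump chain makes polynomially many jumps, and one must ensure that the resident stays within an $\epsilon$-tube around its equilibrium with probability $1-o(1)$, uniformly in $K$ and in the (random, possibly large) mutant trajectory, so that the one- and two-sided branching dominations remain valid up to the relevant stopping times. A secondary subtlety, noted in Remark~\ref{remark-coexstability}, is that asymptotic stability of $(n_{1a},n_{1d},n_2)$ under~\eqref{mutantlessfit2ineq} is not established, so Phase-2 convergence must be drawn from the global attraction properties of the flow of~\eqref{3dimHGT} in Proposition~\ref{prop-convergenceHGTguys} rather than from a standard linearisation argument.
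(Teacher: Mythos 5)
Your overall scheme coincides with the paper's: a first branching-process phase with the resident confined to an $\eps$-tube, a mean-field phase via \cite[Theorem 11.2.1]{EK}, and a third subcritical two-type branching phase under \eqref{reversefirstin}, glued together by the strong Markov property. The probabilistic ingredients you sketch (sandwich couplings with Malthusian parameters $\widehat\lambda\pm O(\eps)$, concentration of the resident, subcritical extinction in time $-\widetilde\lambda^{-1}\log K$) are indeed what the paper uses. However, there is a genuine gap at the centre of your plan: the whole Lotka--Volterra phase is discharged by citing Proposition~\ref{prop-convergenceHGTguys} (and Lemma~\ref{lemma-1invades2LV}, which in fact concerns the reverse invasion of trait 1 into a trait 2 resident and is not the statement needed here). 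Proposition~\ref{prop-convergenceHGTguys} is itself part of the proof of Theorem~\ref{thm-invasionof2}, and its proof is exactly the novel and most laborious content of this invasion direction: one must show that every solution of \eqref{3dimHGT} started in $\mathfrak B^3_\eps$ (trait 1 near $(\bar n_{1a},\bar n_{1d})$, trait 2 of size of order $\sqrt\eps$) converges to $(n_{1a},n_{1d},n_2)$ under \eqref{firstinmutantlessfit2ineq} and to $(0,0,\bar n_2)$ under \eqref{reversefirstin}. You correctly observe that local asymptotic stability of the coexistence equilibrium is not available (Remark~\ref{remark-coexstability}), so no linearisation or off-the-shelf competitive Lotka--Volterra theory can be invoked; but you then offer no substitute argument, which leaves \eqref{coexprob2}--\eqref{invasionof2} unproved. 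The paper closes this gap by an entirely deterministic device: while $n_2$ increases, one substitutes the effective-competition identity $C(n_{1a}(t)+n_2(t))-\tau n_{1a}(t)=\lambda_2-\mu$ into the trait-1 equations, reducing them to the planar system \eqref{2dimHGTapprox}, excludes periodic orbits via the Bendixson (negative-divergence) criterion, and transfers the conclusion back to \eqref{3dimHGT} by a perturbation argument around this substitution, all driven by the competitive-pressure inequalities of Lemma~\ref{lemma-effectivecomp}. Some argument of this kind must be supplied for your plan to become a proof.

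A secondary remark: the step you flag as the main obstacle, namely confining the resident over the relevant horizon, is handled by standard tools and is not where the real work lies. The paper couples $(N^K_{1a},N^K_{1d})$ between two two-type birth--death processes whose limiting ODEs have stable equilibria within $O(\sqrt\eps)$ of $(\bar n_{1a},\bar n_{1d})$, obtains exit times of order $\e^{KV}$ from Freidlin--Wentzell estimates, and controls the (random) duration of the first phase by a Dynkin-formula bound of order $\sqrt\eps K$ on $\E[R^1_{b\sqrt\eps}\wedge T_0^2\wedge T^2_{\sqrt\eps}]$; note also that the correct horizon is this stopping time rather than a deterministic $O(\log K)$ window, since the validity of the branching coupling and the confinement of the resident are mutually dependent and must be cut off simultaneously.
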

Recall that if \eqref{firstinmutantlessfit2ineq} holds, then $\widehat\lambda>0$, whereas if \eqref{reversesecondin} holds, then $\widetilde\lambda<0$. These guarantee that the right-hand sides of \eqref{coexistenceof2} and \eqref{invasionof2} are positive under the corresponding conditions of Theorem~\ref{thm-invasionof2}. The difference of the form of~\eqref{coexistenceof2} and \eqref{invasionof2} is that the second (Lotka--Volterra) phase of invasion takes $O(1)$ time units but afterwards, if there is no stable coexistence, the extinction of the resident population (and its approximating subcritical branching process) will take an order of $\log K$ time.

Next, we show that in case of an unsuccessful invasion, with high probability, the extinction takes a sub-logarithmic time (in particular, the extinction happens during the first phase of the invasion), and at the time of extinction the resident population is close to its equilibrium population size. In the following theorem, $\Vert \cdot \Vert$ denotes the Euclidean norm on $\R^3$, which could certainly also be replaced by any other fixed norm. 
\begin{theorem}\label{thm-failureof2}
Under the assumptions of Theorem~\ref{thm-invasionof2}, on the event $\{ T_0^2 < T_{S_\beta^{\mathrm{co}}} \wedge T_{S_\beta^2} \}$,
\[ \lim_{K \to \infty} \frac{T_0^2}{\log K} =0 \numberthis\label{extinctionof2} \]
and
\[ \mathds 1_{\{ T_0^2 < T_{S_\beta^{\mathrm{co}}} \wedge T_{S_\beta^2}  \}} \big\Vert \mathbf N^K_{T_0^2}-(\bar n_{1a},\bar n_{1d},0) \big\Vert \underset{K \to \infty}{\longrightarrow} 0, \numberthis\label{lastoftheoremof2} \]
both in probability.
\end{theorem}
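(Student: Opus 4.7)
The plan is to deduce both assertions from the branching-process coupling for the trait-$2$ sub-population described heuristically in Section~\ref{sec-phase13}, exploiting the classical dichotomy between $O(1)$-time extinction and $O(\log K)$-time growth to a macroscopic level.

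Fix a small $\delta\in(0,\beta)$. While the rescaled resident sub-population $(N_{1a,t}^K,N_{1d,t}^K)$ stays within $\delta$ of $(\bar n_{1a},\bar n_{1d})$, the mutant chain $(KN_{2,t}^K)$ can be sandwiched between two one-type linear birth--death chains $(\widehat N_2^\pm(t))$ whose per-capita birth and death rates differ from those of $(\widehat N_2(t))$ by $O(\delta)$. Standard stability estimates for the two-type logistic--seed-bank sub-system from \cite{BT19} guarantee that the resident remains in this $\delta$-neighbourhood up to time $\exp(cK)$ with probability $1-o(1)$, as long as $N_{2,t}^K\leq \delta$; in particular the coupling is valid throughout $[0,T_\delta^2\wedge T_0^2]$. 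Now apply the branching-process dichotomy to $(\widehat N_2^\pm(t))$: regardless of the sign of $\widehat\lambda$, the extinction time on the event of eventual extinction has exponential tails uniformly in $K$ (directly when $\widehat\lambda\leq 0$; via Doob's $h$-transform, which turns the conditioned chain subcritical, when $\widehat\lambda>0$), while conditionally on non-extinction in the supercritical case the chain grows like $\exp(\widehat\lambda t)$ and reaches level $\lfloor\delta K\rfloor$ at time $(1+o(1))\log K/\widehat\lambda$. Translated through the coupling, on the event $\{T_0^2<T_\delta^2\}$ one has $T_0^2=O(1)$ with overwhelming probability.

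It remains to show that $\{T_0^2<T_{S_\beta^{\mathrm{co}}}\wedge T_{S_\beta^2}\}$ is, up to a $o(1)$ event, included in $\{T_0^2<T_\delta^2\}$. If instead $T_\delta^2<T_0^2$, then on any bounded time interval starting at $T_\delta^2$ the rescaled process $\mathbf N^K$ is uniformly close to the solution of \eqref{3dimHGT} with initial condition having third coordinate $\delta$ and first two coordinates near $(\bar n_{1a},\bar n_{1d})$; by Proposition~\ref{prop-stability3d} combined with the global attraction property announced in Remark~\ref{remark-coexstability} (to be verified in the Appendix), this deterministic trajectory reaches $S_\beta^{\mathrm{co}}\cup S_\beta^2$ in a finite time, forcing $T_{S_\beta^{\mathrm{co}}}\wedge T_{S_\beta^2}<T_0^2$. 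Combining, on $\{T_0^2<T_{S_\beta^{\mathrm{co}}}\wedge T_{S_\beta^2}\}$ we have $T_0^2=O(1)$, and a fortiori $T_0^2/\log K\to 0$ in probability, giving~\eqref{extinctionof2}. During this $O(1)$-time window the mutant population stays at sublinear (in fact, tight) size, so the resident sub-process is an $O(1/K)$-perturbation of its dormancy-only scaling limit started from $(\bar n_{1a},\bar n_{1d})$; standard LLN fluctuation bounds over bounded time horizons then yield $\|\mathbf N^K_{T_0^2}-(\bar n_{1a},\bar n_{1d},0)\|\to 0$ in probability, which is~\eqref{lastoftheoremof2}.

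The main technical obstacle is maintaining the coupling validity for the \emph{entire} pre-extinction excursion of the mutant, including rare fluctuations to intermediate sizes like $K^{1/2}$; this is handled by choosing $\delta$ small enough that once the macroscopic Lotka--Volterra regime is entered it is no longer escapable back to extinction, and simultaneously using uniform exponential-tail controls on the coupled birth--death chains to absorb the full excursion inside an $O(1)$-time window.
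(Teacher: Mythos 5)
Your overall route is the paper's own: while the mutant count stays below a small macroscopic threshold, the residents are confined near $(\bar n_{1a},\bar n_{1d})$ by Freidlin--Wentzell-type exit estimates (Lemma~\ref{lemma-residentsstayHGTguys}), the trait-2 count is sandwiched between linear birth--death chains as in \eqref{mutantcouplingHGTguys}, the branching dichotomy gives an $O(1)$ extinction time on the extinction event (the paper phrases this through Proposition~\ref{prop-firstphaseHGTguys} and the symmetric-difference comparison with the indicator $\widehat B$ rather than an $h$-transform, but it is the same mechanism), and \eqref{lastoftheoremof2} is read off from the fact that at time $T_0^2$ the residents have not yet left the $b\sqrt\eps$-neighbourhood, i.e.\ essentially from \eqref{secondofpropHGTguys}.

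Two steps need repair. First, your reduction of $\{ T_0^2 < T_{S_\beta^{\mathrm{co}}} \wedge T_{S_\beta^2} \}$ to $\{ T_0^2 < T_\delta^2 \}$ argues that if $T_\delta^2 < T_0^2$ then the limiting ODE started near $(\bar n_{1a},\bar n_{1d},\delta)$ reaches $S_\beta^{\mathrm{co}} \cup S_\beta^2$ in finite time. This is true under \eqref{secondinmutantlessfit2ineq} -- it is exactly Proposition~\ref{prop-convergenceHGTguys}, the hardest ingredient of the paper, which you cite only obliquely via Remark~\ref{remark-coexstability} -- but it is false under \eqref{reversesecondin}, where the trajectory returns to $(\bar n_{1a},\bar n_{1d},0)$, which lies in neither target set; since Theorem~\ref{thm-failureof2} covers that regime too, the inclusion there must instead come from the subcritical coupling itself, namely $\P(T_\delta^2 < T_0^2) \to 0$, so the exceptional event is negligible for a different reason. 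Second, your acknowledged ``main technical obstacle'' is resolved by assertion rather than argument: to keep the coupling valid over the entire pre-extinction excursion you must know that the excursion length is far below the Freidlin--Wentzell exit scale $\e^{VK}$ \emph{before} you are entitled to read off exponential tails from the conditioned branching chain. The paper breaks this circularity inside the proof of Lemma~\ref{lemma-residentsstayHGTguys} via the generator/Dynkin bound $\E[R^1_{b\sqrt\eps} \wedge T_0^2 \wedge T_{\sqrt\eps}^2] \le \widetilde C \sqrt\eps K$ together with Markov's inequality; note also that this lemma is not a quotable ``standard estimate from \cite{BT19}'' -- the paper emphasizes that this invasion direction has no analogue there and that the HGT terms change the couplings. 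With these two repairs your argument coincides with the paper's proof.
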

Next, we derive similar results for the case when trait 2 is individually fit and trait 1 tries to invade it. The corresponding analogue of Theorem~\ref{thm-invasionof2} is given as follows.
\begin{theorem}\label{thm-invasionof1}
Assume that $\lambda_2>\mu$,
$(N^K_{1a}(0),N^K_{1d}(0)) =(\frac{1}{K},0)$,
and 
$N^K_{2}(0) \underset{K\to\infty}{\longrightarrow} \bar n_{2}$.
\begin{enumerate}
    \item Assume that \eqref{reversefirstin} holds. Then, for all $x>0$ we have 
    \[ \lim_{K \to \infty} \mathbb P \Big( T_x^1 < T_0^1 \Big) = 0. \]
    \item Assume that \eqref{firstinmutantlessfit2ineq} holds. Then, \smallskip
    \begin{itemize}
        \item in case \eqref{secondinmutantlessfit2ineq} holds, then for all sufficiently small $\beta>0$, we have
\[ \lim_{K \to \infty} \mathbb P \Big( T_{S_\beta^{\mathrm{co}}} < T_0^1 \wedge T_{S_\beta^1} \Big) = 1- q_1, \numberthis\label{coexprob1} \]
and on the event $\{ T_{S_\beta^{\mathrm{co}}} < T_0^1 \wedge T_{S_\beta^1} \}$
\[ \lim_{K \to \infty} \frac{T_{S_\beta^{\mathrm{co}}}}{\log K} = \frac{1}{\widetilde \lambda}\numberthis\label{coexistenceof1} \]
in probability,
\item whereas in case \eqref{reversesecondin} holds, then for all small enough $\beta>0$, we have
\[ \lim_{K \to \infty} \mathbb P \Big( T_{S_\beta^{1}} < T_0^1 \wedge T_{S_\beta^{\mathrm{co}}}\Big) = 1- q_1, \numberthis\label{invasionprob1} \]
and on the event $\{ T_{S_\beta^{1}} < T_0^1 \wedge T_{S_\beta^{\mathrm{co}}}  \}$,
\[ \lim_{K \to \infty} \frac{ T_{S_\beta^{\mathrm{1}}}}{\log K} =\frac{1}{\widetilde\lambda}- \frac{1}{\widehat\lambda} \numberthis\label{invasionof1} \]
in probability.
    \end{itemize}  
\end{enumerate}
\end{theorem}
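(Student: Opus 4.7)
The plan is to follow the three-phase invasion scheme à la Champagnat (2006), in the version already outlined heuristically in Section~\ref{sec-phase13} and (by assumption) carried out for Theorem~\ref{thm-invasionof2}. By symmetry with that theorem, I only need to check that the arguments go through with the roles of traits 1 and 2 interchanged; the key new feature is that now the mutant population is two-dimensional, so the early branching-process approximation will involve the multi-type chain $((\widehat N_{1a}(t),\widehat N_{1d}(t)))_{t\ge 0}$ with mean matrix $J$, leading eigenvalue $\widetilde\lambda$, and extinction probability $q_1$.

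For Part~1, under condition~\eqref{reversefirstin} the matrix $J$ has no positive eigenvalue, so the branching process is (sub)critical. A stochastic domination coupling, valid as long as $(N_{1a,t}^K,N_{1d,t}^K)$ is $o(1)$ and the resident trait 2 remains in a small $\ell^\infty$-neighbourhood of $\bar n_2$ (which, by a one-dimensional logistic comparison as in the treatment of the resident in Theorem~\ref{thm-invasionof2}, holds for times of order $\log K$ with probability tending to~$1$), shows that the mutant dies out in $O(1)$ time with high probability, yielding $\P(T_x^1<T_0^1)\to 0$.

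For Part~2, the branching-process coupling is two-sided: one couples $(KN_{1a}^K, KN_{1d}^K)$ from below and from above with two-type branching processes whose rates differ from those of $((\widehat N_{1a},\widehat N_{1d}))$ by $O(\eta)$, where $\eta>0$ is the size of the neighbourhood of $\bar n_2$. Kesten--Stigum-type asymptotics for supercritical multi-type branching processes give, on the survival event (of probability $1-q_1+o(1)$), growth at rate $\widetilde\lambda$, so the mutant hits density $\varepsilon>0$ at time $\tfrac{\log K}{\widetilde\lambda}(1+o(1))$. The Markov property then starts a Lotka--Volterra phase from a state close to $(0,0,\bar n_2)$ augmented by a trait-1 mass of $\varepsilon$. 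By the fluid-limit theorem \cite[Thm.~11.2.1]{EK} the rescaled process shadows \eqref{3dimHGT} on a fixed window $[0,T_\varepsilon]$; the global stability properties asserted in Remark~\ref{remark-coexstability} (and to be established as Proposition~\ref{prop-convergenceHGTguys} and Lemma~\ref{lemma-1invades2LV}) imply that the deterministic orbit enters $S_\beta^{\mathrm{co}}$ under~\eqref{secondinmutantlessfit2ineq}, and under~\eqref{reversesecondin} reaches a state where trait 2 has density $\varepsilon$ and trait 1 sits near $(\bar n_{1a},\bar n_{1d})$. In the coexistence case this already yields~\eqref{coexprob1} and~\eqref{coexistenceof1} (Phase~2 costs only $O(1)$ time). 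In the fixation case, a third phase couples the trait-2 population with the subcritical branching process $\widehat N_2$ of drift $\widehat\lambda<0$, giving extinction in time $-\tfrac{\log K}{\widehat\lambda}(1+o(1))$, and hence the total time $(\tfrac{1}{\widetilde\lambda}-\tfrac{1}{\widehat\lambda})\log K+o(\log K)$ required by~\eqref{invasionof1}.

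The main obstacle is the Lotka--Volterra phase under~\eqref{mutantlessfit2ineq}, because Proposition~\ref{prop-stability3d} does not guarantee asymptotic stability of the coexistence equilibrium $(n_{1a},n_{1d},n_2)$ (cf.\ Remark~\ref{remark-coexstability}). I would handle this by establishing the required global attracting property directly — e.g.\ by identifying an invariant region bounded by nullclines and using monotonicity/LaSalle-type reasoning rather than linearisation — so that any orbit starting close to the face $\{n_{1a}+n_{1d}\approx 0\}$ with a small injection of trait-1 mass is trapped in $S_\beta^{\mathrm{co}}$ within bounded time. A secondary technical matter is to glue the $O(\log K)$ branching-phase estimates with the $O(1)$ fluid-limit estimates via the strong Markov property applied at the hitting times $T_\varepsilon^1$ and (in the fixation case) at the end of Phase~2; once each phase is in place this is routine, following the same template as in \cite{BT19} and the proof of Theorem~\ref{thm-invasionof2}.
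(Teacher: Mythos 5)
Your overall three-phase scheme (two-type branching approximation with eigenvalue $\widetilde\lambda$ and extinction probability $q_1$ in phase one, fluid limit in phase two, subcritical one-type coupling with drift $\widehat\lambda<0$ in phase three, glued by the strong Markov property) is the same as the paper's, and Part~1, the first phase, the third phase and the gluing are all in order. The genuine gap is in how you enter the Lotka--Volterra phase. Phase one only tells you that the \emph{total} mutant mass $N^K_{1a}+N^K_{1d}$ reaches order $\eps K$ at time $\approx \log K/\widetilde\lambda$; it says nothing about the active/dormant split at that moment. The deterministic convergence statement that the paper actually proves, Lemma~\ref{lemma-1invades2LV}, is \emph{not} ``convergence from any small injection of trait-1 mass'': its hypotheses \eqref{proportioncond1}--\eqref{proportioncond2} require the ratio $n_{1d}/n_{1a}$ to lie strictly between the two nullcline ratios, because the whole argument is a monotonicity/invariant-region argument inside that wedge. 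Your proposal invokes ``the global stability properties \dots to be established as Proposition~\ref{prop-convergenceHGTguys} and Lemma~\ref{lemma-1invades2LV}'', but Proposition~\ref{prop-convergenceHGTguys} concerns the opposite invasion direction (initial data in $\mathfrak B^3_\eps$, i.e.\ trait 1 near $(\bar n_{1a},\bar n_{1d})$ and trait 2 small) and does not apply here, while Lemma~\ref{lemma-1invades2LV} cannot be applied without first knowing the composition of the mutant mass. Note also that this issue is not confined to the coexistence sub-case: the same proportion conditions are needed to conclude convergence to $(\bar n_{1a},\bar n_{1d},0)$ under \eqref{reversesecondin}.

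The paper closes this gap stochastically: Proposition~\ref{prop-secondphasedormants} (a Kesten--Stigum / semimartingale argument applied to the \emph{proportion} $N^K_{1a}/(N^K_{1a}+N^K_{1d})$, not just to the growth rate) shows that, conditional on survival, there is a time in $[T^1_\eps, T^1_{\sqrt\eps}]$ at which the mutant mass is still of order $\eps K$ and its active fraction is within $\delta$ of $\pi_{1a}$, the left Perron eigenvector of $J$; Lemma~\ref{lemma-goodstartdormants} then checks that such states satisfy \eqref{proportioncond1}--\eqref{proportioncond2}, so the ODE lemma applies from there. In your write-up Kesten--Stigum is used only for the growth rate, and the composition control is replaced by the claim that one could prove the stronger deterministic statement (attraction from arbitrary compositions near the face $n_{1a}+n_{1d}\approx 0$) ``by nullclines and LaSalle-type reasoning''. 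That claim is exactly the hard point, it is not established by the methods you cite, and the paper deliberately avoids having to prove it. To repair the proof you should either add the proportion-equilibration step (as in Proposition~\ref{prop-secondphasedormants} together with Lemma~\ref{lemma-goodstartdormants}) before handing over to the dynamical system, or supply a genuine proof of the stronger global attractivity statement for \eqref{3dimHGT}.
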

Finally, the analogue of Theorem~\ref{thm-failureof2} is the following.
\begin{theorem}\label{thm-failureof1}
Under the assumptions of Theorem~\ref{thm-invasionof1}, we have that on the event $\{ T_0^1 < T_{S_\beta^{\rm co}} \wedge T_{S_\beta^1} \}$, 
\[ \lim_{K \to \infty} \frac{T_0^1}{\log K} =0 \numberthis\label{extinctionof1} \]
and
\[ \mathds 1 \{ T_0^1 < T_{S_\beta^{\rm co}} \wedge T_{S_\beta^1}\} \big\Vert \mathbf N^K_{T_0^1}-(0,0,\bar n_{2}) \big\Vert \underset{K \to \infty}{\longrightarrow} 0, \numberthis\label{lastoftheoremof1} \]
both in probability.
\end{theorem}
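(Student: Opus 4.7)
The plan is to run the argument in parallel with the proof of Theorem~\ref{thm-failureof2}, but with the roles of traits 1 and 2 interchanged: the one-type branching process $(\widehat N_2(t))$ is replaced by the two-type process $((\widehat N_{1a}(t),\widehat N_{1d}(t)))_{t\ge 0}$ from Section~\ref{sec-phase13}, started from $(1,0)$, whose sub-/supercriticality is governed by whether \eqref{reversefirstin} or \eqref{firstinmutantlessfit2ineq} holds. Throughout I work on the event $\{T_0^1 < T_{S_\beta^{\rm co}} \wedge T_{S_\beta^1}\}$ on which the mutant population never becomes macroscopic, and aim to show that on this event both $T_0^1 = o(\log K)$ and $N^K_{2,T_0^1}\approx \bar n_2$ with probability tending to 1.

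First, I would couple $(N_{1a,t},N_{1d,t})_{t\ge 0}$ with two two-type branching processes that sandwich the trait 1 population as long as the trait 2 residents stay close to equilibrium. Fix small $\varepsilon,\eta>0$ and define
\[
\zeta_K = \inf\bigl\{ t \ge 0 \colon N^K_{1a,t}+N^K_{1d,t} \ge \varepsilon \text{ or } |N^K_{2,t}-\bar n_2|\ge \eta \bigr\}.
\]
On $\{t\le \zeta_K\}$ the per-capita birth, death, dormancy-switching and HGT-receiving rates of trait 1 differ from those of $((\widehat N_{1a},\widehat N_{1d}))$ by $O(\varepsilon+\eta)$, since they depend continuously on $N^K_{2,t}$. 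A standard Poisson coupling (as in~\cite{BT19} in the HGT-free setting) then produces two two-type branching processes with perturbed rates that dominate $(N_{1a,t},N_{1d,t})$ from above and below; by continuity of the eigenvalues of the mean matrix $J$ from~\eqref{JdefHGT} in its entries, both remain in the same criticality regime as $((\widehat N_{1a},\widehat N_{1d}))$ for small enough $\varepsilon,\eta$.

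Second, I would show that on any fixed finite horizon $[0,T]$ the resident population stays close to $\bar n_2$, so that $\zeta_K$ is not triggered by the trait 2 coordinate before $T_0^1$. On $\{t\le \zeta_K\}$ the dynamics of $N^K_{2,t}$ is an $O(\varepsilon)$-perturbation of a one-dimensional logistic birth-death process close to the attracting fixed point $\bar n_2$; a routine semimartingale decomposition combined with Doob's inequality and Gronwall's lemma yields $\sup_{t\le T}|N^K_{2,t}-\bar n_2|\to 0$ in probability as $K\to\infty$ followed by $\eta\to 0$. Third, under either \eqref{firstinmutantlessfit2ineq} or \eqref{reversefirstin} the two-type branching process hits $(0,0)$ either almost surely (subcritical case) or with probability $q_1$ (supercritical case), and in both cases, conditionally on extinction, the extinction time has exponential tails by classical multitype branching process theory, cf.\ \cite[Chapter V]{AN72} together with the observation that a supercritical multitype process conditioned on extinction is itself subcritical. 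Choosing $T_K\to\infty$ with $T_K=o(\log K)$, the above coupling and concentration give $\mathbb P(T_0^1 > T_K,\, T_0^1 < T_{S_\beta^{\rm co}} \wedge T_{S_\beta^1}) \to 0$, which is~\eqref{extinctionof1}. Assertion~\eqref{lastoftheoremof1} then follows because on this event $N^K_{1a,T_0^1}=N^K_{1d,T_0^1}=0$ by definition of $T_0^1$, while $|N^K_{2,T_0^1}-\bar n_2|\to 0$ in probability by Step~2 applied on $[0,T_K]$.

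The main obstacle is the two-type nature of the mutant process: extinction requires \emph{both} $\widehat N_{1a}$ and $\widehat N_{1d}$ to hit zero, and the dormant compartment, fed at rate $Cp\bar n_2$ and drained at rate $\sigma + \kappa\mu$, can keep the lineage alive long after the active pool has collapsed by resuscitating individuals back. This must be tracked jointly, which is exactly what the mean-matrix framework of~\cite{AN72} does; as long as the leading eigenvalue $\widetilde\lambda$ of $J$ in \eqref{JdefHGT} is nonzero (guaranteed by our exclusion of the critical cases), the exponential-tail estimate for the extinction time of the multitype process goes through essentially unchanged, and the quantitative conclusion is as clean as in the one-type situation of Theorem~\ref{thm-failureof2}.
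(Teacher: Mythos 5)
You start from the assertion that on $\{T_0^1 < T_{S_\beta^{\rm co}} \wedge T_{S_\beta^1}\}$ ``the mutant population never becomes macroscopic'', and everything downstream (the choice $T_K=o(\log K)$, the use of the coupling only up to $\zeta_K$, and the concluding claim $\P(T_0^1>T_K,\,T_0^1<T_{S_\beta^{\rm co}}\wedge T_{S_\beta^1})\to 0$) silently relies on it. This is a genuine gap: the event only says that extinction precedes hitting the two $\beta$-neighbourhoods; it does not exclude trajectories on which the trait 1 population first reaches size $\eps K$ and then dies out before the full process ever enters $S_\beta^{\rm co}$ or $S_\beta^1$, and on such trajectories $T_0^1$ is at least of order $\log K/\widetilde\lambda$, so \eqref{extinctionof1} would fail if they carried positive limiting probability. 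Ruling them out is exactly the content of the second and third invasion phases (Proposition~\ref{prop-secondphasedormants}, Lemmas~\ref{lemma-1invades2LV} and \ref{lemma-goodstartdormants}, Proposition~\ref{prop-thirdphasedormants}): conditionally on surviving phase one, the process reaches $S_\beta^{\rm co}$ resp.\ $S_\beta^1$ with probability $1-o_\eps(1)$. The paper obtains \eqref{extinctionof1} precisely by proving the two lower bounds $\liminf_K\mathcal E(K,\eps)\geq q_1-o_\eps(1)$ and $\liminf_K\mathcal I^{j_0}(K,\eps)\geq 1-q_1-o_\eps(1)$, whose sum forces every residual scenario --- in particular ``macroscopic excursion followed by extinction before the target sets'' --- to have vanishing probability. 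In your scheme, decomposing at $\zeta_K$ leaves the event that the mutant reaches $\eps K$ before dying, which has probability close to $1-q_1$ (not small in the supercritical case), and on that event your coupling and resident-concentration give no control over $T_0^1$ versus $T_{S_\beta^{\rm co}}\wedge T_{S_\beta^1}$. So Theorem~\ref{thm-failureof1} cannot be proved by the phase-one machinery alone; it is a byproduct of the combined proof of Theorem~\ref{thm-invasionof1}.

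Two secondary points. First, the upper bound on $T_0^1$ via branching processes only comes from the extinction time of the \emph{dominating} process on the event that this process dies; to transfer it to the true population you need the symmetric-difference estimates aligning $\{B=0\}$ with $\{T_0^1<T_{\eps}^1\wedge R_{2\eps}^2\}$ and with extinction of the coupled processes (the $o_\eps(1)$ terms in the paper, following the arguments of Coron et al.), which your appeal to ``classical multitype theory'' does not by itself supply. Second, your Step~2 concentration of $N_2^K$ is done on a deterministic horizon, whereas the paper needs confinement of the resident up to the random time $T_{\eps^\xi}^1\wedge T_0^1$; this is why Lemma~\ref{lemma-residentsstaydormants} combines Freidlin--Wentzell exit estimates with a Dynkin's-formula bound built from a Perron--Frobenius eigenvector. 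Your fixed-horizon Doob--Gronwall bound would only suffice after the main gap above is closed, and the same remark applies to your derivation of \eqref{lastoftheoremof1}, which in the paper follows from \eqref{secondofpropdormants} because the resident provably stays within $2\eps^\xi$ of $\bar n_2$ up to the (random) extinction time on the relevant event.
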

Recall that under the condition \eqref{secondinmutantlessfit2ineq}, $\widetilde\lambda>0$, whereas if \eqref{reversefirstin} holds, then $\widehat\lambda<0$. Thanks to these facts, the right-hand sides of \eqref{coexistenceof1} and \eqref{invasionof1} are positive if the corresponding conditions of Theorem~\ref{thm-invasionof1} are satisfied.

The proof of Theorems~\ref{thm-invasionof2} and \ref{thm-failureof2} will be carried out in Appendix~\ref{sec-HGTproofs}. The proof of Theorems~\ref{thm-invasionof1} and~\ref{thm-failureof1} will be sketched in Appendix~\ref{sec-dormantproofs}. 
In Section~\ref{sec-methods} we provide a brief summary of the steps of our proofs that required some novel ideas, emphasizing that given the results of~\cite{C+19,BT19}, the main difficulty is to verify convergence of the dynamical system in the case when trait 2 is invading trait 1.

Regarding the interpretation of the different regimes of fixation, stable coexistence, and founder control, we gave an informal overview in Section~\ref{sec-stabilityHGT} in the context of the dynamical system~\eqref{3dimHGT}; see also Section~\ref{sec-regimes} for a visualization of these regimes for different choices of the parameters.
A key ingredient of the proof of the convergence of the dynamical system to the respective equilibrium is the following lemma, which will be essential in order to treat the second phase of invasion. Its
proof can be found in Appendix~\ref{sec-phase2HGTguys}.
\begin{lemma}\label{lemma-effectivecomp} The following assertions hold.
\begin{enumerate}[(i)]
\item\label{n2largeduetoHGT} Let $\lambda_1>\mu$. Then,~\eqref{secondinmutantlessfit2ineq} is equivalent to the condition $C\widetilde n_2 > (C-\tau) \bar n_{1a}$. 
\item\label{compeq} If $(n_{1a},n_{1d},n_2)$ exists as a coordinatewise positive equilibrium, then $C\widetilde n_2 = C(n_{1a}+n_2)-\tau n_{1a}$.
\item\label{coexcompineq} If~\eqref{firstinmutantlessfit2ineq} and~\eqref{secondinmutantlessfit2ineq} hold, then $C(n_{1a}+n_2)+\tau n_2>C(\lambda_1-\mu)$.
\item\label{trait2compineq} If~\eqref{reversefirstin} and \eqref{secondinmutantlessfit2ineq} hold with $\lambda_1>\mu$, then $(C+\tau)\bar n_2 > C(\lambda_1-\mu)$. 
\item\label{trait2coexcompineq} If~\eqref{firstinmutantlessfit2ineq} and~\eqref{secondinmutantlessfit2ineq} hold with $\lambda_1>\mu$, then $(C+\tau)\widetilde n_2 < C(\lambda_1-\mu)$. 
\end{enumerate}
\end{lemma}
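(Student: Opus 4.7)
My approach treats Lemma~\ref{lemma-effectivecomp} as a package of routine algebraic verifications. The core inputs are (a) the explicit expressions \eqref{n2bardef}--\eqref{n1dbardef} for $\bar n_{1a},\bar n_{1d},\bar n_2$ together with the identity $C\widetilde n_2=\lambda_2-\mu$; and (b) the equilibrium equations produced by setting each right-hand side of~\eqref{3dimHGT} to zero at the coordinate-wise positive fixed point $(n_{1a},n_{1d},n_2)$, whose existence under \eqref{firstinmutantlessfit2ineq}--\eqref{secondinmutantlessfit2ineq} is provided by Lemma~\ref{lemma-coexistence}.

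Item~\ref{n2largeduetoHGT} is essentially immediate: by construction $C\widetilde n_2-(C-\tau)\bar n_{1a}=\widehat\lambda$ as defined in~\eqref{lambdahatdef}, and the equivalence between $\widehat\lambda>0$ and~\eqref{secondinmutantlessfit2ineq} is already carried out explicitly in the paragraph following~\eqref{lambdahatdef}. Item~\ref{compeq} follows by dividing the third equation of~\eqref{3dimHGT} at equilibrium by $n_2>0$, which directly yields $\lambda_2-\mu=C(n_{1a}+n_2)-\tau n_{1a}$. For item~\ref{coexcompineq}, I would divide the first equilibrium equation by $n_{1a}>0$ and rearrange to obtain $C(n_{1a}+n_2)+\tau n_2=(\lambda_1-\mu)+\sigma n_{1d}/n_{1a}$; the strict inequality then comes from the strict positivity of $n_{1d}/n_{1a}$, which is inherited from the coordinate-wise positivity of the coexistence equilibrium.

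For items~\ref{trait2compineq} and~\ref{trait2coexcompineq}, I would multiply the defining inequalities~\eqref{reversefirstin} and~\eqref{firstinmutantlessfit2ineq} through by $\tau$, substitute $\lambda_1-\lambda_2=(\lambda_1-\mu)-(\lambda_2-\mu)$, and collect coefficients. This produces an inequality comparing a fixed multiple of $(\lambda_1-\mu)$ with $(\lambda_2-\mu)\bigl[C+\tau-\tfrac{Cp\sigma}{\kappa\mu+\sigma}\bigr]$. Since $p\in(0,1)$ and $\tfrac{\sigma}{\kappa\mu+\sigma}\leq 1$, the bracket lies in $(\tau,\,C+\tau)$, giving enough room to bound the relevant side by a multiple of $(C+\tau)(\lambda_2-\mu)=C(C+\tau)\bar n_2=C(C+\tau)\widetilde n_2$ in the required direction. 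To conclude~\ref{trait2compineq} one additionally needs $\lambda_2>\mu$ so that $\bar n_2=(\lambda_2-\mu)/C>0$; under the stated hypotheses this is supplied by Lemma~\ref{lemma-somebodyfit}\ref{if1fit2fit}, which upgrades $\lambda_1>\mu$ to $\lambda_2>\mu$ in the presence of~\eqref{reversefirstin} and~\eqref{secondinmutantlessfit2ineq}.

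The main (and rather minor) obstacle is bookkeeping: keeping track of signs through the mirrored hypotheses~\eqref{firstinmutantlessfit2ineq} versus~\eqref{reversefirstin}, and making sure that the dormancy-induced correction proportional to $\tfrac{p\sigma}{\kappa\mu+\sigma}$ does not spoil the final direction of the inequality. The boundary situations in which~\eqref{firstinmutantlessfit2ineq} or~\eqref{secondinmutantlessfit2ineq} would hold with equality are ruled out by the strict hypotheses of the lemma, so no degenerate subcase requires separate treatment.
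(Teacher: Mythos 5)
Items (i)--(iv) of your proposal are essentially sound and largely parallel the paper's own proof. For (i) the paper performs the same elementary rearrangement (and leaves the details to the reader); your shortcut via $C\widetilde n_2-(C-\tau)\bar n_{1a}=\widehat\lambda$ and the equivalence of $\widehat\lambda>0$ with \eqref{secondinmutantlessfit2ineq}, already established after \eqref{lambdahatdef}, is legitimate. For (ii) and (iii) you divide the third, respectively first, equilibrium equation of \eqref{3dimHGT} by $n_2>0$, respectively $n_{1a}>0$, whereas the paper substitutes the explicit coordinates \eqref{coexeq}; your route is shorter and gives the clean identities $C(n_{1a}+n_2)-\tau n_{1a}=\lambda_2-\mu$ and $C(n_{1a}+n_2)+\tau n_2=\lambda_1-\mu+\sigma n_{1d}/n_{1a}>\lambda_1-\mu$. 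Note that for (iii) and (iv) both your argument and the paper's own computation produce the right-hand side $\lambda_1-\mu$ (for (iv): $(C+\tau)(\lambda_2-\mu)>C(\lambda_1-\mu)$, i.e.\ $(C+\tau)\bar n_2>\lambda_1-\mu$), not the displayed $C(\lambda_1-\mu)$; since this is also the form in which the lemma is invoked later, the extra factor $C$ in the statement is best read as a normalization slip, not as something your proof fails to reach. Your use of Lemma~\ref{lemma-somebodyfit}\eqref{if1fit2fit} to get $\lambda_2>\mu$ in (iv) is exactly the paper's step.

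The genuine gap is item (v), precisely where you dismiss the difficulty as ``bookkeeping''. Multiplying \eqref{firstinmutantlessfit2ineq} by $\tau$ and substituting $\lambda_1-\lambda_2=(\lambda_1-\mu)-(\lambda_2-\mu)$ gives $C(\lambda_1-\mu)>(\lambda_2-\mu)\bigl[C+\tau-\tfrac{Cp\sigma}{\kappa\mu+\sigma}\bigr]$. To conclude $(C+\tau)(\lambda_2-\mu)<C(\lambda_1-\mu)$ you would have to replace the bracket by the \emph{larger} quantity $C+\tau$ on the side that is bounded from above, which is not allowed when $\lambda_2>\mu$; the monotonicity that rescued (iv) runs in the opposite direction here, so ``enough room in the required direction'' is exactly what you do not have. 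This is not repairable by more careful algebra: with $C=\mu=\sigma=1$, $\kappa=0$, $p=1/2$, $\tau=2$, $\lambda_2=2$, $\lambda_1=3.7$ one checks that \eqref{firstinmutantlessfit2ineq} and \eqref{secondinmutantlessfit2ineq} hold with $\lambda_1>\mu$ (the middle expression equals $1.1$), yet $(C+\tau)\widetilde n_2=3>2.7=C(\lambda_1-\mu)$, so the asserted inequality fails for these admissible parameters.

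For comparison, the paper's own treatment of (v) consists of the single remark that it is ``analogous to (iv)'', and that analogy stumbles on the same sign reversal; what the manipulation of \eqref{firstinmutantlessfit2ineq} genuinely yields (after dividing by $\tfrac{\kappa\mu+(1-p)\sigma}{\kappa\mu+\sigma}$ and using $\lambda_2\geq\mu$) is $(C+\tau)\widetilde n_2<C\bar n_{1a}$, with $\bar n_{1a}$ as in \eqref{n1abardef}, rather than an inequality against $\lambda_1-\mu$ or $C(\lambda_1-\mu)$. So for (v) you cannot simply mirror your argument for (iv); you should either prove that corrected inequality or flag that (v) as stated does not follow from (and is contradicted under) the hypotheses \eqref{firstinmutantlessfit2ineq}--\eqref{secondinmutantlessfit2ineq}.
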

Let us provide an interpretation of this lemma. Taking also the effect of HGT into account, at $t>0$, $C(n_{1a}(t)+n_2(t))+\tau n_{2}(t)$ is the `effective competitive pressure' felt by the active trait 1 population: the population is not only exposed to death or switching to dormancy due to competitive pressure, but trait 2 also removes individuals from the trait 1 population via HGT. In the same time, trait 2 feels an effective competitive pressure of $C(n_{1a}(t)+n_2(t))-\tau n_{1}(t)$: death by competition is at least partially compensated by turning trait 1 individuals into trait 2 ones via HGT. Assertion~\eqref{coexcompineq} tells that in case of stable coexistence, this effective competitive pressure is larger in the coexistence equilibrium $(n_{1a},n_{1d},n_2)$ than close to the one-type equilibrium $(\bar n_{1a},\bar n_{1d},0)$ of trait 1. Further, in view of \eqref{n2largeduetoHGT} and \eqref{compeq}, the (active) trait 2 population feels less competitive pressure close to the coexistence equilibrium $(n_{1a},n_{1d},n_2)$ than near $(\bar n_{1a},\bar n_{1d},0)$. These (together with additional arguments) will make it possible for trait 2 to decrease the trait 1 population until this coexistence equilibrium is not reached. On the other hand, under the assumptions of assertion~\eqref{trait2compineq}, there is no coexistence equilibrium, but in the one-trait equilibrium $(0,0,\bar n_2)$ of trait 2, the active trait 1  population feels more effective competitive pressure than in the vicinity of $(\bar n_{1a},\bar n_{1d},0)$. Further, thanks to \eqref{n2largeduetoHGT}, the trait 2 population feels less competitive pressure near $(0,0,\bar n_2)$ than close to $(\bar n_{1a},\bar n_{1d},0)$. This is the heuristic reason for a complete fixation of trait 2 and extinction of trait 1. Finally, the assertion~\eqref{trait2coexcompineq} can be interpreted as follows: in case~\eqref{secondinmutantlessfit2ineq} holds but there is no coexistence equilibrium, then trait 2 cannot make trait 1 go extinct, i.e., there is no fixation of trait 2 for $\lambda_2>\mu$ and no evolutionary suicide for $\lambda_2 \leq \mu$.

In the next remark, we briefly explain how our theorems answer our main questions posed in Section~\ref{sec-introductionHGT}.

\begin{remark}[Answers to the main questions]
\color{white} a \color{black} \\
{\em Invasion of trait 2 against trait 1.}
Assume that $\lambda_2>\mu$. Theorems \ref{thm-invasionof2}--\ref{thm-failureof2} explain that invasion of a trait 2 mutant against a trait 1 resident population (with $\lambda_1>\mu$) is successful with asymptotically positive probability as $K\to\infty$ under the condition~\eqref{secondinmutantlessfit2ineq} and with probability tending to zero under the condition~\eqref{reversesecondin}. If~\eqref{secondinmutantlessfit2ineq} holds, then with high probability, a successful invasion will be followed by fixation of trait 2 if~\eqref{reversefirstin} holds and by convergence to the coexistence equilibrium $(n_{1a},n_{1d},n_2)$ if~\eqref{firstinmutantlessfit2ineq} holds. The time until coexistence divided by $\log K$ converges to $1/\widehat\lambda$ in probability, whereas the time until fixation of trait 1 divided by $\log K$ tends to $1/\widehat\lambda-1/\widetilde \lambda$, where the additional term corresponds to the length of the additional third phase of the invasion where the formerly resident population goes extinct. In case of a failed invasion, the time until the extinction of a mutant is sub-logarithmic in $K$, and the resident population stays near equilibrium until this extinction with high probability. \\
{\em Invasion of trait 1 against trait 2.}
Theorems~\ref{thm-invasionof2}--\ref{thm-failureof2} tell that for $\lambda_1>\mu$, analogous result holds for the reverse invasion direction, where trait 1 tries to invade a trait 2 resident population (which only makes sense if $\lambda_2>\mu$). Here, the roles of traits 1 and 2, the ones of \eqref{reversefirstin} and \eqref{reversesecondin}, the ones of \eqref{firstinmutantlessfit2ineq} and \eqref{secondinmutantlessfit2ineq}, as well as the ones of $\widehat\lambda$ and $\widetilde\lambda$ have to be interchanged with each other.
In particular, if both \eqref{reversefirstin} and \eqref{reversesecondin} hold, then the resident reaches fixation and the mutant goes extinct in any case, ie.\ we have founder control.

\end{remark}

\section{Discussion of methods, parameter regimes, and special cases}
\label{sec-discussion12}
In this section we discuss different aspects of our model and the employed methods. Since we present the proofs of our main results in the Appendix, in Section~\ref{sec-methods} we provide a short overview of the techniques used in the proofs. In Section~\ref{sec-regimes} we visualize and interpret the different regimes of invasion and fixation. Finally, in the remainder of the section, we investigate special cases of our model. In Section~\ref{sec-coex2d} we treat the case when there is only HGT but no dormancy in the system, whereas in Section~\ref{sec-onlydormancybackwards} we analyse the case when there is only competition-induced dormancy but no HGT. In the latter two sections, we omit the proofs because they can be obtained as special cases of the ones in the Appendix.

\subsection{Methods}\label{sec-methods}
As already mentioned in Section~\ref{sec-phase13}, our approach is based on the individual-based invasion theory introduced in~\cite{C06}. 
On the one hand, during the analysis of the invasion (and, if applicable, fixation) of trait 1 in a trait 2 population, we are able to use multiple arguments of our recent paper \cite{BT19} where we investigated competition-induced dormancy without HGT.
The analysis of phases (1) and (if applicable) (3) is a relatively straightforward adaptation of the methods of the aforementioned paper, which rely on a number of proof techniques from the papers \cite{C+16,C+19}. Here, the main task is to take into account the additional effect of HGT and the arising coexistence equilibrium. 

On the other hand, handling the deterministic phase (2) of invasion requires novel ideas, and this is the most tedious part of the proof. Even determining the local stability of the equilibria of the limiting dynamical system is a nontrivial task, and in our proofs we also need a certain global stability, namely a convergence of the solution of the system to the one-trait equilibrium of mutants (in case of no coexistence) respectively to the coexistence equilibrium. As for the invasion of trait 1 (exhibiting dormancy) against trait 2 (benefiting from HGT), there are still some proof techniques from \cite{C+19} that can be extended to this case, based on some additional stochastic results using the Kesten--Stigum theorem (see Section~\ref{sec-phase2dormants}). For the invasion of trait 2 against trait 1, which has no HGT-free analogue in \cite{BT19}, we provide an entirely deterministic proof based on observations regarding the competitive pressure felt by the active trait 1 and trait 2 population, involving an approximation of the trait 1 population by a two-dimensional dynamical system.

\subsection{Fixation vs.\ coexistence landscapes in different parameter regimes}\label{sec-regimes}
In Figure~\ref{figure-regimes}, we present numerical examples for the parameter regimes corresponding to the different scenarios of invasion, fixation, and coexistence described in these theorems. We choose the same values of $C,\mu,\kappa$ for all simulations for simplicity, and we fix different values of $\tau,p>0$ in each of the examples. Given these, we determine the fate of the population depending on the birth rates $\lambda_1,\lambda_2$. We only consider the case $\lambda_1>\mu$
, whereas we allow for $\lambda_2\leq \mu$. The notation for the coloured regions is the following.
\begin{itemize}
    \item Yellow: founder control (with unstable coexistence).
%
    \begin{itemize}
    \item I.\ Founder control (with $\lambda_2>\lambda_1$).
    \end{itemize}
    \item Blue: fixation of trait 1.
    \begin{itemize}
    \item II.\ Fixation of trait 1 despite $\lambda_2>\lambda_1$.
    \item II'.\ Fixation of trait 1 with $\lambda_1>\lambda_2>\mu$.
    \item II''.\ Fixation of trait 1 with $\lambda_1>\mu>\lambda_2$.
    \end{itemize}
    \item Red: stable coexistence.
    \begin{itemize}
    \item III.\ Stable coexistence with $\lambda_1>\lambda_2>\mu$.
    \item III'.\ Stable coexistence with $\lambda_1>\mu>\lambda_2$. 
    \end{itemize}
    \item Green: fixation of trait 2.
    \begin{itemize}
     \item IV.\ Fixation of trait 2 with $\lambda_2>\lambda_1$.
    \item IV'.\ Fixation of trait 2 despite $\lambda_1>\lambda_2$.
    \end{itemize}
\end{itemize}
In each figure, 
\[ \lambda_2-\mu=\frac{Cp\sigma}{\tau(\kappa\mu+\sigma)}(\lambda_2-\mu)+\frac{C}{\tau}(\lambda_1-\lambda_2) \]
is the equation of the orange critical line, whereas the blue critical line is given as follows:
\[ \lambda_1-\mu=\frac{Cp\sigma}{\tau(\kappa\mu+\sigma)}(\lambda_2-\mu)+\frac{C}{\tau}(\lambda_1-\lambda_2). \]
On these lines themselves, the behaviour of our population process is unknown. 


\begin{figure}
\begin{subfigure}{0.45\textwidth}
\begin{tikzpicture}[scale=0.35,domain=1:7.5,xscale=2]
\draw (-0.2,0) -- (1,0) node[right] {};
\draw (1,-0.3) node[below] {1};
\draw (-0.3,1) node[left] {1};
\draw[name path=X] (1,0) -- (7.5,0) node[right] {};
\draw[->] (7.5,0) -- (8,0) node[right] {$\lambda_1$};
\draw[->] (0,-0.2) -- (0,16.8) node[above] {$\lambda_2$};
\draw[color=red, dashed] (-0.2,1) -- (1,1) node {}; 
\draw[color=red, dashed] (1,-0.2) -- (1,1) node {};
\draw[color=green!60!black, thick,name path=L1L2]    plot (\x,\x)             node[right] {$\lambda_2=\lambda_1$}; 
\draw[color=blue, thick, name path=BLUE]   plot (\x,{(-5+10*\x)/5})    node[right] {};
\draw[color=orange, thick, name path=ORANGE] plot (\x,{(9*\x-5)/4}) node[right] {}; 
\draw[color=red, dashed,name path=L2MU] plot(\x,{1}) node[right] {$\lambda_2=\mu$}; 
\draw[color=red, dashed] (1,1) -- (1,15.7) node {}; 
\draw (6.8,0.5) node {II''}; 
\draw (6.8,4) node {II'}; 
\draw (6.8,10) node {II}; 
\draw (7.2,14.2) node {I}; 
\draw (4,14) node {IV}; 
\draw[color=red] (1.3,16.3) node {$\lambda_1=\mu$}; 
\draw[color=red, name path=L1MU] (1,15.7) node {}; 
\tikzfillbetween[of=X and L1L2, on layer=ft]{blue, opacity=0.2};
\tikzfillbetween[of=ORANGE and BLUE, on layer=ft]{yellow, opacity=0.2};
\tikzfillbetween[of=BLUE and L1L2, on layer=ft]{blue, opacity=0.2};
\tikzfillbetween[of=ORANGE and L1MU, on layer=ft]{green, opacity=0.2};
\end{tikzpicture}
\caption{Case $\tau=0.1$, $\sigma=0.9$, $p=0.1$. HGT is weak compared to dormancy. Coexistence can only be unstable (and in that case, we have founder control (I), yellow region).}
\label{fig-weakHGT}
\end{subfigure}
\hspace{.5cm}
\begin{subfigure}{0.45\textwidth}
\vspace{8pt}
\begin{tikzpicture}[scale=0.65,domain=1:8]
\draw[name path=X] (1,0) -- (8,0) node[right] {};
\draw[->] (8,0) -- (8.5,0) node[right] {$\lambda_1$};
\draw (-0.2,0) -- (1,0) node {};
\draw[->] (0,-0.2) -- (0,8.5) node[above]{$\lambda_2$};
\draw[color=red, dashed] (-0.2,1) -- (1,1) node {}; 
\draw[color=red, dashed] (1,-0.2) -- (1,1) node {};
\draw[color=green!60!black, thick, name path=L1L2]    plot (\x,\x)         node[right] {$\lambda_2=\lambda_1$}; 
\draw[color=orange, thick,name path=ORANGE]   plot (\x,{(1-0.05/0.8+(1/0.8-1)*\x)/(-0.05/0.8+1/0.8)})    node[right] {};
\draw[color=blue, thick,name path=BLUE] plot (\x,{(1+0.05/0.8+(1/0.8)*\x)/(1+0.05/0.8+1/0.8)}) node[right] {}; 
\draw[color=red, dashed,name path=L2MU] plot(\x,{1}) node[right] {$\lambda_2=\mu$}; 
\draw[color=red, dashed,name path=L1MU] plot(1,\x) node[right] {}; 
\draw[color=red] (1,8.3) node {$\lambda_1=\mu$};
\draw (7,0.5) node {II''}; \draw (7,1.5) node {II'}; 
\draw (7.5,3.5) node {III}; 
\draw (7,6) node {IV'}; 
\draw (6,7) node {IV}; 
\draw (1,-0.3) node[below] {1};
\draw (-0.3,1) node[left] {1};
\tikzfillbetween[of=X and ORANGE, on layer=ft]{blue, opacity=0.2};
\tikzfillbetween[of=ORANGE and BLUE, on layer=ft]{red, opacity=0.2};
\tikzfillbetween[of=BLUE and L1L2, on layer=ft]{green, opacity=0.2};
\tikzfillbetween[of=L1MU and L1L2, on layer=ft]{green, opacity=0.2};
\end{tikzpicture}
\caption{Case $\sigma=1$, $\tau=0.8$, $p=0.05$. HGT is strong compared to dormancy but still weaker than competition ($\tau<C$). Coexistence can only be stable, and it occurs only for $\lambda_2>\mu$.}
\label{fig-strongHGT}
\end{subfigure}
\begin{subfigure}{0.65\textwidth}
\begin{center}
\begin{tikzpicture}[scale=0.8,domain=1:8]
\draw (-0.2,0) -- (1,0) node[right] {};
\draw (1,-0.3) node[below] {1};
\draw (-0.3,1) node[left] {1};
\draw[name path=X] (1,0) -- (5.7,0) node[right] {};
\draw[name path=X2] (5.7,0) -- (8,0) node[right] {};
\draw[->] (8,0) -- (8.5,0) node[right] {$\lambda_1$};
\draw[->] (0,-0.5) -- (0,8.5) node[above] {$\lambda_2$};
\draw[color=red, dashed] (-0.2,1) -- (1,1) node {}; 
\draw[color=red, dashed] (1,-0.2) -- (1,1) node {};
\draw[color=green!60!black, thick, name path = L1L2]    plot (\x,\x)             node[right] {$\lambda_2=\lambda_1$}; 
\draw[color=orange, thick, name path=ORANGE]   plot (\x,{(1-0.05/1.2+(1/1.2-1)*\x)/(-0.05/1.2+1/1.2)})    node[right] {};
\draw[color=blue, thick, name path=BLUE] plot (\x,{(1-0.05/1.2+(1/1.2)*\x)/(1-0.05/1.2+1/1.2)}) node[right] {}; 
\draw[color=red, dashed,name path=L2MU] plot(\x,{1}) node[right] {$\lambda_2=\mu$}; 
\draw[color=red, dashed, name path=L1MU] plot(1,\x) node[right] {}; 
\draw[color=red] (1,8.3) node {$\lambda_1=\mu$};
\draw (7.5,2.5) node {III}; \draw (7.5,0.5) node {III'}; \draw (7,5) node {IV'}; 
\draw (6,7) node {IV}; 
\draw (2,0.5) node {II''}; 
\tikzfillbetween[of=L2MU and BLUE, on layer=ft]{red, opacity=0.2};
\tikzfillbetween[of=L2MU and X2, on layer=ft]{red, opacity=0.2};
\tikzfillbetween[of=ORANGE and X, on layer=ft]{blue, opacity=0.2};
\tikzfillbetween[of=BLUE and L1L2, on layer=ft]{green, opacity=0.2};
\tikzfillbetween[of=L1MU and L1L2, on layer=ft]{green, opacity=0.2};
\end{tikzpicture}
\caption{Case $\sigma=1$, $\tau=1.2$, $p=0.05$: HGT is strong compared to dormancy and even stronger than competition ($\tau>C$). Coexistence can only be stable, and it occurs even for $\lambda_2\leq\mu$.}
\label{fig-verystrongHGT}
\end{center}
\end{subfigure}
\caption{Fixation vs.\ coexistence landscapes for $\lambda_1$ and $\lambda_2$, and all other parameters fixed.
We always put $C=\mu=1$ and $\kappa=0$.
}\label{figure-regimes}
\end{figure}
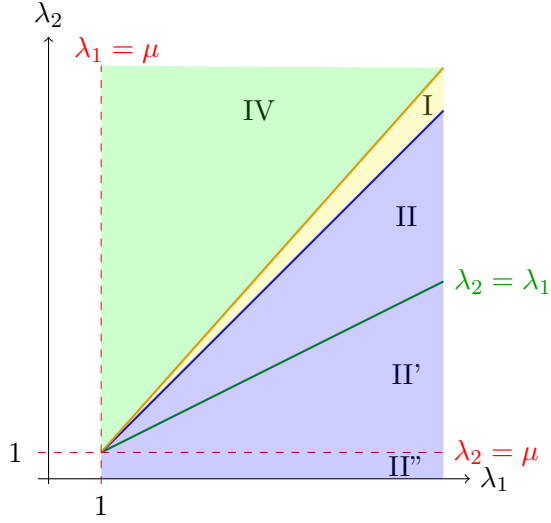
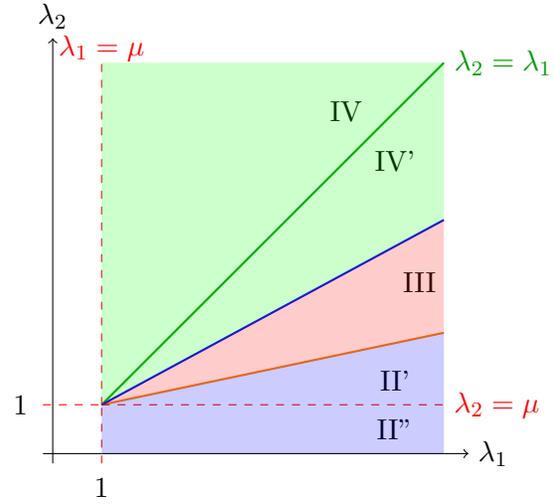
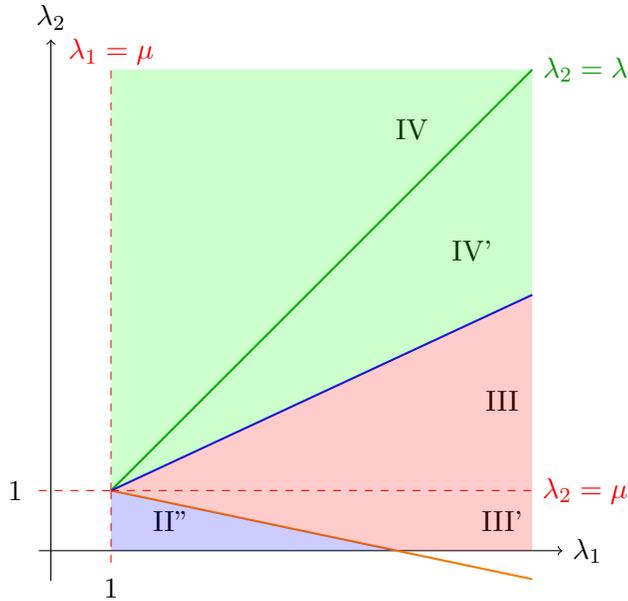

\begin{enumerate}
\item Figure~\ref{fig-weakHGT}: HGT is weak compared to dormancy, i.e., $\tau < \frac{Cp\sigma}{\kappa\mu+\sigma}$. In this case, only an unstable coexistence equilibrium (regime I) may exist (which then corresponds to founder control). The existence of regime II\ shows that trait 1 has an advantage caused by dormancy: It can even reach fixation in some cases when $\lambda_2>\lambda_1$, and further increase of $\lambda_2$ first leads to an unstable coexistence and only afterwards to the fixation of trait 2.
\item Figures~\ref{fig-strongHGT} and \ref{fig-verystrongHGT}: HGT is strong compared to dormancy, i.e., $\tau > \frac{Cp\sigma}{\kappa\mu+\sigma}$. 
In this case, trait 2 has an advantage due to HGT, which gives rise to regime IV'\ where trait 2 reaches fixation despite $\lambda_2<\lambda_1$, whereas regime II\ is now absent. Moreover, there always exists a stable coexistence regime (regime III) with $\mu<\lambda_2<\lambda_1$. Depending on whether $C<\tau$ or $C>\tau$ (i.e., whether the interaction between traits 1 and 2 is harmful or actually beneficial for trait 2), we distinguish two sub-cases.
\begin{itemize}
    \item Figure~\ref{fig-strongHGT}: The slope of the orange line is still positive, so that regime II'\ still exists. Hence, stable coexistence can only occur for $\lambda_2>\mu$ (in other words, regime III'\ is absent and regime II''\ covers the whole area where $\lambda_2<\mu$), but an arbitrarily small positive $\lambda_2-\mu$ can give rise to stable coexistence given that $\lambda_1>\mu$ is also close to zero from above. We observe that increasing $\lambda_1$ is always beneficial for trait 1, since the length of the vertical section of regime II'\ increases in $\lambda_1$. 
    \item Figure~\ref{fig-verystrongHGT}: The slope of the orange line is negative. Hence, stable coexistence can also occur for $\lambda_2 \leq \mu$ (see regime III'). Observe that $\lambda_2=\mu$ leads to coexistence whenever $\lambda_1>\mu$. Further for any positive $\lambda_2 \neq \mu$, stable coexistence is possible given that $\lambda_1$ is sufficiently large. In fact, if $\lambda_1$ is larger than the intersection of the orange line with the $x$-axis, then any $\lambda_2 \in (0, \mu)$ is in the stable coexistence regime. Consequently, regime II'\ is absent, and increasing $\lambda_1$ is actually deleterious for trait 1 because the length of the vertical section of regime II''\ decreases in $\lambda_1$ and reaches 0 at a finite value of $\lambda_1$.
\end{itemize}
\end{enumerate}
Note that in general, as long as $p \in (0,1)$, $\kappa\geq 0$, and $\sigma,\mu,C,\tau>0$, the orange line has a negative slope if and only if $\tau>C$, i.e., HGT is stronger than competition. If this is satisfied and also \eqref{mutantlessfit2ineq} holds, then stable coexistence occurs for some choice of the parameters with $\lambda_2 \leq \mu < \lambda_1$. For $C=\tau$, the orange critical line coincides with the one given by $\lambda_2=\mu$. Hence, if $\lambda_2>\mu$, then trait 2 can coexist with trait 1 once $\lambda_1$ is sufficiently large, but if $\lambda_2<\mu$, then trait 2 cannot invade trait 1.

\subsection{The dormancy-free case with HGT}\label{sec-coex2d}
In the special case $p=0$ when trait 1 exhibits no dormancy, the dormant coordinate of trait 1 can be ignored. This leads to the limiting dynamical system
\begin{equation}\label{2dimHGTgood}
\begin{aligned}
\frac{\d n_{1,t}}{\d t} & = n_{1,t}\big( \lambda_1-\mu-Cn_{1,t}-Cn_{2,t}-\tau n_{2,t} \big), \\
\frac{\d n_{2,t}}{\d t} & = n_{2,t} \big( \lambda_2-\mu-Cn_{1,t}-Cn_{2,t} +\tau n_{1,t} \big),
\end{aligned}
\end{equation}
with some competition parameter $C>0$. This is a special case of the dynamical system studied in \cite[Proposition 4.1]{BCFMT18} corresponding to density-dependent HGT. However, their system comes with rather different notation, therefore we briefly describe the main properties of our model for the reader's convenience.

Using the terminology of \cite[Section 1.1]{B19}, \eqref{2dimHGTgood} is a Lotka--Volterra system with constant intraspecific competition, but with an asymmetric relation between the two traits: the interaction between the two traits is less disadvantageous for trait 2 than for trait 1. As long as $\lambda_1>\mu$ and $\tau<C$, this interaction is disadvantageous also for trait 2, hence the Lotka--Volterra system is competitive. Else if $\lambda_1>\mu$ but $\tau \geq C$, the interaction is still disadvantageous for trait 1 but beneficial (for $\tau > C$) or neutral (in the boundary case $\tau=C$) for trait 2. This yields a host-parasite or prey-predator type interaction between trait 1 and trait 2 if $\tau>C$. (For $p>0$, the system~\eqref{3dimHGT} is not of Lotka--Volterra type in terms of~\cite[Section 1.1]{B19}, but it is clear that the interaction between traits 1 and 2 is still competitive for $C>\tau$ and predator--prey or host--parasite type if $\tau > C$.)

Let us recall the equilibrium $\bar n_2=\smfrac{(\lambda_2-\mu)\vee 0}{C}$, and let us denote the analogous one-type equilibrium $\smfrac{(\lambda_1-\mu) \vee 0}{C}$ of trait 1 by $\bar n_1$. It is straightforward to derive that the system \eqref{2dimHGTgood} exhibits a coexistence equilibrium $(n_1,n_2)$ with two positive coordinates if and only if
\[ (\lambda_2-\mu) < \frac{C}{\tau} (\lambda_1-\lambda_2) <\lambda_1-\mu. \numberthis\label{coexistencecondnodormancy} \]
This chain of inequalities clearly implies $\lambda_1>\lambda_2$, and from this it follows that it also implies $\lambda_1>\mu$. Again, $\lambda_2 \leq \mu$ (i.e., trait 2 being individually unfit) is possible under conditon~\eqref{coexistencecondnodormancy}.
In case~\eqref{coexistencecondnodormancy} holds, the coexistence equilibrium is unique and given as
\begin{equation}\label{nodormancyeq}
\begin{aligned}
n_2& =\frac{\lambda_1-\mu - \frac{C}{\tau} (\lambda_1-\lambda_2)}{\tau}, \\
n_1& =\frac{\mu-\lambda_2+\frac{C}{\tau}(\lambda_1-\lambda_2)}{\tau}.
\end{aligned}
\end{equation}
If \eqref{coexistencecondnodormancy} holds, then elementary computations imply that $(n_1, n_2)$ is asymptotically stable, whereas all other equilibria, namely $(0,0)$, $(\bar n_1,0)$, and $(0,\bar n_2)$, are unstable. (In particular, verifying local asymptotic stability of the coexistence equilibrium is straightforward here, unlike for the three-dimensional system~\eqref{3dimHGT}). Else if \[ (\lambda_2-\mu) < \frac{C}{\tau} (\lambda_1-\lambda_2) > \lambda_1-\mu \numberthis\label{2strongernodormancy} \] 
holds with $\lambda_1>\mu$, $(0,\bar n_2)$ is asymptotically stable, whereas $(0,0)$ and $(0,\bar n_2)$ are unstable. Similarly, if
\[ (\lambda_2-\mu) > \frac{C}{\tau} (\lambda_1-\lambda_2) < \lambda_1-\mu \numberthis\label{1strongernodormancy} \]
holds with $\lambda_2>\mu$, $(0,\bar n_1)$ is asymptotically stable, whereas $(0,0)$ and $(\bar n_1,0)$ are unstable. Finally, if
\[ (\lambda_2-\mu) > \frac{C}{\tau} (\lambda_1-\lambda_2) > \lambda_1-\mu, \numberthis\label{foundercontrolnodormancy} \]
then $(0,0)$ is unstable and both $(\bar n_1, 0)$ and $(0,\bar n_2)$ are asymptotically stable. Based on this and using the methods of Sections~\ref{sec-HGTproofs} and \ref{sec-dormantproofs}, one can derive that if at least one of the conditions \eqref{coexistencecondnodormancy}, \eqref{2strongernodormancy}, \eqref{1strongernodormancy}, and~\eqref{foundercontrolnodormancy} holds (i.e., if all equilibria of \eqref{2dimHGTgood} are hyperbolic), then none of the approximating branching processes defined analogously to Section~\ref{sec-phase13} is critical, and the following assertions hold:
\begin{enumerate}[(I)]
    \item If~\eqref{coexistencecondnodormancy} holds, then both traits can invade the other one but none of them can reach fixation. Starting from a resident population of one trait and a single mutant of the other, the system will converge with asymptotically positive probability as $K \to \infty$ to the stable coexistence equilibrium $(n_1,n_2)$. This is the analogue of the case when~\eqref{mutantlessfit2ineq} holds if trait 1 exhibits dormancy. It is easy to see that just as in the case $p>0$, this regime contains choices of parameters with $\lambda_2 < \mu$ if and only if $C>\tau$, that is, if trait 2 is individually unfit, one can find $\lambda_1>\mu$ corresponding to stable coexistence of the two traits if and only if HGT is stronger than competition.
    \item If~\eqref{1strongernodormancy} holds, then trait 1 can invade trait 2 and even reach fixation with asymptotically positive probability, whereas with high probability, trait 2 cannot invade trait 1. This corresponds to the case when~\eqref{reversefirstin} and \eqref{secondinmutantlessfit2ineq} hold.
    \item If~\eqref{foundercontrolnodormancy} holds, then with probability tending to 1 as $K \to \infty$, none of the two traits can invade the other. This is the case of founder control, which corresponds to the case~\eqref{mutantfitter2ineq} in case trait 1 exhibits competition-induced dormancy. However, there is a major difference: under the assumption \eqref{mutantfitter2ineq}, the coexistence equilibrium $(n_{1a},n_{1d},n_2)$ exists as a coordinatewise positive (but unstable) equilibrium, whereas under condition \eqref{foundercontrolnodormancy}, there exists no coexistence equilibrium.
    \item If~\eqref{2strongernodormancy} holds, then the assertions provided about the case (II) are true with traits 1 and 2 interchanged. This is analogous to the case when \eqref{firstinmutantlessfit2ineq} and~\eqref{reversesecondin} hold.
\end{enumerate}
What remains to analyse are the critical cases when one of the inequalities of~\eqref{coexistencecondnodormancy} is true with the inequality replaced by an equality, which we defer to later work.

In Figure~\ref{figure-phasediagrams} we depict three choices of parameters corresponding to stable coexistence in~\eqref{2dimHGTgood}.


\begin{figure}
    \centering
    \vspace{1pt}
    \begin{subfigure}{0.4\textwidth}
    \includegraphics[scale=2]{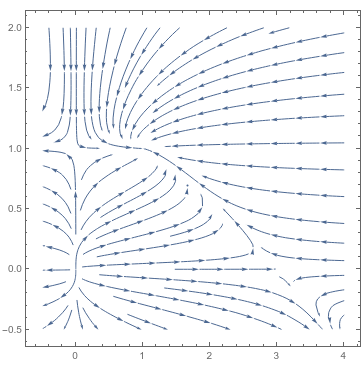}
    \caption{$\lambda_1=5,\lambda_2=3,\mu=2,C=\tau=1$: trait 2 is individually fit and the coexistence condition \eqref{coexistencecondnodormancy} holds. Then, $(0,0)$ is a source, the one-trait equilibria of traits 1 and 2 are saddle points and $(n_1,n_2)=(1,1)$ is a sink.\\}
    \end{subfigure}
    \hspace{1cm}
    \begin{subfigure}{0.4\textwidth}
    \vspace{-36pt}
    \includegraphics[scale=2]{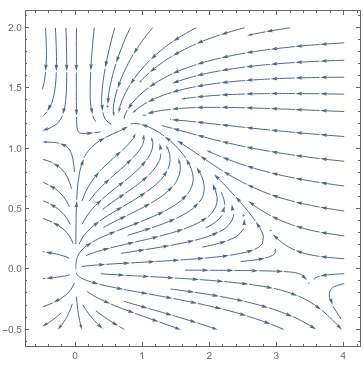} 
    \caption{$C=0.9$ and otherwise the same parameters: the only qualitative change is that $(n_1,n_2)$ is now a stable focus.}
    \end{subfigure}
    \hspace{1pt}
    \begin{subfigure}{1\textwidth}
    \centering
    \includegraphics[scale=2]{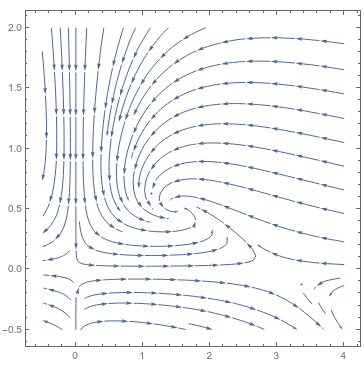}
     \caption{$\lambda_1=5,\lambda_2=1,\mu=2,C=1,\tau=2$: trait 2 is individually unfit but the coexistence condition \eqref{coexistencecondnodormancy} holds. $(0,0)$ and $(\bar n_{1},0)$ are now saddle points, whereas the single-trait equilibrium of trait 2 is not included in the positive orthant. The coexistence equilibrium $(n_1,n_2)$ is again a stable focus.\\}
   \end{subfigure}
 \caption{Three cases of stable coexistence for~\eqref{2dimHGTgood}. The $x$ axis shows the trait 1 and the $y$ axis the trait 2 population size.}\label{figure-phasediagrams}
\end{figure}
\subsection{The HGT-free case with dormancy.}\label{sec-onlydormancybackwards}
Using the notation of Section~\ref{sec-main}, the special case $\tau=0$ of no HGT leads to the dynamical system
\begin{equation}\label{3dimnoHGT}
\begin{aligned}
\frac{\d n_{1a}(t)}{\d t} & = n_{1a}(t)\big( \lambda_1-\mu-C (n_{1a}(t)+n_{2}(t)) \big)  + \sigma n_{1d}(t), \\
\frac{\d n_{1d}(t)}{\d t} & = p C n_{1a}(t)(n_{1a}(t)+n_{2}(t)) - (\kappa\mu+\sigma) n_{1d}(t), \\
\frac{\d n_{2}(t)}{\d t} & = n_{2}(t) \big( \lambda_2-\mu-C (n_{1a}(t)+n_{2}(t)) \big). 
\end{aligned}
\end{equation}
Here, the competition between the two traits is equally disadvantageous for the two traits, whereas trait 1 benefits from competition-induced dormancy.
The main result of our previous paper~\cite{BT19} is that under the condition
\[ \lambda_2-\lambda_1 < p(\lambda_2-\mu)\frac{\sigma}{\kappa\mu+\sigma}, \qquad \lambda_1>\mu,\lambda_2>\mu,  \numberthis\label{fitterw/oHGT}\]
the one-trait equilibrium $(\bar n_{1a},\bar n_{1d},0)$ of trait 1 is asymptotically stable, whereas the one-trait equilibrium $(0,0,\bar n_2)$ of trait 2 and the equilibrium $(0,0,0)$ are unstable, and trait 1 can invade trait 2 with asymptotically positive probability as $K \to \infty$, while invasion implies fixation. Our main motivation in~\cite{BT19} was to show that invasion of trait 1 is possible in some cases when trait 2 has a higher reproduction rate (which is possible under condition~\eqref{fitterw/oHGT}), which is excluded in the case $p=0$ of no dormancy and constant competition. Hence, we did not treat any case where $\lambda_1 \geq \lambda_2$, however, since we showed that invasion and fixation of trait 1 is possible in some cases when $\lambda_2>\lambda_1$, the same assertion follows in the case $\lambda_1 \geq \lambda_2$ by a simple coupling argument. We did not consider the case when  \eqref{fitterw/oHGT} does not hold and we did not tell about the invasion of trait 2 against trait 1. Since these cases can also be handled with the methods employed in the Appendix of the present paper, let us now describe the landscape of invasion and fixation in all non-critical cases that have not been addressed before.

If~\eqref{fitterw/oHGT} holds, then, not surprisingly, a mutant of trait 2 trying to invade a resident population of trait 1 will go extinct (already during the first phase of invasion) with high probability as $K \to \infty$. In contrast, in case~\eqref{fitterw/oHGT} does not hold but instead we have
\[ \lambda_2-\lambda_1 > p(\lambda_2-\mu)\frac{\sigma}{\kappa\mu+\sigma}, \qquad \lambda_1>\mu,\lambda_2>\mu,  \numberthis\label{lessfitw/oHGT}\]
then $(0,0,\bar n_2)$ is asymptotically stable, whereas $(\bar n_{1a},\bar n_{1d},0)$ and $(0,0,0)$ are unstable. In this case, trait 2 can invade trait, with asymptotically positive probability, whereas with probability tending to one, trait 1 cannot invade trait 2. Also here, a successful invasion implies fixation with high probability as $K\to\infty$. Note that all these assertions can easily be derived as special cases of the main results of the present paper. They are also not surprising given the results of~\cite{BT19},  but they cannot be proven with the methods of that paper, especially handling the second phase of invasion of trait 2 against trait 1 requires additional arguments, which can be chosen analogously to the Appendix, Section~\ref{sec-phase2HGTguys}.

In absence of HGT, $\lambda_i \leq \mu$ will lead to an extinction of the trait $i$ population in $O(1)$ time with high probability as $K \to \infty$. Further, also in the case $\lambda_1,\lambda_2>\mu$ there is no coexistence; founder control is also excluded apart from the boundary case when neither~\eqref{fitterw/oHGT} nor \eqref{lessfitw/oHGT} holds, which is the only case when $(\bar n_{1a},\bar n_{1d},0)$ and $(0,0,\bar n_2)$ are non-hyperbolic given that both traits are fit. This critical case requires further analysis, which we defer to later work.

Note that since $p>0$, $\sigma,\mu>0$ and $\kappa\geq 0$, \eqref{lessfitw/oHGT} with $\lambda_1,\lambda_2>\mu$ always implies that $\lambda_2>\lambda_1$. That is, trait 2, which lacks dormancy, can only invade trait 1 if it has a strictly higher birth rate than trait 1, and the difference must even be bounded away from zero (where the bound depends on the parameters). This is certainly not the case for $\tau>0$ where invasion of trait 2 can occur even in the case $\lambda_1>\mu \geq \lambda_2$.

Finally, let us also note that in the model of the present paper with HGT and dormancy, the condition~\eqref{mutantfitter2ineq} of founder control and unstable coexistence implies condition~\eqref{fitterw/oHGT}. This means that a positive but not very high amount of HGT can give rise to founder control in some cases where trait 1 would dominate trait 2 for $\tau=0$.

\newpage

\appendix
\section{Preliminaries: existence and stability of equilibria}\label{sec-preliminaries}
\subsection{Existence of a coexistence equilibrium}\label{sec-coexistenceproof}
This section is devoted to the proof of the results of Section~\ref{sec-coexistence}. We first verify Lemma~\ref{lemma-coexistence}.
\begin{proof}[Proof of Lemma~\ref{lemma-coexistence}.]
Assume that $(n_{1a},n_{1d},n_2)$ is a coexistence equilibrium. Making all the three equations of \eqref{3dimHGT} equal to zero and using that none of the coordinates is zero, we arrive at the following characterization of the coexistence equilibrium (cf.~\eqref{coexeq}).
\begin{equation}\numberthis\label{coexeqoncemore}
    \begin{aligned}
    n_{1a} & = \frac{C(\kappa\mu+\sigma)(\lambda_2-\lambda_1)+Cp\sigma(\mu-\lambda_2)+(\kappa\mu+\sigma)\tau(\lambda_2-\mu)}{\tau(Cp\sigma-(\kappa\mu+\sigma)\tau)}, \\
    n_{1d} & = \frac{pC(\lambda_2-\lambda_1) \big(C(\kappa\mu+\sigma)(\lambda_2-\lambda_1)+Cp\sigma(\mu-\lambda_2)+(\kappa\mu+\sigma)\tau(\lambda_2-\mu)\big)}{\tau(Cp\sigma-(\kappa\mu+\sigma)\tau)^2}, \\
    n_{2} & = \frac{C(\kappa\mu+\sigma)(\lambda_2-\lambda_1)+Cp\sigma(\mu-\lambda_2)+(\kappa\mu+\sigma)\tau(\lambda_1-\mu)}{-\tau(Cp\sigma-(\kappa\mu+\sigma)\tau)},
    \end{aligned}
\end{equation}
which already implies that there is at most one coexistence equilibrium. In particular,
\[ n_{1d} = \frac{pC(\lambda_2-\lambda_1)}{Cp\sigma-(\kappa\mu+\sigma)\tau} n_{1a}. \]
Therefore, in order to have positivity of both $n_{1a}$ and $n_{1d}$, one of the following conditions has to be satisfied:
\begin{enumerate}
    \item $\lambda_2>\lambda_1$ and $\tau<\frac{Cp\sigma}{\kappa\mu+\sigma}$.
    \item $\lambda_1>\lambda_2$ and $\tau>\frac{Cp\sigma}{\kappa\mu+\sigma}$. 
\end{enumerate}
In case the first condition is satisfied, both the numerator and the denominator of the expression for $n_{1a}$ in \eqref{coexeqoncemore} are positive. Hence, the denominator of the expression for $n_{2}$ is negative, and hence the numerator of the expression has also to be negative, which holds if and only if condition \eqref{mutantfitter2ineq} holds (which in particular implies that $\lambda_2>\lambda_1$). 

Similarly, if the second condition is satisfied, then both the numerator and the denominator of $n_{1a}$ in \eqref{coexeqoncemore} are negative. Therefore, the denominator of the expression for $n_{2}$ is positive, which implies that the numerator must also be positive, which is equivalent to condition \eqref{mutantlessfit2ineq} (which in particular implies that $\lambda_1>\lambda_2$). 
We conclude that a coexistence equilibrium exists if and only if either \eqref{mutantfitter2ineq} holds and $\tau < \frac{Cp\sigma}{\kappa\mu+\sigma}$ or \eqref{mutantlessfit2ineq} holds and $\tau > \frac{Cp\sigma}{\kappa\mu+\sigma}$. Hence, the proof of Lemma~\ref{lemma-coexistence} is finished as soon as we have verified the following lemma.
\end{proof}
\begin{lemma}\label{lemma-lessconditions}
Assume $\lambda_1>\mu$. Then, \eqref{mutantfitter2ineq} implies  $\tau<\frac{Cp\sigma}{\kappa\mu+\sigma}$ and \eqref{mutantlessfit2ineq} implies $\tau>\frac{Cp\sigma}{\kappa\mu+\sigma}$.
\end{lemma}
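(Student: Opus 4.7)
The strategy is to clear the denominator $\tau>0$, introduce light shorthand, and extract the conclusion from the fact that the parameters $p\in(0,1)$, $\kappa\ge 0$, $\sigma>0$ force the coefficient $A:=\frac{Cp\sigma}{\kappa\mu+\sigma}$ to satisfy $A<C$. Write $x:=\lambda_1-\mu$ and $y:=\lambda_2-\mu$; by assumption $x>0$. A direct computation shows that the middle expression of both \eqref{mutantfitter2ineq} and \eqref{mutantlessfit2ineq} equals $\tau^{-1}\bigl[(A-C)y+Cx\bigr]$. Multiplying through by $\tau>0$, condition \eqref{mutantfitter2ineq} is equivalent to
\[ \tau y \;>\; (A-C)y+Cx \;>\; \tau x, \]
and \eqref{mutantlessfit2ineq} is the reverse chain.

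\emph{Sign of $y-x$.} Combining the two inequalities in \eqref{mutantfitter2ineq} gives $\tau y>\tau x$, hence $y>x>0$; for \eqref{mutantlessfit2ineq} the same manipulation yields $x>y$ (with no sign constraint on $y$). This is the only place where the two conditions are manipulated jointly.

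\emph{Conclusion for \eqref{mutantfitter2ineq}.} Dividing the right-hand inequality $(A-C)y+Cx>\tau x$ by $x>0$ gives $\tau<(A-C)(y/x)+C$. Since $A-C<0$ and $y/x>1$, we have $(A-C)(y/x)<A-C$, so $\tau<A-C+C=A$, which is the first claim.

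\emph{Conclusion for \eqref{mutantlessfit2ineq}.} Dividing the (reversed) right-hand inequality by $x>0$ gives $\tau>(A-C)(y/x)+C$. Now $(A-C)(y/x)-(A-C)=(A-C)(y/x-1)$ is a product of two negative numbers (negative because $A<C$, and negative because $y<x$ and $x>0$), hence strictly positive; therefore $(A-C)(y/x)+C>A$, and $\tau>A$.

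\emph{Main obstacle.} There is no real obstacle: once one spots the substitution and multiplies by $\tau$, the proof is a short sign-analysis. The only subtlety worth flagging explicitly is that the strict inequality $A<C$ (which rests on $p<1$ and $\sigma\le\kappa\mu+\sigma$) is essential; without it the final comparisons would collapse to equalities. No stability, spectral, or dynamical-systems input is needed.
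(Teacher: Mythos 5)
Your proof is correct. The substitution is right: with $x=\lambda_1-\mu$, $y=\lambda_2-\mu$, $A=\frac{Cp\sigma}{\kappa\mu+\sigma}$, the middle expression of \eqref{mutantfitter2ineq}/\eqref{mutantlessfit2ineq} is indeed $\tau^{-1}[(A-C)y+Cx]$, the chain gives the sign of $y-x$ (matching the paper's observation that \eqref{mutantfitter2ineq} forces $\lambda_2>\lambda_1$ and \eqref{mutantlessfit2ineq} forces $\lambda_1>\lambda_2$), and the strict bound $A<C$ (from $p<1$, $\kappa\mu\ge0$, $\sigma>0$) together with $y/x\gtrless 1$ yields $\tau<A$ resp.\ $\tau>A$; the case $y\le 0$ under \eqref{mutantlessfit2ineq} is handled since only $y/x<1$ is used. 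Your route differs from the paper's in organization: the paper argues by contradiction, first establishing that the middle expression is positive under either condition (rewriting it with the factor $\frac{\kappa\mu+(1-p)\sigma}{\kappa\mu+\sigma}$), then assuming $\tau\ge A$ (resp.\ $\tau\le A$) so that the prefactor $\frac{Cp\sigma}{\tau(\kappa\mu+\sigma)}$ is at most (resp.\ at least) $1$, and deriving a chain of estimates contradicting the second inequality of the condition. Your direct normalization by $\tau$ and then by $x$ avoids both the contradiction and the preliminary positivity step, and makes the role of $A<C$ and of the sign of $y-x$ more transparent; the paper's version, on the other hand, keeps the computation in the original parameters, which it reuses in the proof of Corollary~\ref{cor-1fit}. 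Either argument is a complete proof of the lemma.
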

\begin{proof}
We first show that if $\lambda_1>\mu$, then the expression \[ \frac{Cp\sigma}{\tau(\kappa\mu+\sigma)} (\lambda_2-\mu)+\frac{C}{\tau}(\lambda_1-\lambda_2) \numberthis\label{middleterm} \]
appearing both in \eqref{mutantfitter2ineq} and \eqref{mutantlessfit2ineq} is positive under either of these equations. Indeed, if \eqref{mutantfitter2ineq} holds, then this positivity is clear from the second inequality of \eqref{mutantfitter2ineq}. On the other hand, it is true in general that \eqref{middleterm} equals
\[ \frac{C}{\tau} \frac{\kappa\mu+(1-p)\sigma}{\kappa\mu+\sigma} \Big[ (\lambda_1-\mu)\frac{\kappa\mu+\sigma}{\kappa\mu+(1-p)\sigma} -(\lambda_2-\mu) \Big], \]
which is clearly positive if $\lambda_1>\mu$ and \eqref{mutantlessfit2ineq} holds, since in this case $\lambda_1>\lambda_2$ also holds.

Let us first assume for a contradiction that \eqref{mutantfitter2ineq} holds with $\lambda_1>\mu$, in particular $\lambda_2>\lambda_1$, but $\tau \geq \frac{Cp\sigma}{\kappa\mu+\sigma}$. Then, using the positivity of \eqref{middleterm}, we can estimate  
\[
\begin{aligned}
\frac{Cp\sigma}{\tau(\kappa\mu+\sigma)}(\lambda_2-\mu)+\frac{C}{\tau}  &(\lambda_1-\lambda_2) =\frac{Cp\sigma}{\tau(\kappa\mu+\sigma)} \Big( \lambda_2-\mu+ \frac{\kappa\mu+\sigma}{p\sigma}(\lambda_1-\lambda_2) \Big) 
\\ &\leq  \lambda_2-\mu + \frac{\kappa\mu+\sigma}{p\sigma}(\lambda_1-\lambda_2) \\ & = - \frac{\kappa\mu-(1-p)\sigma}{p\sigma}(\lambda_2-\mu) + \frac{\kappa\mu+\sigma}{p\sigma}(\lambda_1-\mu) \\ & = \frac{\kappa\mu+(1-p)\sigma}{p\sigma} \Big( -(\lambda_2-\mu)+(\lambda_1-\mu) \frac{\kappa\mu+\sigma}{\kappa\mu+(1-p)\sigma} \Big)
\\ & \leq \frac{\kappa\mu+(1-p)\sigma}{p\sigma} \Big( -(\lambda_1-\mu) +(\lambda_1-\mu) \frac{\kappa\mu+\sigma}{\kappa\mu+(1-p)\sigma}\Big)=(\lambda_1-\mu),
\end{aligned}
\]
which contradicts the second inequality of \eqref{mutantfitter2ineq}. 

Second, let us assume for a contradiction that \eqref{mutantlessfit2ineq} holds with $\lambda_1>\mu$, in particular $\lambda_1>\lambda_2$, but $\tau \leq \frac{Cp\sigma}{\kappa\mu+\sigma}$. Then, since \eqref{middleterm} is again positive, we obtain
\[
\begin{aligned}
\frac{Cp\sigma}{\tau(\kappa\mu+\sigma)}(\lambda_2-\mu)+\frac{C}{\tau}   &(\lambda_1-\lambda_2)=\frac{Cp\sigma}{\tau(\kappa\mu+\sigma)} \Big( \lambda_2-\mu+ \frac{\kappa\mu+\sigma}{p\sigma}(\lambda_1-\lambda_2) \Big) 
\\ &\geq  \lambda_2-\mu + \frac{\kappa\mu+\sigma}{p\sigma}(\lambda_1-\lambda_2) \\ & = - \frac{\kappa\mu-(1-p)\sigma}{p\sigma}(\lambda_2-\mu) +  \frac{\kappa\mu+\sigma}{p\sigma}(\lambda_1-\mu) \\ & = \frac{\kappa\mu+(1-p)\sigma}{p\sigma} \Big( -(\lambda_2-\mu)+(\lambda_1-\mu) \frac{\kappa\mu+\sigma}{\kappa\mu+(1-p)\sigma} \Big)
\\ & \geq \frac{\kappa\mu+(1-p)\sigma}{p\sigma} \Big( -(\lambda_1-\mu) +(\lambda_1-\mu) \frac{\kappa\mu+\sigma}{\kappa\mu+(1-p)\sigma}\Big)=(\lambda_1-\mu),
\end{aligned}
\]
which contradicts the second inequality of \eqref{mutantlessfit2ineq}. 
Hence, the lemma follows.
\end{proof}
Finally, we prove Corollary~\ref{cor-1fit}. 
\begin{proof}[Proof of Corollary~\ref{cor-1fit}.]
Let us first assume that condition \eqref{mutantfitter2ineq} holds. Then, by Lemma~\ref{lemma-lessconditions}, it follows that $\tau<\frac{Cp\sigma}{\kappa\mu+\sigma}$. Assume now that $\lambda_1 \leq \mu$. Then, by \eqref{mutantfitter2ineq}, the numerator of the right-hand side of $n_{2}$ in \eqref{coexeqoncemore} must be negative. This together with the conditions $\tau<\frac{Cp\sigma}{\kappa\mu+\sigma}$, $\lambda_2>\lambda_1$, and $p \in (0,1)$ implies that
\[
\begin{aligned}
& C(\kappa\mu+\sigma)(\lambda_2-\lambda_1)+Cp\sigma(\mu-\lambda_2)+(\kappa\mu+\sigma)\tau(\lambda_1-\mu)  \\ & \geq C(\kappa\mu+\sigma)(\lambda_2-\lambda_1)+Cp\sigma(\mu-\lambda_2)+Cp\sigma(\lambda_1-\mu)  \\ & = C(\kappa\mu+\sigma-p\sigma) (\lambda_2-\lambda_1) > 0,
\end{aligned}
\]
which contradicts the assumption that the numerator of the expression for $n_2$ in \eqref{coexeqoncemore} is negative.

Second, let us assume that condition \eqref{mutantlessfit2ineq} is satisfied. Thanks to Lemma~\ref{lemma-lessconditions}, this implies that $\tau>\frac{Cp\sigma}{\kappa\mu+\sigma}$. Assume that $\lambda_1 \leq \mu$. Then, by \eqref{mutantlessfit2ineq}, the numerator of the right-hand side of the expression for $n_2$ in \eqref{coexeq} must be positive. This together with the conditions $\tau>\frac{Cp\sigma}{\kappa\mu+\sigma}$, $\lambda_2<\lambda_1$, and $p \in (0,1)$ yields
\[
\begin{aligned}
& C(\kappa\mu+\sigma)(\lambda_2-\lambda_1)+Cp\sigma(\mu-\lambda_2)+(\kappa\mu+\sigma)\tau(\lambda_1-\mu)  \\ & \leq C(\kappa\mu+\sigma)(\lambda_2-\lambda_1)+Cp\sigma(\mu-\lambda_2)+Cp\sigma(\lambda_1-\mu)  \\ & = C(\kappa\mu+\sigma-p\sigma) (\lambda_2-\lambda_1) < 0,
\end{aligned}
\]
which is again a contradiction.
\end{proof}

\subsection{Stability of equilibria}\label{sec-stabilityproof}
In this section, we verify Proposition~\ref{prop-stability3d} and Lemma~\ref{lemma-somebodyfit}, using Lemma~\ref{lemma-lessconditions}.
\begin{proof}[Proof of Proposition~\ref{prop-stability3d}.]
At any equilibrium $(\widehat n_{1a}, \widehat n_{1d}, \widehat n_2)$ we have the Jacobi matrix
\begin{equation}\label{3dJacobi}
A(\widehat n_{1a}, \widehat n_{1d}, \widehat n_2)= \begin{pmatrix}
\lambda_1-\mu-2 C\widehat n_{1a}-(C+\tau) \widehat n_2 & \sigma & (-C-\tau)\widehat n_{1a} \\
2 p C \widehat n_{1a} + p C \widehat n_2 & - \kappa\mu-\sigma & p C \widehat n_{1a} \\
(-C+\tau)\widehat n_{2} & 0 & \lambda_2-\mu-2 C \widehat n_{2}-(C-\tau) \widehat n_{1a}
\end{pmatrix}.
\end{equation}

At $(0,0,0)$, we have
\[ A(0,0,0)= \begin{pmatrix}
\lambda_1-\mu & \sigma & 0 \\
0 & - \kappa\mu-\sigma & 0 \\
0 & 0 & \lambda_2-\mu
\end{pmatrix}. \]
The eigenvalues of this matrix are its diagonal entries, and thus $\lambda_1-\mu>0$ implies that the matrix is indefinite. If $\lambda_2-\mu \geq 0$, it has only one negative eigenvalue, else (i.e., if trait 2 is individually strictly unfit) it has two ones. Either way, $(0,0,0)$ is unstable. 

At $(0,0,\smfrac{\lambda_2-\mu}{C})$, which equilibrium is contained in the closed positive orthant if and only if $\lambda_2>\mu$ (in which case it equals $(0,0,\bar n_2)$), the Jacobi matrix is given as
\[ A\Big(0,0,\frac{\lambda_2-\mu}{C}\Big) = \begin{pmatrix}
\lambda_1-\mu-\frac{(C+\tau)(\lambda_2-\mu)}{C} & \sigma & 0 \\
p(\lambda_2-\mu)& - \kappa\mu-\sigma & 0 \\
\frac{(-C+\tau)(\lambda_2-\mu)}{C} & 0 & -(\lambda_2-\mu)
\end{pmatrix}. \]
We immediately see that $-(\lambda_2-\mu)$ is an eigenvalue of this matrix, which is negative under the assumption that $\lambda_2>\mu$. Further, the determinant of $A(0,0,\smfrac{\lambda_2-\mu}{C})$ is the product of this eigenvalue and the determinant of
\[A_1\Big(0,0,\frac{\lambda_2-\mu}{C}\Big):= \begin{pmatrix}
\lambda_1-\mu-\frac{(C+\tau)(\lambda_2-\mu)}{C} & \sigma  \\
p(\lambda_2-\mu)& - \kappa\mu-\sigma
\end{pmatrix}. \]
Now, if the first inequality in \eqref{mutantfitter2ineq} is satisfied, then we have
\[ \det A_1\Big(0,0,\frac{\lambda_2-\mu}{C}\Big)=\Big(\lambda_1-\lambda_2-\frac{\tau}{C}(\lambda_2-\mu)\Big)(-\kappa\mu-\sigma)-p(\lambda_2-\mu)\sigma  >p(\lambda_2-\mu)\sigma -p(\lambda_2-\mu)\sigma =0. \]
On the other hand, the same arguments imply that
\[ \Tr A_1\Big(0,0,\frac{\lambda_2-\mu}{C}\Big) < -\frac{p\sigma}{\kappa\mu+\sigma} (\lambda_2-\mu)-\kappa\mu-\sigma<0. \]
Hence, both eigenvalues of $A_1(0,0,\smfrac{\lambda_2-\mu}{C})$ must have a strictly negative real part, which implies that $(0,0,\smfrac{\lambda_2-\mu}{C})$ is asymptotically stable. Else, $\det A_1(0,0,\frac{\lambda_2-\mu}{C})$ is nonpositive, in particular if the first inequality of \eqref{mutantlessfit2ineq} holds, then it is strictly negative, which implies that $A_1(0,0,\smfrac{\lambda_2-\mu}{C})$ is indefinite (with two real eigenvalues, one of them being positive and one of them negative), and hence $A(0,0,\smfrac{\lambda_2-\mu}{C})$ is also indefinite and $(0,0,\smfrac{\lambda_2-\mu}{C})$ is unstable. 
In the latter case, at least one of the eigenvalues of $A_1\Big(0,0,\frac{\lambda_2-\mu}{C}\Big)$ has to have a strictly positive real part, which implies that $(0,0,\smfrac{\lambda_2-\mu}{C})$ is unstable. 

Next, let us investigate the stability of the equilibrium $(\bar n_{1a}, \bar n_{1d}, 0)$. Thanks to Corollary~\ref{cor-1fit}, this equilibrium is unequal to $(0,0,0)$ unless the first inequality of \eqref{mutantfitter2ineq} and the second one of \eqref{mutantlessfit2ineq} holds, which is the only case when $\lambda_1\leq\mu$ is possible. Since we have already analysed the stability of $(0,0,0)$, in the following, without loss of generality we can assume that $\lambda_1>\mu$. Then, $(\bar n_{1a},\bar n_{1d},0)$ has two positive coordinates (cf.~\cite{BT19}), and at this equilibrium, we have the Jacobi matrix
\begin{equation}
A(\bar n_{1a}, \bar n_{1d}, 0)= \begin{pmatrix}
\lambda_1-\mu-2 C\bar n_{1a} & \sigma & (-C-\tau)\bar n_{1a} \\
2 p C \bar n_{1a} & - \kappa\mu-\sigma & p C \bar n_{1a} \\
0 & 0 & \lambda_2-\mu-(C-\tau)\bar n_{1a}
\end{pmatrix}.
\end{equation}
We immediately see that the last diagonal entry of the Jacobi matrix is an eigenvalue of the matrix, whereas the other two eigenvalues are equal to the eigenvalues of 
\[ A_1(\bar n_{1a}, \bar n_{1d}, 0)= \begin{pmatrix}
\lambda_1-\mu-2 C\bar n_{1a} & \sigma  \\
2 p C \bar n_{1a} & - \kappa\mu-\sigma
\end{pmatrix}. \]
Under the assumption that $\lambda_1>\mu$, both eigenvalues of this matrix are negative, see \cite[Section 2.2.1]{BT19}. Further, using the definition of $\bar n_{1a}$, the last diagonal entry equals
\[ \lambda_2-\mu-(C-\tau) \frac{\kappa\mu+\sigma}{\kappa\mu+(1-p)\sigma}\frac{\lambda_1-\mu}{C}.\]
This being negative is equivalent to the condition
\[ (\lambda_2-\mu)\frac{p\sigma-\kappa\mu-\sigma}{\kappa\mu+\sigma}>(\lambda_1-\mu)(1-C/\tau), \]
which is equivalent to the second inequality in \eqref{mutantfitter2ineq}; in this case the equilibrium $(\bar n_{1a}, \bar n_{1d}, 0)$ is asymptotically stable. Similarly, this being positive is equivalent to the second inequality in \eqref{mutantlessfit2ineq}; in this case the equilibrium is unstable.

Lastly, we analyse the stability of the coexistence equilibrium $(n_{1a},n_{1d},n_2)$. Thanks to Lemmas~\ref{lemma-coexistence} and \ref{lemma-lessconditions}, this equilibrium is contained in the open positive orthant if and only if either \eqref{mutantfitter2ineq} or \eqref{mutantlessfit2ineq} holds. Our goal is now to show that under condition \eqref{mutantfitter2ineq} the coexistence equilibrium is unstable. 
Hence, we want to determine the signs of the real parts of the eigenvalues of the Jacobi matrix corresponding to this equilibrium, which depend essentially on all the model parameters.

First of all, since $(n_{1a},n_{1d},n_2)$ is an equilibrium, all right-hand sides in \eqref{3dimHGT} are equal to zero. Using that each of the three coordinates are unequal to zero, this implies
\begin{align}
    \lambda_1-\mu-C(n_{1a}+n_2)-\tau n_2 + \sigma \frac{n_{1d}}{n_{1a}}&=0, \label{first-coexeq} \\
    pC(n_{1a}+n_2)-(\kappa\mu+\sigma) \frac{n_{1d}}{n_{1a}}&=0, \label{second-coexeq} \\
    \lambda_2-\mu-C(n_{1a}+n_2)+\tau n_{1a} & = 0. \label{third-coexeq}
\end{align}
Let us substitute \eqref{first-coexeq} and \eqref{third-coexeq} into \eqref{3dJacobi} for $(\widehat n_{1a},\widehat n_{1d},\widehat n_2)=(n_{1a},n_{1d},n_2)$. This gives that
\[ A(n_{1a},n_{1d},n_2)=\begin{pmatrix} - Cn_{1a} -\sigma \frac{n_{1d}}{n_{1a}} & \sigma & (-C-\tau)n_{1a} \\ 2pC n_{1a}+p C n_2 & -\kappa\mu-\sigma & pCn_{1a} \\ (-C+\tau) n_2 & 0 & - C n_{2} \end{pmatrix}. \numberthis\label{Arewritten} \]

During our analysis, it will be convenient to permute the order of the three equations of \eqref{3dimHGT}. To be more precise, for a permutation matrix $P \in \R^{3\times 3}$, $P A(n_{1a},n_{1d},n_2) P^T$ has the same eigenvalues as $A(n_{1a},n_{1d},n_2)$, and $P A(n_{1a},n_{1d},n_2) P^T$ also equals the Jacobi matrix of the permuted system of ODEs. We choose \[ P=\begin{pmatrix} 1 & 0 & 0 \\ 0 & 0 & 1 \\ 0 & 1 & 0 \end{pmatrix}, \] which yields $P A(n_{1a},n_{1d},n_2) P^T=:\widetilde A (n_{1a},n_{1d},n_{2})$ where 
\[ \widetilde A (n_{1a},n_{1d},n_{2})= \begin{pmatrix} -C n_{1a}-\sigma \frac{n_{1d}}{n_{1a}} & (-C-\tau)n_{1a} &  \sigma \\ (-C+\tau) n_2 & -C n_2 & 0 \\ 2pCn_{1a}+pC n_2 & p C n_{1a} & -\kappa\mu-\sigma \end{pmatrix}. \]
The second leading principal minor of this matrix equals
\[ C^2 n_{1a}n_2 + C \sigma \frac{n_{1d}}{n_{1a}} n_2-(C^2-\tau^2)n_{1a}n_2=C \sigma \frac{n_{1d}}{n_{1a}} n_2+\tau^2 n_{1a}n_2. \]
Hence, the determinant of the entire Jacobi matrix is given as
\[
\begin{aligned}
\det \widetilde A (n_{1a},n_{1d},n_{2}) &= -\big(C\sigma \frac{n_{1d}}{n_{1a}}n_2 + \tau^2 n_{1a} n_2\big)(\kappa\mu+\sigma) + C n_2 \sigma (2p C n_{1a}+p Cn_{2} ) - pC(C-\tau) n_{1a} n_2 \sigma 
\\ & = -\big(C\sigma \frac{n_{1d}}{n_{1a}} n_2+ \tau^2 n_{1a} n_2\big)(\kappa\mu+\sigma) + Cp\sigma n_2\big(C (n_{1a}+n_2)+\tau n_{1a}\big)
\\ & = \tau n_{1a} n_2 ((-\kappa\mu-\sigma)\tau + Cp\sigma) +C\sigma n_2 \big(Cp(n_{1a}+n_2) - \frac{n_{1d}}{n_{1a}}(\kappa\mu+\sigma)\big),
\\ & =  \tau n_{1a} n_2 ((-\kappa\mu-\sigma)\tau + Cp\sigma). 
\end{aligned}\numberthis\label{finaldet3d} \]
where in the last line we used \eqref{second-coexeq}. Now, under the assumption \eqref{mutantfitter2ineq}, the right-hand side of \eqref{finaldet3d} is positive. Hence, if there exists a pair of conjugate eigenvalues, their product is positive, and hence the third eigenvalue (where we count eigenvalues with multiplicity, in particular, the third eigenvalue is necessarily real) must be positive in order that the determinant is positive. Else, all the three eigenvalues must be real, and since their product is positive, there must be a positive one among them. We conclude that $(n_{1a},n_{1d},n_2)$ is unstable under condition~\eqref{mutantfitter2ineq}. We also note that since the trace of $A(n_{1a},n_{1d},n_2)$ is negative, at least one of the eigenvalues must have a negative real part. This concludes the proof. 
\end{proof} 
The next remark summarizes the consequences of the proof of Proposition~\ref{prop-stability3d} regarding the stability of $(n_{1a},n_{1d},n_2)$ under the assumption~\eqref{mutantlessfit2ineq} of `stable coexistence'.
\begin{remark}\label{remark-whatisleftfromstability}
If~\eqref{mutantlessfit2ineq} holds, then the determinant of the Jacobi matrix $A(n_{1a},n_{1d},n_2)$ is still given by the chain of equalities~\eqref{finaldet3d}, but the right-hand side is negative according to Lemma~\ref{lemma-lessconditions}. On the other hand, $A(n_{1a},n_{1d},n_2)$ still has a negative trace, which can be seen from~\eqref{Arewritten} and the fact that $(n_{1a},n_{1d},n_2)$ has three positive coordinates. It follows that there is either one eigenvalue with negative real part or three such eigenvalues. See Remark~\ref{remark-coexstability} for further details of the stability of this equilibrium. 
\end{remark}

Finally, we prove Lemma~\ref{lemma-somebodyfit}.
\begin{proof}[Proof of Lemma~\ref{lemma-somebodyfit}.]
Let us first verify the statement (i). Under its assumptions, we have
\[ \lambda_2-\mu < \frac{C p\sigma}{\tau(\kappa\mu+\sigma)}(\lambda_2-\mu) + \frac{C}{\tau}(\lambda_1-\lambda_2) > \lambda_1-\mu. \]
Since also by assumption, we have $\lambda_2>\mu$, $C,\tau,p,\sigma>0$, and $\kappa\geq 0$, it follows that
\[ \lambda_2-\mu < \frac{C}{\tau} \Big( \frac{p\sigma}{\kappa\mu+\sigma}(\lambda_2-\mu)+(\lambda_1-\lambda_2) \Big) < \frac{C}{\tau} (\lambda_2-\mu+\lambda_1-\lambda_2) = \frac{C}{\tau} (\lambda_1-\mu). \]
Since $\lambda_2>\mu$, this implies that $\lambda_1>\mu$. 

Let us now prove the assertion (ii). Under its assumptions, the following holds
\[ \lambda_2-\mu > \frac{C p\sigma}{\tau(\kappa\mu+\sigma)}(\lambda_2-\mu) + \frac{C}{\tau}(\lambda_1-\lambda_2) < \lambda_1-\mu, \numberthis\label{firstfirstsecondsecond} \]
further, $\lambda_1>\mu$. Let us now assume for contradiction that $\lambda_2 \leq \mu$. Then, we can estimate
\[ \lambda_1-\mu > \frac{C}{\tau} \Big( \frac{p\sigma}{\kappa\mu+\sigma}(\lambda_2-\mu)+(\lambda_1-\lambda_2) \Big) \geq \frac{C}{\tau} (\lambda_1-\mu) > \lambda_2-\mu. \]
But this contradicts \eqref{firstfirstsecondsecond}, hence the assertion (ii).
\end{proof}

\section{The infinitesimal generator of the population process}\label{sec-generator}
According to Section~\ref{sec-modeldef}, the infinitesimal generator $\widetilde{\mathcal L}$ of the continuous time Markov chain $(\mathbf N_t)_{t \geq 0} = ((N_{1a,t},N_{1d,t},N_{2,t}))_{t \geq 0}$ describing our population process maps bounded continuous functions $f \colon \N_0^3 \to \R$ to $\widetilde{\mathcal L} f \colon \N_0^3 \to \R$ and is given as follows. Let $(x_{1a},x_{1d},x_2) \in \N_0^3$, then we have
\[ 
\begin{aligned}
 \widetilde{\mathcal L} f(x_{1a},x_{1d},x_2) & = (f(x_{1a}+1,x_{1d},x_2)-f(x_{1a},x_{1d},x_2)) \lambda_1 x_{1a} \\ & \qquad + (f(x_{1a}-1,x_{1d},x_2)-f(x_{1a},x_{1d},x_2))(\mu+(1-p)\smfrac{C}{K}(x_{1a}+x_2)) x_{1a} \\
 & \qquad + (f(x_{1a}-1,x_{1d}+1,x_2)-f(x_{1a},x_{1d},x_2)) p\smfrac{C}{K}(x_{1a}+x_2) x_{1a} \\
& \qquad + (f(x_{1a}-1,x_{1d},x_2+1)-f(x_{1a},x_{1d},x_2)) \smfrac{\tau}{K} x_{1a} x_2 \\
& \qquad + (f(x_{1a},x_{1d}-1,x_2)-f(x_{1a},x_{1d},x_2))\kappa\mu x_{1d} \\ & \qquad + (f(x_{1a}+1,x_{1d}-1,x_2)-f(x_{1a},x_{1d},x_2)) \sigma x_{1d} \\
& \qquad + (f(x_{1a},x_{1d},x_2+1)-f(x_{1a},x_{1d},x_2))\lambda_2 x_2 \\
& \qquad + (f(x_{1a},x_{1d},x_2-1)-f(x_{1a},x_{1d},x_2))(\mu+\smfrac{C}{K}(x_{1a}+x_2))x_2. 
\end{aligned}
\]
Note that $\widetilde{\mathcal L}(\cdot)=\mathcal L(K\cdot)$ holds for the infinitesimal generator $\mathcal L$ of $(\mathbf N_t^K)_{t \geq 0}$ used in the proof of Lemmas~\ref{lemma-residentsstayHGTguys} and \ref{lemma-residentsstaydormants}.

\section{Invasion of trait 2 against trait 1: proof of Theorems~\ref{thm-invasionof2} and \ref{thm-failureof2}}\label{sec-HGTproofs}
The case when a trait without a dormant state tries to invade another trait being able to go dormant under competitive pressure has no direct analogue in \cite{BT19}, although various techniques from that paper can be adapted to handle it. We investigate the first, second, and third phase of invasion in Sections~\ref{sec-phase1HGTguys}, \ref{sec-phase2HGTguys}, and \ref{sec-phase2HGTguys}, respectively. As for the first and last phase of invasion, many ideas and techniques of the proof are borrowed from \cite[Section 3.1]{C+19}. 
\subsection{The first phase of invasion: mutant growth or extinction}\label{sec-phase1HGTguys}
In the case when trait 1 is resident and trait 2 is mutant, for $\eps>0$ we define the stopping time
\[ R_\eps^1: = \inf \Big\{ t \geq 0 \colon \max \{ \big| N_{1a,t}^K - \bar n_{1a} \big|, \big| N_{1d,t}^K - \bar n_{1d} \big| \} > \eps \Big\}, \numberthis\label{R1epsdef} \]
which is the first time that the resident population leaves the closed $\ell^\infty$-ball of radius $\eps$ around the equilibrium $(\bar n_{1a},\bar n_{1d})$. Certainly, this stopping time depends on $K$, but we omit this from the notation for simplicity. Then our main result about the first phase of invasion is the following.
\begin{prop}\label{prop-firstphaseHGTguys}
Assume that $\lambda_1>\mu$ and $\widetilde \lambda \neq 0$. Let $K \mapsto m_1^K=(m_{1a}^K,m_{1d}^K)$ be a function from $(0,\infty)$ to $[0,\infty) \times [0,\infty)$ such that $m_1^K \in( \smfrac{1}{K} \N_0) \times (\smfrac{1}{K} \N_0)  $ and $\lim_{K \to \infty} m_1^K=(\bar n_{1a},\bar n_{1d})$. Then there exists a constant $b>0$ and a function $\bar f \colon (0,\infty) \to (0,\infty)$ tending to zero as $\eps \downarrow 0$ such that
\[ \limsup_{K \to \infty} \Big| \P \Big( T_{\sqrt\eps}^2 < T_0^2 \wedge R_{b\sqrt\eps}^1, \Big| \frac{T_{\sqrt\eps}^2}{\log K} - \frac{1}{\widehat \lambda} \Big| \leq \bar f(\eps) \Big| \mathbf N_0^K=\big(m_{1a}^K,m_{1d}^K,\frac{1}{K}\big)  \Big)-(1-q_1) \Big| = o_\eps(1) \numberthis\label{invasiontimeHGTguys} \]
and
\[ \limsup_{K \to \infty} \Big| \P\Big(T_0^2 < T_{\sqrt\eps}^2 \wedge R_{b\sqrt\eps}^1~ \Big|  \mathbf N_0^K=\big(m_{1a}^K,m_{1d}^K,\frac{1}{K}\big)  \Big) - q_1 \Big|=o_{\eps}(1), \numberthis\label{secondofpropHGTguys} \]
where $o_{\eps}(1)$ tends to zero as $\eps \downarrow 0$.
\end{prop}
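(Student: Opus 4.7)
The plan is to apply the Champagnat-style coupling strategy (\cite{C06}) in the refined form developed in \cite{C+19,BT19}: sandwich the rescaled mutant count $KN_{2,t}^K$ between two linear birth--death branching processes on the interval $[0, T_{\sqrt\eps}^2 \wedge R_{b\sqrt\eps}^1]$, and then transfer asymptotic information (survival probability via continuity in the rates, growth rate via Kesten--Stigum) from the branching processes back to the original process. The three-dimensional nature of the process causes no essential new difficulty here, since while the mutants are small the active resident sits near $\bar n_{1a}$ and so the effective per-capita rates felt by a trait 2 individual are close to those of $(\widehat N_2(t))$.

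\textbf{Coupling step.} On the event $\{t \leq R_{b\sqrt\eps}^1 \wedge T_{\sqrt\eps}^2\}$ one has $N_{1a,t}^K \in [\bar n_{1a} - b\sqrt\eps,\bar n_{1a}+b\sqrt\eps]$ and $N_{2,t}^K \leq \sqrt\eps$, so the per-capita trait 2 birth rate $\lambda_2+\tau N_{1a,t}^K$ differs from $\lambda_2+\tau\bar n_{1a}$ by at most $\tau b\sqrt\eps$, and the per-capita death rate $\mu+C(N_{1a,t}^K+N_{2,t}^K)$ differs from $\mu+C\bar n_{1a}$ by at most $C(b+1)\sqrt\eps$. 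Using a thinning construction at the level of the driving Poisson point processes, I would build on a common probability space two linear branching processes $(Z_t^{K,-})$ and $(Z_t^{K,+})$ with constant rates $\lambda_2+\tau\bar n_{1a} \mp(\tau+C)b\sqrt\eps$ and $\mu+C\bar n_{1a} \pm Cb\sqrt\eps$ (signs chosen so the upper process over-allocates births and under-allocates deaths) such that pathwise $Z_t^{K,-}\leq KN_{2,t}^K\leq Z_t^{K,+}$ on the stopped interval. Their Malthusian parameters $\widehat\lambda^\pm$ and survival probabilities $1-q^\pm$ both converge, respectively, to $\widehat\lambda$ and $1-q_2$ as $\eps\downarrow 0$, by continuity of the quadratic extinction equation.

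\textbf{Branching process asymptotics.} In the supercritical case $\widehat\lambda>0$, the Kesten--Stigum theorem \cite{AN72} yields $e^{-\widehat\lambda^\pm t}Z_t^{K,\pm}\to W^\pm$ almost surely, with $\{W^\pm>0\}$ coinciding up to a null set with survival and $\P(W^\pm>0)=1-q^\pm$. On $\{W^\pm>0\}$, the first passage time of $Z^{K,\pm}$ to $\lfloor \sqrt\eps K\rfloor$ equals $\widehat\lambda^{\pm,-1}\log K + O_\P(1)$, and on extinction $Z^{K,\pm}$ dies out in $O_\P(1)$ time. Sandwiching converts this into the two conclusions of the proposition, with the slack $\bar f(\eps)$ absorbing the $O(\sqrt\eps)$ drift in $\widehat\lambda^\pm-\widehat\lambda$. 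In the subcritical case $\widehat\lambda<0$ (which is the remaining possibility given $\widetilde\lambda\neq 0$, by Remark~\ref{remark-simplifyconditions}), we have $q_2=1$ and standard estimates give $\P(T_{\sqrt\eps}^2<\infty)\to 0$ while the extinction time is $O_\P(1)$, matching \eqref{invasiontimeHGTguys}--\eqref{secondofpropHGTguys}.

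\textbf{Resident control and main obstacle.} It remains to verify that with high probability $R_{b\sqrt\eps}^1$ does not precede $T_{\sqrt\eps}^2\wedge T_0^2$, which is the main technical step. While $N_{2,t}^K\leq \sqrt\eps$, the pair $(N_{1a,t}^K,N_{1d,t}^K)$ is an $O(\sqrt\eps)$-drift perturbation of the two-dimensional monomorphic dynamics of trait 1 for which $(\bar n_{1a},\bar n_{1d})$ is asymptotically stable (\cite{BT19}). Choosing $b$ small but so that a linearly attractive neighbourhood of radius $b\sqrt\eps$ exists, a Doob $L^2$ bound on the compensated jump martingale combined with a discrete Gronwall estimate against the linearized drift (compare \cite[Sec.~3]{C+16}) gives that the rescaled fluctuations stay $O((\log K)/\sqrt K)=o(b\sqrt\eps)$ on any time window of length $O(\log K)$, yielding $\P(R_{b\sqrt\eps}^1<T_{\sqrt\eps}^2\wedge T_0^2)\to 0$. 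The hard part is that the coupling in Step~1 and the resident-control argument must be run simultaneously in a bootstrap: the coupling requires $R_{b\sqrt\eps}^1$ not to occur, while the resident control requires $N_{2,t}^K\leq\sqrt\eps$. I would close this loop in the standard way, by working on the event $\{t\leq T_{\sqrt\eps}^2\wedge R_{b\sqrt\eps}^1\}$ throughout and observing that the sandwiching branching processes themselves dominate/are dominated by $KN_{2,\cdot}^K$ for \emph{all} $t$ up to that stopping time; a union bound then shows that on the event where $Z^{K,+}$ stays below $2\sqrt\eps K$ and the resident stays within $b\sqrt\eps$ of equilibrium (both holding with probability $\to 1-q_2$, resp.\ $\to 1$), the original process attains $T_{\sqrt\eps}^2$ strictly before $R_{b\sqrt\eps}^1$, with the claimed logarithmic timing.
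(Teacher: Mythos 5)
Your treatment of the mutant population is essentially the paper's argument: a pathwise sandwich of $KN_{2,t}^K$ between two linear birth--death processes with rates shifted by $O(\sqrt\eps)$, valid up to $T_0^2\wedge T_{\sqrt\eps}^2\wedge R_{b\sqrt\eps}^1$, continuity of the extinction probability in the rates to recover $q_2$ (note that the limit the proof actually yields is $1-q_2$, the survival probability of $(\widehat N_2(t))$, so your use of $q_2$ matches the argument), and the Malthusian growth estimates of \cite[Section 7.5]{AN72} to pin the hitting time of $\lfloor\sqrt\eps K\rfloor$ at $(1\pm\bar f(\eps))\log K/\widehat\lambda$, with the subcritical case giving the vanishing invasion probability.

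Where you genuinely diverge from the paper is the resident-control lemma, and this is where your sketch has gaps. The paper does not bound the fluctuations of $(N_{1a,t}^K,N_{1d,t}^K)$ over a window of length $O(\log K)$; it couples the resident between two perturbed two-type logistic processes whose limiting ODEs have stable equilibria within $(b-1)\sqrt\eps$ of $(\bar n_{1a},\bar n_{1d})$, invokes Freidlin--Wentzell exit-time theory to keep these processes in the $b\sqrt\eps$-ball for a time $\e^{KV}$, and then closes the bootstrap by showing $\E[R_{b\sqrt\eps}^1\wedge T_0^2\wedge T_{\sqrt\eps}^2]\le\widetilde C\sqrt\eps K$ via Dynkin's formula applied to $g(\mathbf n)=\gamma n_2$, the sign of $\gamma$ being chosen according to the sign of $\lambda_2-\mu-(C-\tau)\bar n_{1a}$ (this is exactly where $\widetilde\lambda\neq0$ enters); Markov's inequality then makes $\e^{-KV}\cdot O(K)$ negligible. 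Your alternative has three concrete problems. First, the length of the first phase is a random time, not a priori $O(\log K)$, so "fluctuations stay $o(b\sqrt\eps)$ on any window of length $O(\log K)$" does not by itself control the resident up to $T_0^2\wedge T_{\sqrt\eps}^2$; you would first have to bound that duration (e.g.\ through the sandwiching branching processes, paying an extra $o_\eps(1)$ from $q^--q^+$), and your closing union bound does not do this — the event "$Z^{K,+}$ stays below $2\sqrt\eps K$" is (up to $o_K(1)$) the extinction event of $Z^{K,+}$ and has probability near $q_2$, not $1-q_2$, so the sentence as written is incoherent. Second, $b$ must be chosen \emph{large} enough (the paper takes $b\ge2$) to absorb the $O(\sqrt\eps)$ systematic displacement of the quasi-equilibrium caused by competition and HGT from a mutant population of size up to $\sqrt\eps K$, plus the initial offset $m_1^K-(\bar n_{1a},\bar n_{1d})$; "choosing $b$ small" goes in the wrong direction. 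Third, a Doob--Gronwall estimate over a window of length $C\log K$ naively produces an amplification factor $K^{LC}$; to get your claimed $O(\sqrt{\log K/K})$ bound you must exploit the exponential contraction of the linearization at the stable equilibrium and control the nonlinear remainder, which is precisely the work the paper outsources to the Freidlin--Wentzell exit-time result. None of these is unfixable, but as written the resident-control step — the only part of the proposition that is not a routine transfer from branching-process theory — is not yet a proof.
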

The assertion of this proposition is analogous to the one of \cite[Proposition 4.1]{BT19}, apart from the fact that it also includes the case when the approximating branching process $(\widehat N_{2}(t))_{t \geq 0}$ is subcritical. Two substantial differences from the setting of \cite{BT19} are that now the resident trait has a dormant state and the mutant trait does not, and that HGT is beneficial for the mutants and harmful for the residents (unlike in the invasion in the opposite direction analysed in Section~\ref{sec-phase1dormants}). We start the proof with the following lemma.
\begin{lemma}\label{lemma-residentsstayHGTguys}
Under the assumptions of Proposition~\ref{prop-firstphaseHGTguys}, there exist two positive constants $b$ and $\eps_0$ such that for any $\eps \in (0,\eps_0)$,
\[ \limsup_{K \to \infty} \P \big(R^1_{b\sqrt\eps} \leq T^2_{\sqrt\eps} \wedge T_0^2 \big)=0. \]
\end{lemma}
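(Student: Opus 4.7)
\begin{proofsect}{Proof plan}
The strategy is to compare the two-dimensional resident sub-population $(N^K_{1a,t}, N^K_{1d,t})$ with a deterministic two-dimensional dynamical system whose equilibrium is a small perturbation of $(\bar n_{1a},\bar n_{1d})$ for as long as the mutants remain confined below level $\sqrt\eps$. Concretely, on the event $\{t \leq T^2_{\sqrt\eps}\wedge T_0^2\}$ we have $N^K_{2,t}\in[0,\sqrt\eps]$, so the drift that the generator of Section~\ref{sec-generator} prescribes for $(N^K_{1a},N^K_{1d})$ differs from the drift of the pure trait-$1$ system \eqref{3dimnoHGT} with $n_2\equiv 0$ only by terms that are $O(\sqrt\eps)$. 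I would bound $(N^K_{1a},N^K_{1d})$ coordinatewise from above and below by two auxiliary Markov processes in which $N^K_{2,\cdot}$ is frozen, respectively, at $0$ and at $\sqrt\eps$; these couplings are straightforward because an increase in $n_2$ only speeds up the death of active trait-$1$ individuals (through competition and HGT) while the dormancy dynamics and birth dynamics of trait $1$ are unchanged.

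The scaling limits of these two bounding processes are two-dimensional competitive systems obtained from~\eqref{3dimHGT} by freezing $n_2$. For $\eps$ small enough, each of these systems inherits the local asymptotic stability of $(\bar n_{1a},\bar n_{1d})$ from the pure trait-$1$ system (by continuous dependence of eigenvalues of the Jacobian on $\eps$), and each possesses a unique hyperbolic equilibrium within distance $c\sqrt\eps$ of $(\bar n_{1a},\bar n_{1d})$, with $c$ depending only on the model parameters. By the Hartman--Grobman theorem, there is a local strict Lyapunov function $V$ for these perturbed deterministic systems on a neighbourhood of $(\bar n_{1a},\bar n_{1d})$ whose sub-level sets are contained in $\ell^\infty$-balls. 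I will show that the trajectory started from $m_1^K$ (which converges to $(\bar n_{1a},\bar n_{1d})$) stays inside the $\ell^\infty$-ball of radius $\tfrac{b}{2}\sqrt\eps$ for all times, for some $b$ depending only on $c$ and the Lyapunov constants.

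To transfer this deterministic statement to the stochastic bounding processes, I use the semimartingale decomposition obtained by applying $\widetilde{\mathcal L}$ to the coordinate functions, split into its finite-variation drift and its pure-jump martingale parts, and control the martingales via Doob's maximal inequality together with the fact that the jump rates and the squares of the jump sizes are bounded by $O(K)$ and $O(1/K^2)$ respectively on the event $\{t\leq R^1_{b\sqrt\eps}\wedge T^2_{\sqrt\eps}\}$. This yields, uniformly on intervals of length $O(\log K)$, a fluctuation bound of order $K^{-1/2}(\log K)^{1/2}$, which is $o(\sqrt\eps)$ for large $K$. Combined with a Gronwall argument comparing the drift of each bounding process to its deterministic counterpart, this shows that each bounding process stays in an $\ell^\infty$-neighbourhood of radius $b\sqrt\eps$ of $(\bar n_{1a},\bar n_{1d})$ until the stopping time $T^2_{\sqrt\eps}\wedge T_0^2$, with probability tending to $1$.

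The main obstacle is the long time horizon: on the survival event the mutant can take time of order $\log K$ to reach level $\sqrt\eps$, far beyond the $O(1)$ time scale on which standard fluid-limit convergence (as in \cite[Theorem 11.2.1]{EK}) applies. This is why the proof must rely on the \emph{stability} of $(\bar n_{1a},\bar n_{1d})$ for the limiting system rather than on a mere finite-time law of large numbers: the Lyapunov function prevents escape over arbitrarily long intervals, and the martingale bound only needs to survive a polylogarithmic time window, which it does with room to spare. This is exactly the scheme carried out in~\cite[Proof of Lemma~4.2]{BT19}, which I will adapt to the present two-dimensional setting with the modified drift caused by HGT from the mutant pool; the constants $b$ and $\eps_0$ can then be chosen to depend only on the model parameters $(\lambda_1,\mu,C,p,\sigma,\kappa,\tau)$.
\end{proofsect}
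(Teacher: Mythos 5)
Your overall architecture (sandwich the resident between two perturbed two\-type processes whose fluid limits have a stable equilibrium within $O(\sqrt\eps)$ of $(\bar n_{1a},\bar n_{1d})$) matches the paper, but the step where you handle the random time horizon has a genuine gap. You propose to control the martingale fluctuations ``uniformly on intervals of length $O(\log K)$'' and assert that this suffices because the window to survive is ``polylogarithmic''. At this point of the argument, however, you have no bound whatsoever on $T^2_{\sqrt\eps}\wedge T_0^2$: the statement that the mutant either dies out or reaches level $\sqrt\eps K$ within $O(\log K)$ time is itself only proved later (via branching-process couplings that are valid precisely on the event $\{t\le R^1_{b\sqrt\eps}\}$ you are trying to produce), so you would at least have to formulate and prove a polylogarithmic tail bound for the stopped time $R^1_{b\sqrt\eps}\wedge T^2_{\sqrt\eps}\wedge T_0^2$ --- and a Markov-inequality bound on its expectation (which is of order $\sqrt\eps K$, not $\log K$) is useless against a polylog margin. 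The paper avoids this entirely: it invokes the Freidlin--Wentzell exit estimates to get $\P(R^{1,i}_{b\sqrt\eps}>\e^{KV})\to 0$ for the bounding processes, so the ``confinement window'' is \emph{exponential} in $K$, and then it only needs $\E[R^1_{b\sqrt\eps}\wedge T_0^2\wedge T^2_{\sqrt\eps}]\le\widetilde C\sqrt\eps K$, which it gets from Dynkin's formula applied to $g(n_{1a},n_{1d},n_2)=\gamma n_2$, choosing the sign of $\gamma$ according to whether $\widehat\lambda>0$ or $\widehat\lambda<0$ (this is exactly where the hypothesis $\widetilde\lambda\neq 0$ enters). Your scheme either needs this exponential-horizon input (in which case you are reproving Freidlin--Wentzell by hand, and your plain Gronwall comparison over a window of length $c\log K$ must be replaced by an iteration exploiting the contraction, since Gronwall alone produces a factor $\e^{Lc\log K}=K^{Lc}$ that destroys the $\sqrt{\log K/K}$ fluctuation bound), or it needs the polylog stopping-time bound, which is missing.

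A secondary but real problem is your coupling: freezing $N_2$ at $0$ and at $\sqrt\eps$ does not give coordinatewise domination as stated, because increasing $n_2$ does \emph{not} leave the dormancy dynamics unchanged --- the switching rate is $pC(N_{1a}+N_2)N_{1a}$, so the ``upper'' process with $N_2\equiv 0$ has fewer switches into dormancy and its dormant coordinate need not dominate. The paper's bounding processes distribute the $\pm(C+\tau)\sqrt\eps$ and $\pm pC\sqrt\eps$ perturbations carefully between the death and switching channels, using that a competitive event sending an active individual to dormancy is better for the lineage than killing it but worse than no event at all; some version of that argument is needed to make your sandwich legitimate.
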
 
\begin{proof}
We verify this lemma via coupling the rescaled population size $\mathbf N_{1,t}^K=(N_{1a,t}^K,N_{1d,t}^K)$ with two two-type birth-and-death processes, $\mathbf N_{1,t}^1=(N_{1a,t}^1,N_{1d,t}^1)$ and $\mathbf N_{1,t}^2=(N_{1a,t}^2,N_{1d,t}^2)$, on time scales where the mutant population is still small compared to $K$. (The latter processes will also depend on $K$, but we omit the notation $K$ from their nomenclature for simplicity.) To be more precise, similarly to \cite[Section 3.1.2]{C+19}, our goal is to choose $(\mathbf N_{1,t}^1)_{t \geq 0}$ and $(\mathbf N_{1,t}^2)_{t \geq 0}$ so that
\[ N_{1\upsilon,t}^1 \leq N_{1\upsilon,t}^K \leq N_{1\upsilon,t}^2, \qquad \text{a.s.} \qquad \forall t \leq T_0^2 \wedge T_{\sqrt\eps}^2, \qquad \forall \upsilon \in \{ a, d \}. \numberthis\label{firstresidentcouplingHGTguys} \]

In order to satisfy \eqref{firstresidentcouplingHGTguys}, for all sufficiently small $\eps>0$, the processes $(\mathbf N_{1,t}^1)_{t \geq 0}$ and $(\mathbf N_{1,t}^2)_{t \geq 0}$ can be chosen with the following birth and death rates
\begin{align*}
\mathbf N_{1,t}^1 \colon  & \big( \frac{i}{K}, \frac{j}{K} \big)  \to \big( \frac{i+1}{K}, \frac{j}{K} \big) & \text{at rate } & i\lambda_1, \\
 & \big( \frac{i}{K}, \frac{j}{K} \big)  \to  \big( \frac{i-1}{K}, \frac{j}{K} \big) & \text{at rate } &i \big( \mu+C(1-p) \frac{i}{K} + (C+\tau) \sqrt\eps \big), \\
& \big( \frac{i}{K}, \frac{j}{K} \big) \to  \big( \frac{i-1}{K},  \frac{j+1}{K} \big) & \text{at rate }  &i Cp \frac{i}{K}, \\
& \big( \frac{i}{K}, \frac{j}{K} \big) \to  \big( \frac{i}{K},  \frac{j-1}{K} \big) & \text{at rate }  &j\kappa\mu, \\
& \big( \frac{i}{K}, \frac{j}{K} \big) \to  \big( \frac{i+1}{K},  \frac{j-1}{K} \big) & \text{at rate }  &j\sigma, \\
\end{align*}
and
\begin{align*}
\mathbf N_{1,t}^2 \colon  & \big( \frac{i}{K}, \frac{j}{K} \big)  \to \big( \frac{i+1}{K}, \frac{j}{K} \big) & \text{at rate } & i\lambda_1, \\
 & \big( \frac{i}{K}, \frac{j}{K} \big)  \to  \big( \frac{i-1}{K}, \frac{j}{K} \big) & \text{at rate } &i \big( \mu+C(1-p) \big( \frac{i}{K}-\sqrt\eps\big) \big), \\
& \big( \frac{i}{K}, \frac{j}{K} \big) \to  \big( \frac{i-1}{K},  \frac{j+1}{K} \big) & \text{at rate }  &i Cp \big( \frac{i}{K} + \sqrt\eps \big), \\
& \big( \frac{i}{K}, \frac{j}{K} \big) \to  \big( \frac{i}{K},  \frac{j-1}{K} \big) & \text{at rate }  &j\kappa\mu, \\
& \big( \frac{i}{K}, \frac{j}{K} \big) \to  \big( \frac{i+1}{K},  \frac{j-1}{K} \big) & \text{at rate }  &j\sigma. \\
\end{align*}
Informally speaking, the coupling \eqref{firstresidentcouplingHGTguys} holds thanks to the fact that for branching processes having the same kind of transitions as the trait 1 population in the frequency process defined in \ref{sec-modeldef}, competition-induced switching to dormancy is more favourable for an active individual than immediate death by competition, but not better, and for $\kappa>0$ strictly worse, than not being hit by a competitive event at all, further, HGT is unfavourable; see also \cite[Section 4.1]{BT19} for further details.

Let us estimate the time until which the processes $\mathbf N_1^1$ and $\mathbf N_1^2$ stay close to the value $(\bar n_{1a},\bar n_{1d})$ (in $\ell^\infty$-norm for simplicity). We define the stopping times
\[ R^{1,i}_{\eps}:=\inf \big\{ t \geq 0 \colon N^i_{1a,t} \notin [\bar n_{1a}-\eps,\bar n_{1a}+\eps] \text{ or } N^i_{1d,t} \notin [\bar n_{1d}-\eps,\bar n_{1d}+\eps] \big\}, \qquad i \in \{1,2\},~\eps>0. \]
As $K \to \infty$, according to \cite[Theorem 2.1, p.~456]{EK}, uniformly on any fixed time interval of the form $[0,T]$, $T>0$, $\mathbf N_{1,t}^{1}$ converges in probability to the unique solution to
\[
\begin{aligned}
\dot n_{1a,1}(t) & = n_{1a,1}(t) (\lambda_1-\mu-C n_{1a,1}(t)-(C+\tau) \sqrt\eps)+\sigma n_{1d,1}(t), \\
\dot n_{1d,1}(t) & = pCn_{1a,1}(t)^2-(\kappa\mu+\sigma)n_{1d,1}(t),
\end{aligned} \]
given that the initial conditions converge in probability to the initial condition of the limiting dynamical system.
Similarly, for large $K$, the dynamics of $\mathbf N_{1,t}^2$ is close to the one of the unique solution to
\[
\begin{aligned}
\dot n_{1a,2}(t) & = n_{1a,2}(t) (\lambda_1-\mu-C( n_{1a,2}(t)-\sqrt\eps))+\sigma n_{1d,2}(t), \\
\dot n_{1d,2}(t) & = pCn_{1a,2}(t)(n_{1a,2}(t)+\sqrt\eps)-(\kappa\mu+\sigma)n_{1d,2}(t).
\end{aligned} \]
The equilibria of the system of ODEs $(n_{1a,i}(t),n_{1d,i}(t))$, $i \in \{ 1, 2 \}$, are $(0,0)$ and respectively an equilibrium of the form $(\bar n_{1a}+o_\eps(1), \bar n_{1d}+o_\eps(1))$, which we denote by $(\bar n_{1a}^{i,\eps},\bar n_{1d}^{i,\eps})$. 
For $\eps>0$ small enough, the equilibrium $(0,0)$ is unstable and the one $(\bar n_{1a}^{i,\eps},\bar n_{1d}^{i,\eps})$ is asymptotically stable for all $i \in \{1,2\}$. Further, using a straightforward adaptation of the proof of \cite[Lemma 4.6]{BT19}, one can verify that for sufficiently small $\eps>0$, for any coordinatewise strictly positive initial condition, we have 
\[ \lim_{t \to \infty} \big( n_{1a,i}(t), n_{1d,i}(t) \big) = (\bar n_{1a}^{i,\eps},\bar n_{1d}^{i,\eps}), \qquad \forall i \in \{1,2\}. \]
This implies that there exists $\eps_0>0$ and $b \geq 2$ such that for all $0 <\eps \leq \eps_0$ and for all $i \in \{1,2\}$ and $j \in \{ 1a,1d\}$,
\[ \big|\bar n_{j}-\bar n_{j}^{i,\eps}\big| \leq (b-1) \sqrt\eps,  \qquad \text{ and } \qquad 0 \notin [\bar n_{j}-b\sqrt\eps, \bar n_{j}+b\sqrt\eps]. \numberthis\label{Bcond} \] 
Now, thanks to a result about exit of jump processes from a domain by Freidlin and Wentzell \cite[Chapter 5]{FW84} (see~\cite[Section 4.2]{C06} for more details in a very similar situation), 
there exists a family (over $K$) of Markov jump processes $\widetilde {\mathbf N}^1_1=(\widetilde {\mathbf N}^1_{1,t})_{t \geq 0}=(\widetilde N^1_{1a,t},\widetilde N^1_{1d,t})_{t \geq 0}$ with positive, bounded, Lipschitz continuous transition rates that are uniformly bounded away from 0 such that for
\[ \widetilde R^{1,i}_{\eps}:=\inf \big\{ t \geq 0 \colon \widetilde N^i_{1a,t} \notin [\bar n_{1a}-\eps,\bar n_{1a}+\eps] \text{ or } \widetilde N^i_{1d,t} \notin [\bar n_{1d}-\eps,\bar n_{1d}+\eps] \big\}, \qquad i \in \{1,2\},~\eps>0, \]
there exists $V>0$ such that
\[ \P(R^{1,1}_{b\sqrt\eps}>\e^{KV}) = \P(\widetilde R^{1,1}_{b\sqrt\eps}>\e^{KV}) \underset{K \to \infty}{\longrightarrow} 0. \numberthis\label{FW1HGTguys} \] 
Using similar arguments for $\mathbf N_{1}^2$, we derive that for $\eps>0$, $V>0$ small enough, we have that
\[ \P(R^1_{b\sqrt\eps}>\e^{KV})\underset{K \to \infty}{\longrightarrow} 0.  \numberthis\label{FW2HGTguys} \]
Now, on the event $\{ R_{b\sqrt\eps}^1 \leq T_0^2 \wedge T_{\sqrt\eps}^2 \}$ we have $R_{b\sqrt\eps}^1\geq R_{b\sqrt\eps}^{1,1} \wedge R_{b\sqrt\eps}^{1,2}$. Using \eqref{FW1HGTguys} and \eqref{FW2HGTguys}, we derive that
\[ \limsup_{K \to \infty} \P \big( R_{b\sqrt\eps}^1 \leq \e^{KV}, R_{b\sqrt\eps}^1 \leq T_0^2 \wedge T_{\sqrt\eps}^2 \big) = 0. \]
Moreover, using Markov's inequality,
\begin{align*}
    \P(R_{b\sqrt\eps}^1 \leq T_0^2 \wedge T_{\sqrt\eps}^2) & \leq \P \big( R_{b\sqrt\eps}^1 \leq \e^{KV}, R_{b\sqrt\eps}^1 \leq T_0^2 \wedge T_{\sqrt\eps}^2 \big) + \P(R_{b\sqrt\eps}^1 \wedge T_0^2 \wedge T_{\sqrt\eps}^2 \geq \e^{KV} \big) \\ &  \leq \P \big( R_{b\sqrt\eps}^1 \leq \e^{KV}, R_{b\sqrt\eps}^1 \leq T_0^2 \wedge T_{\sqrt\eps}^2 \big) + \e^{-KV} \E(R_{b\sqrt\eps}^1 \wedge T_0^2 \wedge T_{\sqrt\eps}^2 ). 
\end{align*}
Since we have
\[ \E \big[R_{b\sqrt\eps}^1\wedge T_0^2 \wedge T_{\sqrt\eps}^2 \big] \leq \E \Big[\int_0^{R_{b\sqrt\eps}^1\wedge T_0^2 \wedge T_{\sqrt\eps}^2} K N_{2,t}^K \d t \Big], \]
it suffices to show that there exists $\widetilde C>0$ such that 
\[ \E \Big[\int_0^{R_{b\sqrt\eps}^1\wedge T_0^2 \wedge T_{\sqrt\eps}^2} K N_{2,t}^K \d t \Big] \leq \widetilde C \sqrt\eps K. \numberthis\label{expectedstoppingtimeHGTguys} \]
This can be done similarly to \cite[Section 3.1.2]{C+19} (we note that a direct analogue of this argument cannot be found in \cite{BT19}), whereas the case that we have to handle is simpler since we consider a one-type mutant population instead of a two-type one. We claim that it is enough to show that there exists a function $g \colon (\smfrac{1}{K} \N_0)^3 \to \R$ defined as
\[ g(n_{1a},n_{1d},n_{2})=\gamma n_2 \numberthis\label{gammaHGTguys} \]
for a suitably chosen $\gamma \in \R$, such that
\[ \Lcal g(\mathbf N_{t}^K) \geq N_{2,t}^K \numberthis\label{largegeneratorHGTguys}\]
where $\Lcal$ is the infinitesimal generator of $(\mathbf N_t^K)_{t \geq 0}$ (cf.~Appendix~\ref{sec-generator}).
Indeed, if \eqref{largegeneratorHGTguys} holds, then thanks to Dynkin's formula we have
\[ 
\begin{aligned}
\E \Big[\int_0^{R_{b\sqrt\eps}^1 \wedge T_0^2 \wedge T_{\sqrt\eps}^2} K N_{2,t}^K \d t \Big] & \leq  \E \Big[\int_0^{R_{b\sqrt\eps}^1 \wedge T_0^2 \wedge T_{\sqrt\eps}^2} K\Lcal g(\mathbf N_{t}^K)\d t \Big] =\E\big[ K g(\mathbf N_{R_{b\sqrt\eps}^1 \wedge T_0^2 \wedge T_{\sqrt\eps}^2}^K)-Kg(\mathbf N_{0}^K) \big] \\
& \leq |\gamma| (\sqrt\eps K-1),
\end{aligned}
\]
which implies the existence of $b,\eps>0$ such that \eqref{expectedstoppingtimeHGTguys} holds for all $\eps>0$ small enough. Here, Dynkin's formula can indeed be applied because $ \E [R_{b\sqrt\eps}^1\wedge T_0^2 \wedge T_{\sqrt\eps}^2]$ is finite. That holds because given our initial conditions, with positive probability the single initial trait 2 individual dies due to natural death within a unit length of time before any event of the process $\mathbf N_t^K$ occurs, and hence already $T_0^2$ is stochastically dominated by a geometric random variable, which has all moments.
Now, we have
\[ \mathcal L g(\mathbf N_t^K) = N_{2,t}^K \gamma \big( \lambda_2-\mu-C N_{2,t}^K - (C-\tau)  N_{1a,t}^K \big). \]
As long as $t \leq R_{b\sqrt\eps}^1 \wedge T_0^2 \wedge T_{\sqrt\eps}^2$, we have that
\[  \gamma\big| \big( \lambda_2-\mu - C N_{2,t}^K - (C-\tau)  N_{1a,t}^K \big)- \big(\lambda_2-\mu- (C-\tau) \bar n_{1a} \big) \big| \leq b (C+\tau) \sqrt\eps+ C \sqrt\eps=o_\eps(1). \]
As we have seen in Section~\ref{sec-phase13}, the sign of $\lambda_2-\mu-(C-\tau) \bar n_{1a} $ is positive (respectively 0 respectively negative) if and only if the approximating branching process $(\widehat N_2(t))_{t \geq 0}$ is supercritical (respectively critical respectively subcritical). Further, under the assumption that $\widetilde \lambda \neq 0$, the branching process is not critical (cf.~Remark~\ref{remark-simplifyconditions}). Hence, if it is supercritical (respectively subcritical), one can choose $\gamma>0$ (respectively $\gamma<0$) such that  $\gamma (\lambda_2-\mu-(C-\tau) \bar n_{1a})>1$. For all sufficiently small $\eps>0$, this implies the existence of $\gamma$ such that the corresponding $g$ satisfies \eqref{largegeneratorHGTguys}.
\end{proof}
\begin{proof}[Proof of Proposition~\ref{prop-firstphaseHGTguys}]
Now, we consider our population process on the event
\[ A_\eps := \{ T_0^2 \wedge T_{\sqrt\eps}^2 < R_{b\sqrt\eps}^1 \} \]
for sufficiently small $\eps>0$. In this event, the invasion or extinction of the mutant population will happen before the resident population leaves a small neighbourhood of its two-coordinate equilibrium $(\bar n_{1a},\bar n_{1d})$. On $A_\eps$ we couple the process $KN_{2,t}^K$ with two branching processes $N_{2,t}^{\eps,1}$ and $N_{2,t}^{\eps,2}$ on $\N_0$ (which also depend on $K$, but we omit this from the notation for readability) such that almost surely, for any $t <  t_\eps:=T_0^2 \wedge T_{\sqrt\eps}^2 \wedge R_{b\sqrt\eps}^1$,
\[ \begin{aligned}
N_{2,t}^{\eps,1} & \leq \widehat N_{2}(t) \leq N_{2,t}^{\eps,2}, \\
N_{2,t}^{\eps,1} & \leq K N_{2,t}^K \leq N_{2,t}^{\eps,2},
\end{aligned} \numberthis\label{mutantcouplingHGTguys}
\]
where we again recall the approximating branching process $\widehat N_{2}(t)$ defined in Section~\ref{sec-phase13}.
We claim that in order to satisfy \eqref{mutantcouplingHGTguys}, these processes can be defined as follows:
\[ \begin{aligned}
N_{2,t}^{\eps,1} \colon \quad & i \to i+1 & \text{at rate} & \quad i (\lambda_2+\tau (\bar n_{1a}-b\sqrt\eps )), \\
& i \to i-1 & \text{at rate}  &\quad i(\mu+C(\sqrt\eps+\bar n_{1a}+b\sqrt\eps)),
\end{aligned}
\]
and
\[ \begin{aligned}
N_{2,t}^{\eps,2} \colon \quad & i \to i+1 & \text{at rate} & \quad i (\lambda_2+\tau (\bar n_{1a}+b\sqrt\eps )), \\
& i \to i-1 & \text{at rate}  &\quad i(\mu+C(\bar n_{1a}-b\sqrt\eps)).
\end{aligned}
\]
Informally speaking, the coupling \eqref{mutantcouplingHGTguys} holds because in order to increase (decrease) a branching process of a population that can only play the role of transmitter in HGT, one needs to decrease (increase) its competitive death rate and increase (decrease) its HGT rate. 

For $j \in \{ 1,2\}$, let $q_2^{(\eps,j)}$ denote the extinction probability of the process $N_{2,t}^{\eps,j}$ started from $N_{2,0}^{\eps,j}=1$. The extinction probability of a branching process of a HGT-transmitting population is continuous with respect to the competitive event and HGT rates of the process, apart from the point where the branching process is critical. Further, the extinction probability increases with the rate of death by competition and decreases with the HGT rate. These assertions are proven in \cite[Sections A.3]{C+19}, where we note that HGT for the trait 2 population is just a special case of birth.

Hence, it follows from the first line of \eqref{mutantcouplingHGTguys} that for fixed $\eps>0$,
\[ q_2^{(\eps,2)} \leq q_2 \leq q_2^{(\eps,1)} \]
and for $j \in \{ 1,2\}$,
\[ 0 \leq \liminf_{\eps \downarrow 0} \big| q_2^{(\eps,j)}-q_2 \big| \leq \limsup _{\eps \downarrow 0} \big| q_2^{(\eps,j)}-q_2 \big| \leq \limsup_{\eps \downarrow 0} \big| q_2^{(\eps,1)}-q_2^{(\eps,2)} \big| = 0, \numberthis\label{qineqHGTguys} \]
where we recall the extinction probability $q_2$ of the approximating branching process $(\widehat N_{2}(t))_{t \geq 0}$ defined in \eqref{q2def}.  

Next, we show that the probabilities of extinction and invasion of the actual process $N_{2,t}^K$ also converge to $q_2$ and $1-q_2$, respectively, with high probability as $K \to \infty$. We define the stopping times, for $j \in \{ 1,2 \}$,
\[ T_x^{(\eps,j),2} := \inf \{ t >0 \colon N^{\eps,j}_{2,t} = \lfloor K x \rfloor \}, \qquad x \in \R. \]
Using the coupling in the second line of \eqref{mutantcouplingHGTguys}, which is valid on $A_\eps$, we have
\[ \P\big(T_{\sqrt\eps}^{(\eps,1),2} \leq T_0^{(\eps,1),2}, A_\eps\big) \leq \P\big(T_{\sqrt\eps}^{2} \leq T_0^{2},A_\eps \big) \leq  \P\big(T_{\sqrt\eps}^{(\eps,2),2} \leq T_0^{(\eps,2),2}, A_\eps\big). \numberthis\label{couplednonextinctionHGTguys} \] 
Indeed, if a process reaches the size $K\sqrt\eps$ before dying out, then the same holds for a larger process as well. However, $A_\eps$ is independent of $(N_{2,t}^{\eps,j})_{t \geq 0}$ for both $j = 1$ and $j = 2$, and thus
\[ \liminf_{K \to \infty} \P\big( T_{\sqrt\eps}^{(\eps,1),2}\leq T_0^{(\eps,1),2}, A_\eps \big)=\liminf_{K \to \infty}   \P(A_\eps)\P\big( T_{\sqrt\eps}^{(\eps,1),2}  \leq T_{0}^{(\eps,1),2} \big) \geq (1-q_2^{(\eps,1)})(1-o_\eps(1)) \numberthis\label{eps-LB-HGTguys} \]
and
\[ \limsup_{K \to \infty}  \P\big( T_{\sqrt\eps}^{(\eps,2),2}\leq T_0^{(\eps,2),2}, A_\eps \big) =\limsup_{K \to \infty}   \P(A_\eps) \P\big( T_{\sqrt\eps}^{(\eps,2),2} \leq T_{0}^{(\eps,2),2} \big) \leq 1-q_2^{(\eps,2)}(1+o_\eps(1)). \numberthis\label{eps+UB-HGTguys} \]
Letting $K \to \infty$ in \eqref{couplednonextinctionHGTguys} and applying \eqref{eps-LB-HGTguys} and \eqref{eps+UB-HGTguys} yields that 
\[
\begin{aligned}
(1-q_2^{(\eps,1)})(1-o_\eps(1)) & \leq \liminf_{K \to \infty} \P\big( T_{\sqrt\eps}^{(\eps,1),2} \leq T_0^{(\eps,1),2}, A_\eps \big) \\ & \leq \limsup_{K \to \infty}  \P\big( T_{\sqrt\eps}^{(\eps,2),2}\leq T_0^{(\eps,2),2}, A_\eps \big) \\ &\leq  (1-q_2^{(\eps,2)})(1+o_\eps(1)).
\end{aligned} \]
Hence,
\[ \limsup_{K \to \infty} \big| \P(T_{\sqrt\eps}^{2} \leq T_0^{2},A_\eps )-(1-q_2) \big|=o_\eps(1), \]
as required. The equation \eqref{secondofpropHGTguys} can be derived similarly. 

Finally, we show that in the case of invasion (which happens with probability tending to $1-q_2$) the time before the mutant population reaches size $K \sqrt\eps$ is of order $\log K/\widehat \lambda$, where we recall that $\widehat \lambda$ was defined in \eqref{lambdahatdef} as the birth rate minus the death rate of the approximating branching process $\widehat N_{2}(t)$. Having \eqref{secondofpropHGTguys}, we can without loss of generality assume that $q_2<1$, which is equivalent to the assumption that \eqref{secondinmutantlessfit2ineq} holds.

For $j \in \{ 1,2 \}$, let $\widehat\lambda^{(\eps,j)}$ denote the difference of the birth rate and the death rate of the process $N_{2,t}^{\eps,j}$. This difference is positive for all sufficiently small $\eps>0$ and converges to $\widehat \lambda$ as $\eps \downarrow 0$. In other words, there exists a nonnegative function $\bar f \colon (0,\infty) \to (0,\infty)$ with $\lim_{\eps \downarrow 0} \bar f(\eps)=0$ such that for any $\eps>0$ small enough,
\[ \Big| \frac{\widehat\lambda^{(\eps,j)}}{\widehat\lambda}-1\Big| \leq \frac{\bar f(\eps)}{2}. \numberthis\label{eigenvaluesclose}\]
Let us fix $\eps$ small enough such that \eqref{eigenvaluesclose} holds. Then from the second line of \eqref{mutantcouplingHGTguys} we deduce that
\[ \P \Big( T^{(\eps,1),2}_{\sqrt\eps} \leq T^{(\eps,1),2}_{0} \wedge \frac{\log K}{\widehat \lambda} (1+\bar f(\eps)),A_\eps \Big) \leq \P \Big( T^{2}_{\sqrt\eps} \leq T^{2}_{0} \wedge \frac{\log K}{\widehat \lambda} (1+\bar f(\eps)),A_\eps \Big). \]
Using this together with the independence between $A_\eps$ and $(N_{2,t}^{\eps,j})_{t \geq 0}$ and employing \cite[Section 7.5]{AN72}, we obtain for $\eps>0$ small enough (in particular such that $\bar f(\eps)<1$)
\[
\begin{aligned}
& \liminf_{K \to \infty} \P \Big( T^{(\eps,1),2}_{\sqrt\eps} \leq T^{(\eps,1),2}_{0} \wedge \frac{\log K}{\widehat \lambda} (1+\bar f(\eps)),A_\eps \Big) \\
\geq & \liminf_{K \to \infty} \P \Big( T^{(\eps,1),2}_{\sqrt\eps} \leq \frac{\log K}{\widehat \lambda} (1+\bar f(\eps)) \Big) \P(A_\eps) \\
\geq & \liminf_{K \to \infty} \P \Big( T^{(\eps,1),2}_{\sqrt\eps} \leq \frac{\log K}{\widehat \lambda^{(\eps,1)}} \big( 1-\frac{\bar f(\eps)}{2} \big) (1+\bar f(\eps))  \Big) \P(A_\eps) \\
\geq &  \liminf_{K \to \infty} \P \Big( T^{(\eps,1),2}_{\sqrt\eps} \leq \frac{\log K}{\widehat \lambda^{(\eps,1)}} \Big) \P(A_\eps) \\
\geq & (1-q_2^{(\eps,1)})(1-o_\eps(1)). 
\end{aligned} \numberthis\label{festimatesHGTguys}
\]
Similarly, using the second line of \eqref{mutantcouplingHGTguys}, we derive that for all sufficiently small $\eps>0$ 
\begin{align*}
    \P \Big( T^{(\eps,2),2}_{\sqrt\eps} \leq T^{(\eps,2),2}_{0} \wedge \frac{\log K}{\widehat \lambda} (1-\bar f(\eps)),A_\eps \Big) \geq \P \Big( T^{2}_{\sqrt\eps} \leq T^{2}_{0} \wedge \frac{\log K}{\widehat \lambda} (1-\bar f(\eps)),A_\eps \Big),
\end{align*}
and arguments analogous to the ones used in \eqref{festimatesHGTguys} imply that
\[ \liminf_{K \to \infty} \P \Big( T^{(\eps,2),2}_{\sqrt\eps} \leq T^{(\eps,2),2}_{0}, T^{(\eps,2),2}_{\sqrt\eps} \geq \frac{\log K}{\widehat \lambda} (1-\bar f(\eps)),A_\eps \Big) \geq (1-q_2^{(\eps,2)})(1+o_\eps(1)). \]
These together imply \eqref{invasiontimeHGTguys}, which concludes the proof of the proposition.
\end{proof}

\subsection{The second phase of invasion: Lotka--Volterra phase}\label{sec-phase2HGTguys}
Assume that $q_2<1$, in other words, \eqref{secondinmutantlessfit2ineq} holds. Then, for large $K$, with probability close to $1-q_2$, after the end of the first phase, the rescaled population process $\mathbf N_t^K=(N_{1a,t}^K,N_{1d,t}^K,N_{2,t}^K)$ is contained in the set 
\[ \mathfrak B^3_\eps=[\bar n_{1a}-b\sqrt\eps,\bar n_{1a}+b\sqrt\eps] \times [\bar n_{1d}-b\sqrt\eps, \bar n_{1d}+b\sqrt\eps] \times [\smfrac{\sqrt \eps}{2}, \sqrt \eps], \numberthis\label{BHGTguys} \] given that $K$ is so large that $ \smfrac{\lfloor \sqrt \eps K \rfloor}{K} \geq \smfrac{\sqrt \eps}{2} $, where we recall the constant $b$ from Proposition~\ref{prop-firstphaseHGTguys}. Starting from this point in time, as long as all the three sub-populations are comparable to $K$, the process $\mathbf N_t^K$ can well be approximated by the solution of the limiting dynamical system \eqref{3dimHGT}, according to \cite[Theorem 2.1, p.~456]{EK}. More precisely, the process converges in probability uniformly over compact time intervals given convergence of initial conditions in probability to the limiting initial condition, analogously to the proof of Lemma~\ref{lemma-residentsstayHGTguys}. Now, we have two different cases to treat. Namely, we want to show that starting from such an initial condition, if \eqref{firstinmutantlessfit2ineq} holds, then the dynamical system converges (as $t \to \infty$) to the coexistence equilibrium $(n_{1a},n_{1d},n_2)$, whereas if \eqref{reversefirstin} holds, then it converges to the one-type equilibrium $(0,0,\bar n_2)$. In the latter case, we will additionally have a third phase of invasion where trait 2 reaches fixation, which will be analysed in Section~\ref{sec-phase3HGTguys}. Note that such an assertion extends the statement of Proposition~\ref{prop-stability3d} about local stability of equilibria into a statement about global stability.

We proceed with the second phase of invasion in the present section as follows. We first verify Lemma~\ref{lemma-effectivecomp}, which investigates the competitive pressure felt by the type 1 and type 2 active population near the equilibria with at least one positive coordinate. Based on this, we prove Proposition~\ref{prop-convergenceHGTguys}, which asserts the convergence of the dynamical system towards the corresponding stable equilibrium as $t \to \infty$. Let us introduce $\widetilde n_2=\frac{\lambda_2-\mu}{C}$, which is equal to $\bar n_2$ if $\lambda_2 \geq \mu$. Note that $(0,0,\widetilde n_2)$ is always an equilibrium of \eqref{3dimHGT}, although not necessarily a coordinatewise nonnegative one. Recall further that $(n_{1a},n_{1d},n_2)$ denotes the coexistence equilibrium defined in \eqref{coexeqoncemore}.

\begin{proof}[Proof of Lemma~\ref{lemma-effectivecomp}.]
Assertion \eqref{n2largeduetoHGT} can be verified via turning the inequality $C\widetilde n_2>(C-\tau)\bar n_{1a}$ into \eqref{secondinmutantlessfit2ineq} via elementary transformations, using the definitions of $\bar n_{1a}$ (cf.~\eqref{n1abardef}) and $\widetilde n_2$, the assumption that $\lambda_1>\mu$, and the facts that $\tau,C,\sigma>0$, $p \in (0,1)$, and $\kappa \geq 0$. We leave the details for the reader.  

As for assertion~\eqref{compeq}, note that thanks to Lemma~\ref{lemma-lessconditions}, if $(n_{1a},n_{1d},n_2)$ exists as a coordinatewise positive equilibrium, then $Cp\sigma \neq (\kappa\mu+\sigma)\tau$. Now, using \eqref{coexeq}, we compute
\[
\begin{aligned}
C&(n_{1a}+n_2)-\tau n_{1a} \\& = \frac{C(\kappa\mu+\sigma)\tau(\lambda_2-\lambda_1)-\tau C(\kappa\mu+\sigma)(\lambda_2-\lambda_1)-\tau Cp\sigma(\mu-\lambda_2)-(\kappa\mu+\sigma)\tau^2 (\lambda_2-\mu)}{\tau (Cp\sigma-(\kappa\mu+\sigma)\tau)} \\ & = \frac{Cp\sigma(\lambda_2-\mu)+(\kappa\mu+\sigma)\tau(\lambda_2-\mu)}{Cp\sigma-(\kappa\mu+\sigma)\tau} = \lambda_2-\mu = C \widetilde n_2,
\end{aligned} \]
as required.

Assertion~\eqref{coexcompineq} can be verified similarly, using again that in view of Lemma~\ref{lemma-lessconditions}, \eqref{firstinmutantlessfit2ineq} together with \eqref{secondinmutantlessfit2ineq} implies $Cp\sigma < (\kappa\mu+\sigma)\tau$. Using \eqref{coexeq} again, we obtain
\[ 
\begin{aligned}
C&(n_{1a}+n_2) + \tau n_2 \\& = \frac{C(\kappa\mu+\sigma)\tau(\lambda_2-\lambda_1)-C(\kappa\mu+\sigma)(\lambda_2-\lambda_1)\tau+Cp\sigma(\lambda_2-\mu)\tau-(\kappa\mu+\sigma)\tau(\lambda_1-\mu)}{\tau(Cp\sigma-(\kappa\mu+\sigma)\tau)} \\ & = \frac{Cp\sigma(\lambda_2-\mu)-(\kappa\mu+\sigma)(\lambda_1-\mu)}{Cp\sigma-(\kappa\mu+\sigma)} \geq \frac{Cp\sigma(\lambda_1-\mu)-(\kappa\mu+\sigma)(\lambda_1-\mu)}{Cp\sigma-(\kappa\mu+\sigma)} = \lambda_1-\mu,
\end{aligned}
\]
as wanted.

Next, we prove assertion~\eqref{trait2compineq}. Under the assumptions of the assertion, using that~\eqref{reversefirstin} holds, we obtain
\[ \lambda_2-\mu > \frac{Cp\sigma}{\tau(\kappa\mu+\sigma)}(\lambda_2-\mu) + \frac{C}{\tau} (\lambda_1-\mu) - \frac{C}{\tau} (\lambda_2-\mu). \]
In other words,
\[ (\lambda_2 - \mu)\Big( 1 + \frac{C}{\tau}\frac{\kappa\mu+(1-p)\sigma}{\kappa\mu+\sigma} \Big) > \frac{C}{\tau} (\lambda_1-\mu). \numberthis\label{abitlessisstillabitmore}\]
Thanks to part~\eqref{if1fit2fit} of Lemma~\ref{lemma-somebodyfit}, we have $\lambda_2>\mu$ under the assumptions of the assertion, and hence \eqref{abitlessisstillabitmore} implies
\[ (\lambda_2-\mu) (\tau+C) > C(\lambda_1-\mu), \]
and hence assertion~\eqref{trait2compineq} follows. 

Finally, let us consider assertion~\eqref{trait2coexcompineq}. If $\lambda_2 \leq \mu$, then there is nothing to show. Else, the proof can be carried out analogously to the one of assertion~\eqref{trait2compineq}. 
\end{proof}
Next, we state and prove Proposition~\ref{prop-convergenceHGTguys}.
\begin{prop}\label{prop-convergenceHGTguys}
Assume that the initial condition $(n_{1a}(0),n_{1d}(0),n_{2}(0))$ of the dynamical system \eqref{3dimHGT} is contained in $\mathfrak B^3_\eps$. Then, given that $\eps>0$ is sufficiently small, the following assertions hold about the solution $t \mapsto (n_{1a}(t),n_{1d}(t),n_2(t))$ of \eqref{3dimHGT}.
\begin{enumerate}
    \item If \eqref{reversefirstin} and \eqref{secondinmutantlessfit2ineq} hold, then
    \[ \lim_{t \to \infty} (n_{1a}(t),n_{1d}(t),n_2(t)) =(0,0,\bar n_2), \numberthis\label{fixationof2} \]
    \item whereas if \eqref{firstinmutantlessfit2ineq} and \eqref{secondinmutantlessfit2ineq} hold, then 
    \[ \lim_{t \to \infty} (n_{1a}(t),n_{1d}(t),n_2(t)) = (n_{1a},n_{1d},n_2). \numberthis\label{convergencetocoexistence} \]
\end{enumerate}
\end{prop}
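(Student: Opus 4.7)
The plan is to establish the claimed global convergence by (i) trapping the trajectory in a compact subset of the open positive orthant, (ii) ruling out extinction of trait~$2$, and (iii) identifying the $\omega$-limit set as the unique candidate equilibrium via a reduction to an effectively two-dimensional system, where the decisive tool is Lemma~\ref{lemma-effectivecomp}.

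\textbf{Step 1 (Trapping and non-extinction of trait~2).} First I would check that the open positive orthant is forward-invariant for~\eqref{3dimHGT}, and that the total mass $n_{1a}(t)+n_{1d}(t)+n_2(t)$ is uniformly bounded: summing the three equations and using that the quadratic terms $-C(n_{1a}+n_2)^2$ and $-\kappa\mu\,n_{1d}$ dominate once the sum is large, one sees that the time derivative of this sum becomes negative outside a compact set. Hence the $\omega$-limit set $\omega(\mathbf{n}(0))$ is a nonempty, compact, forward-invariant subset of the closed positive orthant. Next, by Lemma~\ref{lemma-effectivecomp}\eqref{n2largeduetoHGT}, the standing assumption~\eqref{secondinmutantlessfit2ineq} is equivalent to $\widehat\lambda=\lambda_2-\mu-(C-\tau)\bar n_{1a}>0$. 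For $\eps>0$ small, an initial condition in $\mathfrak{B}^3_\eps$ therefore satisfies $\dot n_2(0)/n_2(0)\geq\widehat\lambda/2$. A Gronwall comparison then yields a time $T_1<\infty$ and $\delta>0$ independent of $\eps$ with $n_2(T_1)\geq\delta$, while $(n_{1a},n_{1d})$ remains $O(\sqrt\eps)$-close to $(\bar n_{1a},\bar n_{1d})$ throughout $[0,T_1]$. A Butler--McGehee-type argument then excludes $\omega$-accumulation at the boundary equilibria $(0,0,0)$ and $(\bar n_{1a},\bar n_{1d},0)$, whose unstable manifolds, by Proposition~\ref{prop-stability3d}, enter the interior in the $n_2$-direction, so that $\omega(\mathbf n(0))$ lies in the open positive orthant and is separated from $\{n_2=0\}$.

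\textbf{Step 2 (Two-dimensional reduction and identification).} The dormant coordinate $n_{1d}$ enters~\eqref{3dimHGT} only linearly, with uniform exponential relaxation at rate $\kappa\mu+\sigma$ towards the quasi-stationary value $pC n_{1a}(n_{1a}+n_2)/(\kappa\mu+\sigma)$. Exploiting this slaving, I would show that $\omega(\mathbf n(0))$ lies on the smooth two-dimensional invariant manifold $\mathcal{M}$ on which $n_{1d}$ is exactly this function of $(n_{1a},n_2)$, reducing the asymptotic analysis to the planar flow induced on $\mathcal{M}$. On this reduced flow, Poincar\'e--Bendixson forces the $\omega$-limit set to be an equilibrium, a periodic orbit, or a heteroclinic cycle. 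To rule out periodic orbits I would apply a Bendixson--Dulac criterion with a multiplier of the form $n_{1a}^\alpha n_2^\beta$, where the exponents are chosen so that parts \eqref{compeq}, \eqref{coexcompineq}, \eqref{trait2compineq} of Lemma~\ref{lemma-effectivecomp}, which compare the effective competitive pressures $C(n_{1a}+n_2)\pm\tau n_{1a/2}$ at different equilibria, force the weighted divergence to have a definite sign on the open positive quadrant. The equilibria of the reduced planar system are exactly the restrictions of the interior equilibria in Proposition~\ref{prop-stability3d}: under~\eqref{reversefirstin} no interior equilibrium exists so $\omega(\mathbf n(0))=\{(0,0,\bar n_2)\}$, while under~\eqref{firstinmutantlessfit2ineq} the only interior candidate is $(n_{1a},n_{1d},n_2)$ and $\omega(\mathbf n(0))=\{(n_{1a},n_{1d},n_2)\}$.

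\textbf{Main obstacle.} The hardest part is the exclusion of periodic orbits in case~2: Remark~\ref{remark-coexstability} explicitly warns that the coexistence equilibrium is not known to be hyperbolic, so no purely local spectral argument can preclude a Hopf-type limit cycle surrounding it. Constructing the right Dulac multiplier (or, equivalently, a Lyapunov-type functional on $\mathcal{M}$) and verifying that its weighted divergence has a consistent sign across the relevant region of the reduced phase plane is the core technical difficulty. This is precisely where Lemma~\ref{lemma-effectivecomp} is indispensable, as it converts the algebraic fitness inequalities~\eqref{firstinmutantlessfit2ineq}--\eqref{secondinmutantlessfit2ineq} into geometric comparisons of effective competitive pressures that can be integrated along orbits to yield a Dulac-type sign.
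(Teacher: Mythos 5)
Your Step 2 contains the decisive gap: the reduction to a planar flow is not justified, and without it neither Poincar\'e--Bendixson nor a Dulac argument is available for the three-dimensional system \eqref{3dimHGT}. The graph $\{n_{1d}=pC\,n_{1a}(n_{1a}+n_2)/(\kappa\mu+\sigma)\}$ is the set where $\dot n_{1d}=0$, but it is \emph{not} invariant under the flow, and there is no small parameter in the model that would let you invoke a Fenichel/slow-manifold theorem to produce an attracting two-dimensional invariant manifold on which $n_{1d}$ is exactly this function of $(n_{1a},n_2)$; the system is also not competitive in the monotone-systems sense (the off-diagonal signs of the Jacobian are mixed), so Hirsch-type ``3D competitive $\Rightarrow$ Poincar\'e--Bendixson'' results do not apply either. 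Since $\omega$-limit sets of \eqref{3dimHGT} need not lie on any such surface, the ``reduced planar flow on $\mathcal M$'' you analyse is not a well-defined object, and the subsequent Bendixson--Dulac step is moot; moreover the multiplier $n_{1a}^\alpha n_2^\beta$ is only conjectured, not verified, even though ruling out a cycle around the (possibly non-hyperbolic, cf.\ Remark~\ref{remark-coexstability}) coexistence equilibrium is exactly the hard point you yourself flag. There are also smaller inaccuracies in Step 1: in case (1) the claimed limit $(0,0,\bar n_2)$ lies on the boundary, so the $\omega$-limit set cannot be ``in the open positive orthant'' (only persistence of $n_2$ should be claimed), and the time $T_1$ needed to push $n_2$ from order $\sqrt\eps$ up to a fixed $\delta$ grows like $\log(1/\eps)$, so it is not independent of $\eps$.

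The paper's proof avoids the manifold issue by a different planar reduction: it substitutes the \emph{effective-competition identity} $C(n_{1a}+n_2)-\tau n_{1a}=\lambda_2-\mu$ (which holds exactly at every critical point of $t\mapsto n_2(t)$, and asymptotically when $n_2$ is eventually monotone, since then $\dot n_2\to0$ while $\liminf n_2>0$) into the first two equations of \eqref{3dimHGT}. This yields the genuinely autonomous planar system \eqref{2dimHGTapprox} in $(n_{1a},n_{1d})$ alone, whose divergence is strictly negative, so the plain Bendixson criterion (no Dulac multiplier needed) forces convergence to $(0,0)$ under \eqref{reversefirstin} and to $(n_{1a},n_{1d})$ under \eqref{firstinmutantlessfit2ineq}. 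The conclusion is then transferred to the full system by a dichotomy on the monotonicity of $n_2$, using the perturbed family \eqref{2dimHGTapproxperturbed} when $C(n_{1a}+n_2)-\tau n_{1a}$ is merely close to $\lambda_2-\mu$, with the inequalities of Lemma~\ref{lemma-effectivecomp} (which you correctly identified as central) controlling the signs. If you want to salvage your outline, you would have to replace the quasi-steady-state ``slaving'' by this kind of constraint substitution, or else rigorously construct an invariant two-dimensional attracting manifold and explicitly verify a Dulac inequality on it -- neither of which your proposal does.
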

\begin{proof}
Assume throughout the proof that \eqref{secondinmutantlessfit2ineq} holds. Let $\eps>0$ be such that \eqref{Bcond} holds and $\lambda_2-\mu-C \bar n_{1a}(t)-(2+2(C+\tau)b)\sqrt\eps>0$; such $\eps$ exists thanks to part~\eqref{n2largeduetoHGT} of Lemma~\ref{lemma-effectivecomp}. At $t=0$, we have
\[ n_2(t) (\lambda_2-\mu-C(n_{1a}(t)+n_2(t)) + \tau n_{1a}(t)) \geq \frac{\sqrt\eps}{2} (C\widetilde n_2-(C-\tau)\bar n_{1a}-(1+(C+\tau) b)\sqrt\eps)>0. \]
Hence, by continuity of $t \mapsto \bar n_{2}(t)$, for a positive amount of time after $t=0$, $n_2(t)$ will be increasing. More precisely, $n_2(t)$ increases as long as $C(n_{1a}(t)+n_2(t))-\tau n_{1a}(t)<\lambda_2-\mu$ (cf.~also part~\eqref{compeq} of Lemma~\ref{lemma-effectivecomp}). Our goal is now to prove that in case \eqref{firstinmutantlessfit2ineq}, this implies \eqref{convergencetocoexistence}, whereas in case \eqref{reversefirstin} holds, it implies \eqref{fixationof2}. We verify these assertions via approximating the partial dynamics of $t \mapsto (n_{1a}(t),n_{1d}(t))$ by the system where  
\[ C(n_{1a}(t)+n_2(t))-\tau n_{1a}(t)=\lambda_2-\mu \numberthis\label{artificialcomp} \]
is substituted into the first two coordinates of \eqref{3dimHGT} and then performing a perturbation argument. If \eqref{artificialcomp} holds, then we have
\[  
\begin{aligned}
C&(n_{1a}(t)+n_2(t))+\tau n_{2}(t) =C(n_{1a}(t)+n_2(t))-\tau n_{1a}(t)+\tau(n_{1a}(t)+n_2(t))  \\ &=\lambda_2-\mu+\tau(n_{1a}(t)+n_2(t))= \lambda_2-\mu + \frac{\tau}{C} \Big( C(n_{1a}(t)+n_2(t))-\tau n_{1a}(t)\Big)+\frac{\tau^2}{C} n_{1a}(t) \\ & =\big( 1+\frac{\tau}{C} \Big) (\lambda_2-\mu)+\frac{\tau^2}{C} n_{1a}(t),
\end{aligned} \]
where the right-hand side does not depend on $n_2(t)$; further,
\[
p C n_{1a}(t)(n_{1a}(t)+n_2(t)) = p n_{1a}(t)\big( C(n_{1a}(t)+n_2(t))-\tau n_{1a}(t) + \tau n_{1a}(t) \big) = p n_{1a}(t) (\lambda_2-\mu+\tau n_{1a}(t)),
\]
where the right-hand side is also independent of $n_2(t)$. 
Thus, we shall first investigate the two-dimensional system of ODEs
\[
\begin{aligned}
 \dot n_{1a}(t) & = n_{1a}(t)\big(\lambda_1-\lambda_2-\frac{\tau}{C}(\lambda_2-\mu)-\frac{\tau^2}{C}n_{1a}(t)\big)+\sigma n_{1d}(t), \\
 \dot n_{1d}(t) & = n_{1a}(t) p(\lambda_2-\mu+\tau n_{1a}(t)) - (\kappa\mu+\sigma)n_{1d}(t).
\end{aligned}
\numberthis\label{2dimHGTapprox}
\]
Then, $(0,0)$ is an equilibrium of this system, which is unstable (cf.~Proposition~\ref{prop-stability3d}). Further, if \eqref{firstinmutantlessfit2ineq} holds, then $(n_{1a},n_{1d})$ is also an equilibrium, which is asymptotically stable, whereas if \eqref{reversefirstin} holds, then there is no other coordinatewise nonnegative equilibrium. Clearly,~\eqref{2dimHGTapprox} is a two-dimensional system whose coordinatewise nonnegative solutions are all bounded (where the bound depends on the initial condition).

Let us first analyse the case when \eqref{reversefirstin} holds. In this case, $\lambda_1-\lambda_2-\smfrac{\tau}{C}(\lambda_2-\mu)<0$ by part~\eqref{trait2compineq} of Lemma~\ref{lemma-effectivecomp}. Hence, as long as $n_{1a}(t) \geq 0$ and $n_{1d}(t) \geq 0$ (which certainly holds for all $t > 0$ if it holds for $t = 0$), the divergence of the system, which equals $\lambda_1-\lambda_2-\smfrac{\tau}{C}(\lambda_2-\mu)-2\smfrac{\tau^2}{C}n_{1a}(t)-(\kappa\mu+\sigma)$, is strictly negative. Hence, the Bendixson criterion implies that the system \eqref{2dimHGTapprox} has no nontrivial periodic trajectory in the closed positive orthant. Hence, starting from a coordinatewise nonnegative trajectory at $t=0$, the system must converge to $(0,0)$ as $t \to \infty$.

Let us now consider the case when \eqref{firstinmutantlessfit2ineq} holds. Note that in this case we have $(C+\tau) \widetilde n_2 < C(\lambda_1-\mu)$ according to part~\eqref{trait2coexcompineq} of Lemma~\ref{lemma-effectivecomp}. Hence, starting from an initial condition in $[0,\infty)^2 \setminus \{(0,0)\}$, the solution of \eqref{3dimHGT} cannot converge to $(0,0)$ as $t \to \infty$; more precisely, we see immediately that
\[ \frac{\tau^2}{C} n_{1a}(t) \geq  \lambda_1-\lambda_2-\frac{\tau}{C}(\lambda_2-\mu) >0 \numberthis\label{can'tgetsmall} \]
holds for all sufficiently large $t >0$. Now, if $t_0>0$ is such that \eqref{can'tgetsmall} is satisfied for all $t>t_0$, then the divergence of the system, which equals $\lambda_1-\lambda_2-\frac{\tau}{C}(\lambda_2-\mu)-2\frac{\tau^2}{C}n_{1a}(t)-(\kappa\mu+\sigma)n_{1d}(t)$, is strictly negative. Hence, the system \eqref{2dimHGTapprox} cannot have a nontrivial periodic solution. Consequently, it must converge to an equilibrium as $t \to \infty$, which can only be $(n_{1a},n_{1d})$. 

Let us now turn back to the original system \eqref{3dimHGT} started from $\mathfrak B^3_\eps$. We have seen that $n_2(t)$ is increasing as long as \eqref{artificialcomp} holds with `$=$' replaced by `$<$' (which is certainly true for $t \in [0,\delta')$ for some $\delta'>0$, thanks to part~\eqref{n2largeduetoHGT} of Lemma~\ref{lemma-effectivecomp}).  In particular,  part~\eqref{n2largeduetoHGT} of Lemma~\ref{lemma-effectivecomp} also implies that $\liminf_{t \to\infty} n_2(t)>0$. Hence, there are two possibilities. 

The first one is that $t \mapsto n_2(t)$ eventually becomes monotone. Then, by boundedness, $\lim_{t \to \infty}\dot n_2(t)=0$. Thus, in view of the third line of \eqref{3dimHGT}, since $n_2(t)$ stays bounded away from zero for all times, it follows that $C(n_{1a}(t)+n_2(t))-\tau n_{1a}(t)$ tends to $\lambda_2-\mu$. Hence, 
\color{black} once $\eps>0$ is sufficiently small, for all sufficiently small $\delta>0$ we can find $t_1=t_1(\delta)>0$ such that uniformly for all initial conditions $(n_{1a}(0),n_{1d}(0),n_2(0)) \in \mathfrak B^3_\eps$, for all $t > t_1$ we have 
\[ \lambda_2-\mu-\delta < C(n_{1a}(t)+n_2(t))-\tau n_{1a}(t) < \lambda_2-\mu+\delta. \]
Now, for any $\varrho \in (-\delta,\delta)$, the two-dimensional system
\[
\begin{aligned}
 \dot n_{1a}(t) & = n_{1a}(t)\big(\lambda_1-\lambda_2-\frac{\tau}{C}(\lambda_2-\mu)-\frac{\tau^2}{C}n_{1a}(t)-\varrho\big)+\sigma n_{1d}(t), \\
 \dot n_{1d}(t) & = n_{1a}(t) p(\lambda_2-\mu+\tau n_{1a}(t)+\varrho) - (\kappa\mu+\sigma)n_{1d}(t),
\end{aligned}
\numberthis\label{2dimHGTapproxperturbed}
\]
which is defined analogously to \eqref{2dimHGTapprox} but with $C(n_{1a}(t)+n_2(t))-\tau n_{1a}(t)=\lambda_2-\mu+\varrho$ everywhere instead of $C(n_{1a}(t)+n_2(t))-\tau n_{1a}(t)=\lambda_2-\mu$, exhibits the following behaviour. If $\delta$ is small enough, then for any $\varrho \in (-\delta,\delta)$, the solution $(n_{1a}(t),n_{1d}(t))$ of \eqref{2dimHGTapproxperturbed} converges to $(0,0)$ under the assumption \eqref{reversefirstin} and to a coordinatewise positive equilibrium $(n_{1a}^{\varrho},n_{1d}^\varrho)$ under the assumption \eqref{firstinmutantlessfit2ineq}, which tends to $(n_{1a},n_{1d})$ uniformly over $\varrho \in (-\delta,\delta)$ as $\delta \downarrow 0$. This implies that under the assumption \eqref{reversesecondin}, the pair of coordinates $((n_{1a}(t),n_{1d}(t)))_{t \geq 0}$ of the solution of \eqref{3dimHGT} tends to $(0,0)$. Since we have already seen that that $C(n_{1a}(t)+n_2(t))-\tau n_{1a}(t) \to \lambda_2-\mu$ as $t \to \infty$, \color{black} we conclude that $\lim_{t \to \infty} n_2(t)=\lambda_2-\mu=C\bar n_2$, which implies \eqref{fixationof2}. On the other hand, under condition~\eqref{firstinmutantlessfit2ineq}, given that $(n_{1a}(t),n_{1d}(t)) \to (n_{1a},n_{1d})$ and $C(n_{1a}(t)+n_2(t))-\tau n_{1a}(t) \to \lambda_2-\mu$ as $t \to \infty$, we have
\[ 
\begin{aligned}
\lim_{t \to \infty} n_2(t) & = \lim_{t \to \infty} \frac{\lambda_2-\mu+(\tau-C)n_{1a}(t)}{C} = \frac{\lambda_2-\mu+(\tau-C) n_{1a}}{C} \\ & = \frac{C (n_{1a}+n_2)-\tau n_{1a}+(\tau-C) n_{1a}}{C} = n_2,
\end{aligned}
\]
where in the penultimate step we used part~\eqref{compeq} of Lemma~\ref{lemma-effectivecomp}. 

The second possibility is that $t \mapsto n_2(t)$ is not monotone on any interval of the form $[s,\infty)$, $s \geq 0$. Then, since $t \mapsto n_2(t)$ is continuously differentiable according to the Picard--Lindelöf theorem, it follows that the set $E$ of loci of local maxima and local minima of $t \mapsto n_2(t)$ is unbounded (and at most countable). At any local maximum or minimum locus $t$ we have $\dot n_2(t)=0$, and since it is impossible that $n_2(t)=0$, this implies that $C(n_{1a}(t)+n_2(t))-\tau n_{1a}(t)=\lambda_2-\mu$. Now, thanks to our observations regarding the behaviour of the system \eqref{2dimHGTapprox}, the pair of coordinates $((n_{1a}(t),n_{1d}(t)))_{t \geq 0}$ of the solution \eqref{3dimHGT} satisfies $\lim_{t \to \infty, t \in E} (n_{1a}(t),n_{1d}(t))=(0,0)$ under the assumption \eqref{reversefirstin} and  $\lim_{t \to \infty, t \in E} (n_{1a}(t),n_{1d}(t))=(n_{1a},n_{1d})$ under the assumption \eqref{firstinmutantlessfit2ineq}. Similarly to the previous case, we conclude that $\lim_{t \to \infty, t \in E} n_2(t)=\bar n_2$ under the condition \eqref{reversefirstin} and $\lim_{t \to \infty, t\in E} n_2(t) = n_2$ under the one \eqref{firstinmutantlessfit2ineq}. But since $E$ is the set of loci of local extrema of $n_2(t)$, this implies that the limiting assertions of the previous sentence remain true if we remove the condition $t \in E$. Now, the fact that $n_2(t)$ has a limit and the continuous differentiability of $t \mapsto n_2(t)$ imply that $\dot n_2(t)$ tends to 0, and since $\lim_{t\to\infty} n_2(t) \neq 0$, it follows that $\lim_{t\to\infty} C(n_{1a}(t)+n_2(t))-\tau n_{1a}(t)=\lambda_2-\mu$. Therefore, under the condition \eqref{reversefirstin}, we have $\lim_{t \to\infty} n_{1a}(t)=0$. Hence, it is also true that $\lim_{t\to\infty} n_{1d}(t)=0$. On the other hand, under the assumption \eqref{firstinmutantlessfit2ineq}, we derive that $\lim_{t \to \infty} n_{1a}(t)=n_{1a}$ (cf.~part~\eqref{compeq} of Lemma~\ref{lemma-effectivecomp}). In particular, $\lim_{t \to \infty} \dot n_{1a}(t)=0$, and hence $\lim_{t\to\infty} \sigma n_{1d}(t)=n_{1a}(\lambda_1-\mu-C(n_{1a}+n_{2})-\tau n_2)=\sigma n_{1d}$. Thus, we conclude the proposition.
\end{proof}

\subsection{The third phase of invasion: extinction of the resident population}\label{sec-phase3HGTguys}
Let us assume that \eqref{secondinmutantlessfit2ineq} holds. After the second phase, the rescaled process $\mathbf N_t^K$ is near the state $(n_{1a},n_{1d},n_2)$ under condition \eqref{firstinmutantlessfit2ineq} and close to $(\bar n_{1a},\bar n_{1d},0)$ in case \eqref{reversefirstin}. In the first case, there is no third phase; the system stays close to the coexistence equilibrium $(n_{1a},n_{1d},n_2)$ for an amount of time that is at least exponential in $K$, which can be derived similarly as \eqref{FW1HGTguys} and \eqref{FW2HGTguys}. (On larger time scales, $(n_{1a},n_{1d},n_2)$ is only metastable: eventually the population will die out with high probability, cf.~eg.~\cite[Sections 2, 4.2]{C06}.). On the other hand, in the second case, there is one, which ends with the extinction of the formerly resident trait 1 with high probability after a time that is logarithmic in $K$, while trait 2 stays close to its equilibrium population size $\bar n_2$. To be more precise, the following proposition holds, where we recall the set $S_\beta^2$ from \eqref{Sbeta2def} and the stopping time $T_{S_\beta^2}$ from \eqref{TSbetaidef}. 
\begin{prop}\label{prop-thirdphaseHGTguys}
Assume that \eqref{secondinmutantlessfit2ineq} and \eqref{reversefirstin} hold with $\lambda_1>\mu$. There exist $\eps_0,C_0>0$ such that for all $\eps \in (0,\eps_0)$, if there exists $\eta \in (0,1/2)$ that satisfies
\[ \big| N^K_{2}(0)-\bar n_{2} \big| \leq \eps~\text{ and }~\eta \eps/2 \leq N^K_{1a}(0)+N^K_{1d}(0) \leq \eps/2, \]
then
\begin{align*}
    \forall E>-1/\widetilde \lambda,  \quad & \P(T_{S_{2\eps}^2} \leq E\log K) \underset{K \to \infty}{\longrightarrow} 1, \\
    \forall 0 \leq E <-1/\widetilde \lambda, \quad & \P(T_{S_{2\eps}^2} \leq E \log K) \underset{K \to \infty}{\longrightarrow} 0.
\end{align*}
\end{prop}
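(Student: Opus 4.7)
The plan is to reduce the extinction of the (small) trait~1 population to the extinction of a two-type continuous-time subcritical branching process, closely following the strategy of Proposition~\ref{prop-firstphaseHGTguys} but exchanging the roles of resident and mutant. The approximating process is exactly $((\widehat N_{1a}(t),\widehat N_{1d}(t)))_{t\geq 0}$ introduced in Section~\ref{sec-phase13}, whose mean matrix $J$ (see \eqref{JdefHGT}) has largest eigenvalue $\widetilde\lambda$, and under \eqref{reversefirstin} this process is strictly subcritical, i.e.\ $\widetilde\lambda<0$.

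First I would control the trait~2 population. As long as $N^K_{1a,t}+N^K_{1d,t}\leq 2\eps$ and $|N^K_{2,t}-\bar n_2|\leq 2\eps$, a Freidlin--Wentzell argument analogous to \eqref{FW1HGTguys}--\eqref{FW2HGTguys} (with the roles of traits $1$ and $2$ swapped, now comparing $N^K_{2,t}$ with one-dimensional birth--death processes whose drift pushes toward $\bar n_2^{\pm}=\bar n_2+O(\sqrt\eps)$) shows that with probability tending to $1$, the trait~2 population remains in $[\bar n_2-\eps',\bar n_2+\eps']$ (for some $\eps'=O(\sqrt\eps)$) up to a stopping time $R^2_{\eps'}$ whose first exit from this slab has expectation at least $e^{KV}$ for some $V>0$. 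On the event $\{t\leq R^2_{\eps'}\wedge T^1_0\}$, the rates governing $(N^K_{1a,t},N^K_{1d,t})$ can be sandwiched, exactly as in Lemma~\ref{lemma-residentsstayHGTguys}, between those of two two-type branching processes $(\widehat N^{(\eps,i)}_{1a},\widehat N^{(\eps,i)}_{1d})$, $i\in\{1,2\}$, obtained by replacing $\bar n_2$ with $\bar n_2\pm\eps'$ in the transition rates listed in Section~\ref{sec-phase13}. Both bounding processes are still subcritical with largest eigenvalues $\widetilde\lambda^{(\eps,i)}$ converging to $\widetilde\lambda$ as $\eps\downarrow 0$.

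Next I would invoke the classical two-type subcritical asymptotics of \cite[Sections~V.3 and~V.7]{AN72}: for a single ancestor, the survival probability of $(\widehat N_{1a},\widehat N_{1d})$ up to time $t$ satisfies $\P(\widehat N_{1a}(t)+\widehat N_{1d}(t)>0)\sim c\,e^{\widetilde\lambda t}$ as $t\to\infty$, with $c>0$. For the upper bound on $T_{S^2_{2\eps}}$, start the comparison process with the initial $\lfloor K\eps/2\rfloor$ ancestors and let $\tau^{\mathrm{ext}}$ denote its extinction time. Standard estimates give, for any $\delta>0$,
\begin{equation*}
\P\Big(\tau^{\mathrm{ext}}\leq\frac{(1+\delta)\log K}{|\widetilde\lambda^{(\eps,2)}|}\Big)\geq 1-K\eps\cdot c\,e^{\widetilde\lambda^{(\eps,2)}(1+\delta)\log K/|\widetilde\lambda^{(\eps,2)}|}=1-O(K^{-\delta}).
\end{equation*}
Choosing $\delta,\eps$ small enough that $(1+\delta)/|\widetilde\lambda^{(\eps,2)}|<E$, and combining with the fact that $\tau^{\mathrm{ext}}\ll e^{KV}$ so the coupling with the upper branching process persists, yields $T^1_0\leq E\log K$, and hence $T_{S^2_{2\eps}}\leq E\log K$ (adding the $O(1)$ time for trait $2$ to re-equilibrate after trait $1$ dies), with probability tending to $1$. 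For the lower bound, I would couple the trait~1 population from below by a subcritical branching process started from $\lceil K\eta\eps/2\rceil$ ancestors; by independence across ancestors and the Kesten--Stigum asymptotic,
\begin{equation*}
\P\big(\tau^{\mathrm{ext}}_{\mathrm{low}}>E\log K\big)\geq 1-\big(1-c'e^{\widetilde\lambda^{(\eps,1)}E\log K}\big)^{\lceil K\eta\eps/2\rceil}=1-\exp\!\big(-\Theta(K^{1-E|\widetilde\lambda^{(\eps,1)}|})\big),
\end{equation*}
which tends to $1$ whenever $E<1/|\widetilde\lambda^{(\eps,1)}|$, and the latter is $>E$ once $\eps$ is small enough (since $\widetilde\lambda^{(\eps,1)}\to\widetilde\lambda$). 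Since $S^2_{2\eps}$ forces both trait~1 coordinates to vanish, $T_{S^2_{2\eps}}>E\log K$ with probability tending to $1$.

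The main obstacle is the precise two-type Kesten--Stigum asymptotic for the survival probability that drives the lower bound: one needs a quantitative tail estimate $\P(\widehat N_{1a}(t)+\widehat N_{1d}(t)>0)\geq c'e^{\widetilde\lambda t}$ valid for every $t\geq 0$ (not only asymptotically), uniform over the two possible starting types. This is where we lean on the irreducibility and positivity of the mean matrix $J$, together with the Perron--Frobenius structure of $J$, to transfer the one-type estimate \cite[Ch.~V]{AN72} to our setting. Once this is in place, the Freidlin--Wentzell containment of trait~2 and the elementary coupling arguments above close the proof.
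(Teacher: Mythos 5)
Your plan is essentially the paper's own proof: Freidlin--Wentzell containment of the trait-2 population near $\bar n_2$, a sandwich of the small trait-1 population between two $\eps$-perturbed subcritical two-type branching processes whose leading eigenvalues converge to $\widetilde\lambda$, and the $\log K/|\widetilde\lambda|$ extinction-time window, which the paper simply cites from Athreya--Ney rather than rederiving from the single-ancestor survival asymptotic and independence over the $\Theta(\eps K)$ initial families as you do. The one detail you gloss over is that the coupling is valid only while the rescaled trait-1 population also stays below the threshold $\eps$ (not just before $R^2_{2\eps}$), so one must additionally note, as in \eqref{diebeforeyougrowupHGTguys}, that the subcritical comparison process started from $\lfloor\eta\eps K/2\rfloor$ individuals dies out before reaching size $\lfloor\eps K\rfloor$ with probability tending to one.
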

\begin{proof}
Analogously to the proof of Lemma~\ref{lemma-residentsstayHGTguys}, we first show that the rescaled mutant population size vector $N_{2,t}^K$ stays close to its equilibrium $\bar n_2$ for long times, given that the resident population is small. To this aim, we use methods from \cite[Proof of Proposition 4.1, Step 1]{C+16}, which were also used in \cite[Section 4.3]{BT19} in the case of a one-coordinate resident and a two-coordinate mutant population. For $\eps>0$, we define
\[ R_\eps^2: = \inf \Big\{ t \geq 0 \colon  \big| N_{2,t}^K - \bar n_{2} \big| > \eps \Big\}, \numberthis\label{R2epsdef} \]
the first time the mutant population leaves an $\eps$-ball around its one-trait equilibrium, and we recall the stopping times $T_0^1$ and $T_\eps^1$ from Section~\ref{sec-results}.
These stopping times also depend on $K$, but we have dropped the $K$-dependence from the notation for readability. 

We couple $N_{2,t}^K$ with two two-type birth-and-death processes, $N_{2,t}^{\eps,\preceq}$ and $N_{2,t}^{\eps,\succeq}$, such that
\[ N^{\eps,\preceq}_{2,t} \leq N_{2,t}^K \leq N^{\eps,\succeq}_{2,t}, \qquad \text{a.s.} \qquad \forall 0 \leq t \leq T_{\eps}^1. \numberthis\label{lastmutantcouplingHGTguys} \] 
In order to fulfill \eqref{lastmutantcouplingHGTguys}, these processes can be defined with the following rates:
\[ \begin{aligned}
N_{2}^{\eps,\preceq} \colon & i \to i+1 & \text{ at rate } & i\lambda_2, 
& i \to i-1 & \text{ at rate } i \Big(\mu + C \big(\eps+\frac{i}{K}\big)\Big),
\end{aligned}
\]
and
\[ \begin{aligned}
N_{2}^{\eps,\succeq} \colon & i \to i+1 & \text{ at rate } & i(\lambda_2+\tau\eps), 
& i \to i-1 & \text{ at rate } i \Big(\mu + C \frac{i}{K} \Big).
\end{aligned}
\]
The principle of this coupling is the same as for the coupling satisfying \eqref{mutantcouplingHGTguys}.

We now show that the processes $N_{2,t}^{\eps,\preceq}$ and $N_{2,t}^{\eps,\succeq}$ will stay close to $\bar n_{2}$ for at least an exponential (in $K$) time with a probability close to 1 for large $K$, similarly to the behaviour of the resident population size in the first phase of invasion (cf.~Section~\ref{sec-phase1HGTguys}). To this aim, we will investigate the stopping time
\[ R_{\varrho}^{j,2} = \inf \big\{ t \geq 0 \colon N_{2,t}^{\eps,j} \notin [\bar n_{2}-\varrho,\bar n_{2}+\varrho] \big\} 
\big\} 
\]
for $\varrho >0$ and $j \in \{ \preceq, \succeq \}$. Let us first study the process $N_{2,t}^{\eps,\preceq}$. By \cite[Theorem 2.1, p.~456]{EK}, the dynamics of this process is close to the dynamics of the unique solution to
\[
\begin{aligned}
\dot n_{2}(t)&=n_{2}(t)(\lambda_2-\mu-C \eps-C n_{2}(t)).
\end{aligned}
\]
Since \eqref{secondinmutantlessfit2ineq} and \eqref{reversefirstin} hold with $\lambda_1>\mu$, $\lambda_2>\mu$ follows thanks to part~\eqref{if1fit2fit} of Lemma~\ref{lemma-somebodyfit}. Hence, we find that for all sufficiently small $\eps>0$, this system has a unique coordinatewise positive equilibrium $\bar n_{2}^{\eps,\preceq}=\smfrac{\lambda_2-\mu-C\eps}{C}$, which is asymptotically stable and tends to $\bar n_2=\lambda_2-\mu$ as $\eps \downarrow 0$, whereas $0$ is unstable and there is no other coordinatewise nonnegative equilibrium. Further, starting from any positive initial condition, the solution converges to the stable positive equilibrium as $t \to \infty$ (which is a direct consequence of the boundedness of the solution, given the aforementioned stability properties of the two equilibria). Thus, we can find $\eps'_0>0$ such that for any $\eps \in (0,\eps'_0)$,
\[  \big| \bar n_{2}^{\eps,\preceq}-\bar n_{2} \big| \leq \eps \text{ and } 0 \notin [\bar n_{2}-2\eps, \bar n_{2} + 2\eps] . \] 
Now, similarly to the proof of Lemma~\ref{lemma-residentsstayHGTguys}, we can use results by Freidlin and Wentzell about exit of jump processes from a domain \cite[Section 5]{FW84} in order to construct a family (over $K$) of Markov processes $(\widetilde N_{2,t})_{t \geq 0}$ whose transition rates are positive, bounded, Lipschitz continuous, and uniformly bounded away from 0 such that for
\[ \widetilde R_{\eps}^{2,\preceq} = \inf \big\{ t \geq 0 \colon \widetilde N_{2,t} \notin [\bar n_{2}-\eps,\bar n_{2}+\eps] \big\}  \]
there exists $V>0$ such that for all $i \in \{ a, d \}$ we have
\[ \P\big( R^{2,\preceq}_{2\eps} > \e^{KV} \big) = \P\big( \widetilde R^{2,\preceq}_{2\eps} > \e^{KV} \big) \underset{K \to \infty}{\longrightarrow} 1. \numberthis\label{firstFWextinctionHGTguys} \] 
See~\cite[Section 4.2]{C06} for further details. 
Similarly, we obtain
\[ \P\big( R^{2,\succeq}_{2\eps} > \e^{KV} \big) \underset{K \to \infty}{\longrightarrow} 1, \numberthis\label{secondFWextinctionHGTguys} \]
where without loss of generality we can assume that the constants $V$ in \eqref{firstFWextinctionHGTguys} and in \eqref{secondFWextinctionHGTguys} are the same. 
Now note that $R_{2\eps}^2 \geq R_{2\eps}^{2,\preceq} \wedge R_{2\eps}^{2,\succeq}$ on the event $\{ R_{2\eps}^2 \leq T_\eps^1 \}$. This together with \eqref{firstFWextinctionHGTguys} and \eqref{secondFWextinctionHGTguys} implies that
\[ \lim_{K \to \infty} \P\big( R_{2\eps}^2 \leq \e^{KV} \wedge T^1_\eps)=0. \numberthis\label{mutantsstayinequilibriumHGTguys} \]
Now, we can find a pair of two-type branching processes $\mathbf N_1^{\eps,\preceq}=(N_{1a}^{\eps,\preceq},N_{1d}^{\eps,\preceq})=(N_{1a,t}^{\eps,\preceq},N_{1d,t}^{\eps,\preceq})_{t \geq 0}$ and $\mathbf N_1^{\eps,\succeq}=(N_{1a}^{\eps,\succeq},N_{1d}^{\eps,\succeq})=(N_{1a,t}^{\eps,\succeq},N_{1d,t}^{\eps,\succeq})_{t \geq 0}$ such that 
\[ N_{1\upsilon,t}^{\eps,\preceq} \leq K N_{1\upsilon,t}^K \leq N_{1\upsilon,t}^{\eps,\succeq}, \qquad \forall \upsilon \in \{ a, d \}, \numberthis\label{subcriticalcouplingHGTguys} \]
almost surely on the time interval $[ 0, R_{2\eps}^2  \wedge  T^1_\eps ]$.
Indeed, in order for \eqref{subcriticalcouplingHGTguys} to be satisfied, the processes $\mathbf N_{1}^{\eps,\preceq}$ and $\mathbf N_{1}^{\eps,\succeq}$ can be defined with the following rates and initial conditions: 
\[ \begin{aligned}
(N_{1a,t}^{\eps,\preceq},N_{1d,t}^{\eps,\preceq}) \colon \quad &  \Big(\frac{i}{K},\frac{j}{K}\Big) \to \Big(\frac{i+1}{K},\frac{j}{K}\Big)& \text{at rate} & \quad i \lambda_1, \\
& \Big(\frac{i}{K},\frac{j}{K}\Big) \to \Big(\frac{i-1}{K},\frac{j}{K}\Big) & \text{at rate}  &\quad i(\mu+(1-p)(C(\bar n_2+3\eps))+\tau(\bar n_2+2\eps)), \\
&  \Big(\frac{i}{K},\frac{j}{K}\Big) \to \Big(\frac{i-1}{K},\frac{j+1}{K}\Big) & \text{at rate}& \quad pC i(\bar n_2-2\eps), \\
&  \Big(\frac{i}{K},\frac{j}{K}\Big) \to \Big(\frac{i+1}{K},\frac{j-1}{K}\Big)  & \text{at rate} &\quad j\sigma, \\
&  \Big(\frac{i}{K},\frac{j}{K}\Big) \to \Big(\frac{i}{K},\frac{j-1}{K}\Big) & \text{at rate} &\quad j\kappa\mu,
\end{aligned}
\]
started from an initial condition $(N_{1a,0}^{\eps,\preceq},N_{1d,0}^{\eps,\preceq})$ satisfying $\lfloor K(N_{1a,0}^{\eps,\preceq}+N_{1d,0}^{\eps,\preceq}) \rfloor= \eps/2$, and
\[ \begin{aligned}
(N_{1a,t}^{\eps,\succeq},N_{1d,t}^{\eps,\succeq}) \colon \quad &  \Big(\frac{i}{K},\frac{j}{K}\Big) \to \Big(\frac{i+1}{K},\frac{j}{K}\Big)& \text{at rate} & \quad i \lambda_1, \\
& \Big(\frac{i}{K},\frac{j}{K}\Big) \to \Big(\frac{i-1}{K},\frac{j}{K}\Big) & \text{at rate}  &\quad i(\mu+(1-p)(C(\bar n_2-2\eps))+\tau(\bar n_2-2\eps)), \\
&  \Big(\frac{i}{K},\frac{j}{K}\Big) \to \Big(\frac{i-1}{K},\frac{j+1}{K}\Big) & \text{at rate}& \quad pCi(\bar n_2+3\eps), \\
&  \Big(\frac{i}{K},\frac{j}{K}\Big) \to \Big(\frac{i+1}{K},\frac{j-1}{K}\Big)  & \text{at rate} &\quad j\sigma, \\
&  \Big(\frac{i}{K},\frac{j}{K}\Big) \to \Big(\frac{i}{K},\frac{j-1}{K}\Big) & \text{at rate} &\quad j\kappa\mu. 
\end{aligned}
\]
started from an initial condition $(N_{1a,0}^{\eps,\succeq},N_{1d,0}^{\eps,\succeq})$ satisfying $\lceil K( N_{1a,0}^{\eps,\succeq}+N_{1d,0}^{\eps,\succeq}) \rceil =\eta\eps/2$. This coupling is based on the same principle as the one satisfying \eqref{firstresidentcouplingHGTguys}.

For all $\eps>0$ sufficiently small, both of these two-type branching processes are subcritical, cf.~Section~\ref{sec-phase13}. In other words, the largest eigenvalues $\widetilde \lambda^{\eps,\preceq}$ respectively $\widetilde\lambda^{\eps,\succeq}$ of their mean matrices $J^{\eps,\preceq}$ respectively $J^{\eps,\succeq}$ defined analogously to the matrix $J$ introduced in \eqref{JdefHGT}, are strictly negative. Further, these eigenvalues both converge to the largest eigenvalue $\widetilde \lambda$ of $J$, which was explicitly expressed in \eqref{lambdaHGT}, as $\eps \downarrow 0$.
From this, analogously to \cite[Section 3.3]{C+19}, we conclude that the extinction time of these processes started from $[\lfloor\smfrac{ \eta K \eps}{2} \rfloor, \lfloor \smfrac{\eps K}{2} \rfloor +1]$ is of order $(\smfrac{1}{\widetilde \lambda} \pm O(\eps))\log K$. This in turn follows from the general assertion~\cite[p.~202]{AN72}  that for a two-type branching process $\mathfrak N=(\mathfrak N(t))_{t \geq 0}=(\mathfrak N_1(t),\mathfrak N_2(t))_{t \geq 0}$ that is subcritical, with its mean matrix having largest eigenvalue $\bar \lambda<0$,
given that $\mathfrak N_1(0)+\mathfrak N_2(0) \in [\lfloor\smfrac{ \eta K \eps}{2} \rfloor, \lfloor \smfrac{\eps K}{2} \rfloor +1]$, defining
\[ \mathfrak S_\eps^{\mathfrak N} = \inf \{ t \geq 0 \colon \mathfrak N_1(t)+\mathfrak N_2(t) \geq \lfloor \eps K \rfloor \} , \qquad \eps > 0,  \numberthis\label{extinction2typelower} \]
and
\[ \mathfrak S_0^{\mathfrak N} = \inf \{ t \geq 0 \colon \mathfrak N_1(t)+\mathfrak N_2(t)  =0  \} , \numberthis\label{extinction2typeupper} \]
the following hold:
\[\forall C<\bar \lambda^{-1}, \qquad \lim_{K \to \infty} \P(\mathfrak S_0^{\mathfrak N} \leq C \log K)=0\]
and
\[ \forall C>\bar \lambda^{-1}, \qquad \lim_{K \to \infty} \P(\mathfrak S_0^{\mathfrak N} \leq C \log K)=1. \]
Further, if $\mathfrak N_1(0)+\mathfrak N_2(0)=\lfloor \smfrac{ \eta K \eps}{2} \rfloor$, then for all small enough $\eps>0$,
\[ \lim_{K \to \infty} \P\Big( \mathfrak S_0^{\Ncal} > K \wedge \mathfrak S_{\lfloor \eps K \rfloor }^{\mathfrak N} \Big) = 0. \numberthis\label{diebeforeyougrowupHGTguys} \]
Now for $E \geq 0$ we can estimate as follows
\[ 
\begin{aligned}
\P(T_0^1 < E \log K) - \P\big(\mathfrak S_0^{N_1^{\eps,\preceq}}  < E \log K \big)
\leq & \P\big( T_0^1 > T_\eps^1 \wedge K \big) + \P\big( T_\eps^1 \wedge K > R_{2\eps}^2  \big) \\
\leq & \P\big( \mathfrak S_0^{N_1^{\eps,\succeq}} > \mathfrak S_{\lfloor \eps K \rfloor}^{N_1^{\eps,\succeq}} \wedge K \big) +\P\big( T_\eps^1 \wedge K > R_{2\eps}^2 \big).
\end{aligned} \numberthis\label{strangeboundHGTguys} \]
Here, the first inequality can be verified in the following way:
\[
\begin{aligned}
\P(T_0^1 < E \log K)&- \P \big( \mathfrak S_0^{N_1^{\eps,\preceq}} < E \log K \big)  =
 \P(T_0^1 < E \log K \leq \mathfrak S_0^{N_1^{\eps,\preceq}})
\\ &\leq    
\P(R_{2\eps}^2 < T_0^1 < E \log K, R_{2\eps}^2 < T_\eps^1) + \P(
 T_\eps^1 <T_0^1 < E \log K, R_{2\eps}^2 > T_\eps^1)
\\ & \leq
\P(R_{2\eps}^2 < T_\eps^1 \wedge E \log K) + \P(T_0^1 > T_\eps^1)
\\ & \leq
 \P(R_{2\eps}^2 < T_\eps^1 \wedge K) + \P(T_0^1 > T^1_\eps \wedge K)
\end{aligned} \numberthis\label{proofofspeciation}
 \]
As soon as $\eps>0$ is sufficiently small, the second term in the last line of \eqref{strangeboundHGTguys} tends to zero as $K \to \infty$ according to \eqref{mutantsstayinequilibriumHGTguys} and so does the first one according to \eqref{diebeforeyougrowupHGTguys}. We conclude that
\[ \limsup_{K \to \infty} \P\big( T_0^1 < E\log K \big) \leq \lim_{K \to \infty} \P\big( \mathfrak S_0^{N_1^{\eps,\preceq}} \leq E \log K \big) \]
and
\[ \liminf_{K \to \infty} \P\big( T_0^1 < E\log K \big) \geq \lim_{K \to \infty} \P\big( \mathfrak S_0^{N_1^{\eps,\succeq}} \leq E\log K \big). \]
Hence, we conclude the proposition.
\end{proof}
\subsection{Proof of Theorems~\ref{thm-invasionof2} and \ref{thm-failureof2}.}
Putting together Propositions~\ref{prop-firstphaseHGTguys}, \ref{prop-convergenceHGTguys}, and \ref{prop-thirdphaseHGTguys}, we now prove our main results, making use of some arguments from \cite[Section 3.4]{C+19}, similarly to \cite[Section 3.4]{BT19}. However, the possibility of convergence to a stable coexistence equilibrium gives rise to a case distinction that was absent from these two papers. Throughout the proof we will assume that $\lambda_1>\mu$. Our proof strongly relies on the coupling \eqref{mutantcouplingHGTguys}. More precisely, we define a Bernoulli random variable $\widehat B$ as the indicator of nonextinction
\[ \widehat B:= \mathds 1 \{ \forall t>0 \colon \widehat N_{2}(t)>0 \} \]
of the approximating branching process $(\widehat N_{2}(t))_{t \geq 0}$ defined Section~\ref{sec-phase13}, which is initially coupled with the same branching processes as $(K N_{2,t}^K)_{t \geq 0}$ according to \eqref{mutantcouplingHGTguys}. 
Let $\bar f$ be the function defined in Proposition~\ref{prop-firstphaseHGTguys}. Throughout the remainder of the proof, we can assume that $\eps>0$ is so small that $\bar f(\eps) <1$.

We want to show that
\[ \liminf_{K \to \infty} \mathfrak E(K,\eps) \geq q_2-o(\eps) \numberthis\label{extinctionlowerHGTguys}\]
holds for 
\[ \mathfrak E(K,\eps):= \P \Big( \frac{T_0^2}{\log K} \leq \bar f(\eps), T_0^2 < T_{S_\beta^2} \wedge T_{S_\beta^{\rm co}}, \widehat B=0 \Big).\]
Moreover, we want to show that in case $q_2<1$, 
\[ \liminf_{K \to \infty} \mathfrak I^{j_0}(K,\eps) \geq 1-q_2-o(\eps), \numberthis\label{survivallowerHGTguys}\]
where we define
\[ \mathfrak I^{1}(K,\eps):=\P \Big( \Big| \frac{ T_{S_\beta^2}}{\log K}-\Big( \frac{1}{\widetilde \lambda} - \frac{1}{\widehat \lambda} \Big)\Big| \leq \bar f(\eps), T_{S_\beta^2} < T_0^2 , \widehat B=1 \Big) \]
and
\[ \mathfrak I^{\mathrm{co}}(K,\eps):=\P \Big( \Big| \frac{T_{S_\beta^{\mathrm{co}}}}{\log K}- \frac{1}{\widetilde \lambda}  \Big| \leq \bar f(\eps), T_{S_\beta^{\mathrm{co}}} < T_0^2 \wedge T_{S_\beta^2}, \widehat B=1 \Big), \]
further, we put $j_0=\mathrm{co}$ in case $(n_{1a},n_{1d},n_2)$ exists as a coordinatewise positive equilibrium (this corresponds to the case when both \eqref{firstinmutantlessfit2ineq} and \eqref{secondinmutantlessfit2ineq} hold; cf.~also Lemmas~\ref{lemma-coexistence} and~\ref{lemma-lessconditions}) and $j_0=2$ otherwise (which corresponds to the case when \eqref{firstinmutantlessfit2ineq} and \eqref{reversesecondin} hold). 
Throughout the proof, $\beta$ is to be thought as positive and sufficiently small; we will comment on how small it should be in the individual cases.

The assertions~\eqref{extinctionlowerHGTguys} and~\eqref{survivallowerHGTguys} together will imply Theorem~\ref{thm-invasionof2} and the equation \eqref{extinctionof2} in Theorem~\ref{thm-failureof2}. The other assertion of Theorem~\ref{thm-failureof2}, equation \eqref{lastoftheoremof2}, follows already from \eqref{secondofpropHGTguys}.

Let us first consider the case of mutant extinction in the first phase of invasion and verify \eqref{extinctionlowerHGTguys}. It is clear that we have 
\[ \mathfrak E(K,\eps)\geq \P \Big( \frac{T_0^2}{\log K} \leq \bar f(\eps), T_0^2 < T_{S_\beta^2} \wedge T_{S_\beta^{\rm co}}, \widehat B=0, T_0^2< T_{\sqrt\eps}^2 \wedge R_{b\sqrt\eps}^1 \Big). \]
Now, according to our initial conditions, one can choose $\beta>0$ sufficiently small such that for all sufficiently small $\eps>0$ we have
\[ T_{\sqrt\eps}^2 \wedge R_{b\sqrt\eps}^1 < T_{S_\beta^2} \wedge T_{S_\beta^{\rm co}}, \]
almost surely. We assume further on during the proof that $\beta$ satisfies this condition. Then,
\[ \mathfrak E(K,\eps)\geq \P \Big( \frac{T_0^2}{\log K} \leq \bar f(\eps), \widehat B=0, T_0^2 < T_{\sqrt\eps}^2 \wedge R_{b\sqrt\eps}^1 \Big). \numberthis\label{andisandHGTguys} \]
Moreover, analogously to the proof of \cite[Proposition 3.1]{C+19}, we obtain
\[ \limsup_{K \to \infty} \P \big( \{ \widehat B=0 \} \Delta \{ T_0^2 < T_{\eps}^2 \wedge R_{b\sqrt\eps}^1  \}  \big)=o_\eps(1), \numberthis\label{undefinedsymmdiffHGTguys} \]
where $\Delta$ denotes symmetric difference, and
\[ \limsup_{K \to \infty} \P \big( \{ \widehat B=0 \} \Delta \{ T_0^{(\eps,2),2} < \infty \}  \big)=o_\eps(1). \]
Together with \eqref{andisandHGTguys}, these imply that
\[
\begin{aligned}
\liminf_{K \to \infty} \mathfrak E(K,\eps) \geq & \P \Big( \frac{\widehat
\lambda T_0^2}{\log K} \leq \bar f(\eps), T_0^{(\eps,2),2} < \infty \Big) + o_\eps(1) \\ \geq &  \P \Big( \frac{\widehat\lambda
T_0^{(\eps,2),2}}{\log K} \leq \bar f(\eps), T_0^{(\eps,2),2} < \infty \Big) + o_\eps(1)  \geq \P \Big( T_0^{(\eps,2),2} < \infty \Big) + o_\eps(1), 
\end{aligned}
\numberthis\label{secondlineHGTguys}
\]
where in the second inequality of \eqref{secondlineHGTguys} we used the coupling \eqref{mutantcouplingHGTguys}. Thus, using~\eqref{qineqHGTguys}, we arrive at~\eqref{extinctionlowerHGTguys}. Hence, we have derived \eqref{extinctionof2}. 

Let us move on to the case of mutant survival in the first phase of invasion and verify \eqref{survivallowerHGTguys}.  
Arguing analogously to \eqref{undefinedsymmdiffHGTguys}, we get
\[ \limsup_{K \to \infty} \P \big( \{ \widehat B=1 \} \Delta \{ T_{\sqrt \eps}^2  < T_0^2 \wedge R_{b\sqrt\eps}^1  \}  \big)=o_\eps(1). \] 
Hence,
\begin{equation}\label{beforesetsHGTguysfixation}
\begin{aligned}
\liminf_{K \to \infty} \mathfrak I^{j_0}(K,\eps) & = \liminf_{K \to \infty} \P \Big( \Big| \frac{T_{S_\beta^2}}{\log K} -\Big( \frac{1}{\widehat \lambda} - \frac{1}{\widetilde \lambda}  \Big) \Big| \leq \bar f(\eps), T_{S_\beta^2}<T_0^2,  
 T^2_{\sqrt \eps} < T_0^2 \wedge R_{b\sqrt\eps}^1 \Big) + o_\eps(1)
\end{aligned}
\end{equation} 
if $j_0=2$, and
\begin{equation}\label{beforesetsHGTguyscoex}
\begin{aligned}
\liminf_{K \to \infty} \mathfrak I^{j_0}(K,\eps) & = \liminf_{K \to \infty} \P \Big( \Big| \frac{T_{S_\beta^{\rm co}}}{\log K} - \frac{1}{\widehat \lambda}  \Big| \leq \bar f(\eps), T_{S_\beta^{\mathrm{co}}}<T_0^2 \wedge T_{S_\beta^{2}},  
 T^2_{\sqrt \eps} < T_0^2 \wedge R_{b\sqrt\eps}^1 \Big) + o_\eps(1)
\end{aligned}
\end{equation} 
if $j_0=\mathrm{co}$. 
For $\eps>0$ and $\beta>0$, we recall the set $\mathfrak B^3_\eps$ from \eqref{BHGTguys}.
Note that if $T^2_{\sqrt\eps}<\infty$, then for all small enough $\eps>0$, $\mathbf N^K_{T^2_{\sqrt\eps}} \in \mathfrak B^3_\eps$ for all sufficiently large $K$ (depending on $\eps$). From now on, we assume that $\eps>0$ is so small that it satisfies this condition.
Further, we define another set
\[ \mathfrak B^4_\beta:= [0,\beta/2] \times [0,\beta/2] \times [\bar n_2-\beta/2, \bar n_2+\beta/2] \]
and the stopping time
\[
\begin{aligned}
\widehat T''_{\beta}:=&\inf \Big\{ t \geq T_{\sqrt\eps} \colon \mathbf N^K_t \in \mathfrak B^4_\beta \Big\}.
\end{aligned}
\]
Note that for $j_0=2$, $\mathfrak B^4_\beta$ is in fact the analogue of $S_{\beta}^{\mathrm{co}}$ for $j_0=\mathrm{co}$: these two sets are small closed neighbourhoods of the state that the rescaled population process will reach with high probability conditional on survival of mutants. However, in case $j_0=2$, we additionally need to take into account the time of extinction of the trait 2 population. Thus, vaguely speaking, we want to show that with high probability the process has to pass through $\mathfrak B^3_\eps$ in order to reach $S_\beta^{j_0}$, whatever $j_0 \in \{ 2, \mathrm{co} \}$ is, and afterwards it has to go through $\mathfrak B^4_\beta$ in order to reach $S_\beta^2$ in case $j_0=2$
. Then, using to the Markov property, we can estimate $T_{S_\beta^{j_0}}$ by estimating $T^2_{\sqrt\eps}$, $\widehat T''_{\beta}-T^2_{\sqrt\eps}$, and $T_{S_\beta^{1}}-\widehat T''_{\beta}$ in case $j_0=2$, and by estimating $T^2_{\sqrt\eps}$ and $T_{S_{\beta}^{\mathrm{co}}}-T^2_{\sqrt\eps}$ in case $j_0 = \mathrm{co}$.

In case $j_0=2$, \eqref{beforesetsHGTguysfixation} implies that 
\begin{align*}
    \liminf_{K \to \infty}\mathfrak I^2(K,\eps)& \geq   \P \Big( \Big| \frac{T_{S_\beta^2}}{\log K} -\Big( \frac{1}{\widehat \lambda} - \frac{1}{\widetilde \lambda} \Big) \Big| \leq \bar f(\eps), T^2_{\sqrt \eps} < T_0^2 \wedge R_{b\sqrt\eps}^1, \widehat T''_{\beta}<T_{S_\beta^2}, T_{S_\beta^2}<T_0^2 \Big) + o_\eps(1) \\
     & \geq \P \Big( \Big| \frac{T^2_{\sqrt\eps}}{\log K} -\frac{1}{\widehat \lambda} \Big| \leq \frac{\bar f(\eps)}{3}, \Big| \frac{\widehat T''_{\beta}-T^2_{\sqrt\eps}}{\log K} \Big| \leq \frac{\bar f(\eps)}{3}, \Big| \frac{T_{S_{\beta}^2}-\widehat T''_{\beta}}{\log K}+ \frac{1}{\widetilde\lambda}\Big| \leq \frac{\bar f(\eps)}{3},  \\
    & \qquad   T^2_{\sqrt \eps} < T_0^2 \wedge R_{b\sqrt\eps}^1 , \widehat T''_{\beta}<T_{S_\beta^2}, T_{S_\beta^2}<T_0^2 \Big) + o_\eps(1) ,
\end{align*}
whereas in case $j_0=\mathrm{co}$, \eqref{beforesetsHGTguyscoex} implies that
\begin{align*}
    \liminf_{K \to \infty}\mathfrak I^{\mathrm{co}} & (K,\eps) \geq    \P \Big( \Big| \frac{T_{S_\beta^{\mathrm{co}}}}{\log K} - \frac{1}{\widehat \lambda}  \Big| \leq \bar f(\eps), T^2_{\sqrt \eps} < T_0^2 \wedge R_{b\sqrt\eps}^1, T_{S_\beta^{\mathrm{co}}}<T_0^2 \wedge T_{S_\beta^2} \Big) + o_\eps(1) \\
    \geq &  \P \Big( \Big| \frac{T^2_{\sqrt\eps}}{\log K} -\frac{1}{\widetilde \lambda} \Big| \leq \frac{\bar f(\eps)}{3}, \Big| \frac{T_{S_\beta^\mathrm{co}} -T^2_{\sqrt\eps}}{\log K} \Big| \leq \frac{\bar f(\eps)}{3},  T^2_{\sqrt \eps} < T_0^2 \wedge R^1_{b\sqrt\eps} , T_{S_\beta^\mathrm{co}}<T_0^2 \wedge T_{S_\beta^1} \Big) + o_\eps(1) ,
\end{align*}
Note that for $\beta>0$ small enough and $\eps>0$ sufficiently small chosen accordingly, $R_{b\sqrt\eps}^1 \leq T_{S_\beta^{j_0}}$ almost surely, whatever $j_0$ is. Hence, applying the strong Markov property at times $T^2_{\sqrt\eps}$ and $\widehat T''_ {\beta}$ implies 
\[ \begin{aligned}
    \liminf_{K \to \infty} \mathfrak I^2(K,\eps)&\geq \liminf_{K \to \infty} \Big[ \P \Big( \Big| \frac{T^2_{\sqrt\eps}}{\log K} -\frac{1}{\widehat \lambda} \Big| \leq \frac{\bar f(\eps)}{3},  T^2_{\sqrt\eps} < T_0^2 \wedge R_{b\sqrt\eps}^1 \Big) \\
    & \qquad \times \inf_{\begin{smallmatrix}\mathbf n=(n_{1a},n_{1d},n_{2}) \colon \mathbf n \in \mathfrak B^3_\eps\end{smallmatrix}} \P \Big(  \Big| \frac{\widehat T''_{\beta}-T^2_{\sqrt\eps}}{\log K} \Big| \leq \frac{\bar f(\eps)}{3}, \widehat T''_{\beta} < T_0^2 \Big| \mathbf N^K_0=\mathbf n \Big) \\
    & \qquad \times \inf_{\mathbf n \in \mathfrak B^4_\beta} \P \Big(\Big| \frac{T_{S_{\beta}^2}-\widehat T''_{\beta}}{\log K}+\frac{1}{\widetilde \lambda}\Big| \leq \frac{\bar f(\eps)}{3}, T_{S_\beta^2} < T_0^2 \Big| \mathbf N_0^K = \mathbf n\Big) \Big]+o_{\eps}(1) \end{aligned} \numberthis\label{productformfixation2} \]
in case $j_0=2$, and 
\[ \begin{aligned}
    \liminf_{K \to \infty} &\mathfrak I^{\mathrm{co}}(K,\eps)\geq \liminf_{K \to \infty} \Big[ \P \Big( \Big| \frac{T^2_{\sqrt\eps}}{\log K} -\frac{1}{\widetilde \lambda} \Big| \leq \frac{\bar f(\eps)}{3}, T^2_{\sqrt\eps} < T_0^2 \wedge R_{b\sqrt\eps}^1 \Big) \\
    & \qquad \times \inf_{\begin{smallmatrix}\mathbf n \in \mathfrak B^3_\eps\end{smallmatrix}} \P \Big(  \Big| \frac{T_{S_\beta^\mathrm{co}} -T^2_{\sqrt\eps}}{\log K} \Big| \leq \frac{\bar f(\eps)}{3}, T_{S_\beta^{\mathrm{co}}} < T_0^2 \wedge T_{S_\beta^2} \Big| \mathbf N^K_0=\mathbf n \Big) \Big]\\
   \end{aligned} \numberthis\label{productformcoex2} \] in case $j_0=\mathrm{co}$. 
Hence, it remains to show that the right-hand side of \eqref{productformfixation2} respectively \eqref{productformcoex2} is close to $1-q_2$ as $K \to \infty$, given that $\eps>0$ is small. Let us first analyse the first term (which appears on the right-hand side of both equations) and show that 
\[ \liminf_{K \to \infty} \P \Big( \Big| \frac{T^2_{\sqrt\eps}}{\log K} -\frac{1}{\widehat \lambda} \Big| \leq \frac{\bar f(\eps)}{3}, T^2_{\sqrt\eps}<T_0^2, T^2_{\sqrt\eps} < T_0^2 \wedge R_{b\sqrt\eps}^1 \Big) \geq 1-q_2+o_{\eps}(1). \numberthis\label{firsttermHGTguys} \]
Now, we have the following
\[
\begin{aligned}
  \P \Big( \Big|  & \frac{T^2_{\sqrt\eps}}{\log K} -\frac{1}{\widehat \lambda} \Big| \leq \frac{\bar f(\eps)}{3}, T^2_{\sqrt\eps} < T_0^2 \wedge R_{b\sqrt\eps}^1 \Big) = \P  \Big( \Big| \frac{T^2_{\sqrt\eps}}{\log K} -\frac{1}{\widehat \lambda} \Big| \leq \frac{\bar f(\eps)}{3}, T^2_{\sqrt\eps} < T_0^2 \wedge R_{b\sqrt\eps}^1 \Big)
\\ &\geq  \P \Big(  \Big| \frac{\widehat\lambda T^2_{\sqrt\eps}}{\log K}-1 \Big|\leq \frac{\bar f(\eps)}{6},   T^2_{\sqrt\eps} < T_0^2 \wedge R_{b\sqrt\eps}^1 \Big) 
\\& \geq  \P \Big( \frac{\widehat\lambda T^2_{\sqrt\eps}}{\log K} - \frac{\widehat \lambda T^2_\eps}{\log K}  \leq \frac{\bar f(\eps)}{6}, \Big| \frac{\widehat\lambda T^2_{\sqrt\eps}}{\log K}-1 \Big|\leq \frac{\bar f(\eps)}{6},   T^2_{\sqrt\eps} < T_0^2 \wedge R_{b\sqrt\eps}^1 \Big). 
\end{aligned}
\]
Note that for any three events $A_1,A_2,A_3$, the following holds
\[ \P(A_1\cap A_2 \cap A_3) \geq \P(A_2 \cap A_3) - \P(A_1^c \cap A_3) -\P(A_2^c \cap A_3). \]
Applying this to the previous chain of inequalities, we obtain
\[
\begin{aligned}
\P &\Big(  \Big| \frac{T^2_{\sqrt\eps}}{\log K} -\frac{1}{\widetilde \lambda} \Big| \leq \frac{\bar f(\eps)}{3}, T^2_{\sqrt\eps} < T_0^2 \wedge R^1_{b\sqrt\eps} \Big) 
\\   \geq  \P &\Big(  \Big| \frac{\widehat\lambda T^2_{\sqrt\eps}}{\log K}-1 \Big|\leq \frac{\bar f(\eps)}{6},   T^2_{\sqrt\eps} < T_0^2 \wedge R^1_{b\sqrt\eps} \Big) - \P \Big( \frac{\widehat\lambda T^2_{\sqrt\eps}}{\log K} \geq \frac{\bar f(\eps)}{6}, T^1_{\sqrt\eps} < T_0^2 \wedge R_{b\sqrt\eps}^1  \Big).
\end{aligned}
\numberthis\label{ABCDHGTguys}
\]
Proposition~\ref{prop-firstphaseHGTguys} implies that the first term on the right-hand side of \eqref{ABCDHGTguys} satisfies
\[ \liminf_{K \to \infty} \P \Big(  \Big| \frac{\widehat\lambda T^2_{\sqrt\eps}}{\log K}-1 \Big|\leq \frac{\bar f(\eps)}{6},   T^2_{\sqrt\eps} < T_0^2 \wedge R^1_{b\sqrt\eps} \Big) \geq 1-q_2-o_\eps(1). \]
Finally, thanks to \cite[Lemma A.2]{C+19}, the second term on the right-hand side of \eqref{ABCDHGTguys} satisfies
\[ \limsup_{K \to \infty} \P \Big( \frac{\widehat\lambda T^2_{\sqrt\eps}}{\log K} \geq \frac{\bar f(\eps)}{6}, T^2_{\sqrt\eps} < T_0^2 \wedge R^1_{b\sqrt\eps}  \Big)=o_\eps(1). \]
This implies \eqref{firsttermHGTguys}.

Now, let us handle the second phase of invasion, both in the case $j_0=2$ (fixation of trait 2, which corresponds to an almost-fixation already by the end of the second phase) and in the case $j_0=\mathrm{co}$ (convergence to the coexistence equilibrium). 
For $\mathbf m=(m_{1a},m_{1d},m_{2}) \in [0,\infty)^3$, let $\mathbf n^{(\mathbf m)}$ denote the unique solution of the dynamical system \eqref{3dimHGT} with initial condition $\mathbf m$. Thanks to the continuity of flows of this dynamical system with respect to the initial condition (cf.~\cite[Theorem 1.1]{DLA06}) and thanks to the convergence provided by Proposition~\ref{prop-convergenceHGTguys}, it follows that if $\beta>0$ is sufficiently small, then there exist $\eps_0,\delta_0>0$ such that for all $\eps \in (0,\eps_0)$ and $\delta \in (0,\delta_0)$, there exists $\widehat t_{\beta,\delta,\eps}>0$ such that for all $t>\widehat t_{\beta,\delta,\eps}$, one has   
\[ \Big\Vert \mathbf n^{(\mathbf n^0)}(t)-(0,0,\bar n_2) \Big\Vert \leq \frac{\beta}{4} \]
in case $j_0=2$, and
\[ \Big\Vert \mathbf n^{(\mathbf n^0)}(t)-(n_{1a},n_{1d},n_2) \Big\Vert \leq \frac{\beta}{4} \]
in case $j_0=\mathrm{co}$, 
for any initial condition $n^0=(n_{1a}^0,n_{1d}^0,n_{2}^0) \in \mathfrak B^3_\eps$. 

Now, using \cite[Theorem 2.1, p.~456]{EK}, we conclude that for all sufficiently small $\beta>0$ and accordingly chosen $\eps<\eps_0$, the following hold. If $j_0=2$, then
\[ \lim_{K \to \infty} \P\Big( \widehat T''_\beta-T^2_{\sqrt\eps} \leq \widehat t_{\beta,\delta,\eps} \Big| \mathbf N^K_0 \in \mathfrak B^3_\eps \Big) =1-o_\eps(1), \]
whereas if $j_0=\mathrm{co}$, then
\[ \lim_{K \to \infty} \P\Big( T_{S_\beta^{\mathrm{co}}}-T^2_{\sqrt\eps} \leq \widehat t_{\beta,\delta,\eps} \Big| \mathbf N^K_0 \in \mathfrak B^3_\eps \Big) =1-o_\eps(1). \]
Thus, for $j_0=2$, the second term on the right-hand side of \eqref{productformfixation2} is close to 1 when $K$ tends to $\infty$, $\beta$ is small and $\eps>0$ is small enough chosen according to $\beta$. The same holds for the second term on the right-hand side of \eqref{productformcoex2} in case $j_0=\text{co}$. In the latter case, together with \eqref{firsttermHGTguys} we have obtained
\[ \liminf_{K \to \infty} \mathfrak I^{\mathrm{co}}(K,\eps) \geq 1-q_2-o_\eps(1), \]
which implies \eqref{coexprob2} and \eqref{coexistenceof2}. 

Finally, let us consider the third term on the right-hand side of \eqref{productformfixation2} (in the case $j_0=2$ of fixation of trait 2). Proposition~\ref{prop-thirdphaseHGTguys} implies that there exists $\beta_0>0$ (denoted as $\eps_0$ in Proposition~\ref{prop-thirdphaseHGTguys}) such that for all $\beta<\beta_0$,
\[ \lim_{K \to \infty} \P \Big(\Big| \frac{T_{S_{\beta}^2}-\widehat T''_{\beta}}{\log K}+ \frac{1}{\widetilde \lambda} \Big| \leq \frac{f(\eps)}{3} \Big| \mathbf N_0^K \in \mathfrak B^4_{\beta} \Big) = 1-o_\eps(1). \]
Combining \eqref{firsttermHGTguys} with the convergence of the second and the third term on the right-hand side of \eqref{productformfixation2} to 1, we obtain
\[ \liminf_{K \to \infty} \mathfrak I^1(K,\eps) \geq 1-q_2-o_\eps(1), \]
which implies \eqref{invasionprob2} and \eqref{invasionof2}.

\section{Invasion of trait 1 against trait 2: Proof of Theorems~\ref{thm-invasionof1} and \ref{thm-failureof1}}\label{sec-dormantproofs}
 In this section we investigate the case when the resident trait is 2, which in particular has no dormant state, and the mutant trait is 1, which exhibits dormancy. This situation is similar to the one analysed in \cite{BT19}, with the additional effect of HGT that is beneficial for the residents and harmful for the mutants. We will be able to use methods of \cite[Section 4.1]{BT19}, which in turn use various techniques of \cite[Section 3.1]{C+19}, but some coupling arguments have to be altered in order to handle HGT. Of course, many arguments for this invasion direction are also similar to our proofs in Section~\ref{sec-HGTproofs} about the reverse one, whence we will omit some details of the proofs in the present section. Still, the treatment of the dynamical system~\eqref{3dimHGT} requires some additional care compared to the HGT-free case.
\subsection{The first phase of invasion: mutant growth or extinction}\label{sec-phase1dormants}
For $\eps>0$, we recall the stopping time $ R_\eps^2$ from \eqref{R2epsdef},
which is the first time that the resident population leaves the closed $\eps$-ball around the equilibrium $\bar n_2=\smfrac{\lambda_2-\mu}{C}$ (for $\lambda_2>\mu$). Note that this stopping time also depends on $K$. Now, the following holds about the first phase of invasion.
\begin{prop}\label{prop-firstphasedormants}
Assume that $\lambda_2>\mu$ and $\widetilde \lambda \neq 0$. 
Let $K \mapsto m_2^K$ be a function from $(0,\infty)$ to $[0,\infty)$ such that $m_2^K \in \smfrac{1}{K} \N_0$ and $\lim_{K \to \infty} m_2^K=\bar n_2$. Then there exists a function $f \colon (0,\infty) \to (0,\infty)$ tending to zero as $\eps \downarrow 0$ such that for any $\xi \in [1/2,1]$,
\[ \limsup_{K \to \infty} \Big| \P \Big( T_{\eps^\xi}^1 < T_0^1 \wedge R^2_{2\eps^\xi}, \Big| \frac{T_{\eps^\xi}^1}{\log K} - \frac{1}{\widetilde \lambda} \Big| \leq \bar f(\eps) \Big| \mathbf N_0^K=\big(\frac{1}{K},0,m_2^K\big)  \Big)-(1-q_2) \Big| = o_\eps(1) \numberthis\label{invasiontimedormants} \]
and
\[ \limsup_{K \to \infty} \Big| \P\Big(T_0^1 < T_{\eps^\xi}^1 \wedge R^1_{2\eps^\xi}~ \Big|  \mathbf N_0^K=\big(\frac{1}{K},0,m_2^K\big)  \Big) - q_2 \Big|=o_{\eps}(1) \numberthis\label{secondofpropdormants} \]
\end{prop}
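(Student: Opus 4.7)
The plan is to mirror the strategy of Proposition~\ref{prop-firstphaseHGTguys}, with two substantial modifications: the resident population is now one-dimensional (trait~2 has no dormant state), while the mutant population is two-dimensional (active/dormant trait~1), and the direction in which HGT acts is reversed: HGT is now harmful for the mutants and neutral for the residents. Consequently, the approximating branching process for the mutants is the two-type process $((\widehat N_{1a}(t),\widehat N_{1d}(t)))_{t\geq 0}$ introduced in Section~\ref{sec-phase13}, whose leading eigenvalue is $\widetilde\lambda$ and whose extinction probability is $q_1$. (Accordingly, we expect the $q_2$ in the statement to be a typo for $q_1$.) The proof splits into (i) a Freidlin--Wentzell-type argument ensuring the resident stays close to $\bar n_2$ up to the end of the first phase, and (ii) a sandwiching of $K(N^K_{1a},N^K_{1d})$ between two two-type linear branching processes for which both the survival probability and the hitting time of level $\lfloor \eps^\xi K\rfloor$ can be analysed by classical results of Athreya and Ney.

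First I would establish the analogue of Lemma~\ref{lemma-residentsstayHGTguys}. Define $(N^1_{2,t})$ and $(N^2_{2,t})$ as birth-and-death chains whose rates are obtained from those of $(N^K_{2,t})$ by replacing $N^K_{1a,t}$ and $N^K_{2,t}$ by their worst-case values $\pm \eps^\xi$ inside the competition and HGT terms; more precisely the lower process lets $\tau$ act only on a ``missing'' mutant population and competition on the maximal one, and vice versa for the upper process. These inequalities yield $N^1_{2,t}\leq N^K_{2,t}\leq N^2_{2,t}$ for $t\leq T_0^1\wedge T^1_{\eps^\xi}$. The one-dimensional logistic ODE limits of these couplings have asymptotically stable equilibria $\bar n_2^{i,\eps^\xi}$ with $|\bar n_2^{i,\eps^\xi}-\bar n_2|\leq \eps^\xi$, so Freidlin--Wentzell exit estimates \cite[Chapter~5]{FW84} yield $\P(R^{2,i}_{2\eps^\xi}\leq \e^{KV})\to 0$ for some $V>0$. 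Combining this with a Dynkin-formula bound on $\mathbb E[T_0^1\wedge T^1_{\eps^\xi}\wedge R^2_{2\eps^\xi}]$, applied to the linear test function $g(\mathbf n)=\gamma(n_{1a}+n_{1d})$ with $\gamma$ chosen so that $\mathcal L g(\mathbf N^K_t)\geq N^K_{1a,t}+N^K_{1d,t}$ (using $\widetilde\lambda\neq 0$ and hence that the trait $1$ branching process is non-critical near $(\bar n_2,0)$), one concludes $\P(R^2_{2\eps^\xi}\leq T_0^1\wedge T^1_{\eps^\xi})\to 0$.

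On the event $A_{\eps^\xi}:=\{T_0^1\wedge T^1_{\eps^\xi}<R^2_{2\eps^\xi}\}$ I would then couple the rescaled mutant population $K(N^K_{1a,t},N^K_{1d,t})$ with two two-type branching processes $(\mathbf N_1^{\eps,\preceq})$ and $(\mathbf N_1^{\eps,\succeq})$ whose transition rates are obtained from those in Section~\ref{sec-phase13} by replacing $\bar n_2$ by $\bar n_2\pm 2\eps^\xi$ in the directions that make the bounds monotone: for the lower process increase the death, HGT and dormancy-initiation rates and decrease the birth rate, with the opposite substitution for the upper process. This yields $N^{\eps,\preceq}_{1\upsilon,t}\leq KN^K_{1\upsilon,t}\leq N^{\eps,\succeq}_{1\upsilon,t}$ for $\upsilon\in\{a,d\}$ and $t\leq T_0^1\wedge T^1_{\eps^\xi}\wedge R^2_{2\eps^\xi}$. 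The corresponding extinction probabilities $q_1^{(\eps,j)}$ converge to $q_1$ as $\eps\downarrow 0$ by continuity of the extinction probability of a supercritical (or subcritical) two-type branching process in its mean matrix, cf.~\cite[Section~A.3]{C+19}, and the leading eigenvalues $\widetilde\lambda^{(\eps,j)}$ of their mean matrices converge to $\widetilde\lambda$ by continuity of matrix spectra.

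The main difficulty, which has no genuine one-type analogue, lies in the hitting-time asymptotics for the two-type branching processes. Here I would invoke the Kesten--Stigum theorem (\cite[Section~7.5]{AN72}): on the event of non-extinction, $\e^{-\widetilde\lambda^{(\eps,j)} t}(N^{\eps,j}_{1a,t},N^{\eps,j}_{1d,t})$ converges a.s.~to $W^j\mathbf v$ with $W^j>0$ a.s., where $\mathbf v$ is the Perron right eigenvector. This yields, for the threshold-hitting time of the total mass $N^{\eps,j}_{1a,t}+N^{\eps,j}_{1d,t}$ at $\lfloor \eps^\xi K\rfloor$, the standard $\log K/\widetilde\lambda^{(\eps,j)}+O(1)$ behaviour on survival, uniformly for $\xi\in[1/2,1]$ since the correction $\xi\log\eps/\widetilde\lambda$ is $o_\eps(\log K)$. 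Passing this through the coupling, exactly as in the derivation of \eqref{invasiontimeHGTguys} in Proposition~\ref{prop-firstphaseHGTguys} (using the independence of $A_{\eps^\xi}$ from the auxiliary branching processes and the probability bound $\P(A_{\eps^\xi})\to 1$), delivers \eqref{invasiontimedormants}. Equation \eqref{secondofpropdormants} follows along the same lines from the corresponding extinction statement for subcritical/supercritical two-type branching processes started from a single active individual.
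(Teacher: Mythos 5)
Your overall route is the same as the paper's: a resident-stability lemma proved by sandwiching $N^K_{2}$ between two one-dimensional logistic birth--death processes, Freidlin--Wentzell exit estimates, and a Dynkin-formula bound on $\E[T_0^1\wedge T^1_{\eps^\xi}\wedge R^2_{2\eps^\xi}]$; then a coupling of $K(N^K_{1a},N^K_{1d})$ with two two-type branching processes whose rates are obtained from those of $((\widehat N_{1a}(t),\widehat N_{1d}(t)))_{t\ge0}$ by $O(\eps^\xi)$ perturbations of $\bar n_2$, with survival probabilities and hitting times controlled via \cite{AN72} and the continuity arguments of \cite{C+19,BT19} (this is exactly Lemma~\ref{lemma-residentsstaydormants} plus the coupling \eqref{upsilondefdormants} in the paper, which then defers the remainder to \cite[Proposition 4.1]{BT19}). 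You are also right that the $q_2$ appearing in the statement is meant to be $q_1$, the extinction probability of the two-type process, as used in Theorem~\ref{thm-invasionof1} and in Section~\ref{sec-dormantproofs}.

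There is, however, one concrete step in your sketch that fails: the choice $g(\mathbf n)=\gamma(n_{1a}+n_{1d})$ with a single scalar $\gamma$ in the Dynkin argument. Since dormancy initiation and resuscitation leave $n_{1a}+n_{1d}$ unchanged, one computes $\mathcal L g(\mathbf N^K_t)=\gamma\big[N^K_{1a,t}\big(\lambda_1-\mu-(1-p)C(N^K_{1a,t}+N^K_{2,t})-\tau N^K_{2,t}\big)-\kappa\mu N^K_{1d,t}\big]$. Requiring $\mathcal L g\ge N^K_{1a,t}+N^K_{1d,t}$ at every state reachable before the stopping time forces, by taking states with $N^K_{1a,t}=0$ resp.\ $N^K_{1d,t}=0$, both $-\gamma\kappa\mu\ge1$ and $\gamma\big(\lambda_1-\mu-((1-p)C+\tau)\bar n_2+O(\eps^\xi)\big)\ge1$: for $\kappa=0$ the first is impossible, and for $\kappa>0$ the two force opposite signs of $\gamma$ whenever $\lambda_1-\mu-((1-p)C+\tau)\bar n_2>0$, which the hypotheses do not exclude. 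The sign of $\widetilde\lambda$ does not rescue this, because $\widetilde\lambda$ is an eigenvalue of the mean matrix, not the drift of the total mass. This is precisely where the two-type mutant departs from the one-type situation of Lemma~\ref{lemma-residentsstayHGTguys}: the paper instead takes $g=\gamma_1 n_{1a}+\gamma_2 n_{1d}$ with $(\gamma_1,\gamma_2)$ a right Perron--Frobenius eigenvector of $J$ from \eqref{JdefHGT} (available since $J+u_0\,\mathrm{Id}$ has positive entries), rescaled so that $\widetilde\lambda\gamma_i\ge2$ for $i\in\{1,2\}$ -- which works for either sign of $\widetilde\lambda\neq0$ -- so that the two coefficient inequalities hold with margin and survive the $O(\eps^\xi)$ perturbation of $J^K(t)$ around $J$. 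With that replacement your argument goes through; the remaining slips (HGT is beneficial, not neutral, for the resident, and the second display should involve $R^2_{2\eps^\xi}$) are cosmetic.
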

This proposition is the analogue of \cite[Proposition 4.1]{BT19}. We will now explain what has to be altered in the proof of \cite{BT19} in order to handle the additional HGT terms. The first step of the proof is to verify the following lemma.
\begin{lemma}\label{lemma-residentsstaydormants}
Under the assumptions of Proposition~\ref{prop-firstphasedormants}, there exists a positive constant $\eps_0$ such that for any $\xi \in [1/2,1]$ and $0<\eps \leq \eps_0$,
\[ \limsup_{K \to \infty} \P \big(R^2_{2\eps^\xi} \leq T^1_{\eps^\xi} \wedge T_0^1 \big)=0. \]
\end{lemma}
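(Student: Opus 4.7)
The plan is to mirror the strategy used for Lemma~\ref{lemma-residentsstayHGTguys} in the opposite invasion direction. The key new ingredient is that the mutant population is now two-dimensional (active plus dormant), so the Lyapunov function in the Dynkin step must be a linear combination of both mutant coordinates, with coefficients tuned via the spectral theory of the mean matrix $J$ from~\eqref{JdefHGT}.

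First I would couple the rescaled resident $(KN_{2,t}^K)_{t\geq 0}$ between two one-dimensional logistic birth-and-death chains $(N_{2,t}^{\eps,1})$ and $(N_{2,t}^{\eps,2})$, valid up to time $T_0^1 \wedge T_{\eps^\xi}^1$. For the lower bound, take birth rate $i\lambda_2$ and death rate $i(\mu + C(i/K + \eps^\xi))$ (no HGT benefit, worst-case competition from a trait 1 population of size $\eps^\xi K$); for the upper bound, take birth rate $i(\lambda_2 + \tau\eps^\xi)$ (maximal HGT) and death rate $i(\mu + Ci/K)$ (no competition from trait 1). Their scaling limits are one-dimensional logistic ODEs with unique strictly positive equilibria $\bar n_2 \pm O(\eps^\xi)$, so Freidlin--Wentzell exit estimates (as in \cite[Chapter~5]{FW84} and \cite[Section~4.2]{C06}) furnish $V>0$ and $\eps_0>0$ such that for all $\eps \in (0,\eps_0)$ both bounding processes remain within distance $2\eps^\xi$ of $\bar n_2$ for time at least $\e^{KV}$ with probability tending to~$1$ as $K\to\infty$.

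By Markov's inequality,
\[
\P(R_{2\eps^\xi}^2 \leq T_{\eps^\xi}^1 \wedge T_0^1) \leq \P(R_{2\eps^\xi}^2 \leq \e^{KV},~ R_{2\eps^\xi}^2 \leq T_{\eps^\xi}^1 \wedge T_0^1) + \e^{-KV}\,\E\!\left[R_{2\eps^\xi}^2 \wedge T_{\eps^\xi}^1 \wedge T_0^1\right],
\]
and the first summand tends to $0$ by the Freidlin--Wentzell estimate above, so it suffices to bound the expectation by a polynomial in $K$. Since the total mutant population is at least $1$ before $T_0^1$, the stopped time is dominated by $\int_0^{R_{2\eps^\xi}^2 \wedge T_0^1 \wedge T_{\eps^\xi}^1}(N_{1a,s}+N_{1d,s})\,\d s$. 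Thus the task reduces to producing a linear function $f(n_{1a},n_{1d},n_2) = \Gamma_a n_{1a} + \Gamma_d n_{1d}$ for which $\widetilde{\mathcal L} f \geq N_{1a}+N_{1d}$ on the event of interest; Dynkin's formula then delivers a bound of order $(|\Gamma_a|+|\Gamma_d|)\eps^\xi K$, which is exactly what is needed.

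The heart of the argument, and the main obstacle beyond the scalar setting treated in Lemma~\ref{lemma-residentsstayHGTguys} and in~\cite[Section~4.1]{BT19}, is the choice of $(\Gamma_a,\Gamma_d)$. Using $N_2/K \approx \bar n_2$ and $(N_{1a}+N_2)/K \approx \bar n_2$ in the relevant regime, a direct computation with the generator from Appendix~\ref{sec-generator} shows that the leading-order part of $\widetilde{\mathcal L}f$ is $N_{1a}(\Gamma_a J_{11} + \Gamma_d J_{12}) + N_{1d}(\Gamma_a J_{21} + \Gamma_d J_{22})$. Hence we need $J(\Gamma_a,\Gamma_d)^{T}$ to dominate $(1,1)^{T}$ componentwise, strictly enough to absorb the $O(\eps^\xi)$ perturbation. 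In the supercritical case~\eqref{firstinmutantlessfit2ineq}, Perron--Frobenius applied to the Metzler matrix $J$ (with $\widetilde\lambda > 0$) yields a strictly positive right eigenvector $v$ with $Jv = \widetilde\lambda v$, and $(\Gamma_a,\Gamma_d)^{T} = cv$ with $c > 0$ large enough works, producing an $f \geq 0$. In the subcritical case~\eqref{reversefirstin}, $-J$ is a nonsingular M-matrix, so $(-J)^{-1}$ has non-negative entries; setting $(\Gamma_a,\Gamma_d)^{T} = -(-J)^{-1}(c,c)^{T}$ with $c$ slightly larger than $1$ gives $\Gamma_a,\Gamma_d \leq 0$ (hence $f \leq 0$) and $J(\Gamma_a,\Gamma_d)^{T} = (c,c)^{T}$. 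Either sign of $f$ is compatible with the Dynkin bound, just as for the scalar $\gamma$ in Lemma~\ref{lemma-residentsstayHGTguys}; combining with the exit estimate then yields the lemma.
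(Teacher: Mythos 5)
Your proposal is correct and follows essentially the same route as the paper: the same sandwich coupling of the resident trait 2 population, the same Freidlin--Wentzell exit estimate combined with the Markov-inequality/Dynkin reduction, and a linear Lyapunov function in the two mutant coordinates whose coefficients are dictated by the mean matrix $J$ of \eqref{JdefHGT}. The only (cosmetic) difference is the final linear-algebra step: the paper uses a single unified choice, namely the Perron eigenvector of $J$ rescaled by $2(\widetilde\lambda(\widetilde\gamma_1\wedge\widetilde\gamma_2))^{-1}$ so that $J\Gamma^T=\widetilde\lambda\,\Gamma^T$ has both entries at least $2$ regardless of the sign of $\widetilde\lambda$, whereas you split into the supercritical case (positive multiple of the Perron eigenvector) and the subcritical case ($\Gamma=J^{-1}(c,c)^T\le 0$ via the M-matrix property of $-J$), which is equally valid.
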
 
Let us now explain how to obtain Lemma~\ref{lemma-residentsstaydormants} via an adaptation of the proof of \cite[Lemma 4.2]{BT19}. 
\begin{proof}
We prove the lemma via coupling the rescaled population size $N_{2,t}^K$ with two birth-and-death processes, $N_{2,t}^-$ and $N_{2,t}^+$, on time scales where the mutant population size remains still small compared to $K$. More precisely, following \cite[Section 3.1.2]{C+19},
\[ N_{2,t}^- \leq N_{2,t}^K \leq N_{2,t}^+, \qquad \text{a.s.} \qquad \forall t \leq T_0^1 \wedge T_{\eps^\xi}^1. \numberthis\label{firstresidentcouplingdormants} \]

In order to satisfy \eqref{firstresidentcouplingdormants}, the processes $N_2^-=(N_{2,t}^-)_{t \geq 0}$ and $N_2^+=(N_{2,t}^+)_{t \geq 0}$ can be defined with the following birth and death rates
\begin{align*}
N_{2,t}^- \colon \quad & \frac{i}{K} \to \frac{i+1}{K} \quad \text{at rate} \quad i\lambda_2, \\
                 & \frac{i}{K} \to \frac{i-1}{K} \quad \text{at rate} \quad i \big( \mu+C \frac{i}{K} + C \eps^\xi \big).
\end{align*}
and
\begin{align*}
N_{2,t}^+ \colon  \quad & \frac{i}{K} \to \frac{i+1}{K} \quad \text{at rate} \quad i\lambda_2+\tau \eps^\xi, \\
                 & \frac{i}{K} \to \frac{i-1}{K} \quad \text{at rate} \quad i \big( \mu+C \frac{i}{K} \big).
\end{align*}
Using this coupling, which is based on the same principle as the one satisfying \eqref{mutantcouplingHGTguys}, the proof can be continued analogously to the one of \cite[Lemma 4.2]{BT19}. Following the arguments of that proof, one arrives at the point from where the lemma follows as soon as one shows that for $V>0$ small enough, one has
\[ \lim_{K \to \infty} \e^{-KV} \E\big( R_{2\eps^\xi}^1 \wedge T_0^1 \wedge T_{\eps^\xi}^1 \big) =0. \numberthis\label{FWbound} \]
Now, we can again verify \eqref{FWbound} similarly to \cite[Section 3.1.2]{C+19} and hence also to the proof of Lemma~\ref{lemma-residentsstayHGTguys} (note that in \cite{BT19} only the case when the approximating branching process $((\widehat N_{1a}(t),\widehat N_{1d}(t)))_{t \geq 0}$ is supercritical was handled, which allowed for a simpler proof). Since we have
\[ \E \big[R_{2\eps^\xi}^2 \wedge T_0^1 \wedge T_{\eps^\xi}^1 \big] \leq \E \Big[\int_0^{R_{2\eps^\xi}^2 \wedge T_0^1 \wedge T_{\eps^\xi}^1} K N_{1,t}^K \d t \Big], \]
it suffices to show that there exists $\widetilde C>0$ such that 
\[ \E \Big[\int_0^{R^2_{2\eps^\xi} \wedge T_0^1 \wedge T_{\eps^\xi}^1} K N_{1,t}^K \d t \Big] \leq \widetilde C \eps^\xi K. \numberthis\label{expectedstoppingtimedormants} \]
To this end, it is enough to show that there exists a function $g \colon (\smfrac{1}{K} \N_0)^3 \to \R$ defined as
\[ g(n_{1a},n_{1d},n_{2})=\gamma_1 n_{1a} + \gamma_2 n_{1d} \numberthis\label{gamma1gamma2dormants} \]
such that
\[ \Lcal g(\mathbf N_{t}^K) \geq N_t^K, \numberthis\label{largegeneratordormants}\]
where $\Lcal$ is the infinitesimal generator (cf.~Appendix~\ref{sec-generator}) of $(\mathbf N_t^K)_{t \geq 0}$ and we write $N_t^K=N_{1a,t}^K+N_{1d,t}^K$.
Indeed, if \eqref{largegeneratordormants} holds, then by Dynkin's formula,
\[ 
\begin{aligned}
\E \Big[\int_0^{R_{2\eps^\xi}^2 \wedge T_0^1 \wedge T_{\eps^\xi}^1} K N_{1,t}^K \d t \Big] & \leq  \E \Big[\int_0^{R_{2\eps^\xi}^2\wedge T_0^1 \wedge T_{\eps^\xi}^1} K\Lcal g(\mathbf N_{t}^K)\d t \Big] =\E\big[ K g(\mathbf N_{R_{2\eps^\xi}^2\wedge T_0^2 \wedge T_{\eps^\xi}^1}^K)-Kg(\mathbf N_{0}^K) \big] \\
& \leq (\gamma_1 \vee \gamma_2) \eps^\xi K - ( \gamma_1 \wedge \gamma_2 )
\end{aligned}
\]
follows, which implies \eqref{expectedstoppingtimedormants}, independently of the signs of $\gamma_1$ and $\gamma_2$. Here, Dynkin's formula can indeed be applied because $ \E [R_{2\eps^\xi}^2\wedge T_0^1 \wedge T_{\eps^\xi}^1]$ is finite. That holds because given our initial conditions, with positive probability the single initial active trait 1 individual dies due to natural death within a unit length of time before any event of the process $\mathbf N_t^K$ occurs, and hence already $T_0^1$ is stochastically dominated by a geometric random variable, which has all moments. We now apply the infinitesimal generator $\mathcal L$ to the function $g$ introduced in \eqref{gamma1gamma2dormants} once again, which yields
\[ \Lcal g(\mathbf N_t^K) = N_{1a,t}^K \big[ (\lambda_1-\mu-C (N_{2,t}^K+N_{1a,t}^K)-\tau N_{2,t}^K)\gamma_1 +pC (N_{2,t}^K+N_{1a,t}^K) \gamma_2 \big] + N_{1d,t}^K \big[ \sigma  \gamma_1 -(\kappa\mu+\sigma)  \gamma_2 \big]. \] 
Hence, according to \eqref{gamma1gamma2dormants}, it suffices to show that there exist $\gamma_1,\gamma_2 \in \R$ such that the following system of inequalities is satisfied:
\begin{align}
    (\lambda_1-\mu-C (N_{1a,t}^K+N_{2,t}^K)-\tau N_{2,t}^K)\gamma_1 +pC (N_{1a,t}^K+N_{2,t}^K) \gamma_2 & >1 \label{first-gamma1gamma2dormants} 
\end{align}
and
\begin{align}
     \sigma \gamma_1 -(\kappa\mu+\sigma)\gamma_2 & >1. \label{second-gamma1gamma2dormants}
\end{align}
We claim that as long as $t \leq R_{2\eps^\xi}^2 \wedge T_0^1 \wedge T_{\eps^\xi}^1$, the matrix
\[ J^K(t) = \begin{pmatrix} \lambda_1-\mu-C N_{1a,t}^K -(C+\tau) N_{2,t}^K & p C (N_{1a,t}^K + N_{2,t}^K) \\ \sigma & -\kappa\mu-\sigma \end{pmatrix}\]
is entrywise close to the mean matrix $J$ introduced in \eqref{JdefHGT}, namely (noting that the two matrices have the same second row), there exists a constant $\widetilde C>0$ such that
\[ \big| \big( J^K(t)-J \big)_{j} \big| \leq \widetilde C \eps^{\xi}, \qquad  \forall j\in \{1,2\}. \numberthis\label{JJclosedormants} \]
Indeed, as long as $t \leq R_{2\eps^\xi}^2 \wedge T_0^1 \wedge T_{\eps^\xi}^1$, for $j=1$ we have  that
\[ |(\lambda_1-\mu-C N_{1a,t}^K -(C+\tau) N_{2,t}^K)-(\lambda_1-\mu-(C+\tau)\bar n_2) | \leq 2(C+\tau)\eps^\xi + C \eps^\xi, \]
whereas for $j=2$ we have 
\[ |pC(N_{1a,t}^K+N_{2,t}^K)-pC\bar n_2| \leq p C(2\eps^\xi+\eps^\xi). \]
This implies \eqref{JJclosedormants} for eg.~$\widetilde C = 3C+2\tau$. 

Let us now choose $(\gamma_1,\gamma_2)$. 
Thanks to the assumption that $\lambda_1>\mu$, given that $\eps>0$ is sufficiently small, $J^K(t) + ((C+\tau) \bar n_2+2(\kappa\mu+\sigma))\mathrm{Id} $ is a matrix with positive entries, where $\mathrm{Id}$ denotes the $2\times 2$ identity matrix. Hence, writing $u_0=(C+\tau) \bar n_2+2(\kappa\mu+\sigma)$, it follows from the Perron--Frobenius theorem that there exists a strictly positive right eigenvector $\widetilde \Gamma=(\widetilde \gamma_1,\widetilde \gamma_2)$ of $J+u_0\mathrm{Id}$ corresponding to the eigenvalue $\widetilde \lambda +u_0$. Then we have 
\[ (J+u_0 \mathrm{Id}) \widetilde \Gamma^T = (\widetilde \lambda + u_0)\widetilde \Gamma^T, \]
and thus also $J\widetilde \Gamma^T = \widetilde \lambda \widetilde \Gamma^T$. Since by assumption $\widetilde \lambda \neq 0$ and $\widetilde \Gamma$ has two positive coordinates, we obtain that
\[ \Gamma: = (\gamma_1,\gamma_2):=2(\widetilde\lambda( \widetilde \gamma_1 \wedge \widetilde \gamma_2))^{-1} \widetilde \Gamma\]
is well-defined, and it solves 
\[ J \Gamma^T = \widetilde \lambda \Gamma^T, \numberthis\label{GammaEVeq} \] further, $\widetilde\lambda \gamma_i \geq 2$ holds for all $i \in \{1,2\}$. Now, using \eqref{GammaEVeq} and \eqref{JJclosedormants}, we obtain
\[
\begin{aligned}
\big| \gamma_1 & (\lambda_1-\mu-C N_{1a,t}^K -(C+\tau) N_{2,t}^K)+\gamma_2 pC(N_{1a,t}^K+N_{2,t}^K) - \widetilde \lambda \gamma_1 \big| \\ & =  \big| \gamma_1 (C N_{1a,t}^K +(C+\tau) N_{2,t}^K-(C+\tau)\bar n_2) + \gamma_2 pC (N_{1a,t}^K+N_{2,t}^K-\bar n_2)\big| \leq (|\gamma_1|+|\gamma_2|) \widetilde C \eps^\xi. 
\end{aligned}\]
Finally, since $\widetilde \lambda \gamma_1 \geq 2$, it follows that if $\eps>0$ is small enough, then as long as $t \leq R_{2\eps^\xi}^2 \wedge T_0^1 \wedge T_{\eps^\xi}^1$, we have
\[ \gamma_1(\lambda_1-\mu-C N_{1a,t}^K -(C+\tau) N_{2,t}^K) + \gamma_2(p C (N_{1a,t}^K + N_{2,t}^K)) \geq 1. \]
This (together with the fact that $J$ and $J^K(t)$ have the same second row for any positive $K$ and $t$) implies \eqref{largegeneratordormants}, and hence the proof of Lemma~\ref{lemma-residentsstaydormants} is concluded. 
%
\end{proof}
\begin{proof}[Proof of Proposition~\ref{prop-firstphasedormants}.] In what follows, similarly to \cite[Section 4.1]{BT19}, we consider our population process on the event
\[ A_\eps := \{ T_0^1 \wedge T_{\eps^\xi}^1 < R_{2\eps^\xi}^2 \} \]
for sufficiently small $\eps>0$. On this event, the invasion or extinction of the mutant population will happen before the resident population substantially deviates from its equilibrium size. We couple on $A_\eps$ the process $(KN_{1a,t}^K,KN_{1d,t}^K)$ with two two-type branching processes $(N_{1a,t}^{\eps,-}, N_{1d,t}^{\eps,-})$ and $(N_{1a,t}^{\eps,+}, N_{1d,t}^{\eps,+})$ on $\N_0^2$ (these again depend on $K$, but we omit that from the notation) such that almost surely, for any $t <  t_\eps:=T_0^1 \wedge T_{\eps^\xi}^1 \wedge R_{2\eps^\xi}^2$ and $\upsilon \in \{ a,d \}$,
\[ \begin{aligned}
N_{1\upsilon,t}^{\eps,-} & \leq \widehat N_{1\upsilon}(t)\leq N_{1\upsilon,t}^{\eps,+}, \\
N_{1\upsilon,t}^{\eps,-} & \leq K N_{1\upsilon,t}^K \leq N_{1\upsilon,t}^{\eps,+},
\end{aligned} \numberthis\label{upsilondefdormants}
\]
where the approximating branching process $((\widehat N_{1a}(t),\widehat N_{1d}(t)))_{t \geq 0}$ was defined in Section~\ref{sec-phase13}. 
We claim that in order to satisfy \eqref{upsilondefdormants}, these processes can be defined with the following jump rates:
\[ \begin{aligned}
(N_{1a,t}^{\eps,-},N_{1d,t}^{\eps,-}) \colon \quad & (i,j) \to (i+1,j) & \text{at rate} & \quad i \lambda_1, \\
& (i,j) \to (i-1,j)& \text{at rate}  &\quad i(\mu+(1-p)C(\eps^\xi+\bar n_2+2\eps^\xi)+5pC\eps^\xi \\ & & & \qquad +\tau (\bar n_2+2\eps^\xi)), \\
& (i,j) \to (i-1,j+1) & \text{at rate}& \quad ipC(\bar n_2-2\eps^\xi), \\
& (i,j) \to (i+1,j-1) & \text{at rate} &\quad j\sigma, \\
& (i,j) \to (i,j-1) & \text{at rate} &\quad j\kappa\mu,
\end{aligned}
\]
and
\[ \begin{aligned}
(N_{1a,t}^{\eps,+},N_{1d,t}^{\eps,+}) \colon \quad & (i,j) \to (i+1,j) & \text{at rate} & \quad i \lambda_1, \\
& (i,j) \to (i-1,j)& \text{at rate}  &\quad i(\mu+(1-p)C(\bar n_2-2\eps^\xi)-5pC\eps^\xi), \\
& (i,j) \to (i-1,j+1) & \text{at rate}& \quad ipC(\bar n_2+2\eps^\xi+\eps^\xi), \\
& (i,j) \to (i+1,j-1) & \text{at rate} &\quad j\sigma, \\
& (i,j) \to (i,j-1) & \text{at rate} &\quad j\kappa\mu.
\end{aligned}
\]
The idea of this coupling is similar to the one of the coupling satisfying \eqref{firstresidentcouplingHGTguys}. Given this coupling, the proof of Proposition~\ref{prop-firstphasedormants} can be completed analogously to the one of \cite[Proposition 4.1]{BT19} (which uses further arguments of \cite{C+19}), noting that for the mutant population of trait 1, HGT is just a special case of death by competition. Here, we also note that although the case when $\widetilde \lambda <0$ was not considered in \cite{BT19}, in this case the same proof techniques can be applied as for $\widetilde\lambda>0$. 
\end{proof}

\subsection{The second phase of invasion: Lotka--Volterra phase}\label{sec-phase2dormants}
Assume that \eqref{firstinmutantlessfit2ineq} holds. On the event $\{ T_{\sqrt \eps}^1 < T_0^1 \wedge R_{2\sqrt\eps}^2 \} \subset A_{\eps}$, after time $T_{\eps}^1$ the total mutant (trait 1) population has size close to $\eps K$. Note that Proposition~\ref{prop-firstphasedormants} does not tell about the proportion of dormant and active mutants at time $T_\eps^1$. Similarly to \cite[Section 4.2.2]{BT19}, we show that with high probability, there exists a point in time in the interval $[T_\eps^1,T_{\sqrt \eps}^1]$ such that at this time, the total mutant population size is still comparable to $\eps K$, and the proportion of the active and dormant mutants is near the equilibrium proportion $((\widehat N_{1a}(t),\widehat N_{1d}(t)))_{t \geq 0}$. We verify this in Section~\ref{sec-KestenStigum}, using arguments similar to \cite{C+19,BT19}. Starting from such a proportion of active and dormant mutants, the limiting dynamical system \eqref{3dimHGT} will converge to $(n_{1a},n_{1d},n_2)$ in the case \eqref{secondinmutantlessfit2ineq} of stable coexistence and to $(\bar n_{1a}, \bar n_{1d},0)$ in the case \eqref{reversesecondin} of mutant fixation; we will prove this in Section~\ref{sec-ODEconvergencedormants}.
\subsubsection{Kesten--Stigum arguments}\label{sec-KestenStigum}
If~ \eqref{firstinmutantlessfit2ineq} holds, then $\widetilde \lambda$ is positive. Hence, the Kesten--Stigum theorem (see eg.~\cite[Theorem 2.1]{GB03}) ensures that
we have
\[ \Big( \frac{\widehat N_{1a}(t)}{\widehat N_{1a}(t)+\widehat N_{1d}(t)},\frac{\widehat N_{1d}(t)}{\widehat N_{1a}(t)+\widehat N_{1d}(t)}\Big) \underset{K \to \infty}{\longrightarrow} (\pi_{1a},\pi_{1d}) \]
almost surely, conditional on the survival of the approximating branching process $(\widehat N_{1a}(t),\widehat N_{1d}(t))_{t \geq 0}$,
where $(\pi_{1a},\pi_{1d})$ is the positive left eigenvector of the matrix $J$ defined in \eqref{JdefHGT} associated to $\widetilde \lambda$ such that $\pi_{1a}+\pi_{1d}=1$, which can be computed explicitly according to \eqref{lambdaHGT}.  We have the following proposition. 
\begin{prop}\label{prop-secondphasedormants}
Assume that~\eqref{firstinmutantlessfit2ineq} holds (in other words, $\widetilde\lambda>0$) with $\lambda_2>\mu$. Then there exists $\widehat C>0$ sufficiently large such that for $\delta>0$ such that $\pi_{1a} \pm \delta \in (0,1)$, under the same assumptions as Proposition~\ref{prop-firstphasedormants},
\[ \numberthis\label{mutantproportionsdormants}
\begin{aligned}
& \liminf_{K \to \infty} \P \Big( \exists t \in \big[ T_\eps^1, T^1_{\sqrt \eps}\big], \frac{\eps K}{\widehat C} \leq KN_{1a,t}^K+KN_{1d,t}^K \leq \sqrt \eps K, \\ & \qquad \pi_{1a}-\delta < \frac{ N_{1a,t}^K}{ N_{1a,t}^K+N_{1d,t}^K} < \pi_{1a}+\delta \Big| T^1_{\sqrt \eps} < T_0^1 \wedge R_{2\sqrt\eps}^1 \Big) \geq 1-o_\eps(1).
\end{aligned}
\]
\end{prop}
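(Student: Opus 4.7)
My plan is to leverage the sandwich coupling~\eqref{upsilondefdormants} established in Section~\ref{sec-phase1dormants} together with the Kesten--Stigum theorem for two-type branching processes, adapting the arguments of~\cite[Section 3.2]{C+19} and~\cite[Section 4.2]{BT19} to the present HGT setting. Since $\widetilde\lambda>0$, the bounding two-type branching processes $(N_{1a}^{\eps,\pm},N_{1d}^{\eps,\pm})$ have mean matrices $J^{\eps,\pm}$ which are entrywise close to $J$ for small $\eps$, their Perron--Frobenius eigenvalues $\widetilde\lambda^{\eps,\pm}$ converge to $\widetilde\lambda>0$, and the associated normalized positive left eigenvectors $(\pi_{1a}^{\eps,\pm},\pi_{1d}^{\eps,\pm})$ converge to $(\pi_{1a},\pi_{1d})$ as $\eps\downarrow 0$. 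The Kesten--Stigum theorem (see~\cite[Theorem~2.1]{GB03}) then yields that, conditional on non-extinction of $(N_{1a}^{\eps,\pm},N_{1d}^{\eps,\pm})$, the ratios $N_{1a,t}^{\eps,\pm}/(N_{1a,t}^{\eps,\pm}+N_{1d,t}^{\eps,\pm})$ converge almost surely to $\pi_{1a}^{\eps,\pm}$ as $t\to\infty$.

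The key quantitative input I would use is the following: for any $\delta'>0$, there exists a deterministic time $T^\star(\delta',\eps)$ such that, conditional on survival, both bounding processes have proportions within $\delta'$ of their respective equilibria for all $t\geq T^\star(\delta',\eps)$ with probability at least $1-\delta'$. Further, by standard branching process growth estimates (Kesten--Stigum also gives $\e^{-\widetilde\lambda^{\eps,\pm} t}(N_{1a,t}^{\eps,\pm}+N_{1d,t}^{\eps,\pm})$ converges a.s.\ to a positive random variable $W^{\eps,\pm}$ on non-extinction), the hitting time $\tau^{-}:=\inf\{t\geq 0\colon N_{1a,t}^{\eps,-}+N_{1d,t}^{\eps,-}\geq \lfloor \eps K/\widehat C\rfloor\}$ satisfies $\tau^- = \log K/\widetilde\lambda^{\eps,-} + O_P(1)$ on survival, so $\tau^- \to \infty$ as $K\to\infty$, and hence $\tau^- > T^\star(\delta',\eps)$ with high probability. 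By the sandwich coupling (valid on $A_\eps$, which dominates the event of interest for large $K$ and small $\eps$ by Lemma~\ref{lemma-residentsstaydormants}), the actual process satisfies $KN_{1a,\tau^-}^K+KN_{1d,\tau^-}^K \geq \lfloor \eps K/\widehat C\rfloor$, i.e.\ $T_{\eps/\widehat C}^1 \leq \tau^-$.

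The principal obstacle is transferring the Kesten--Stigum proportion estimate from the bounding branching processes to the actual process and simultaneously ensuring $\tau^-\leq T_{\sqrt\eps}^1$. Writing $S^{\eps,\pm}_t:=N_{1a,t}^{\eps,\pm}+N_{1d,t}^{\eps,\pm}$, I would observe that the sandwich implies
\[ \frac{N_{1a,t}^{\eps,-}}{S^{\eps,+}_t}\leq \frac{KN_{1a,t}^K}{KN_{1a,t}^K+KN_{1d,t}^K}\leq \frac{N_{1a,t}^{\eps,+}}{S^{\eps,-}_t}, \]
so control of the ratio $S^{\eps,+}_t/S^{\eps,-}_t$ at time $t=\tau^-$ is needed. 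Since $\widetilde\lambda^{\eps,+}-\widetilde\lambda^{\eps,-}=O(\eps^{1/2})$ and $\tau^-=O(\log K)$, the deterministic part of $S^{\eps,+}_{\tau^-}/S^{\eps,-}_{\tau^-}$ is of order $K^{O(\eps^{1/2})}$, which is subpolynomial and can be absorbed into the slack between $\eps K/\widehat C$ and $\sqrt\eps K$ by choosing $\widehat C$ sufficiently large and $\eps$ sufficiently small; the random prefactors $W^{\eps,\pm}$ are finite a.s.\ conditional on survival, contributing only $O_P(1)$. Combined with the Kesten--Stigum convergence applied to both bounding processes, this yields proportions within $2\delta'+o_\eps(1)$ of $\pi_{1a}$ at time $\tau^-$, hence~\eqref{mutantproportionsdormants}.

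Finally, to convert conditioning on survival of the bounding processes into conditioning on the event $\{T_{\sqrt\eps}^1<T_0^1\wedge R_{2\sqrt\eps}^2\}$, I would invoke the symmetric-difference estimates from the proof of Proposition~\ref{prop-firstphasedormants} (analogous to~\eqref{undefinedsymmdiffHGTguys}), which show that these events coincide up to a probability that vanishes as first $K\to\infty$ and then $\eps\downarrow 0$; this is exactly the technique used in~\cite[Section 4.2.2]{BT19} and transfers verbatim once the HGT contribution is absorbed into the competitive/transfer death rate of the mutants as in our coupling.
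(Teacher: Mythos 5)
Your central step does not go through. The componentwise sandwich \eqref{upsilondefdormants} bounds the true proportion between $N^{\eps,-}_{1a,t}/S^{\eps,+}_t$ and $N^{\eps,+}_{1a,t}/S^{\eps,-}_t$ (with $S^{\eps,\pm}_t$ the totals of the bounding processes, as in your notation), so you need $S^{\eps,+}_t/S^{\eps,-}_t$ to stay bounded, in fact close to $1$, at the time where you evaluate. But the two bounding processes have distinct Malthusian parameters with $\widetilde\lambda^{\eps,+}-\widetilde\lambda^{\eps,-}$ of order $\sqrt\eps$, so on the relevant time horizon, which is of order $\log K$, the ratio $S^{\eps,+}_t/S^{\eps,-}_t$ is of order $K^{c\sqrt\eps}$ for some $c>0$. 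For fixed $\eps$ this diverges as $K\to\infty$; it is not subpolynomial, and the order of limits in \eqref{mutantproportionsdormants} is $K\to\infty$ first and only then $\eps\downarrow 0$, so it cannot be absorbed anywhere: the Kesten--Stigum limits for the bounding processes then only give a lower bound of order $\pi_{1a}K^{-c\sqrt\eps}\to 0$ and an upper bound tending to $1$ for the true proportion, which says nothing about the window $(\pi_{1a}-\delta,\pi_{1a}+\delta)$. The same mismatch ruins the size constraint: the slack between $\eps K/\widehat C$ and $\sqrt\eps K$ is a constant factor $\widehat C/\sqrt\eps$, independent of $K$, so a $K^{c\sqrt\eps}$ discrepancy cannot be hidden there either; moreover at your time $\tau^-$ the upper process is of order $K^{\widetilde\lambda^{\eps,+}/\widetilde\lambda^{\eps,-}}\gg\sqrt\eps K$, so you cannot guarantee $\tau^-\leq T^1_{\sqrt\eps}$, and beyond $T^1_{\sqrt\eps}$ the coupling is no longer valid at all.

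The proof the paper has in mind (that of \cite[Proposition 4.4]{BT19}, following \cite[Proposition 3.2]{C+19}, with the HGT rate treated as an extra competitive death term for active mutants) avoids the sandwich entirely for the proportion: one writes $N^K_{1a,t}/(N^K_{1a,t}+N^K_{1d,t})$ as a semimartingale via Poisson random measures, observes that on the relevant window its drift is, up to $O(\sqrt\eps)$ errors, a fixed quadratic polynomial whose unique root in $(0,1)$ is $\pi_{1a}$ and which is bounded below by some $\theta/2>0$ on $[0,\pi_{1a}-\delta]$, and controls the martingale part by Doob's inequality: its predictable quadratic variation over a window of length $\log\log(1/\eps)$ after $T^1_\eps$ is $O\big(\log\log(1/\eps)/(\eps K/\widehat C)\big)$, because with high probability the total mutant size does not fall below $\eps K/\widehat C$ after $T^1_\eps$ (supercritical branching lower bound plus \cite[Lemma A.1]{C+19}; this is exactly where $\widehat C$ enters), while a Yule-process domination (\cite[Lemma A.2]{C+19}) guarantees $T^1_{\sqrt\eps}-T^1_\eps\geq\log\log(1/\eps)$ with high probability, so the drift has time to push the proportion into $(\pi_{1a}-\delta,\pi_{1a}+\delta)$ while the size stays in $[\eps K/\widehat C,\sqrt\eps K]$. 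Kesten--Stigum only identifies the target $(\pi_{1a},\pi_{1d})$; it is not used quantitatively. If you insist on a coupling route, you would have to re-initialize the sandwich at time $T^1_\eps$ and work on a window of length $o(\log K)$, but then you need a quantitative finite-time version of the type-proportion convergence, which is essentially the drift-plus-martingale argument just described.
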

The proof of this proposition is very similar to the one of~\cite[Proposition 4.4]{BT19}, which employs some arguments of the proof of~ \cite[Proposition 3.2]{C+19}. Compared to the setting of~\cite{BT19}, only the additional HGT terms have to be handled, which can be treated very similarly to the terms for death by competition. We refrain from presenting further details here. 

\subsubsection{Convergence of the dynamical system}\label{sec-ODEconvergencedormants}
Next, we show that for $\widetilde\lambda>0$, starting from an initial condition where the dormant/active proportion of mutants is sufficiently balanced, the system converges towards the coexistence equilibrium $(n_{1a},n_{1d},n_2)$ under the assumption \eqref{firstinmutantlessfit2ineq} and to the monomorphic equilibrium $(\bar n_{1a},\bar n_{1d},0)$ of trait 1 under the assumption \eqref{reversefirstin}. The proof of this lemma uses ideas from the proof of \cite[Lemma 4.7]{BT19}, however, the extension of that proof is not immediate because of the additional HGT terms and possible coexistence.
\begin{lemma}\label{lemma-1invades2LV}
Assume that~\eqref{firstinmutantlessfit2ineq} holds with $\lambda_2>\mu$. Let us consider the system of ODEs \eqref{3dimHGT}. Then, if for $t=0$ we have 
\[ \frac{pC(n_{1a}(t)+n_{2}(t))}{\kappa\mu+\sigma} > \frac{n_{1d}(t)}{n_{1a}(t)}, \numberthis\label{proportioncond1} \]
further,
\[ \frac{n_{1d}(t)}{n_{1a}(t)} > \frac{\mu-\lambda_1+C(n_2(t)+n_{1a}(t))+\tau n_2(t)}{\sigma}, \numberthis\label{proportioncond2} \]
and
\[ \qquad n_2(t) \geq 0, n_{1a}(t),n_{1d}(t)>0, \numberthis\label{+0cond}\]
then under condition \eqref{reversesecondin}, the following holds:
\[ \lim_{t \to \infty} (n_{1a}(t),n_{1d}(t), n_{2}(t))=(\bar n_{1a},\bar n_{1d},0),
\numberthis\label{goodlimitfixation1} \]
whereas under condition \eqref{secondinmutantlessfit2ineq}, there exists $\eps>0$ such that if $(n_{1a}(0),n_{1d}(0),n_2(0))$ additionally satisfies \[
n_{1a}(0)+n_{2}(0) \leq \sqrt \eps \text{ and } \bar n_2-2\sqrt\eps \leq n_2(0) \leq \bar n_2 + 2\sqrt\eps, \numberthis\label{initialconddormants} \] then we have
\[ \lim_{t \to \infty} (n_{1a}(t),n_{1d}(t), n_{2}(t))=(n_{1a},n_{1d},n_2).
\numberthis\label{goodlimitcoexistence1} \]
\end{lemma}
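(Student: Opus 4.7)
The plan is to extend Lemma 4.7 of \cite{BT19} to the present setting, with two new ingredients: handling the additional HGT term in the third equation of~\eqref{3dimHGT}, and (in the coexistence case) identifying the limit even though local asymptotic stability of $(n_{1a},n_{1d},n_2)$ is not established (cf.~Remark~\ref{remark-coexstability}). The starting observation is that \eqref{proportioncond1} is nothing but $\dot n_{1d}(0)>0$ and \eqref{proportioncond2} is nothing but $\dot n_{1a}(0)>0$, read directly off the first two lines of~\eqref{3dimHGT}. Together with \eqref{+0cond} and the fact that the positive orthant is forward-invariant, this puts the trajectory at $t=0$ inside the cone
\[
\Kcal=\Big\{(n_{1a},n_{1d},n_2)\in(0,\infty)^2\times[0,\infty):\;\tfrac{\mu-\lambda_1+C(n_{1a}+n_2)+\tau n_2}{\sigma}<\tfrac{n_{1d}}{n_{1a}}<\tfrac{pC(n_{1a}+n_2)}{\kappa\mu+\sigma}\Big\},
\]
inside which both mutant components are strictly increasing.

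The first step is to show that $\Kcal$ is forward-invariant as long as $n_2(t)$ stays in a small neighbourhood of $\bar n_2$. This is done by computing $\tfrac{\d}{\d t}(n_{1d}/n_{1a})$ along \eqref{3dimHGT} and checking that on the upper face of $\Kcal$ one has $\dot n_{1d}=0$ while $\dot n_{1a}>0$ (so $n_{1d}/n_{1a}$ decreases back into $\Kcal$), and symmetrically on the lower face. Inside $\Kcal$ one has $\dot n_{1a}+\dot n_{1d}>0$, and a linear comparison against the mean matrix $J$ of~\eqref{JdefHGT} (using that $J$'s Perron eigenvector lies in this cone under \eqref{firstinmutantlessfit2ineq}) shows that the mutant population grows essentially at rate $\widetilde\lambda>0$ until it reaches macroscopic size. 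For the coexistence case the smallness condition \eqref{initialconddormants} (read as $n_{1a}(0)+n_{1d}(0)\le\sqrt\eps$, $|n_2(0)-\bar n_2|\le 2\sqrt\eps$) guarantees that $n_2$ stays close to $\bar n_2$ during this growth phase, analogously to the role played by $\mathfrak B^3_\eps$ in Section~\ref{sec-phase2HGTguys}.

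Once the mutant population is of order one, we identify the $\omega$-limit set. In the fixation case under \eqref{reversesecondin} the only coordinate-wise nonnegative asymptotically stable equilibrium is $(\bar n_{1a},\bar n_{1d},0)$ (Proposition~\ref{prop-stability3d} and absence of a positive coexistence equilibrium by Lemma~\ref{lemma-coexistence}); boundedness of the orbit, instability of $(0,0,0)$ and $(0,0,\bar n_2)$, and the growth provided by Phase~1 exclude every other candidate, and a Bendixson–Dulac-style divergence computation on the slow-manifold parametrisation used in the proof of Proposition~\ref{prop-convergenceHGTguys} (now with the roles of traits 1 and 2 swapped, i.e.\ imposing the $\dot n_2=0$-relation $C(n_{1a}+n_2)-\tau n_{1a}=\lambda_2-\mu$) shows that $n_2(t)\to 0$ and then the two-dimensional dormancy system of \cite{BT19} drives $(n_{1a},n_{1d})$ to $(\bar n_{1a},\bar n_{1d})$. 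In the coexistence case under \eqref{secondinmutantlessfit2ineq} we follow the same scheme but with $(\bar n_{1a},\bar n_{1d},0)$ replaced by $(n_{1a},n_{1d},n_2)$: since $(0,0,\bar n_2)$ and $(\bar n_{1a},\bar n_{1d},0)$ are both unstable (Proposition~\ref{prop-stability3d}) and the trajectory cannot leave the positive orthant, the same monotonicity-of-$n_2$ argument of Proposition~\ref{prop-convergenceHGTguys} (splitting into the two cases of eventual monotonicity versus oscillation and using Lemma~\ref{lemma-effectivecomp}) identifies $(n_{1a},n_{1d},n_2)$ as the unique possible limit. The main obstacle is precisely this last step, because we cannot invoke local asymptotic stability of $(n_{1a},n_{1d},n_2)$; circumventing this via the global, derivative-sign-based argument inherited from Section~\ref{sec-phase2HGTguys} is the crux of the proof.
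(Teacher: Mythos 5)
Your opening move coincides with the paper's: reading \eqref{proportioncond1} and \eqref{proportioncond2} as $\dot n_{1d}>0$ and $\dot n_{1a}>0$ and working in the cone where both hold (the paper's set $U$), with the same boundary computation for $\frac{\d}{\d t}(n_{1d}/n_{1a})$. But you then abandon the payoff of that observation. The paper's entire argument is elementary monotone convergence: inside $U$ both $n_{1a}(t)$ and $n_{1d}(t)$ increase, solutions are bounded, so along a subsequence the trajectory converges to a point where \emph{both} inequalities become equalities; at such a point $\dot n_2$ must also vanish, so the subsequential limit is an equilibrium with $n_{1a}^*,n_{1d}^*>0$, i.e.\ either $(\bar n_{1a},\bar n_{1d},0)$ or the coexistence equilibrium. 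No $\omega$-limit-set theory, no growth-rate comparison with $J$, and no Bendixson--Dulac argument is needed in this invasion direction. Your replacement for this step does not work as stated: instability of $(0,0,0)$, $(0,0,\bar n_2)$ or $(\bar n_{1a},\bar n_{1d},0)$ does not preclude convergence to them along a stable manifold, a bounded orbit in $\R^3$ need not approach any equilibrium, and the Bendixson computation of Proposition~\ref{prop-convergenceHGTguys} lives on a two-dimensional reduction obtained by substituting $C(n_{1a}+n_2)-\tau n_{1a}=\lambda_2-\mu$ into the trait-1 equations; that reduction was tailored to the situation where $n_2$ is the monotone/oscillating coordinate started from $\mathfrak B^3_\eps$, and it does not transfer to the present direction by ``swapping roles'' (indeed the constraint you quote is the unswapped one, and here the monotone quantities are $n_{1a},n_{1d}$, not $n_2$).

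The second, and more serious, gap is in the coexistence case: you never rule out $(\bar n_{1a},\bar n_{1d},0)$ as the limit, and you assign \eqref{initialconddormants} the wrong job (keeping $n_2$ near $\bar n_2$ during an initial growth phase). In the paper this hypothesis is used precisely and only to exclude the monomorphic limit: under \eqref{firstinmutantlessfit2ineq} and \eqref{secondinmutantlessfit2ineq} one has $\bar n_{1a}>n_{1a}+n_2>\widetilde n_2$ (via Lemma~\ref{lemma-effectivecomp}\eqref{compeq} and an explicit computation, see \eqref{equilibriumordering}), so for small $\eps$ the initial condition satisfies $n_{1a}(0)+n_2(0)\le\widetilde n_2+3\sqrt\eps<n_{1a}+n_2$; if the trajectory converged (subsequentially) to $(\bar n_{1a},\bar n_{1d},0)$, Bolzano's theorem would give a time with $n_{1a}(t)+n_2(t)=n_{1a}+n_2$, where the non-strict versions of \eqref{proportioncond1}--\eqref{proportioncond2} force either the exact coexistence equilibrium (contradiction) or $\dot n_2>0$ with $n_2$ subsequently bounded away from $0$ (contradiction again). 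Without an argument of this quantitative kind your proposal cannot distinguish the two admissible limits, so the coexistence conclusion \eqref{goodlimitcoexistence1} is not established; the linear comparison with $J$ and the claimed growth at rate $\widetilde\lambda$ are, by contrast, superfluous for this purely deterministic lemma.
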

\begin{proof}
Note that \eqref{proportioncond1} is equivalent to the condition that $\dot n_{1d}(t)>0$ and \eqref{proportioncond2} is equivalent to the condition that $\dot n_{1a}(t)>0$. Hence, as long as \eqref{proportioncond1} and \eqref{proportioncond2} hold, $t \mapsto n_{1a}(t)$ and $t \mapsto n_{1d}(t)$ are strictly increasing. 

Let us assume that \eqref{proportioncond1}, \eqref{proportioncond2}, and \eqref{+0cond} are satisfied for $t=0$. We claim that then both of these inequalities hold for all $t>0$ unless for some $t>0$,
\[ 
\frac{pC(n_{1a}(t)+n_{2}(t))}{\kappa\mu+\sigma} = \frac{n_{1d}(t)}{n_{1a}(t)} = \frac{\mu-\lambda_1+C(n_2(t)+n_{1a}(t))+\tau n_2(t)}{\sigma}. \numberthis\label{1a1dequilibrium}
\]
Indeed, let us assume that for some $t>0$, $(n_{1a}(t),n_{1d}(t),n_{2}(t))$ lies on the boundary of the set
\[ \begin{aligned}
 U=\{ & (\widehat n_{1a},\widehat n_{1d},\widehat n_{2}) \in [0,\infty) \times (0,\infty) \times (0,\infty) \colon \text{\eqref{proportioncond1},  \eqref{proportioncond2}, and \eqref{+0cond} hold for }\\ &  (n_{1a}(t),n_{1d}(t), n_{2}(t))= (\widehat n_{1a},\widehat n_{1d},\widehat n_{2}) \}
\end{aligned} \numberthis\label{propcondset1} \]
with $n_{1a},n_{1d}>0$. Then one of the following conditions holds:
    \begin{enumerate}[(i)]
    \item\label{first-bdry1} $\dot n_{1d}(t)=0$, $\dot n_{1a}(t)>0$,
    \item\label{second-bdry1} $\dot n_{1a}(t)=0$, $\dot n_{1d}(t)>0$,
    \item\label{third-bdry1} $\dot n_{1a}(t)=\dot n_{1d}(t)=0$.
    \end{enumerate}
In case \eqref{first-bdry1} we have
\[ \frac{\d}{\d t} \Big[ \frac{n_{1d}(t)}{n_{1a}(t)} \Big]  =\frac{-\dot n_{1a}(t) n_{1d}(t)}{n_{1a}(t)^2} < 0. \]
The case \eqref{second-bdry1} yields
\[ \frac{\d}{\d t} \Big[ \frac{n_{1d}(t)}{n_{1a}(t)} \Big] =\frac{\dot n_{1d}(t) n_{1a}(t)}{n_{1a}(t)^2} > 0. \]
We conclude that $s \mapsto (n_{1a}(s),n_{1d}(s),n_2(s))$ cannot enter the interior of the complement of the open set  $U$ defined in \eqref{propcondset1}. Hence, in case the solution reaches the boundary of the set $U$, after an arbitrarily short time, either the solution will be situated in $U$ again or condition~\eqref{third-bdry1} will still be satisfied.

Since solutions of \eqref{3dimHGT} with coordinatewise nonnegative initial conditions are bounded (depending only on the initial condition), we infer from from the previous consideration, using 
monotonicity, that $t \mapsto (n_{1a}(t),n_{1d}(t),n_2(t))$ converges along a subsequence to some $(n_{1a}^*,n_{1d}^*,n_2^*)$ such 
that for $(n_{1a}(t),n_{1d}(t),n_2(t))=(n_{1a}^*,n_{1d}^*,n_2^*)$, the inequalities \eqref{proportioncond1} and \eqref{proportioncond2} are replaced by the 
corresponding equalities (whereas \eqref{+0cond} is still satisfied). But this convergence implies that at $(n_{1a}^*,n_{1d}^*,n_2^*)$, the time derivative of the $n_2$-coordinate of \eqref{3dimHGT} must be equal to zero. This can only happen if $(n_{1a}^*,n_{1d}^*,n_2^*)$ is an equilibrium of \eqref{3dimHGT} with $n_{1a}^*$ and $n_{1d}^*$ positive and $n_2^*$ nonnegative, i.e., either $(n_{1a}^*,n_{1d}^*,n_2^*)=(n_{1a},n_{1d},n_2)$ or $(n_{1a}^*,n_{1d}^*,n_2^*)=(\bar n_{1a},\bar n_{1d},0)$.  

In case \eqref{reversesecondin} holds, only the latter case is possible because the coexistence equilibrium $(n_{1a},n_{1d},n_2)$ does not exist. Hence, thanks to the boundedness of the solution, $t \mapsto (n_{1a}(t),n_{1d}(t),n_2(t))$ must converge along all subsequences to $(\bar n_{1a},\bar n_{1d},0)$, which implies \eqref{goodlimitfixation1}. 

In case \eqref{secondinmutantlessfit2ineq} is satisfied, our goal is to show that only  $(n_{1a}^*,n_{1d}^*,n_2^*)=(n_{1a},n_{1d},n_2)$ is possible.  We claim that under \eqref{firstinmutantlessfit2ineq} we have (regardless of whether $\lambda_2>\mu$)
\[ \bar n_{1a} > n_{1a}+n_2 > \widetilde n_2, \numberthis\label{equilibriumordering} \]
where we recall the notation $\widetilde n_2 = \frac{\lambda_2-\mu}{C}$. Indeed, by part~\eqref{compeq} of Lemma~\ref{lemma-effectivecomp}, we have
\[ 0 = \lambda_2-\mu-C \widetilde n_2 = \lambda_2-\mu - C(n_{1a}+n_{2}) + \tau n_{2}, \]
which implies the second inequality in \eqref{equilibriumordering}. In order to verify the first inequality in~\eqref{equilibriumordering}, we note that in case $(n_{1a},n_{1d},n_2)$ exists as a coordinatewise positive equilibrium, we have according to \eqref{coexeq} that
\[ n_{1a}+n_2 = \frac{(\lambda_2-\lambda_1)(\kappa\mu+\sigma)}{Cp\sigma-(\kappa\mu+\sigma)\tau} \]
and
\[ \bar n_{1a} = \frac{(\lambda_1-\mu)(\kappa\mu+\sigma)}{C(\kappa\mu+(1-p)\sigma)}. \]
Now, since by assumption~\eqref{mutantlessfit2ineq} holds, thanks to Lemma~\ref{lemma-lessconditions} this also implies that $Cp\sigma-(\kappa\mu+\sigma)<0$. Given this and using elementary computations, we conclude that $n_{1a}+n_2<\bar n_{1a}$ is equivalent to \eqref{secondinmutantlessfit2ineq}, which holds by assumption. Hence, the claim follows.


Let now $\eps>0$ so small that 
\[ \bar n_{1a} > n_{1a}+n_2 > \widetilde n_2 + 3\sqrt\eps; \]
such $\eps$ exists thanks to \eqref{equilibriumordering}. Under the condition \eqref{initialconddormants} that $n_{1a}(0)+n_{1d}(0) \leq \sqrt \eps$ and $n_2(0) \leq \widetilde n_2 + 2\sqrt\eps=\bar n_2+2\sqrt\eps$ (where we used that $\lambda_2>\mu$), we have that $n_{1a}(0) + n_{2}(0) \leq \widetilde n_2 + 3\sqrt \eps$. Assume now for a contradiction that $(n_{1a}^*,n_{1d}^*,n_2^*)=(\bar n_{1a},\bar n_{1d},0)$. Then we have
\[ \liminf_{t \to \infty} n_{1a}(t)+n_{2}(t) > n_{1a}+n_2. \]
Hence, thanks to the continuity of the solutions of \eqref{3dimHGT} and Bolzano's theorem, there exists $t>0$ such that
\[ n_{1a}(t) + n_2(t)=n_{1a}+n_2. \numberthis\label{goodactivesum} \]
At the same time, we still have that
\[ \frac{pC(n_{1a}(t)+n_{2}(t))}{\kappa\mu+\sigma} \geq \frac{n_{1d}(t)}{n_{1a}(t)} \geq \frac{\mu-\lambda_1+C(n_2(t)+n_{1a}(t))+\tau n_2(t)}{\sigma}. \numberthis\label{bothnonstrict} \]
If both inequalities hold with an equality, then we immediately have $(n_{1a}(t),n_{1d}(t),n_2(t))=(n_{1a},n_{1d},n_2)$, and hence $(n_{1a}(s),n_{1d}(s),n_2(s))=(n_{1a},n_{1d},n_2)$ for all $s \geq t$, which contradicts the assumption that $(n_{1a}^*,n_{1d}^*,n_2^*)=(\bar n_{1a},\bar n_{1d},0)$. Else, there exists $\delta>0$ such that for all $s \in (t,t+\delta)$, both inequalities of \eqref{bothnonstrict} hold with a strict inequality after we replace $s$ with $t$ everywhere. This implies that $ n_{2}(s) < n_2$ for all $s \in (t,t+\delta)$, from which together with \eqref{goodactivesum} it follows that $n_{1a}(t)>n_{1a}$. Now, we have
\[ \dot n_{2}(s)=n_2(s) (\lambda_2-\mu-C(n_{1a}(s)+n_2(s))+\tau n_{1a}(s)) = n_2(s) (\lambda_2-\mu-C(n_{1a}+n_2)+\tau n_{1a}(t))>0, \numberthis\label{n2not0} \]
where we used that $\lambda_2-\mu-C(n_{1a}+n_2)+\tau n_{1a}=0$ since $(n_{1a},n_{1d},n_2)$ is an equilibrium. It follows that $r \mapsto n_2(t+r)$ is increasing on $(0,\delta)$. Since the right-hand side of~\eqref{n2not0} is strictly positive, one can easily improve these arguments in order to see that if $n_2(0)>0$ (which is true thanks to~\eqref{initialconddormants} for all $\eps>0$ sufficiently small), then $\liminf_{s \to \infty} n_2(s)>0$. This contradicts the convergence of $s \mapsto (n_{1a}(s),n_{1d}(s),n_2(s))$ to $(\bar n_{1a},\bar n_{1d},0)$ along a subsequence. We conclude that along all subsequences, the solution of \eqref{3dimHGT} converges to $(n_{1a},n_{1d},n_2)$, and hence the lemma follows.
\end{proof}
The last lemma of this section ensures that the state $(\pi_{1a},\pi_{1d},\bar n_{2})$ that the population process approaches according to Proposition~\ref{prop-secondphasedormants} is an admissible initial condition for Lemma~\ref{lemma-1invades2LV}.
\begin{lemma}\label{lemma-goodstartdormants}
Assume that \eqref{firstinmutantlessfit2ineq} holds with $\lambda_2>mu$. Let $\widehat C$ be chosen according to Proposition~\ref{prop-secondphasedormants}, further, $t>0$ and $n_{1a}(t),n_{1d}(t),n_{2}(t)>0$ such that $n_{2}(t) \in (\bar n_{2}-2\sqrt\eps,\bar n_{2}+2\sqrt\eps), n_{1a}+n_{1d} \in (\eps/\widehat C,\sqrt\eps)$, and $\smfrac{n_{1d}(t)}{n_{1a}(t)} = \smfrac{\pi_{1d}}{\pi_{1a}}$. Then, under the assumptions of the proposition, if $\eps>0$ is sufficiently small, then $(n_{1a}(t),n_{1d}(t),n_{2}(t))$ satisfies \eqref{proportioncond1}, \eqref{proportioncond2}, and \eqref{+0cond}. 
\end{lemma}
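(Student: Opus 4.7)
The plan is to reduce both conditions \eqref{proportioncond1} and \eqref{proportioncond2} to the bare inequality $\widetilde\lambda > 0$, which holds by hypothesis \eqref{firstinmutantlessfit2ineq}, plus a small-$\eps$ perturbation. Condition \eqref{+0cond} is immediate from the positivity hypotheses on the three coordinates, so I focus on the other two.

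The starting point is to extract explicit formulas for $\pi_{1d}/\pi_{1a}$ from the left-eigenvector identity $(\pi_{1a},\pi_{1d})J = \widetilde\lambda(\pi_{1a},\pi_{1d})$. Reading off the two coordinates of this equation with $J$ from \eqref{JdefHGT}, substituting $\bar n_2 = (\lambda_2-\mu)/C$, and writing $J_{11} = \lambda_1-\lambda_2-(\tau/C)(\lambda_2-\mu)$, one obtains the two expressions
\[
\frac{n_{1d}(t)}{n_{1a}(t)} = \frac{\pi_{1d}}{\pi_{1a}} = \frac{pC\bar n_2}{\widetilde\lambda+\kappa\mu+\sigma} = \frac{\widetilde\lambda-J_{11}}{\sigma},
\]
both strictly positive because $\widetilde\lambda > 0$ and $p(\lambda_2-\mu)\sigma > 0$ (this also gives $\widetilde\lambda > J_{11}$, by the standard formula for the Perron eigenvalue of the $2\times 2$ essentially nonnegative matrix $J$).

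For \eqref{proportioncond1}, I would plug the first expression into the inequality so that it becomes
\[
\frac{pC(n_{1a}(t)+n_2(t))}{\kappa\mu+\sigma} > \frac{pC\bar n_2}{\widetilde\lambda+\kappa\mu+\sigma}.
\]
Using $n_{1a}(t)+n_2(t) > \bar n_2 - 2\sqrt\eps$, a direct algebraic rearrangement reduces this to $\bar n_2\widetilde\lambda > 2\sqrt\eps(\widetilde\lambda+\kappa\mu+\sigma)$, which is satisfied for all $\eps$ below some explicit threshold since $\bar n_2 > 0$ (because $\lambda_2 > \mu$) and $\widetilde\lambda > 0$. For \eqref{proportioncond2}, invoking the second expression and the elementary identity $-J_{11} = \mu-\lambda_1+(C+\tau)\bar n_2$, the required inequality is equivalent to
\[
\widetilde\lambda > (C+\tau)(n_2(t)-\bar n_2) + C n_{1a}(t),
\]
whose right-hand side is bounded in absolute value by $(3C+2\tau)\sqrt\eps$ by virtue of $|n_2(t)-\bar n_2| \leq 2\sqrt\eps$ and $n_{1a}(t) \leq n_{1a}(t)+n_{1d}(t) \leq \sqrt\eps$; hence the right-hand side can be made strictly smaller than $\widetilde\lambda$ by taking $\eps$ small enough.

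The argument presents no substantial obstacle: it is a short calculation combining the Perron left-eigenvector identity with the two perturbation bounds $n_{1a}(t)\leq\sqrt\eps$ and $|n_2(t)-\bar n_2|\leq 2\sqrt\eps$. The conceptual point, fitting the Kesten--Stigum picture behind Proposition~\ref{prop-secondphasedormants}, is that both \eqref{proportioncond1} and \eqref{proportioncond2} express that the branching-process equilibrium ratio $\pi_{1d}/\pi_{1a}$ lies strictly between the two threshold ratios at which $\dot n_{1d}$ and $\dot n_{1a}$ vanish respectively; this strict containment is exactly $\widetilde\lambda > 0$.
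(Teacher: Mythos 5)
Your argument is correct and follows essentially the same route as the paper: both proofs read off $\pi_{1d}/\pi_{1a}=\frac{pC\bar n_2}{\widetilde\lambda+\kappa\mu+\sigma}=\frac{\widetilde\lambda-J_{11}}{\sigma}$ from the left-eigenvector identity for $J$ and then reduce \eqref{proportioncond1} and \eqref{proportioncond2} to $\widetilde\lambda>0$ up to an $O(\sqrt\eps)$ perturbation coming from $|n_2(t)-\bar n_2|\leq 2\sqrt\eps$ and $n_{1a}(t)\leq\sqrt\eps$. The paper merely omits these elementary estimates, which you have written out correctly, so there is nothing to fix.
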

\begin{proof}
The fact that \eqref{+0cond} is satisfied for $\eps>0$ sufficiently small follows simply from the assumption $\lambda_2>\mu$. Moreover,
since $(\pi_{1a},\pi_{1d})$ is a left eigenvector of $J$ corresponding to the eigenvalue $\widetilde\lambda$, we have
\begin{align*}
    (\lambda_1-\lambda_2-\frac{\tau}{C}(\lambda_2-\mu))+\sigma \frac{\pi_{1d}}{\pi_{1a}}& =\widetilde\lambda, \\
    p(\lambda_2-\mu)-(\kappa\mu+\sigma)\frac{\pi_{1d}}{\pi_{1a}} & = \widetilde \lambda \frac{\pi_{1d}}{\pi_{1a}}
\end{align*}
Hence, since $\widetilde\lambda>0$, given that $\eps>0$ is small enough, the bounds \eqref{proportioncond1} and \eqref{proportioncond2} follow from estimating $\pi_{1d}/\pi_{1a}$ from above respectively from below according to the conditions of the lemma. Since these computations are elementary, we refrain from presenting the details here.
\begin{comment}{
we obtain
\[ 
\begin{aligned}
\frac{\pi_{1d}}{\pi_{1a}} & = \frac{\widetilde\lambda-\lambda_1+\lambda_2+\frac{\tau}{C}(\lambda_2-\mu)}{\sigma}=\frac{\widetilde\lambda-\lambda_1+\mu+C \big(\frac{\lambda_2-\mu}{C}\big)+\tau\big(\frac{\lambda_2-\mu}{C}\big)}{\sigma} \\ & > \frac{-\lambda_1+\mu+C\Big(\frac{\lambda_2-\mu}{C}+3\sqrt\eps\Big)+\tau \Big(\frac{\lambda_2-\mu}{C}+2\sqrt\eps\Big)}{\sigma}\geq \frac{\mu-\lambda_1+C(n_{1a}(t)+n_{2}(t))+\tau n_2(t)}{\sigma}
\end{aligned}\]
and
\[ \frac{\pi_{1d}}{\pi_{1a}} =\frac{pC\Big(\frac{\lambda_2-\mu}{C}\Big)}{\widetilde\lambda+\kappa\mu+\sigma} < \frac{pC\Big(\frac{\lambda_2-\mu}{C}-2\sqrt\eps\Big)}{\kappa\mu+\sigma}  \leq \frac{pC(n_{1a}(t)+n_{2}(t))}{\kappa\mu+\sigma},\]
as asserted.}\end{comment}
\end{proof}

\subsection{The third phase of invasion: extinction of the resident population}\label{sec-phase3dormants}

Let us assume that \eqref{firstinmutantlessfit2ineq} holds. After the second phase, the rescaled process $\mathbf N_t^K$ is close to the state $(n_{1a},n_{1d},n_2)$ under condition \eqref{secondinmutantlessfit2ineq} and to $(0,0,\bar n_{2})$ under condition \eqref{reversesecondin}. In the first case, there is no third phase; more precisely, for the stochastic population process, the coexistence equilibrium $(n_{1a},n_{1d},n_2)$ is only metastable, but with high probability, the process stays close to it for a time that is at least exponential in $K$ (see the beginning of Section~\ref{sec-phase3HGTguys} for details and corresponding references). In contrast, in the second case, there is a third phase, which takes $O(\log K)$ time units and ends with the extinction of the formerly resident trait 2 with high probability after a time that is in logarithmic in $K$, while trait 1 stays close to its two-coordinate equilibrium. More precisely, we have the following proposition, where we recall the set $S_\beta^1$ from \eqref{Sbeta1def} and the stopping time $T_{S_\beta^1}$ from \eqref{TSbetaidef}. 
\begin{prop}\label{prop-thirdphasedormants}
Assume that \eqref{firstinmutantlessfit2ineq} and \eqref{reversesecondin} hold with $\lambda_2>\mu$. There exist $\eps_0,C_0,c_0>0$ such that for all $\eps \in (0,\eps_0)$, if there exists $\eta \in (0,1/2)$ that satisfies
\[ \big| N^K_{1a}(0)-\bar n_{1a} \big| \leq \eps~\text{ and }~\big| N^K_{1d}(0) - \bar n_{1d} | \leq \eps ~\text{ and }~\eta \eps/2 \leq N^K_2(0) \leq \eps/2, \]
then
\begin{align*}
    & \forall E>(\mu+C\bar n_{1a}-\lambda_1)^{-1}+C_0\eps, \quad & \P(T_{S_{c_0\eps}^1} \leq E\log K) \underset{K \to \infty}{\longrightarrow} 1, \\
    & \forall 0 \leq E < (\mu+C\bar n_{1a}-\lambda_1)^{-1}-C_0\eps, \quad & \P(T_{S_{c_0\eps}^1} \leq E \log K) \underset{K \to \infty}{\longrightarrow} 0.
\end{align*}
\end{prop}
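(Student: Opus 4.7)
The plan is to mirror the proof of Proposition~\ref{prop-thirdphaseHGTguys}, with the roles of traits 1 and 2 interchanged. Under the hypotheses \eqref{firstinmutantlessfit2ineq} and \eqref{reversesecondin}, the invasion fitness $\widehat\lambda$ (defined in~\eqref{lambdahatdef}) is strictly negative, so the approximating branching process $(\widehat N_{2}(t))_{t\ge 0}$ of Section~\ref{sec-phase13} is subcritical. It is this subcritical process that will govern the receding trait 2 population during the third phase.

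First I would establish the analogue of Lemma~\ref{lemma-residentsstayHGTguys} in this direction: as long as $N_{2,t}^K\le 2\eps$, the pair $(N_{1a,t}^K,N_{1d,t}^K)$ remains within a $c_0\eps$-neighborhood of $(\bar n_{1a},\bar n_{1d})$ for at least time $\e^{KV}$ with probability tending to $1$. The construction is entirely parallel to~\eqref{firstresidentcouplingHGTguys}: sandwich $(K N_{1a,t}^K, K N_{1d,t}^K)$ between two two-type birth-and-death processes $\mathbf N_1^{\eps,1},\mathbf N_1^{\eps,2}$ whose transition rates perturb the pure trait 1 rates by $O(\eps)$ (the lower process incorporates maximal HGT and competitive loss from a trait~2 pool of size $\eps$, while the upper process removes these terms and slightly alters the competition-induced dormancy switch). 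By \cite[Theorem~2.1, p.~456]{EK} these processes converge to two-dimensional dynamical systems whose coordinatewise positive equilibria lie within $O(\eps)$ of $(\bar n_{1a},\bar n_{1d})$, inheriting asymptotic stability from the two-dimensional trait-1 analysis in \cite[Section~2.2.1]{BT19}. Freidlin--Wentzell exit-time estimates \cite[Chapter~5]{FW84} then yield the exponential concentration, exactly as in~\eqref{FW1HGTguys}--\eqref{FW2HGTguys}.

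Second, on this exponentially long time window I would couple the one-type trait 2 process $KN_{2,t}^K$ between two subcritical one-type birth-death processes with transitions
\begin{align*}
N_2^{\eps,\preceq}\colon\quad & i\to i+1 \text{ at rate } i\bigl(\lambda_2+\tau(\bar n_{1a}-c_0\eps)\bigr),\ i\to i-1 \text{ at rate } i\bigl(\mu+C(\bar n_{1a}+c_0\eps)+Ci/K\bigr),\\
N_2^{\eps,\succeq}\colon\quad & i\to i+1 \text{ at rate } i\bigl(\lambda_2+\tau(\bar n_{1a}+c_0\eps)\bigr),\ i\to i-1 \text{ at rate } i\bigl(\mu+C(\bar n_{1a}-c_0\eps)\bigr),
\end{align*}
in direct analogy with~\eqref{subcriticalcouplingHGTguys}. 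Both are subcritical for all sufficiently small $\eps>0$ since $\widehat\lambda<0$, with rates (birth minus death) converging to $\widehat\lambda$ as $\eps\downarrow 0$. Then I would apply the classical extinction-time asymptotics for subcritical one-type branching processes (see \cite[p.~202]{AN72}, which we already used in~\eqref{extinction2typelower}--\eqref{diebeforeyougrowupHGTguys}): starting from a population of size of order $\eps K$, the extinction time divided by $\log K$ converges in probability to the reciprocal of the absolute value of the rate. Combining the two sides of the coupling with the inequality chain of~\eqref{proofofspeciation} and the Freidlin--Wentzell bound from Step~1 yields both asymptotic statements of the proposition, where the constants $C_0,c_0$ absorb the $o_\eps(1)$ error coming from the perturbation of $\widehat\lambda$, and the reciprocal rate matches the stated $(\mu+C\bar n_{1a}-\lambda_1)^{-1}$ up to this $O(\eps)$ correction.

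The main obstacle is Step~1. In contrast to the analogous step of Proposition~\ref{prop-thirdphaseHGTguys}, where the lingering trait was the one-dimensional trait~2 and stability was immediate, here the lingering trait is the genuinely two-dimensional $(N_{1a,t}^K,N_{1d,t}^K)$ coupled through the competition-induced dormancy transition. One must ensure that the Kurtz limits of the perturbing two-type birth-and-death processes still admit $(\bar n_{1a},\bar n_{1d})$ (up to $O(\eps)$) as an asymptotically stable equilibrium with a globally attracting basin in the positive orthant, which requires revisiting the trace-and-determinant computation from \cite[Section~2.2.1]{BT19} under the $O(\eps)$ perturbation. Once this is in place, the remaining steps are routine adaptations of the arguments of Section~\ref{sec-phase3HGTguys}.
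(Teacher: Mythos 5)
Your proposal is correct and follows essentially the same route as the paper's proof: a coupling/Freidlin--Wentzell step (as in Lemma~\ref{lemma-residentsstayHGTguys}, with $\sqrt\eps$ replaced by $\eps$) keeps $(N_{1a,t}^K,N_{1d,t}^K)$ near $(\bar n_{1a},\bar n_{1d})$ while trait 2 is small, and then $KN_{2,t}^K$ is sandwiched between subcritical one-type birth--death processes whose extinction times are controlled via \cite[p.~202]{AN72} and an inequality chain as in \eqref{proofofspeciation}/\eqref{strangebounddormants}. One caveat: the decay rate your couplings deliver is $-\widehat\lambda=\mu+C\bar n_{1a}-\lambda_2-\tau\bar n_{1a}$ (i.e.\ the third phase lasts about $-\log K/\widehat\lambda$, which is exactly what the paper uses afterwards), so you should drop the claim that this ``matches'' the constant $(\mu+C\bar n_{1a}-\lambda_1)^{-1}$ in the statement up to $O(\eps)$ --- that constant (with $\lambda_1$ and without the $\tau\bar n_{1a}$ term) is an inconsistency in the statement/sketch itself, not something either your argument or the paper's produces.
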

\begin{proof}[Sketch of proof]
This proposition is the analogue of Proposition~\ref{prop-thirdphaseHGTguys} for the third phase of the invasion with the roles of the two traits interchanged, in the case of fixation of mutants. On the other hand, the inital conditions in Proposition~\ref{prop-thirdphasedormants} are very similar to the ones in Proposition~\ref{prop-firstphaseHGTguys}, which investigates the first phase of invasion in the reverse invasion direction. The essential difference is that while Proposition~\ref{prop-thirdphaseHGTguys} only tells about the case when \eqref{firstinmutantlessfit2ineq} and \eqref{reversesecondin}, ie.\ when we expect that fixation of trait 1 holds, according to the properties of the dynamical system~\eqref{3dimHGT}. In particular, we have seen that under these conditions we have $\widetilde\lambda < 0$. 

Now, the proof of Proposition~\ref{prop-thirdphasedormants} proceeds as follows. First, similarly to the proof of Lemmas~\ref{lemma-residentsstayHGTguys} and \ref{lemma-residentsstaydormants}, we first show that the rescaled mutant population size vector $(N_{1a,t}^K,N_{1d,t}^K)$ stays close to its equilibrium $(\bar n_{1a},\bar n_{1d})$ for long times, given that the resident population is small. This can be verified via coupling this two-type population size process between two birth-and death processes on $[0,T^2_\eps]$. The coupled processes can be chosen analogously to the proof of Lemma~\ref{lemma-residentsstayHGTguys}, with the only difference being that in their definition, $\sqrt \eps$ must be replaced by $\eps$ everywhere.  Now, analogously to the proof of Lemmas~\ref{lemma-residentsstayHGTguys} and \ref{lemma-residentsstaydormants}, we can use results by Freidlin and Wentzell about exit of jump processes from a domain \cite[Section 5]{FW84} to show that with high probability, we can find $V>0$ and $c_0>0$ such that the following holds: 
\[ \lim_{K \to \infty} \P\big( R_{c_0\eps}^1 \leq \e^{KV} \wedge T^2_\eps)=0. \numberthis\label{mutantsstayinequilibriumdormants} \]
Now, we can find two branching processes $N_2^{\eps,\leq}=(N_{2,t}^{\eps,\leq})_{t \geq 0}$ and $N_2^{\eps,\geq}=(N_{2,t}^{\eps,\geq})_{t \geq 0}$ such that 
\[ N_{2,t}^{\eps,\leq} \leq K N_{2,t}^K \leq N_{2,t}^{\eps,\geq} \numberthis\label{subcriticalcouplingdormants} \]
almost surely on the time interval
\[ I_\eps^K = \big[ 0, R_{c_0\eps}^1  \wedge  T^2_\eps \big]. \]
Indeed, in order that \eqref{subcriticalcouplingdormants} is satisfied, the processes $N_{2}^{\eps,\geq}$ and $N_{2}^{\eps,\leq}$ can be defined with the following rates and initial conditions:
\[ \begin{aligned}
N_{2}^{\eps,\leq} \colon & i \to i+1 & \text{ at rate } & i(\lambda_1+\tau\eps  (\bar n_{1a}-c_0\eps)), 
& i \to i-1 & \text{ at rate } i \big(\mu + C \big(\bar n_{1a} + (c_0+1)\eps\big)\big),
\end{aligned}
\]
started from $\lfloor \eta \eps K \rfloor$, and
\[ \begin{aligned}
N_{2}^{\eps,\geq} \colon & i \to i+1 & \text{ at rate } & i(\lambda_1+\tau \eps(\bar n_{1a}+(c_0+1)\eps)),
& i \to i-1 & \text{ at rate } i (\mu + C (\bar n_{1a} - c_0\eps)),
\end{aligned}
\]
started from  $\lfloor \eps K \rfloor +1$. This coupling is based on the same principle as the one fulfilling \eqref{mutantcouplingHGTguys}.

For all $\eps>0$ sufficiently small, both of these branching processes are subcritical, cf.~Section~\ref{sec-phase13}. The growth rates of these three processes are $\lambda_2-\mu-C \bar n_{1a} \pm O(\eps)=\widehat \lambda + O(\eps)$. From this, analogously to \cite[Section 3.3]{C+19}, we derive that the extinction time of these processes started from $[\lfloor \eta K \eps \rfloor, \lfloor \eps K \rfloor +1]$ is of order $(-\widehat \lambda+O(\eps))\log K$. This in turn follows from the fact that for a single-type branching process $\mathfrak N=(\mathfrak N(t))_{t \geq 0}$ with birth rate $\Bcal>0$ and death rate $\Dcal>0$ that is subcritical (i.e., $\Bcal<\Dcal$), the assertions \eqref{extinction2typelower} and \eqref{extinction2typeupper} are applicable. Here, one considers the branching process as a two-type branching process where the second component stays zero for all times, and the Lyapunov exponent $\bar \lambda$ equals $(\Dcal-\Bcal)^{-1}$. 
Further, if $\mathfrak N(0)=\lfloor \eta K \eps \rfloor$, then for all small enough $\eps>0$,
\[ \lim_{K \to \infty} \P\Big( \mathcal S_0^{\Ncal} > K \wedge \mathcal S_{\lfloor \eps K \rfloor }^{\mathcal N} \Big) = 0, \numberthis\label{diebeforeyougrowupdormants} \]
analogously to~\eqref{diebeforeyougrowupHGTguys}. Now for $C \geq 0$ we can estimate as follows
\[ 
\begin{aligned}
\P(T_0^2 < C \log K) - \P\big( \mathcal S_0^{N_1^{\eps,\leq}}  < C \log K \big)
\leq & \P\big( T_0^2 > T_\eps^2 \wedge K \big) + \P\big( T_\eps^2 \wedge K > R_{c_0\eps}^1  \big) \\
\leq & \P\big(\mathcal S_0^{N_1^{\eps,\geq}} > \mathcal S_{\lfloor \eps K \rfloor}^{N_1^{\eps,\geq}} \wedge K \big) +\P\big( T_\eps^2 \wedge K > R_{c_0\eps}^1 \big).
\end{aligned} \numberthis\label{strangebounddormants} \]
The first inequality in~\eqref{strangebounddormants} can be verified analogously to~\eqref{proofofspeciation}.
Once $\eps>0$ is small enough, the second term in the last line tends to zero as $K \to \infty$ according to \eqref{mutantsstayinequilibriumdormants} and so does the first one according to \eqref{diebeforeyougrowupdormants}. We conclude that
\[ \limsup_{K \to \infty} \P\big( T_0^2 < E\log K \big) \leq \lim_{K \to \infty} \P\big( \mathcal S_0^{N_1^{\eps,\leq}} \leq E \log K \big) \]
and
\[ \liminf_{K \to \infty} \P\big( T_0^2 < E\log K \big) \geq \lim_{K \to \infty} \P\big( \mathcal S_0^{N_1^{\eps,\geq}} \leq E\log K \big), \]
which implies the proposition.
\end{proof}
\subsection{Proof of Theorems~\ref{thm-invasionof1} and \ref{thm-failureof1}}
Putting together Propositions~\ref{prop-firstphasedormants}, \ref{prop-secondphasedormants}, and \ref{prop-thirdphasedormants}, we now prove our main results, employing some arguments from \cite[Section 3.4]{C+19}, similarly to \cite[Section 3.4]{BT19}, handling the additional effect of HGT and the eventual convergence to the coexistence equilibrium. Throughout the proof we will assume that $\lambda_2>\mu$. Our proof strongly relies on the coupling \eqref{upsilondefdormants}. More precisely, we define a Bernoulli random variable $B$ as the indicator of nonextinction
\[ B:= \mathds 1 \{ \forall t>0 \colon \widehat N_{1a}(t)+\widehat N_{1d}(t)>0 \} \]
of the approximating branching process $((\widehat N_{1a}(t), \widehat N_{1d}(t)))_{t \geq 0}$ defined Section~\ref{sec-phase13}, which is initially coupled with $((K N_{1a,t}^K,K N_{1d,t}^K))_{t \geq 0}$ according to \eqref{upsilondefdormants}. 
Let $f$ be the function defined in Proposition~\ref{prop-firstphasedormants}. Throughout the rest of the proof, we can assume that $\eps>0$ is so small that $f(\eps) <1$.

Our goal is to show that
\[ \liminf_{K \to \infty} \mathcal E(K,\eps) \geq q_1-o(\eps) \numberthis\label{extinctionlowerdormants}\]
holds for 
\[ \mathcal E(K,\eps):= \P \Big( \frac{T_0^1}{\log K} \leq f(\eps), T_0^1 < T_{S_\beta^1} \wedge T_{S_\beta^{\rm co}}, B=0 \Big).\]
Further, we want to show that in case $q_1<1$, 
\[ \liminf_{K \to \infty} \mathcal I^{j_0}(K,\eps) \geq 1-q_1-o(\eps), \numberthis\label{survivallowerdormants}\]
where we define
\[ \mathcal I^{1}(K,\eps):=\P \Big( \Big| \frac{T_{S_\beta^1} }{\log K}-\Big( \frac{1}{\widetilde \lambda} - \frac{1}{\widehat \lambda} \Big)\Big| \leq f(\eps), T_{S_\beta^1} < T_0^1 , B=1 \Big) \]
and
\[ \mathcal I^{\mathrm{co}}(K,\eps):=\P \Big( \Big| \frac{T_{S_\beta^{\mathrm{co}}}}{\log K}- \frac{1}{\widetilde \lambda}  \Big| \leq f(\eps), T_{S_\beta^{\mathrm{co}}} < T_0^1 \wedge T_{S_\beta^1}, B=1 \Big), \]
further, we put $j_0=\mathrm{co}$ in case $(n_{1a},n_{1d},n_2)$ exists as a coordinatewise positive equilibrium (this corresponds to either the case when both \eqref{firstinmutantlessfit2ineq} and \eqref{secondinmutantlessfit2ineq} hold; cf.~also Lemmas~\ref{lemma-coexistence} and~\ref{lemma-lessconditions}) and $j_0=1$ otherwise (which corresponds to the case when \eqref{secondinmutantlessfit2ineq} and \eqref{reversefirstin} hold). 
Throughout the proof, $\beta>0$ is to be understood as sufficiently small; we will comment on how small it should be in the individual cases.

The assertions~\eqref{extinctionlowerdormants} and~\eqref{survivallowerdormants} together will imply Theorem~\ref{thm-invasionof1} and the equation \eqref{extinctionof1} in Theorem~\ref{thm-failureof1}. The other assertion of Theorem~\ref{thm-failureof1}, equation \eqref{lastoftheoremof1}, is a consequence of \eqref{secondofpropdormants}.

Let us start with the case of mutant extinction in the first phase of invasion and verify \eqref{extinctionlowerdormants}. Clearly, we have 
\[ \mathcal E(K,\eps)\geq \P \Big( \frac{T_0^1}{\log K} \leq f(\eps), T_0^1 < T_{S_\beta^1} \wedge T_{S_\beta^{\rm co}}, B=0, T_0^1 < T_{\eps}^1 \wedge R_{2\eps}^2 \Big). \]
Now, considering our initial conditions, one can choose $\beta>0$ sufficiently small such that for all sufficiently small $\eps>0$ we have
\[ T_{\eps}^1 \wedge R_{2\eps}^2 < T_{S_\beta^1} \wedge T_{S_\beta^{\rm co}}, \]
almost surely. We assume further on during the proof that $\beta$ satisfies this condition. Then,
\[ \mathcal E(K,\eps)\geq \P \Big( \frac{T_0^1}{\log K} \leq f(\eps), B=0, T_0^1 < T_{\eps}^1 \wedge R_{2\eps}^2 \Big). \numberthis\label{andisanddormants} \]
Moreover, similarly to the proof of Proposition~\ref{prop-firstphasedormants} with $\xi=1$, we obtain
\[ \limsup_{K \to \infty} \P \big( \{ B=0 \} \Delta \{ T_0^1 < T_{\eps}^1 \wedge R_{2\eps}^2  \}  \big)=o_\eps(1), \numberthis\label{undefinedsymmdiffdormants} \]
where we recall that $\Delta$ denotes symmetric difference, and
\[ \limsup_{K \to \infty} \P \big( \{ B=0 \} \Delta \{ T_0^{(\eps,+),1} < \infty \}  \big)=o_\eps(1), \]
where we recall the coupling~\eqref{upsilondefdormants} and define the stopping time
\[ T_0^{(\eps,+),1} = \inf \{ t >0 \colon N^{(\eps,+)}_{1a}(t)+N^{(\eps,+)}_{1d}(t) =0 \}. \]
Using \eqref{andisanddormants}, an analogue of \eqref{secondlineHGTguys} implies that
\begin{align*}
\liminf_{K \to \infty} \mathcal E(K,\eps) \geq
\P \Big( T_0^{(\eps,+),1} < \infty \Big) + o_\eps(1).
\end{align*}
Thus, employing an analogue of \eqref{qineqHGTguys}, we obtain \eqref{extinctionlowerdormants}, which implies \eqref{extinctionof1}. 

Let us continue with the case of mutant survival in the first phase of invasion and verify \eqref{survivallowerdormants}. 
Arguing analogously to \eqref{undefinedsymmdiffdormants} but for $\xi=1/2$, we get
\[ \limsup_{K \to \infty} \P \big( \{ B=1 \} \Delta \{ T_{\sqrt \eps}^1  < T_0^1 \wedge R_{2\sqrt\eps}^2  \}  \big)=o_\eps(1). \] 
Thus,
\begin{equation}\label{beforesetsdormantsfixation}
\begin{aligned}
\liminf_{K \to \infty} \mathcal I^{j_0}(K,\eps) & = \liminf_{K \to \infty} \P \Big( \Big| \frac{T_{S_\beta^1}}{\log K} -\Big( \frac{1}{\widetilde \lambda} - \frac{1}{\widehat\lambda} \Big) \Big| \leq f(\eps), T_{S_\beta^1}<T_0^1,  
T^1_{\sqrt \eps} < T_0^1 \wedge R_{2\sqrt\eps}^2 \Big) + o_\eps(1)
\end{aligned}
\end{equation} 
if $j_0=1$, and
\begin{equation}\label{beforesetsdormantscoex}
\begin{aligned}
\liminf_{K \to \infty} \mathcal I^{j_0}(K,\eps) & = \liminf_{K \to \infty} \P \Big( \Big| \frac{T_{S_\beta^{\rm co}}}{\log K} - \frac{1}{\widetilde \lambda}  \Big| \leq f(\eps), T_{S_\beta^{\mathrm{co}}}<T_0^1 \wedge T_{S_\beta^{1}},  
 T^1_{\sqrt \eps} < T_0^1 \wedge R_{2\sqrt\eps}^2 \Big) + o_\eps(1)
\end{aligned}
\end{equation} 
if $j_0=\mathrm{co}$. 
For $\eps>0,\beta>0$ and $\delta$ satisfying the conditions of Proposition~\ref{prop-secondphasedormants}, we introduce the sets
\[ \begin{aligned}
\mathfrak B^1_\eps &:= [\pi_{1a}-\delta,\pi_{1a}+\delta] \times [\eps/\widehat C,\sqrt \eps] \times [\bar n_2-2\sqrt\eps,\bar n_2+2\sqrt\eps], \\
\mathfrak B^2_\beta &:= [\bar n_{1a}-(\beta/2),\bar n_{1a}+(\beta/2)] \times [\bar n_{1d}-(\beta/2),\bar n_{1d}+(\beta/2)] \times [0,\beta/2],
\end{aligned}
\]
and the stopping times
\[
\begin{aligned}
T'_\eps:=& \inf \Big\{ t \geq 0 \colon \Big( \frac{N_{1a,t}^K}{N_{1a,t}^K+N_{1d,t}^K}, N_{1a,t}^K+N_{1d,t}^K, N_{2,t}^K \Big)\in \mathfrak B^1_\eps \Big\}, \\
T''_{\beta}:=&\inf \Big\{ t \geq T'_\eps \colon \mathbf N^K_t \in \mathfrak B^2_\beta \Big\}.
\end{aligned}
\]
Note that for $j_0=1$, $\mathfrak B^2_\beta$ is in fact the analogue of $S_{\beta}^{\mathrm{co}}$ for $j_0=\mathrm{co}$: these are small closed neighbourhoods of the metastable state that the rescaled population process will reach with high probability conditional on survival of mutants. However, in case $j_0=1$, we additionally need to take into account the time of extinction of the trait 2 population. Thus, informally speaking, we want to show that with high probability the process has to pass through $\Bcal^1_\eps$ in order to reach $S_\beta^{j_0}$, whatever $j_0 \in \{ 1, \mathrm{co} \}$ is, and afterwards it has to go through $\mathfrak B^2_\beta$ in order to reach $S_\beta^1$ in case $j_0=1$
. Then, thanks to the Markov property, we can estimate $T_{S_\beta^{j_0}}$ by estimating $T'_\eps$, $T''_{\beta}-T'_\eps$, and $T_{S_\beta^{1}}-T''_{\beta}$ in case $j_0=1$, and by estimating $T'_\eps$ and $T_{S_{\beta}^{\mathrm{co}}}-T'_\eps$ in case $j_0 = \mathrm{co}$.

In case $j_0=1$, \eqref{beforesetsdormantsfixation} implies that 
\begin{align*}
    \liminf_{K \to \infty} \mathcal I^1(K,\eps) & \geq  \P \Big( \Big| \frac{T_{S_\beta^1}}{\log K} -\Big( \frac{1}{\widetilde \lambda} - \frac{1}{\widehat \lambda} \Big) \Big| \leq f(\eps), T^1_{\sqrt \eps} < T_0^1 \wedge R_{2\sqrt\eps}^2, T''_{\beta}<T_{S_\beta^1}, T_{S_\beta^1}<T_0^1 \Big) + o_\eps(1) \\
    & \geq   \P \Big( \Big| \frac{T'_\eps}{\log K} -\frac{1}{\widetilde \lambda} \Big| \leq \frac{f(\eps)}{3}, \Big| \frac{T''_{\beta}-T'_\eps}{\log K} \Big| \leq \frac{f(\eps)}{3}, \Big| \frac{T_{S_{\beta}^1}-T''_{\beta}}{\log K}+ \frac{1}{\widehat\lambda}\Big| \leq \frac{f(\eps)}{3},  \\
    & \qquad   T^1_{\sqrt \eps} < T_0^1 \wedge R_{2\sqrt\eps}^2 , T''_{\beta}<T_{S_\beta^1}, T_{S_\beta^1}<T_0^1 \Big) + o_\eps(1) ,
\end{align*}
whereas in case $j_0=\mathrm{co}$, \eqref{beforesetsdormantscoex} implies that
\begin{align*}
    \liminf_{K \to \infty}\mathcal I^{\mathrm{co}} & (K,\eps) \geq    \P \Big( \Big| \frac{T_{S_\beta^{\mathrm{co}}}}{\log K} - \frac{1}{\widetilde \lambda} \Big| \leq f(\eps), T^1_{\sqrt \eps} < T_0^1 \wedge R_{2\sqrt\eps}^2, T_{S_\beta^{\mathrm{co}}}<T_0^1 \wedge T_{S_\beta^1} \Big) + o_\eps(1) \\
    \geq &  \P \Big( \Big| \frac{T'_\eps}{\log K} -\frac{1}{\widetilde \lambda} \Big| \leq \frac{f(\eps)}{3}, \Big| \frac{T_{S_\beta^\mathrm{co}} -T'_\eps}{\log K} \Big| \leq \frac{f(\eps)}{3},  T^1_{\sqrt \eps} < T_0^1 \wedge R_{2\sqrt\eps}^2 , T_{S_\beta^\mathrm{co}}<T_0^1 \wedge T_{S_\beta^1} \Big) + o_\eps(1) ,
\end{align*}
Note that for $\beta>0$ sufficiently small and $\eps>0$ sufficiently small chosen accordingly, $R_{2\sqrt\eps}^2 \leq T_{S_\beta^{j_0}}$ almost surely, whatever $j_0$ is. Hence, the strong Markov property applied at times $T'_\eps$ and $T''_ {\beta}$ implies 
\[ \begin{aligned}
    \liminf_{K \to \infty} \mathcal I^1(K,\eps)&\geq \liminf_{K \to \infty} \Big[ \P \Big( \Big| \frac{T'_\eps}{\log K} -\frac{1}{\widetilde \lambda} \Big| \leq \frac{f(\eps)}{3}, T'_\eps<T_0^1, T^1_{\sqrt\eps} < T_0^1 \wedge R_{2\sqrt\eps}^2 \Big) \\
    & \qquad \times \inf_{\begin{smallmatrix}\mathbf n=(n_{1a},n_{1d},n_{2}) \colon \big(\frac{n_{1a}}{n_{1a}+n_{1d}},n_{1a}+n_{1d},n_2\big) \in \mathfrak B^1_\eps\end{smallmatrix}} \P \Big(  \Big| \frac{T''_{\beta}-T'_\eps}{\log K} \Big| \leq \frac{f(\eps)}{3}, T''_{\beta} < T_0^1 \Big| \mathbf N^K_0=\mathbf n \Big) \\
    & \qquad \times \inf_{\mathbf n \in \mathfrak B^2_\beta} \P \Big(\Big| \frac{T_{S_{\beta}^1}-T''_{\beta}}{\log K}+\frac{1}{\widehat \lambda}\Big| \leq \frac{f(\eps)}{3}, T_{S_\beta^1} < T_0^1  \Big| \mathbf N_0^K = \mathbf n\Big) \Big]+o_{\eps}(1) \end{aligned} \numberthis\label{productformfixation1} \]
in case $j_0=1$, and 
\[ \begin{aligned}
    \liminf_{K \to \infty} &\mathcal I^{\mathrm{co}}(K,\eps)\geq \liminf_{K \to \infty} \Big[ \P \Big( \Big| \frac{T'_\eps}{\log K} -\frac{1}{\widetilde \lambda} \Big| \leq \frac{f(\eps)}{3}, T'_\eps<T_0^1, T^1_{\sqrt\eps} < T_0^1 \wedge R_{2\sqrt\eps}^2 \Big) \\
    & \qquad \times \inf_{\begin{smallmatrix}\mathbf n=(n_{1a},n_{2},n_{2}) \colon \big(\frac{n_{1a}}{n_{1a}+n_{1d}},n_{1a}+n_{1d},n_1\big) \in \mathfrak B^1_\eps\end{smallmatrix}} \P \Big(  \Big| \frac{T_{S_\beta^\mathrm{co}} -T'_\eps}{\log K} \Big| \leq \frac{f(\eps)}{3}, T_{S_\beta^{\mathrm{co}}} < T_0^1 \wedge T_{S_\beta^1} \Big| \mathbf N^K_0=\mathbf n \Big) \Big]\\
   \end{aligned} \numberthis\label{productformcoex1} \] in case $j_0=\mathrm{co}$.
It remains to show that the right-hand side of \eqref{productformfixation1} respectively \eqref{productformcoex1} is close to $1-q_1$ as $K \to \infty$ if $\eps$ is small. We begin with the first term (note that this one appears on the right-hand side of both equations). We want to verify that 
\[ \liminf_{K \to \infty} \P \Big( \Big| \frac{T'_\eps}{\log K} -\frac{1}{\widetilde \lambda} \Big| \leq \frac{f(\eps)}{3}, T'_\eps<T_0^1, T^1_{\sqrt\eps} < T_0^1 \wedge R_{2\sqrt\eps}^2 \Big) \geq 1-q_1+o_{\eps}(1). \numberthis\label{firsttermdormants} \]
This can be done analogously to \cite[Proof of (3.61)]{C+19}, where Proposition~\ref{prop-secondphasedormants} plays the role of \cite[Proposition 3.2]{C+19}. 

Now, let us treat the second phase of invasion, both in the case $j_0=1$ (fixation of trait 1, which corresponds to an almost-fixation already by the end of the second phase) and in the case $j_0=\mathrm{co}$ (convergence to the coexistence equilibrium). 
For $\mathbf m=(m_{1a},m_{1d},m_{2}) \in [0,\infty)^3$, let us recall that $\mathbf n^{(\mathbf m)}$ denotes the unique solution of the dynamical system \eqref{3dimHGT} with initial condition $\mathbf m$. Thanks to the continuity of flows of this dynamical system with respect to the initial condition (cf.~\cite[Theorem 1.1]{DLA06}) and thanks to the convergence provided by Lemma~\ref{lemma-1invades2LV}, we deduce that if $\beta>0$ is small enough, then there exist $\eps_0,\delta_0>0$ such that for all $\eps \in (0,\eps_0)$ and $\delta \in (0,\delta_0)$, there exists $t_{\beta,\delta,\eps}>0$ such that for all $t>t_{\beta,\delta,\eps}$,
\[ \Big\Vert \mathbf n^{(\mathbf n^0)}(t)-(\bar n_{1a},\bar n_{1d},0) \Big\Vert \leq \frac{\beta}{4} \]
holds in case $j_0=1$, and
\[ \Big\Vert \mathbf n^{(\mathbf n^0)}(t)-(n_{1a},n_{1d},n_2) \Big\Vert \leq \frac{\beta}{4} \]
holds in case $j_0=\mathrm{co}$, 
for any initial condition $n^0=(n_{1a}^0,n_{1d}^0,n_{2}^0)$ such that $(n_{1a}^0/(n_{1a}^0+n_{1d}^0),n_{1a}^0+n_{1d}^0,n_2^0) \in \mathfrak B^1_\eps$. Indeed, because of Lemma~\ref{lemma-goodstartdormants}, $n^0$ satisfies \eqref{proportioncond1}, \eqref{proportioncond2}, and \eqref{+0cond} in case $n_{1a}^0/(n_{1a}^0+n_{1d}^0)$ is equal to $\pi_{1a}$, and for all sufficiently small $\eps>0$, the same follows by continuity for all $n^0=(n_{1a}^0,n_{1d}^0,n_{2}^0)$ such that  $(n_{1a}^0/(n_{1a}^0+n_{1d}^0),n_{1a}^0+n_{1d}^0,n_2^0) \in \mathfrak B^1_\eps$.

Now, using \cite[Theorem 2.1, p.~456]{EK}, we conclude that for all sufficiently small $\beta>0$ and accordingly chosen $\eps<\eps_0$, the following hold. If $j_0=1$, then
\[ \lim_{K \to \infty} \P\Big( T''_\beta-T'_\eps \leq t_{\beta,\delta,\eps} \Big|\Big( \frac{N_{1a,0}^K}{N_{1a,0}^K+N_{1d,0}^K}, N_{1,0}^K, N_{2,0}^K \Big) \in \mathfrak B^1_\eps \Big) =1-o_\eps(1), \]
whereas if $j_0=\mathrm{co}$, then
\[ \lim_{K \to \infty} \P\Big( T_{S_\beta^{\mathrm{co}}}-T'_\eps \leq t_{\beta,\delta,\eps} \Big|\Big( \frac{N_{1a,t}^K}{N_{1a,t}^K+N_{1d,t}^K}, N_{1,t}^K, N_{2,t}^K \Big) \in \mathfrak B^1_\eps \Big) =1-o_\eps(1). \]
Thus, for $j_0=1$, the second term on the right-hand side of \eqref{productformfixation1} is close to 1 when $K$ tends to $\infty$, $\beta$ is small and $\eps>0$ is small enough chosen according to $\beta$. Similarly, for $j_0=\mathrm{co}$, the second term on the right-hand side of \eqref{productformcoex1} is close to 1 when $K$ tends to $\infty$, $\beta$ is small and $\eps>0$ is small enough chosen according to $\beta$. In the latter case, together with \eqref{firsttermdormants} we have obtained
\[ \liminf_{K \to \infty} \mathcal I^{\mathrm{co}}(K,\eps) \geq 1-q_1-o_\eps(1), \]
which implies \eqref{coexprob1} and \eqref{coexistenceof1}. 

Finally, we investigate the third term on the right-hand side of \eqref{productformfixation1} (in the case $j_0=1$ of fixation of trait 1). Proposition~\ref{prop-thirdphasedormants} implies that there exists $\beta_0>0$ (denoted as $\eps_0$ in Proposition~\ref{prop-thirdphasedormants}) such that for all $\beta<\beta_0$, for $\eps>0$ sufficiently small,
\[ \lim_{K \to \infty} \P \Big(\Big| \frac{T_{S_{\beta}^1}-T''_{\beta}}{\log K}+ \frac{1}{\widehat \lambda} \Big| \leq \frac{f(\eps)}{3} \Big| \mathbf N_0^K \in \mathfrak B^2_{\beta} \Big) = 1-o_\eps(1). \]
Combining \eqref{firsttermdormants} with the convergence of the second and the third term on the right-hand side of \eqref{productformfixation1} to 1, we obtain
\[ \liminf_{K \to \infty} \mathcal I^1(K,\eps) \geq 1-q_1-o_\eps(1), \]
which implies \eqref{invasionprob1} and \eqref{invasionof1}.

\section*{Acknowledgements}
The authors thank F.~Nie and C.~Smadi for interesting discussions and comments, and T.~Paul for pointing out an error in an earlier version of the manuscript. 
We also thank the referees and the editor of {\em Theoret.\ Pop.\ Biol.} for valuable comments.
JB was supported by DFG Priority Programme 1590 ``Probabilistic Structures in Evolution’’ and Berlin Mathematics Research Center MATH+. AT was supported by DFG Priority Programme 1590 ``Probabilistic Structures in Evolution’’.


\begin{thebibliography}{WWWW98}


\bibitem[AN72]{AN72}
{\sc K. B. Athreya} and {\sc P. E. Ney},
\emph{Branching processes},
Springer (1972).
\bibitem[B04]{B04}
{\sc N.Q. Balaban, J. Merrin, R. Chait,  L. Kowalik, S. Leibler}
Bacterial Persistence as a Phenotypic Switch.
\emph{Science} {\bf 305} (2004).
\bibitem[B20]{B19}
{\sc A. Bovier.} 
Stochastic models for adaptive dynamics. Scaling limits and diversity. To appear in {\em Probabilistic Structures in Evolution}, A. Baake and A. Wakolbinger, Eds. (2020).
\bibitem[BCFMT16]{BCFMT16}
{\sc S. Billiard, P.  Collet, R. Ferrière, S. Méléard} and {\sc V. C. Tran,}
The effect of competition and horizontal trait inheritance on invasion, fixation, and polymorphism. 
\emph{J. Theoret. Biol.} {\bf 411}, 48–-58 (2016). 
\bibitem[BCFMT18]{BCFMT18}
{\sc S. Billiard, P.  Collet, R. Ferrière, S. Méléard,} and {\sc V. C. Tran,}
Stochastic dynamics for adaptation and evolution of microorganisms,
\emph{Journal of the European Mathematical Society},
pages 527--552, special issue for the Proceedings ECM2016 (2018). 
\bibitem[BEGKW15]{BEGKW15}
{\sc J. Blath}, {\sc B. Eldon}, {\sc A. González Casanova},  {\sc N. Kurt} and {\sc M. Wilke-Berenguer},
Genetic variability under the seedbank coalescent. \emph{Genetics}, {\bf 200:3}, 921--934 (2015).
\bibitem[BGKW16]{BGKW16} {\sc J. Blath}, {\sc A. González Casanova}, {\sc N. Kurt}, and {\sc M. Wilke-Berenguer},  A new coalescent for seed-bank models, {\em Ann. Appl. Probab.} \textbf{26:2}, 857--891 (2016). 
\bibitem[BGKW20]{BGKW20} {\sc J. Blath}, {\sc A. González Casanova}, {\sc N. Kurt}, and {\sc M. Wilke-Berenguer}, The seed bank coalescent with simultaneous switching, \emph{Electron.~J.~Probab.}, {\bf 25}, paper no.~27, 21 pp. (2020).
\bibitem[BP14]{BP14} {\sc F. Baumdicker} and {\sc P. Pfaffelhuber}. The infinitely many genes model with horizontal gene transfer, \emph{Electron.~J.~Probab.}, {\bf 19:115}, 1-27 (2014).
\bibitem[BT20]{BT19}
{\sc J. Blath} and {\sc A. Tóbiás},
Invasion and fixation of microbial dormancy traits under competitive pressure,  \emph{Stoch. Process. Their Appl.}, {\bf 130:12}, 7363--7395 (2020).
\bibitem[B84]{B84}
{\sc M.G. Bulmer},
Delayed Germination of Seeds: Cohen’s Model Revisited
\emph{Theor. Pop. Biol.} {\bf 26}, 367--377 (1984).
\bibitem[C06]{C06}
{\sc N. Champagnat},
A microscopic interpretation for adaptive
dynamics trait substitution sequence models,
{\em Stochastic Process. Appl.}, {\bf 116:8}, 1127--1160 (2006).
\bibitem[Coh66]{Co66}
{\sc D.~Cohen},
\newblock Optimizing reproduction in a randomly varying environment.
\newblock {\em J. Theoret. Biol.}, {\bf 12}, 119--129 (1966).
\bibitem[CCLS17]{C+16}
{\sc C. Coron}, {\sc M. Costa}, {\sc H. Leman}, and {\sc C. Smadi},
A stochastic model for speciation by mating preferences,
\emph{J.~Math.~Biol.}, {\bf 76}, 1421--1463 (2018). 
\bibitem[CCLLS21]{C+19}
{\sc C. Coron}, {\sc M. Costa}, {\sc F. Laroche}, {\sc H. Leman}, and {\sc C. Smadi},
Emergence of homogamy in a two-loci stochastic population model,
\emph{ALEA, Lat. Am. J. Probab. Math. Stat.} {\bf 18}, 469-–508 (2021).
\color{black} \bibitem[CMT21]{CMT19}
{\sc N. Champagnat}, {\sc S. Méléard}, and {\sc V. C. Tran},
Stochastic analysis of emergence of evolutionary cyclic behavior
in population dynamics with transfer, {\em Ann. Appl. Probab.}, to appear, see also:
\emph{arXiv:1901.02385} (2021).
\bibitem[DLA06]{DLA06}
{\sc F. Dumortier}, {\sc J. Llibre}, and {\sc J. C. Artés},
Qualitative Theory of Planar Differential Systems. Springer (2006). 
\bibitem[E85]{E85}
{\sc S. Ellner},
Germination Strategies in 
 Randomly Varying  Environments. I. Logistic-type 
Models.
\emph{Theor. Pop. Biol.} {\bf 28}, 50--79 (1985).
\bibitem[EK86]{EK}
{\sc S. N. Ethier} and {\sc T. G. Kurtz},
\emph{Markov processes. Characterization and convergence},
Wiley Series in Probability and Mathematical Statistics: Probability and Mathematical Statistics. John Wiley \& Sons Inc., New York (1986). 
\bibitem[FW84]{FW84}
{\sc M. Freidlin} and {\sc A. D. Wentzell},
{\em Random perturbations of dynamical systems}, Grundlehren der Mathematischen Wissenschaften (Fundamental Principles of Mathematical Sciences), volume 260,
Springer (1984). \color{black}
\bibitem[GB03]{GB03}
{\sc H-O. Georgii} and {\sc E. Baake},
Supercritical multitype branching processes: the ancestral types of typical individuals.
\emph{Adv. App. Probab.}, {\bf 35:4}, 1090--1110 (2003). 
\bibitem[GB14]{GB14}
{\sc C. Gyles} and {\sc P. Boerlin},
Horizontally transferred genetic elements and their role in pathogenesis of bacterial disease.
\emph{Veterinary Pathology}, {\bf 51:2}, 328--334 (2014). 
\bibitem[KKL01]{KKL01} {\sc I. Kaj}, {\sc S. Krone}, and {\sc M. Lascoux}, {\rm Coalescent theory for seed bank models.} {\it J. of Appl. Probab.}, {\bf 38:2}, 285--300 (2001). 
\bibitem[KW12]{KW12}
{\sc E. V. Koonin} and {\sc Y.~I. Wolf},
\newblock Evolution of microbes and viruses: a paradigm shift in evolutionary
  biology?
\newblock {\em Frontiers in cellular and infection microbiology}, {\bf 2}, 119 (2012).
\color{blue}
\begin{comment}{\color{gray} \bibitem[KSVHJ11]{KSVHJ11}
\textsc{F. C. Klebaner}, \textsc{S. Sagitov}, \textsc{V. A. Vatutin}, \textsc{P. Haccou}, and \textsc{P. Jagers},
Stochasticity in the adaptive dynamics of
evolution: the bare bones, \emph{Journal of
Biological Dynamics}, \textbf{5:2}, 147--162 (2011). \color{black}}\end{comment}
\color{black}
\bibitem[KL+05]{KL+05}
\textsc{E. Kussell, R. Kishony, N.Q. Balaban, S. Leibler},
Bacterial Persistence: A Model of Survival in Changing Environments.
\emph{Genetics} {\bf 169:4}, 8 pp (2005).
\bibitem[L10]{L10}
\textsc{K. Lewis,}
Persister Cells.
\emph{Annu. Rev. Microbiol.} {\bf 64}, 357–-372 (2010).
\bibitem[LT46]{LT46}
{\sc J. Lederberg} and {\sc E. L. Tatum}.
\newblock Gene recombination in escherichia coli.
\newblock {\em Nature}, {\bf 158}:558 (1946).
\bibitem[LJ11]{LJ11}
\textsc{J. T. Lennon} and {\sc S. E. Jones},
Microbial seed banks: the ecological and evolutionary implications of dormancy. \emph{Nat. Rev. Microbiol.} \textbf{9:2}, 119--130 (2011). 
\bibitem[LdHWB20]{LdHWB20}
\textsc{J. T. Lennon} and {\sc den Hollander, F. T. W.} and {\sc Wilke Berenguer, M.} and {\sc Blath, J.},
Principles of seed banks: complexity emerging from dormancy,
\emph{arXiv:2012.00072}, (2020).
\bibitem[OLG00]{OLG00}
{\sc H. Ochman, J. G. Lawrence}, and {\sc E. A. Groisman},
Lateral gene transfer and the nature of bacterial innovation.
\emph{Nature} {\bf 405}, 299--304 (2000).
\bibitem[RC87]{RC87}
{\sc D. B. Roszak} and {\sc R. R. Colwell},
Survival strategies of bacteria in the natural environment. 
\emph{Microbiol Rev.} 51:365–-379 (1987). 
\bibitem[SL18]{SL18}
{\sc W.~R.~Shoemaker} and {\sc J.~T.~Lennon},
Evolution with a seed bank: The population genetic consequences of
  microbial dormancy,
\textit{Evol.~Appl.} \textbf{11}:60--75 (2017).
\bibitem[SD73]{SD73}
{\sc A. S. Sussman} and {\sc H. A. Douthit}, 
Dormancy in microbial spores. 
\emph{Annu. Rev. Plant Physiol.} {\bf 24}:311–352 (1973). 
\bibitem[TLLPS11]{T11} {\sc A.~Tellier}, {\sc S.J.Y.~Laurent}, {\sc H.~Lainer}, {\sc P.~Pavlidis}, and {\sc W.~Stephan}, Inference of 
seed bank parameters in two wild tomato species using ecological and 
genetic data. \emph{Proceedings of the National Academy of Sciences of the
U.S.A.}~\textbf{108:41}, 17052--17057 (2011).
\bibitem[V15]{V15} {\sc J. Vandermeer}, Some complications of the elementary forms of competition in a source/sink and metacommunity context: the role of intranstive loops. \emph{arXiv:1502.05225} (2015). 
\bibitem[W04]{W04} 
{\sc R. J. Whittington}, {\sc D. J. Marshall}, {\sc P. J. Nicholls}, {\sc I. B. Marsh}, and {\sc L. A. Reddacliff}, 
Survival and dormancy of Mycobacterium avium subsp. paratuberculosis in the environment. 
\emph{Appl Environ Microbiol.} {\bf 70}, 2989--3004 (2004).
\end{thebibliography}
\end{document}